\documentclass[11pt,oneside]{amsart}



\newcounter{commentcounter}

\usepackage{amsmath} 
\usepackage{amssymb} 
\usepackage{amsthm} 
\usepackage{stmaryrd} 
\usepackage[english]{babel} 
\usepackage[font=small,justification=centering]{caption} 
\usepackage[nodayofweek]{datetime}
\usepackage[shortlabels]{enumitem} 
\usepackage[T1]{fontenc} 
\usepackage[utf8]{inputenc} 
\usepackage{ifthen} 
\usepackage{mathabx} 
\usepackage{mathtools} 
\usepackage[dvipsnames]{xcolor} 
\usepackage[pdftex,  colorlinks=true]{hyperref} 
    \hypersetup{urlcolor=RoyalBlue, linkcolor=RoyalBlue,  citecolor=black}
\usepackage{setspace} 
\usepackage{tikz-cd} 
\usepackage{xfrac} 
\usepackage[capitalize]{cleveref} 

\usepackage{wrapfig}
\usepackage{caption}
\usepackage{subcaption}


\makeatletter
\@namedef{subjclassname@1991}{Mathematical subject classification 1991}
\@namedef{subjclassname@2000}{Mathematical subject classification 2000}
\@namedef{subjclassname@2010}{Mathematical subject classification 2010}
\@namedef{subjclassname@2020}{Mathematical subject classification 2020}
\makeatother

\newtheorem{theorem}{Theorem}[section]
\newtheorem{lemma}[theorem]{Lemma}
\newtheorem{corollary}[theorem]{Corollary}
\newtheorem{proposition}[theorem]{Proposition}

\newtheorem{thmx}{Theorem}

\newtheorem{propx}[thmx]{Proposition}

\theoremstyle{definition}
\newtheorem{definition}[theorem]{Definition}

\newtheorem{example}[theorem]{Example}

\theoremstyle{plain}

    \newtheoremstyle{TheoremNum}
        {\topsep}{\topsep} 
        {\itshape} 
        {-0.25cm} 
        {\bfseries} 
        {.} 
        { }  
        {\thmname{#1}\thmnote{ \bfseries #3}}
    \theoremstyle{TheoremNum}

\newcommand*{\claimproofname}{My proof}
























\usepackage{tikz}
\usetikzlibrary{arrows,quotes}
\tikzstyle{blackNode}=[fill=black, draw=black, shape=circle]

  

\title[]{A marking graph for finite-type Artin groups}

\usepackage[foot]{amsaddr}

\author{Kaitlin Ragosta}
\email{kragosta@brandeis.edu}

\date{\today}
\subjclass[2020]{}

\begin{document}

\begin{abstract}
Clean markings on surfaces were a key component in Masur and Minsky's hierarchy machinery, which proved to be a powerful tool in the study of mapping class groups. We construct a marking graph for irreducible finite-type Artin groups which is quasi-isometric to the group modulo its center, i.e., an element of $A_{\Gamma}/Z(A_{\Gamma})$ is determined up to finite error by its action on one of our markings. To construct this graph, we construct suitable collections of transverse parabolic subgroups which extend the maximal simplices of the complex of irreducible parabolic subgroups to analogues of clean markings, and we define natural analogues of elementary moves.
\end{abstract}

\maketitle
\tableofcontents

\section{Introduction}

Finite-type Artin groups first arose as fundamental groups of complex hyperplane arrangements, and have been extensively studied \cite{Tits-Artin-groups,Deligne,Brieskorn-Saito}. For example, they admit finite $K(\pi,1)$s and geodesic biautomatic structures \cite{Charney-Davis-Kpi1,Brady-Watt-Kpi1,ft-artin-groups-geodesically-biautomatic}. They are also Helly, and irreducible finite-type Artin groups modulo their centers were shown to be acylindrically hyperbolic via the construction of a WPD element \cite{Garside-Helly, third-CAL-paper}.

Nevertheless, some natural questions about finite-type Artin groups remain unanswered. Finite-type Artin groups include braid groups, which are also mapping class groups. It is generally unknown to what extent finite-type Artin groups mimic mapping class groups in structure. For example, mapping class groups of surfaces with genus at least 3 are not $\text{CAT}(0)$, while this question is open for all finite-type Artin groups except braid groups on at most seven strands, which are $\text{CAT}(0)$ \cite{Kapovich-Leeb-nonCAT0, Bridson-CAT0-reducibles, 5-strand-braid-CAT0, 6-strand-braid-CAT0, 7-strand-braid-CAT0}. On the other hand, mapping class groups admit a largest acylindrical action on the curve complex, and no similarly nice action of a finite-type Artin group modulo its center on a hyperbolic space is currently known \cite{Bowditch-acylindricity, Largest-acyl-HHG, ABO-hyp-structures}. 

Recently, Cumplido, Gebhardt, Gonz\'{a}les-Meneses, and Wiest gave a description of the curve complex for a braid group using only group theoretic properties of its parabolic subgroups \cite{C-parab-definition}. This description allowed the authors to define a ``curve complex'' for the other irreducible finite-type Artin groups, which they called the \emph{complex of irreducible parabolic subgroups}. They showed that this complex is infinite-diameter, but it is not yet known whether it is $\delta$-hyperbolic, nor whether the action of the Artin group modulo its center is acylindrical.

Inspired by the marking graph for mapping class groups, we construct a new model for finite-type Artin groups modulo their centers.

\begin{thmx}\label{thmx:main}
    An irreducible finite-type Artin group modulo its center acts geometrically on the associated marking graph $\mathcal{W}$ defined in Definition \ref{def:marking graph}.
\end{thmx}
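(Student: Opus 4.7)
The plan is to establish the three defining conditions for a geometric action: action by isometries, cocompactness, and properness. First I would observe that $A_\Gamma$ acts on itself by conjugation, which induces an action on the set of parabolic subgroups, and that the center $Z(A_\Gamma)$ acts trivially since inner automorphisms by central elements are trivial. This descends to an action of $A_\Gamma / Z(A_\Gamma)$ on parabolic subgroups. Because the transversality condition and any extra data carried by a marking should be defined in terms intrinsic to the parabolic structure, conjugation carries markings to markings, and one can read off from the definitions that elementary moves are carried to elementary moves. This gives an action of $A_\Gamma/Z(A_\Gamma)$ on $\calw$ by graph automorphisms, hence by isometries with respect to the combinatorial metric.

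Next I would handle cocompactness by peeling the marking graph in layers. There are only finitely many standard parabolic subgroups of $A_\Gamma$, and every parabolic is conjugate to a standard one, so there are finitely many $A_\Gamma$-orbits of irreducible parabolics. Consequently there are finitely many orbits of maximal simplices of the complex of irreducible parabolic subgroups. Since a marking is built by extending such a maximal simplex by a finite collection of transverse parabolics (together with any bounded combinatorial data), and since the construction from Definition \ref{def:marking graph} should make only finitely many choices available above each maximal simplex up to the conjugation action, one concludes that $\calw / (A_\Gamma / Z(A_\Gamma))$ has finitely many vertices. Finiteness of edge orbits follows from the same argument applied to the finitely many types of elementary move.

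The remaining and hardest step is properness: I need to show that the stabilizer in $A_\Gamma/Z(A_\Gamma)$ of a marking $w \in \calw$ is finite. An element stabilizing $w$ must, at the very least, permute the set of parabolic subgroups underlying $w$; in particular it stabilizes the maximal simplex of irreducible parabolics on which $w$ is built. Thus the stabilizer of $w$ sits inside the intersection of the normalizers of the parabolics appearing in $w$. The strategy is to leverage the fact that a maximal simplex in the complex of irreducible parabolic subgroups is rich enough that the intersection of normalizers modulo the center is forced to be finite, using the structural theory of parabolic subgroups of finite-type Artin groups (e.g. the description of normalizers via ribbon elements and the fact that a parabolic is its own normalizer up to a controlled finite piece). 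The main obstacle will be verifying this finiteness statement: one must rule out infinite-order elements that permute the parabolics of a marking without being central, and this is where the careful choice of ``clean'' transverse extension in Definition \ref{def:marking graph} should do the work, analogously to how clean markings rigidify mapping classes in the surface setting.

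Combining these three ingredients yields a geometric action of $A_\Gamma / Z(A_\Gamma)$ on $\calw$, which is the statement of Theorem \ref{thmx:main}.
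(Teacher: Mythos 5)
Your high-level decomposition---action by isometries, cocompactness, properness---matches the paper's, and your intuition for where the difficulty concentrates is partly right, but the proposal glosses over the two places where the paper has to do genuine technical work, and one of your claims is actually false.

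First, the claim that ``one can read off from the definitions that elementary moves are carried to elementary moves'' is correct for twist moves but decidedly nontrivial for flip moves. A flip move (Definition~\ref{def:flip}) is defined via the projections $\pi_{P_j}(Q)$, and whether the condition $|\pi_{P_j}(Q_j) - \pi_{P_j}(Q'_j)| \leq 1$ is preserved under conjugation by $x$ depends on how $\pi_{xP_jx^{-1}}(xQ_jx^{-1})$ relates to $\pi_{P_j}(Q_j)$. The projection is defined relative to the canonical positive standardizer of the base simplex, and conjugating the base changes the canonical standardizer in a way that is not a priori controlled. Proposition~\ref{prop:flip-isom} does nontrivial bookkeeping (using Corollary~\ref{transversal k existence} and Lemma~\ref{well behaved projections}) to show that the \emph{difference} of projections is conjugation-invariant even though the individual projections are not. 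Your proposal treats this as automatic, which it is not.

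Second, and more seriously, your properness strategy begins from a false premise. You write that ``a maximal simplex in the complex of irreducible parabolic subgroups is rich enough that the intersection of normalizers modulo the center is forced to be finite.'' This is not true: by Theorem~\ref{almost ascending product}, the stabilizer of a maximal $C_{parab}$-simplex $\{A_{X_i}\}$ consists of all ascending products $\Delta_{X_1}^{\square}\cdots\Delta_{X_n}^{\square}\Delta_\Gamma^{\square}$, which is infinite modulo the center whenever the simplex has at least one proper $A_{X_i}$. You then correctly observe that the transversals must ``do the work,'' but you frame the danger as infinite-order elements ``that permute the parabolics of a marking,'' which misses the actual obstruction: the dangerous elements are twist-like elements such as $\Delta_{X_i}^2$ that fix every base parabolic pointwise but have infinite order modulo the center. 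The paper neutralizes these in Theorem~\ref{vx stabilizer} via Lemma~\ref{distinct twists} (a nonzero power of $\Delta_{X_j}$ moves the transversal $Q_j$) together with Lemma~\ref{conjugation implies containment}, concluding that the stabilizer lies in a conjugate of $\langle\Delta_\Gamma\rangle$, which has order at most $2$ modulo center. Your proposal identifies that the transversals are relevant but does not isolate this mechanism, and a direct attempt along the lines you sketch would stall at your false finiteness claim.

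Your cocompactness argument is essentially correct in outline (finitely many orbits plus local finiteness), but ``the construction... should make only finitely many choices available above each maximal simplex'' is exactly the content of Proposition~\ref{markings conj to standard}, which you would still need to prove; the paper's Section~\ref{sec:geometric action} additionally makes the fundamental domain explicit as a ball by bounding the graph distance between all-standard markings, which is not strictly necessary but is more concrete. In short: the skeleton of your plan is sound, but both the flip-isometry step and the stabilizer step require the projection and ribbon machinery that your proposal defers to later, and the properness sketch as written starts from an incorrect claim about base-simplex stabilizers.
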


A maximal simplex in the curve complex of a surface determines a pants decomposition of the surface. By additionally associating a suitable transverse curve to each curve in the pants decomposition, one obtains a clean marking. Masur and Minsky constructed a marking graph for surfaces whose vertex set consists of markings and whose edges correspond to elementary moves between markings; see Section \ref{sec:subsurfaces and markings} for definitions and details \cite{Masur-Minsky-1, Masur-Minsky-marking-graph-qi}. The mapping class group of a surface without boundary acts geometrically on this graph, and the marking graph is a key component of the hierarchy machinery Masur and Minsky constructed for the mapping class group \cite{Masur-Minsky-1, Masur-Minsky-marking-graph-qi}. 



A space which generalizes the marking graph has many potential applications. Notably, the marking graph was a key ingredient in the construction of tight geodesics in the curve graph \cite{Masur-Minsky-marking-graph-qi}. The question of hyperbolicity of the complex of irreducible parabolic subgroups remains open in large part because there is no clear picture of what geodesics should look like, and a marking graph for finite-type Artin groups may be a useful tool for ``guessing'' geodesics, which is a standard technique to prove hyperbolicity \cite{Bowditch-curve-graph-hyp-1, Masur-Schleimer-guessing-geods, Hamenstadt-curve-complex}. Several other potential applications are discussed in Section \ref{subsec:hhg-applications}.

\subsection{Additional results and an overview of the proof}

\subsubsection{Results on the parabolic subgroup graph}
The full definition of a finite-type Artin group is given at the beginning of Section \ref{sec:finite type artin groups}. Much of this paper requires a detailed understanding of the structure of simplices in the \emph{complex of irreducible parabolic subgroups} associated to a finite-type Artin group $A_{\Gamma}$, denoted $C_{parab}(A_{\Gamma})$ or simply $C_{parab}$ when the group is clear. 

\begin{definition}
    Let $A_{\Gamma}$ be an Artin group, and let $X$ be a subset of the Artin generators. The subgroup $\langle X \rangle$, often denoted $A_{X}$, is a \emph{standard parabolic subgroup} of $A_{\Gamma}$. If the induced subgraph of $\Gamma$ with vertex set $X$ is connected, then $A_{X}$ is called \emph{irreducible}. For any $g \in A_{\Gamma}$, a subgroup of the form $g A_{X} g^{-1}$ is called a \emph{parabolic subgroup}.
\end{definition}

Roughly speaking, $C_{parab}$ is a simplicial complex whose vertices are parabolic subgroups and where simplices correspond to subgroup inclusion or commuting subgroups \cite{C-parab-definition}. See Definition \ref{def:c-parab-def} for a precise definition.

The following proposition completely characterizes maximal $C_{parab}$-simplices whose vertex sets consist of \emph{standard} parabolic subgroups. We will see later, specifically in Proposition \ref{propx:sim-std}, that this is sufficient to understand all maximal $C_{parab}$-simplices.

\begin{propx}\label{propx:maxl-characterization}
    Let $\{A_{X_i}\}$ span a simplex $\Sigma$ in $C_{parab}$. The simplex $\Sigma$ is maximal precisely when the following hold.
            \begin{itemize}
                \item $\bigcup_{i} X_i = V(\Gamma) - \{t\}$ for some Artin generator $t$.
                \item For every $A_{X_i}$ in the simplex, the union of all $X_j \subsetneq X_i$ is equal to $X_i - \{t_i\}$ for some Artin generator $t_i$ contained in $X_i$.
            \end{itemize}
\end{propx}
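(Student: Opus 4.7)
The plan is to argue by induction on $|V(\Gamma)|$, with a case analysis based on the complement $T := V(\Gamma) \setminus \bigcup_i X_i$. The key structural observation I would use throughout: the $X_i \in \Sigma$ that are maximal under inclusion are pairwise incomparable, hence pairwise commute as standard parabolic subgroups of $A_\Gamma$, hence are pairwise disjoint with no $\Gamma$-edges between them. Combined with each $X_i$ being connected, this means the maximal $X_i$ in $\Sigma$ are exactly the connected components of the subgraph of $\Gamma$ induced on $\bigcup_i X_i$. Throughout, I will treat $A_\Gamma$ as excluded from the vertex set of $C_{parab}$, and handle $|V(\Gamma)|\leq 2$ as a direct base case.

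For the forward direction, I first rule out $\bigcup_i X_i = V(\Gamma)$: the structural observation forces $\Gamma$ to be a disjoint union of the maximal $X_i$'s with no edges between them, contradicting connectedness unless a single $X_i$ equals $V(\Gamma)$, which is excluded. Assuming $|T|\geq 2$, I produce an extension. Fix $t \in T$, let $C_1, \ldots, C_k$ be the maximal $X_i$'s, set $\mathcal{J} := \{j : t \text{ is }\Gamma\text{-adjacent to }C_j\}$, and define
\[
Y := \{t\} \cup \bigcup_{j \in \mathcal{J}} C_j.
\]
Then $Y$ is connected (via $t$ if $\mathcal{J}\neq\emptyset$, trivially otherwise), and $Y \subsetneq V(\Gamma)$ since $|T|\geq 2$ ensures $Y$ misses either some non-adjacent $C_j$ or some vertex of $T\setminus\{t\}$. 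Compatibility with each $A_{X_l}\in\Sigma$ follows: if $X_l \subseteq C_j$ with $j\in\mathcal{J}$ then $X_l \subseteq Y$; if $X_l \subseteq C_j$ with $j\notin\mathcal{J}$ then $X_l$ is disjoint from $Y$ with no $\Gamma$-edges to $Y$, so $A_{X_l}$ commutes with $A_Y$. Since $t \notin \bigcup_i X_i$, $A_Y$ is a new vertex, contradicting maximality. Hence $|T|=1$, giving condition (1).

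For condition (2), I observe that $\Sigma_i := \{A_{X_l}\in\Sigma : X_l \subsetneq X_i\}$ spans a simplex in $C_{parab}(A_{X_i})$, and argue it is maximal there: any extension of $\Sigma_i$ by $A_Y$ with $Y \subsetneq X_i$ connected also extends $\Sigma$ in $C_{parab}(A_\Gamma)$, because each $A_{X_l}\in\Sigma$ not comparable to $A_{X_i}$ commutes with $A_{X_i}$ and hence with $A_Y \subseteq A_{X_i}$. By the inductive hypothesis applied to $A_{X_i}$, the analogue of (1) for $\Sigma_i$ yields $\bigcup_{X_l\subsetneq X_i} X_l = X_i\setminus\{t_i\}$, which is (2). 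For the backward direction (assuming (1) and (2)), I would run a symmetric case analysis on a hypothetical standard extension $A_Y$: if $Y\subseteq C_j$, apply induction to $\Sigma_{C_j}$ to rule it out; if $Y$ strictly contains some $C_j$, the nesting/commuting constraints force $Y$ to be a union of $C_j$'s together with at most $\{t\}$, and since $t$ is a cut vertex adjacent to every $C_j$, compatibility and connectedness force $Y$ to equal some $C_j$ (already in $\Sigma$) or $V(\Gamma)$ (excluded); if $Y$ is incomparable with every $C_j$, then $Y\subseteq T = \{t\}$, but $A_{\{t\}}$ cannot commute with any $A_{C_j}$ since $t$ is $\Gamma$-adjacent to $C_j$.

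The main obstacle will be ruling out extension by \emph{non-standard} parabolic subgroups in the backward direction, since maximality in $C_{parab}$ requires excluding all parabolic extensions, not just standard ones. I expect this to require invoking structural results on parabolic subgroups of finite-type Artin groups, for example that a parabolic subgroup contained in a standard parabolic $A_X$ is itself a parabolic of $A_X$, together with a description of parabolics commuting with $A_{C_j}$. These should allow a hypothetical conjugate extension $g A_{Y'} g^{-1}$ to be normalized (after conjugation by a suitable element preserving $\Sigma$) to a standard form, reducing to the case already handled.
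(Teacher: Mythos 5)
Your forward direction and the handling of condition (2) are correct and essentially match the paper's argument: the paper's Lemma 3.4 establishes the "maximal $X_i$'s live in distinct components of $\Gamma-\{v\}$" observation you are using, and the extension by $Y=\{t\}\cup\bigcup_{j\in\mathcal{J}}C_j$ is the same object as the maximal component $M$ of $A_{X_1\cup X_2\cup X_3\cup\{t_1\}}$ in the paper's proof. Phrasing condition (2) as an explicit induction inside $A_{X_i}$, rather than as the paper's ``apply the same argument to each $A_{X_i}$,'' is a cosmetic difference. Your case analysis for a standard extension $A_Y$ in the backward direction also lines up with the paper's brief argument that no standard parabolic fits.

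The real gap is exactly where you flag it, but your proposed fix is headed the wrong way. You suggest conjugating a hypothetical extension $gA_{Y'}g^{-1}$ to standard form ``by a suitable element preserving $\Sigma$.'' The stabilizer of a maximal standard simplex is far too small for this: by Theorem~\ref{almost ascending product} it consists only of ascending products of the $\Delta_{X_i}$ and $\Delta_\Gamma$, and there is no reason such an element should carry an arbitrary parabolic in the link to a standard one. What actually works — and what the paper does — is to apply Proposition~\ref{propx:sim-std} to the entire simplex $\Sigma\cup\{P\}$: there exists a single $h$ conjugating everything to standard parabolics, yielding a \emph{different} standard simplex $\Sigma'=\{A_{Y_i}\}$ with a standard $A_Z$ in its link. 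One then has to check that $\Sigma'$ again satisfies conditions (1) and (2), which follows because conjugation preserves inclusion and commutation and, by Paris's conjugacy criterion (Theorem~\ref{parab subgroup conjugacy}), conjugate standard parabolics have the same number of generators, so the counting behind conditions (1) and (2) transfers. This reduces to the standard-extension case you already handled. Without Proposition~\ref{propx:sim-std} (or an equivalent simultaneous standardization statement), the structural facts you list — parabolics of $A_X$ inside $A_X$, centralizers of $A_{C_j}$ — do not obviously close the gap, since the bad cases are $P$ \emph{containing} some $C_j$ or commuting with all of them, where neither fact applies directly.

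One further small imprecision: in your sub-case ``$Y$ strictly contains some $C_j$,'' the conclusion should just be $Y=V(\Gamma)$ (excluded), not ``$Y$ equals some $C_j$ or $V(\Gamma)$''; once $Y\supsetneq C_j$ and $Y$ is connected, any additional vertex forces $t\in Y$, and $t$'s adjacency to every $C_{j'}$ then forces all $C_{j'}\subseteq Y$. The option $Y=C_j$ is not available once you have assumed strict containment.
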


Using this description, we can compute the stabilizer of a maximal \emph{standard} $C_{parab}$ simplex; see Definition \ref{almost ascending product} for the definition of an ascending product. Similarly, Proposition \ref{propx:sim-std} shows that this is sufficient to understand the stabilizers of maximal simplices in general.
\begin{thmx}\label{thmx:simplex-stab}
    If $g$ stabilizes a maximal $C_{parab}$ simplex $\{A_{X_i}\}$, then $g$ can be written as an ascending product of powers of $\Delta_{X_i}$ and $\Delta_{\Gamma}$.
\end{thmx}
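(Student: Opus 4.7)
The plan is to combine Proposition A, which gives a rigid combinatorial description of the simplex, with the classical description of normalizers of standard parabolic subgroups in finite-type Artin groups (due to Paris and Godelle). From Proposition A, the simplex $\{A_{X_i}\}$ carries a forest structure: its maximal vertices are the irreducible parabolics $A_C$ as $C$ ranges over the connected components of the induced subgraph on $V(\Gamma)\setminus\{t\}$, and each $A_{X_i}$ has children corresponding to the connected components of the subgraph on $X_i\setminus\{t_i\}$. Any $g$ stabilizing the simplex induces an automorphism of this forest preserving the rank and the inclusion relation.

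I would first reduce to the case where $g$ normalizes each $A_{X_i}$ individually. The key ingredients are: (i) $\Delta_\Gamma$ conjugates each standard parabolic $A_X$ to $A_{\sigma(X)}$, where $\sigma$ is the longest-element involution on $V(\Gamma)$; and (ii) each $\Delta_{X_i}$ restricts to the analogous involution on the subforest rooted at $A_{X_i}$. I would argue that every combinatorial automorphism of the forest realizable by conjugation in $A_\Gamma$ is in fact implemented by an ascending product of such $\Delta$-elements, so after multiplying $g$ by the inverse of such a product we may assume $g \in \bigcap_i N_{A_\Gamma}(A_{X_i})$. I would then induct on the number of vertices in the simplex: for a deepest $A_{X_{i_0}}$, Godelle's description of $N_{A_\Gamma}(A_{X_{i_0}})$, together with the condition that $g$ must also normalize the parent and commuting siblings of $A_{X_{i_0}}$, forces the $A_{X_{i_0}}$-component of $g$ to be a power of $\Delta_{X_{i_0}}$; peeling this factor off reduces the problem to the stabilizer of the simplex obtained by removing $A_{X_{i_0}}$, and the inductive hypothesis completes the proof.

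The main obstacle I expect is the first reduction step: pinning down exactly which combinatorial forest-automorphisms are realizable by conjugation in $A_\Gamma$ and matching them cleanly to ascending $\Delta$-products. This interacts delicately with diagram symmetries of $\Gamma$ and the ambiguity in choosing the ``missing'' generators $t$ and $t_i$, and is likely to require case analysis based on the classification of irreducible finite-type Coxeter diagrams. A secondary subtlety is bookkeeping with centers: since $\Delta_X^2$ is central in $A_X$, the decomposition has some built-in redundancy, and care is needed to ensure the final product genuinely satisfies the ascending condition of Definition \ref{almost ascending product}.
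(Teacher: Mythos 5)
Your high-level strategy shares some ingredients with the paper's proof---both rely on Godelle's normalizer decomposition and both exploit the combinatorial structure provided by Proposition~\ref{maximal simplex characterization}---but the specific plan has two genuine gaps, and the induction runs in what turns out to be the wrong direction.

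The first gap is the reduction to $g \in \bigcap_i N_{A_\Gamma}(A_{X_i})$, which you flag yourself. To carry it out you would need to show that the forest automorphism induced by $g$ is already realized by some ascending product $p$, so that $p^{-1}g$ normalizes each vertex. But determining which forest automorphisms are realizable by conjugation in $A_\Gamma$, and matching them to ascending $\Delta$-products, is essentially the content of the theorem, so the reduction is circular as stated. The paper avoids classifying automorphisms entirely: because the maximal vertices of the simplex union to $V(\Gamma) \setminus \{t\}$, any $g$ stabilizing the simplex automatically normalizes the reducible standard parabolic $A_{V(\Gamma)\setminus\{t\}}$, and Proposition~\ref{product of deltas} gives a complete description of the ribbons for such an almost-maximal $X$. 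Applying Godelle then yields $g = g'r$ with $r$ an ascending product of $\Delta$'s for the maximal components and $\Delta_\Gamma$, and $g'$ lying in the direct product of the maximal components, with no need to decide in advance which permutation $g$ induces.

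The second gap is in the induction. Removing a deepest vertex $A_{X_{i_0}}$ does not leave a maximal simplex of $C_{parab}(A_\Gamma)$, so the inductive hypothesis---a statement about maximal simplices---does not apply to $\{A_{X_i}\}_{i\neq i_0}$. Indeed the statement is false for non-maximal simplices: the stabilizer of a single vertex $\{A_X\}$ is the full normalizer of $A_X$, which is far larger than the group of ascending $\Delta$-products. There is also a loss of control at the ribbon level: since a deepest $X_{i_0}$ is a single generator $\{s\}$, the $\{s\}$-ribbons-$\{s\}$ form a large set about which Proposition~\ref{product of deltas} says nothing (that result only applies when $X$ misses a single generator, i.e.\ is almost-maximal). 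The paper instead inducts on the length of nesting chains, top-down: after controlling the top-level ribbon via Proposition~\ref{product of deltas} and splitting $g' = g_xg_yg_z$ across the maximal components, Lemma~\ref{conj preserving gen sets} shows that each $g_x$ stabilizes a \emph{maximal} simplex of $C_{parab}(A_X)$ for a smaller ambient Artin group $A_X$, so the inductive hypothesis does apply and the ribbon control again comes from an almost-maximal set. Reversing the direction of your induction along these lines would likely repair both gaps.
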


As we will explain in more detail in Subsection \ref{sec:finite type artin groups}, in a braid group, the element $\Delta_{X_i}$ is roughly analogous to a half-twist about the boundary of a curve on the disk, and $\Delta_{\Gamma}$ is roughly analogous to a half-twist about the boundary of the disk. Thus elements of the form in the proposition are natural analogues of Dehn twists about some collection of base curves and possibly a permutation of the base curves. The normalizers of both parabolic subgroups of finite-type Artin groups and of arbitrary elements in finite-type Artin groups are well-understood, but little is known in general about the intersections of normalizers, so Theorem \ref{thmx:simplex-stab} is of independent interest.

\subsubsection{Simultaneous standardizability}

Two parabolic subgroups $P$ and $Q$ of an Artin group $A_{\Gamma}$ are said to be \emph{simultaneously standardizable} if there are two subsets $X, Y \subseteq V(\Gamma)$ and a single element $g \in A_{\Gamma}$ such that $P = g A_{X} g^{-1}$ and $Q = g A_{Y} g^{-1}$. In particular, conjugation by $g^{-1}$ sends both $P$ and $Q$ to standard parabolic subgroups. In the braid group, studying simultaneously standardizable subgroups is equivalent to studying curves which can be simultaneously mapped to round curves; details regarding this connection are given in Subsection \ref{sec:braid groups}.


Standardizers of parabolic subgroups are also of independent interest; they are closely related to standardizers of curve systems on the disk, which were studied in \cite{Lee-and-Lee-curve-systems}, and an algorithm to produce the minimal standardizer of a single parabolic subgroup was constructed and studied in \cite{Cumplido-minimal-standardizers}.

A key technical ingredient in this paper is the following proposition. It generalizes a result in \cite{C-parab-definition}, which proves the same result for a pair $P$ and $Q$ which are adjacent in $C_{parab}$.

\begin{propx}\label{propx:sim-std}
    Let $\{P_i\}$ span a simplex in $C_{parab}$. There is a positive element $g$ such that $g^{-1} P_i g$ is standard for every $P_i$.
\end{propx}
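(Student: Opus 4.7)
The plan is to argue by induction on the number $n$ of parabolic subgroups in the simplex. The case $n=1$ is immediate from the definition of a parabolic subgroup together with Cumplido's minimal-standardizer construction \cite{Cumplido-minimal-standardizers}, which guarantees a positive conjugator. The case $n=2$ is the pairwise simultaneous standardizability result of \cite{C-parab-definition} already cited in the proposition.

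For the inductive step, I would first select $P_n$ to be a minimal element of the simplex under the containment partial order (a minimal element always exists in a finite poset). Applying the inductive hypothesis to $\{P_1, \dots, P_{n-1}\}$ gives a positive $g$ with $g^{-1} P_i g = A_{X_i}$ standard for each $i < n$. Setting $Q := g^{-1} P_n g$, the problem reduces to finding a positive $h$ with $h^{-1} A_{X_i} h$ standard for each $i < n$ and $h^{-1} Q h$ standard; then $gh$ is the desired common standardizer. Because $P_n$ was chosen minimal, each edge in $C_{\text{parab}}$ between $Q$ and $A_{X_i}$ is of one of exactly two types: either $Q \subsetneq A_{X_i}$, or $Q$ commutes with $A_{X_i}$ as a subgroup.

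Next I would set $A_Y := \bigcap \{A_{X_i} : A_{X_i} \supseteq Q\}$, which is itself a standard parabolic (finite intersections of standard parabolics of a finite-type Artin group are standard parabolics) and contains $Q$. Viewing $Q$ as a parabolic of the finite-type Artin group $A_Y$, choose $h$ to be a positive minimal standardizer of $Q$ inside $A_Y$; then $h^{-1} Q h$ is standard in $A_Y$ and hence in $A_\Gamma$. Since $h \in A_Y \subseteq A_{X_i}$ for every $A_{X_i}$ containing $Q$, conjugation by $h$ fixes such $A_{X_i}$ on the nose, so standardness is preserved for those.

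The main obstacle is the remaining case: the $A_{X_i}$ that commute with $Q$ but do not contain it. For these, $h$ need not commute with $A_{X_i}$ a priori, and a naive choice could destroy standardness. My approach would be to exploit two facts: that each such $A_{X_i}$ is itself $C_{\text{parab}}$-adjacent to every $A_{X_j}$ containing $Q$ (since all vertices lie in a common simplex), and the structural feature of commuting standard parabolics in finite-type Artin groups that they correspond to essentially orthogonal pieces of the defining graph $\Gamma$. The goal would be to refine $h$ to lie in the sub-parabolic of $A_Y$ supported on generators disjoint from those supporting the commuting $A_{X_i}$'s, so that $h$ centralizes each such $A_{X_i}$. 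Combining this with Proposition \ref{propx:maxl-characterization} to constrain how commuting and nesting relations can coexist inside a single simplex should complete the inductive step and produce the required single positive conjugator.
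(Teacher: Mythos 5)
Your induction (on simplex size, peeling off a minimal element) is genuinely different from the paper's, and it has a real gap that you yourself flag: the case of the $A_{X_i}$ that commute with $Q$ but do not contain it. You choose $h$ to be a minimal standardizer of $Q$ inside $A_Y = \bigcap\{A_{X_i} : Q \subseteq A_{X_i}\}$, which handles all containing $A_{X_i}$ (since $h \in A_{X_i}$ gives $h^{-1}A_{X_i}h = A_{X_i}$), but there is no reason for $h$ to centralize a commuting $A_{X_j}$: $Q$ commuting with $A_{X_j}$ does not force $A_Y$, let alone $h$, to commute with $A_{X_j}$. Your proposed fix---restrict $h$ to a sub-parabolic of $A_Y$ supported away from the commuting $A_{X_j}$'s---is not established; it is not clear such a sub-parabolic exists, contains $Q$, and still admits a standardizer for $Q$ that is simultaneously a standardizer in $A_\Gamma$. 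So the inductive step is incomplete.

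The paper sidesteps the commuting case entirely by inducting \emph{top-down through the levels of the nesting poset} rather than on simplex size. The maximal (level-$1$) vertices of the simplex pairwise commute with trivial intersection, so they are the irreducible components of a single reducible parabolic $P_{(1,-)}$; conjugating by its minimal positive standardizer $g_1$ makes all level-$1$ vertices standard at once. Lemma~\ref{minimal standardizer inside parabolic} then guarantees that the minimal positive standardizer $g_2$ for the level-$2$ reducible parabolic (computed after conjugating by $g_1$) lies \emph{inside} the already-standardized level-$1$ subgroup, so $g_1 g_2$ preserves standardness of level~1. Iterating down the levels gives $g = g_1\cdots g_K$. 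The crucial structural moves you are missing are (i) packaging all commuting vertices at a given level into one reducible parabolic, which collapses your problematic ``commuting'' case into a single minimal-standardizer computation, and (ii) Lemma~\ref{minimal standardizer inside parabolic}, which guarantees that subsequent conjugations cannot destroy standardness already achieved. If you want to salvage your approach, you would need an analogue of (ii) for the commuting direction, and at present you have only a heuristic.
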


The above proposition says that every $C_{parab}$-simplex is conjugate to one whose vertex set consists entirely of standard parabolic subgroups. This implies that there are finitely many conjugacy classes of $C_{parab}$-simplices, and it allows us to reduce many structural questions about $C_{parab}$-simplices to questions about the finite collection of standard parabolic subgroups, as outlined in the previous subsection.

\subsubsection{Connections to hierarchical hyperbolicity and associated applications}\label{subsec:hhg-applications}

The existence of a geometric action on a graph defined similarly to the marking graph in Theorem \ref{thmx:main} is a key property of \emph{combinatorially hierarchically hyperbolic groups}; see \cite{og-chhs-paper, hhs-to-chhs}. Combinatorially hierarchically hyperbolic groups are also hierarchically hyperbolic groups, or HHGs. Braid groups, right-angled Artin groups, and extra-large type Artin groups are HHGs, and the $(3,3,3)$ Artin group is virtually an HHG, leading many to ask whether all Artin groups are HHGs \cite{hyp-structures-survey, XL-CHHS}. For Artin groups which have non-dihedral finite-type parabolic subgroups, obtaining a hierarchy structure for irreducible finite-type Artin groups is almost certainly a prerequisite to resolving this question. Theorem \ref{thmx:main} provides strong evidence in favor of the existence of such a structure for irreducible finite-type Artin groups modulo their centers, and Proposition 5.14 in \cite{HHG-Z-central-extensions} together with Lemma 3.1.10 in \cite{Ragosta-thesis} shows that hierarchical hyperbolicity of irreducible finite-type Artin groups modulo their centers would imply hierarchical hyperbolicity of irreducible finite-type Artin groups themselves. Hierarchical hyperbolicity has strong consequences, including finite asymptotic dimension and semihyperbolicity \cite{HHG-asmp-dim,HHG-semihyperbolic,HHG-semihyperbolic-with-Minsky}. Both of these are currently unknown for finite-type Artin groups except in the braid group cases or dihedral group cases \cite{as-dim-Artin-groups-Tselekidis,HHG-semihyperbolic,HHG-semihyperbolic-with-Minsky}.

\subsubsection{Structure of the paper}
Markings, which form the vertex set of the marking graph, are defined in Section \ref{sec:markings}. Their stabilizers are classified in Section \ref{sec:marking stabilizers}. The classification of marking stabilizers requires new insight into the structure of maximal simplices in $C_{parab}$ and their stabilizers; these are addressed in Section \ref{sec:simplices standardizers ribbons}. In the marking graph for surfaces, two markings are adjacent if they are connected by one of two kinds of elementary moves: a flip move or a twist move. We generalize elementary moves to the finite-type Artin setting in Section \ref{sec:elementary moves}. Generalizing flip moves requires significant technical innovation. Specifically, it requires control over the possible transverse elements associated to a particular base simplex and the definition of a new projection from these possible transverse elements to the base simplex. These steps are all technical, and they are completed in Section \ref{sec:transversals}. We conclude the proof of Theorem \ref{thmx:main} by showing that $A_{\Gamma}/Z(A_{\Gamma})$ acts on the associated marking graph by isometries and with a compact fundamental domain in Sections \ref{sec:by isometries} and \ref{sec:geometric action} respectively.\\

\noindent\textbf{Acknowledgments.} I am grateful to my Ph.D. supervisor, Carolyn Abbott, for numerous helpful suggestions, for her comments on several drafts of this paper, and for her guidance in general. I would also like to thank Mark Hagen for helpful conversations related to this project, and I would like to thank both Mark Hagen and Ruth Charney for their feedback on my thesis, which contained this work as the main component. I would like to thank Mar\'{i}a Cumplido for answering my questions about ribbons, which play an important technical role in this paper, and Abdul Zalloum for helpful suggestions at the early stages of this project. This work was supported by NSF grant DMS-2139752.

\section{Background}

\subsection{Markings on surfaces}\label{sec:subsurfaces and markings}

In this subsection, we recall the definition of the marking graph of a surface. We will assume that the reader is familiar with surfaces, mapping class groups, and the curve complex. The \emph{complexity} of a surface with genus $g$ and $p$ punctures is $\xi(S) = 3g+p$. When we refer to the curve complex of a surface $S$ of complexity 4, unless otherwise specified, we mean the flag simplicial complex in which \emph{minimally} intersecting curves are adjacent. Specifically, if $S$ is a one-holed torus, then curves are adjacent if they intersect once, and if $S$ is a 4-punctured sphere, then curves are adjacent if they intersect twice.

We will restrict our attention to the case of surfaces with complexity at least 4 and without boundary. The restriction to surfaces without boundary may initially seem to be an odd choice given that we are interested in the braid group, which is the mapping class group of a punctured disk. In fact, it is acceptable for our purposes to consider the braid group modulo its center, which is the mapping class group of a punctured sphere. We explain why this is a sensible simplification in Subsection \ref{sec:braid groups}.

It is a well-known fact that the mapping class group of a thrice-punctured sphere is trivial. Given this, one might reasonably ask whether an element of the mapping class group of a surface $S$ is determined entirely by its action on a collection of curves which cuts $S$ into thrice punctured spheres. Such a collection is called a \emph{pants decomposition} of $S$. 

\begin{figure}[h!]
        \centering
        \includegraphics[width=8cm]{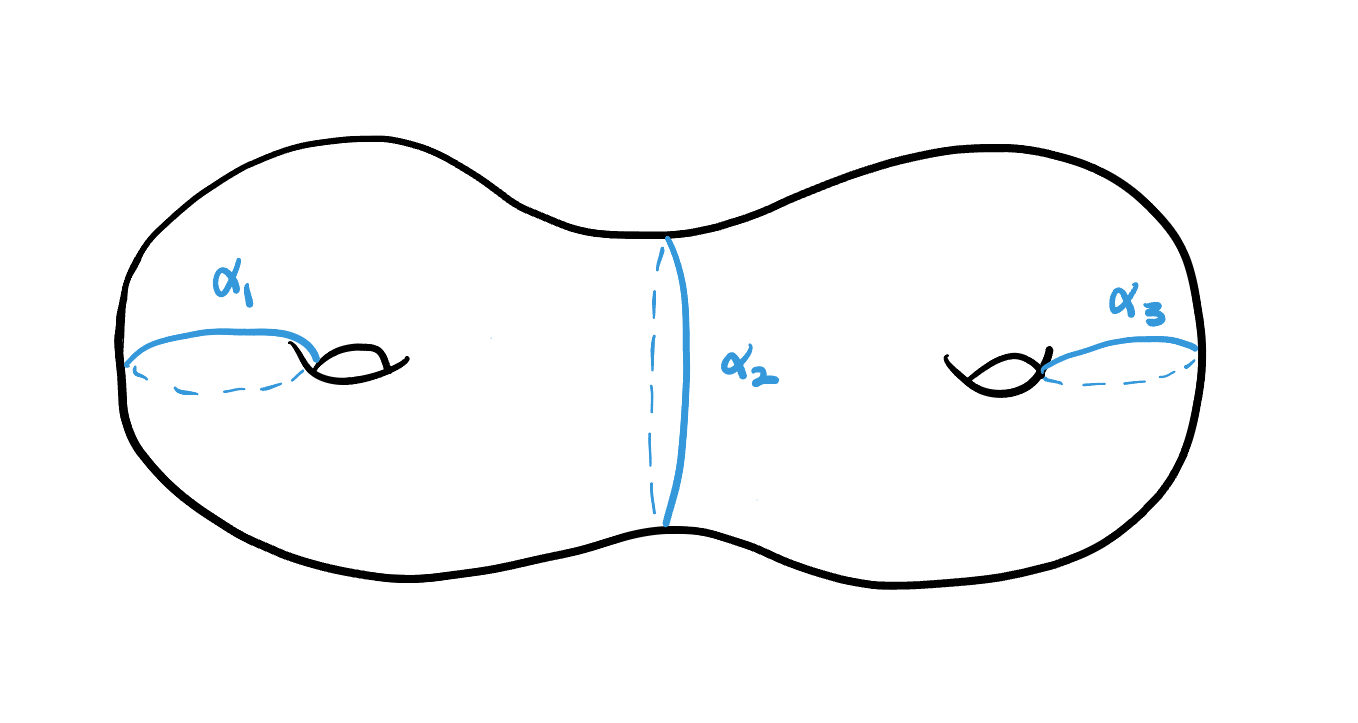}
        \captionsetup{margin=.5cm,justification=centering}
        \caption{A pants decomposition of a genus 2 surface.}
\end{figure}

In fact, a pants decomposition may be stabilized by two kinds of mapping classes. Firstly, some elements of the mapping class group may permute the curves in the pants decomposition. These permutations do not cause problems; any pants decomposition contains finitely many curves, so there are finitely many possible permutations. Second, any power of a Dehn twist about one of the curves in the pants decomposition stabilizes the pants decomposition. Dehn twists are infinite order, so if we hope to determine an element of the mapping class group, we must add in some way to ``keep track'' of Dehn twists. In \cite{Masur-Minsky-marking-graph-qi}, Masur and Minsky showed that there is a suitable way of adding extra curves to a pants decomposition which accomplishes this goal. First, it is easier to see that there are arcs in the annular subsurface with core curve $\alpha$ which are not fixed by such Dehn twists. 

\begin{definition}
    A \emph{complete marking} is a set $\{p_1, \cdots, p_k\}$ such that each $p_i$ is a pair $(\alpha_i, t_i)$ where $t_i$ is a diameter 1 set of vertices of $C(\alpha_i)$, the annular curve graph of $\alpha_i$ as defined in \cite{Masur-Minsky-marking-graph-qi}.
\end{definition}
This definition can be modified slightly to obtain a subset of markings which are built from curves, as follows.

\begin{definition}
    Given an essential simple closed curve $\alpha$ on $S$, an essential simple closed curve $\beta$ is called a \emph{clean transverse curve} for $\alpha$ if when $\alpha$ and $\beta$ are placed in minimal position, they have the minimal possible non-zero intersection number, i.e., a regular neighborhood of $\alpha \cup \beta$ is either a 1-holed torus or a 4-holed sphere.

    A marking $\mu$ is called \emph{clean} if every $p_i$ is of the form $\{\alpha_i , \pi_{\alpha_i}(\beta_i)\}$ where $\beta_i$ is a clean transverse curve for $\alpha_i$ which is disjoint from $\alpha_j$ for any $j \neq i$.
\end{definition}

Any non-clean complete marking can be associated to some clean markings as follows.

\begin{definition}
    A complete marking $\mu = \{\alpha_i, t_i\}$ is said to be \emph{compatible} with a clean marking $\mu' = \{\alpha_i, t'_i\}$ if $\mu'$ is also complete, the base collections of $\mu$ and $\mu'$ are the same, and $d_{\alpha_i}(t_i, t'_i)$ is minimal amongst all possible choices of curve $t'_i$.
\end{definition}
\begin{figure}[h!]
        \centering
        \includegraphics[width=8cm]{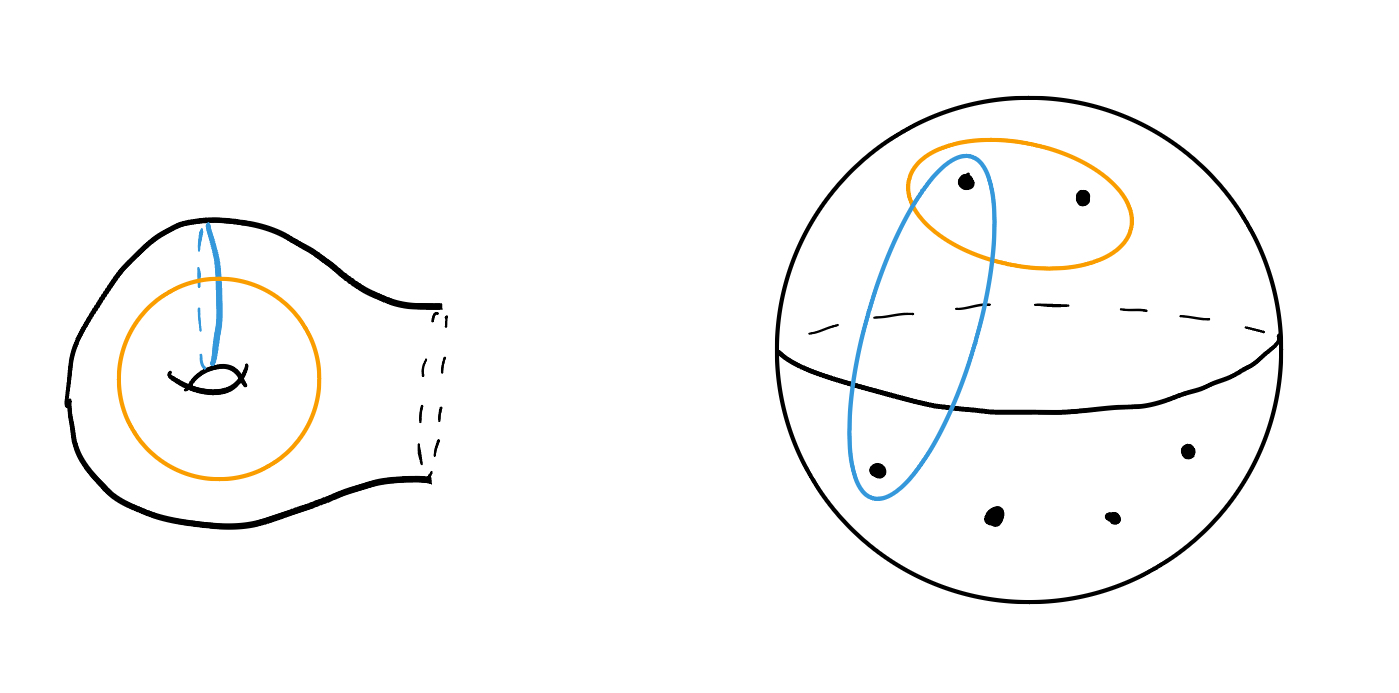}
        \captionsetup{margin=.5cm,justification=centering}
        \caption{The blue and orange curves intersect minimally in both figures.}
\end{figure}

\begin{figure}[h!]
        \centering
        \includegraphics[width=10cm]{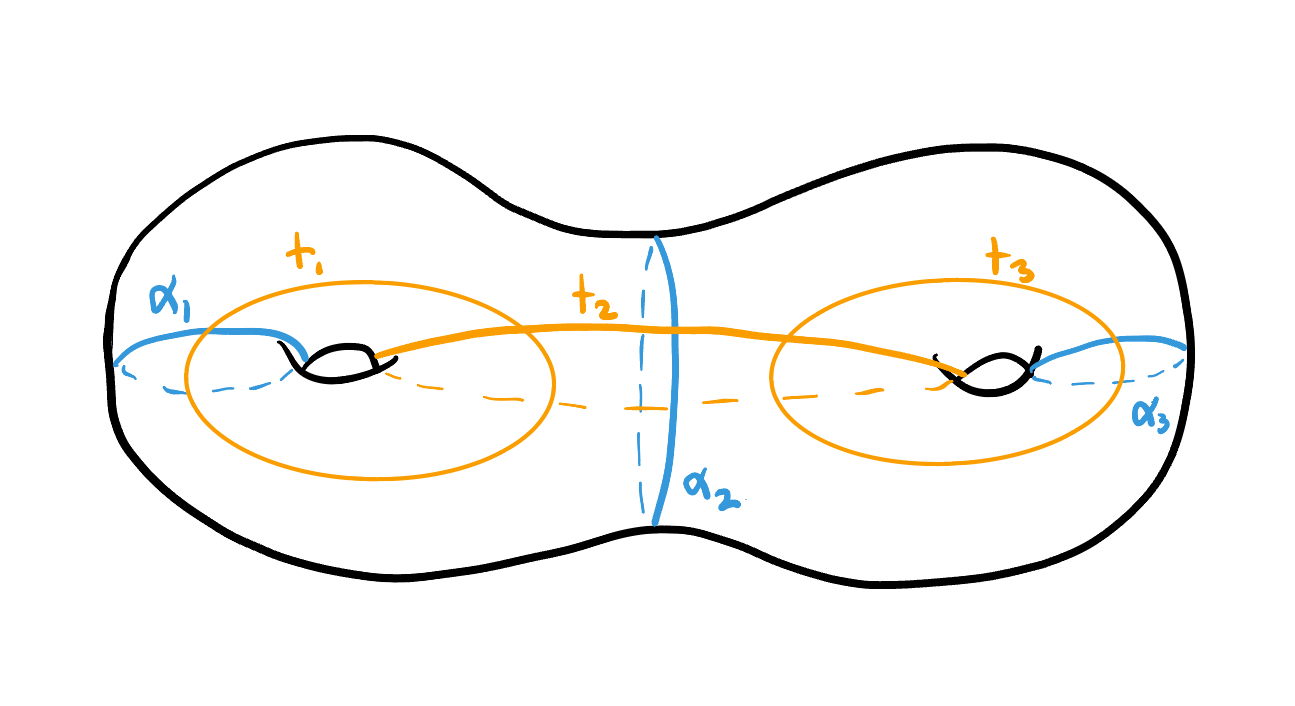}
        \captionsetup{margin=.5cm,justification=centering}
        \caption{A clean marking with base curves $\{\alpha_i\}$ and tranvserse curves $\{t_i\}$.}
        \label{fig:nonunique-clean-markings}
\end{figure}

We can now explain the technical motivation for working on surfaces without boundary: on a surface $S$ with boundary, a Dehn twist about a component of $\partial S$ is an infinite order element of the mapping class group, but there are no curves on the surface which intersect the boundary, so we cannot use transverse curves to ``keep track'' of these twists. This issue can be resolved with the introduction of annular curve graphs and non-clean markings, but this will not be necessary for our purposes. 

Masur and Minsky also defined \emph{elementary moves} between complete clean markings. 
\begin{definition}
    Let $\mu = (\alpha_i, t_i)$ be a complete clean marking. There are two types of elementary moves which transform $\mu$ into a new clean marking:
\begin{enumerate}
    \item Twist: Replace $t_i$ with $t'_i$, where $t'_i$ is obtained from $t_i$ by a Dehn twist or half-twist around $\alpha_i$.
    \item Flip: Replace a fixed pair $(\alpha_i , t_i)$ in $\mu$ by $(t_i , \alpha_i)$ to obtain a marking $\mu''$ which is not necessarily clean, and replace $\mu''$ by a compatible clean marking $\mu'$.

    \begin{figure}[h!]
        \centering
        \includegraphics[width=10cm]{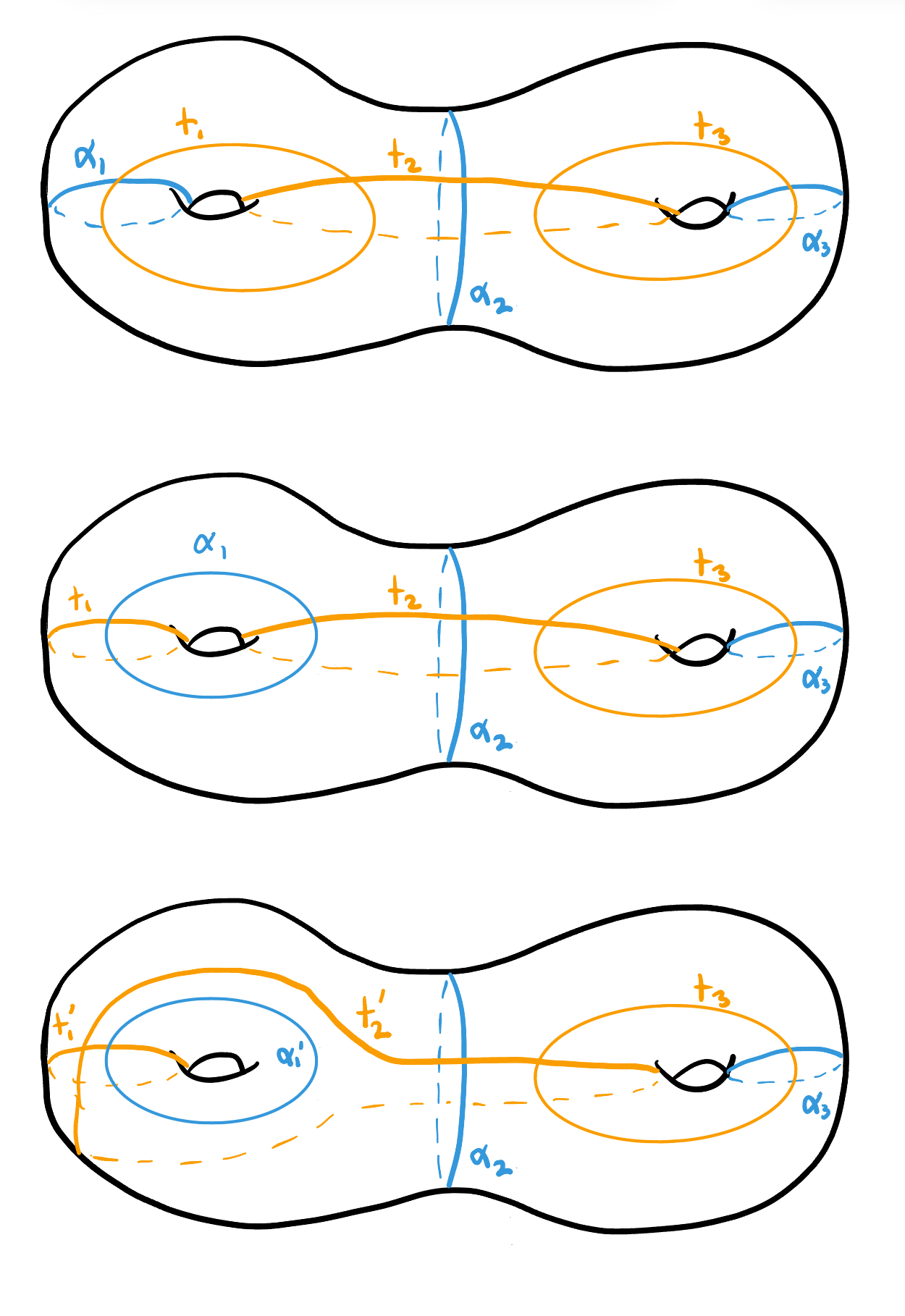}
        \captionsetup{margin=.5cm,justification=centering}
        \caption{The three steps of performing a flip move across $(\alpha_1, t_1)$}
\end{figure}
\end{enumerate}
\end{definition}
Intuitively, one should think of a flip move as interchanging the roles of a particular $(\alpha_i , t_i)$ pair, and replacing $t_j$ for $j \neq i$ with $t'_j$ such that $t_j$ and $t'_j$ look almost the same when we restrict to an annulus with core curve $\alpha_j$. We can now define the marking graph.

\begin{definition}
    The \emph{marking graph} of a surface $S$ is the graph whose vertex set consists of complete clean markings on $S$ and where two vertices are connected via an edge if one of the corresponding clean markings can be obtained from the other via an elementary move.
\end{definition}

Masur and Minsky showed that there are finitely many clean markings $\mu'$ which can be obtained from a given clean marking $\mu$ via a flip move. It is clear that there are only finitely many markings which can be obtained from $\mu$ via twist moves, so the marking graph is locally finite. The following theorem says that the clean transverse curves successfully encode the information which was missing from pants decompositions. Throughout this paper, an action is \emph{geometric} if it is properly discontinuous, cocompact, and by isometries.

\begin{theorem}\cite{Masur-Minsky-marking-graph-qi}
The natural action of $MCG(S)$ on the marking graph is geometric, i.e., the marking graph is quasi-isometric to $MCG(S)$.
\end{theorem}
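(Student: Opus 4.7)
The natural approach is to verify the hypotheses of the Svarc--Milnor lemma: since the marking graph is already noted to be locally finite (hence proper once we fix a connected component), it suffices to show that (i) the action is by isometries, (ii) the marking graph is connected, (iii) there are finitely many $\mathrm{MCG}(S)$-orbits of markings (cocompactness), and (iv) the stabilizer of any clean marking is finite (properly discontinuous).

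Parts (i) and (iii) are the easiest. A mapping class $\phi$ acts on markings by sending $(\alpha_i, t_i)$ to $(\phi(\alpha_i), \phi(t_i))$; since $\phi$ preserves intersection numbers and disjointness, it preserves the ``clean'' and ``compatible'' conditions and sends elementary moves to elementary moves, which gives (i). For (iii), I would invoke the ``change of coordinates'' principle: $\mathrm{MCG}(S)$ acts transitively on topological types of pants decompositions, so there are only finitely many orbits of base collections $\{\alpha_i\}$. For each such base, the clean transverse curves live in a finite union of regular neighborhoods, and up to the stabilizer of the base one only has finitely many choices of transverse curve data, giving finitely many orbits of markings.

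For (iv), let $\mu = \{(\alpha_i, t_i)\}$ be a clean marking and suppose $\phi \in \mathrm{MCG}(S)$ stabilizes $\mu$. Then $\phi$ permutes $\{\alpha_i\}$, so a finite-index subgroup $H$ fixes each $\alpha_i$ setwise. The stabilizer of a pants decomposition is well-understood: it is generated (up to finite index) by Dehn twists $T_{\alpha_i}$ about the base curves. But a nonzero power $T_{\alpha_i}^k$ acts on the annular projection of $t_i$ by translation by $k$, so it cannot fix the transverse data $\pi_{\alpha_i}(\beta_i)$. Hence the only elements of $H$ stabilizing $\mu$ are trivial, and $\mathrm{Stab}(\mu)$ is finite.

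The main obstacle is (ii), connectedness: one must show that any two clean markings $\mu, \mu'$ can be joined by a finite sequence of flip and twist moves. I would proceed by induction on a complexity measuring how far apart the base pants decompositions are in the pants graph. The base case reduces to showing that, when $\mu$ and $\mu'$ share the same base curves, the twist moves act transitively on the possible transverse data (up to finitely many choices per curve, these are indexed by $\mathbb{Z}$-orbits of Dehn twists). For the inductive step, I would use Hatcher--Thurston style moves on pants decompositions: any two pants decompositions are related by a sequence of elementary pants moves, each of which is implemented by a flip move at the level of markings (possibly followed by a clean-up via twists to restore the ``clean and compatible'' condition). Combining the Hatcher--Thurston connectivity of the pants graph with the explicit twist transitivity then yields connectivity of the marking graph, and the Svarc--Milnor lemma finishes the proof.
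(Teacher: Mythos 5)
The paper does not prove this theorem; it is stated as background and cited directly to Masur and Minsky's hierarchy paper, so there is no ``paper's own proof'' to compare against. Your sketch is, however, essentially the argument Masur and Minsky give: verify the Svarc--Milnor hypotheses by checking that the action is isometric, that the marking graph is connected (via Hatcher--Thurston connectivity of pants decompositions plus transitivity of twist moves on transverse data), that there are finitely many orbits (change of coordinates), and that stabilizers are finite. One small overclaim in (iv): after passing to the finite-index subgroup $H$ that fixes each $\alpha_i$ setwise, the pointwise stabilizer of the pants decomposition is a \emph{finite extension} of the lattice generated by the Dehn twists $T_{\alpha_i}$, so $H$ may still contain nontrivial finite-order mapping classes fixing $\mu$; your translation-by-$k$ observation kills the twist lattice but not automatically the finite part. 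The conclusion that $\mathrm{Stab}(\mu)$ is finite still follows, but the intermediate assertion that the only elements of $H$ stabilizing $\mu$ are trivial is slightly too strong as written.
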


\subsection{Finite-type Artin groups}\label{sec:finite type artin groups}

An Artin group is a group with a presentation of the form \begin{align*}
            \langle \sigma_1,...\sigma_n | \underbracket{\sigma_i\sigma_j\sigma_i...}_{m_{ij}} = \underbracket{\sigma_j \sigma_i \sigma_j ...}_{m_{ij}} \rangle
        \end{align*}
        for some constants $m_{ij}$. In general, some choices of $i$ and $j$ may have no relation, but this does not occur in the class of Artin groups we consider in this paper. Note that $m_{ij} = m_{ji}$.

        The relation constants $m_{ij}$ are often encoded in a defining graph. Each Artin generator corresponds to a vertex in the graph. If $m_{ij} > 3$, then the vertices corresponding to $\sigma_i$ and $\sigma_j$ are connected by an edge labeled with $m_{ij}$. The vertices corresponding to $\sigma_i$ and $\sigma_j$ are connected by an unlabeled edge if $m_{ij} = 3$, and they are not connected by any edge when $m_{ij}=2$. A finite-type Artin group is called \emph{reducible} if it can be decomposed as the direct product of two finite-type Artin groups where neither factor is trivial. An irreducible finite-type Artin group, for our purposes, can be thought of as an Artin group with a defining graph of the following form\cite{Coxeter-reflections}.

        \begin{figure}[h]
        \centering
        \includegraphics[width=12cm]{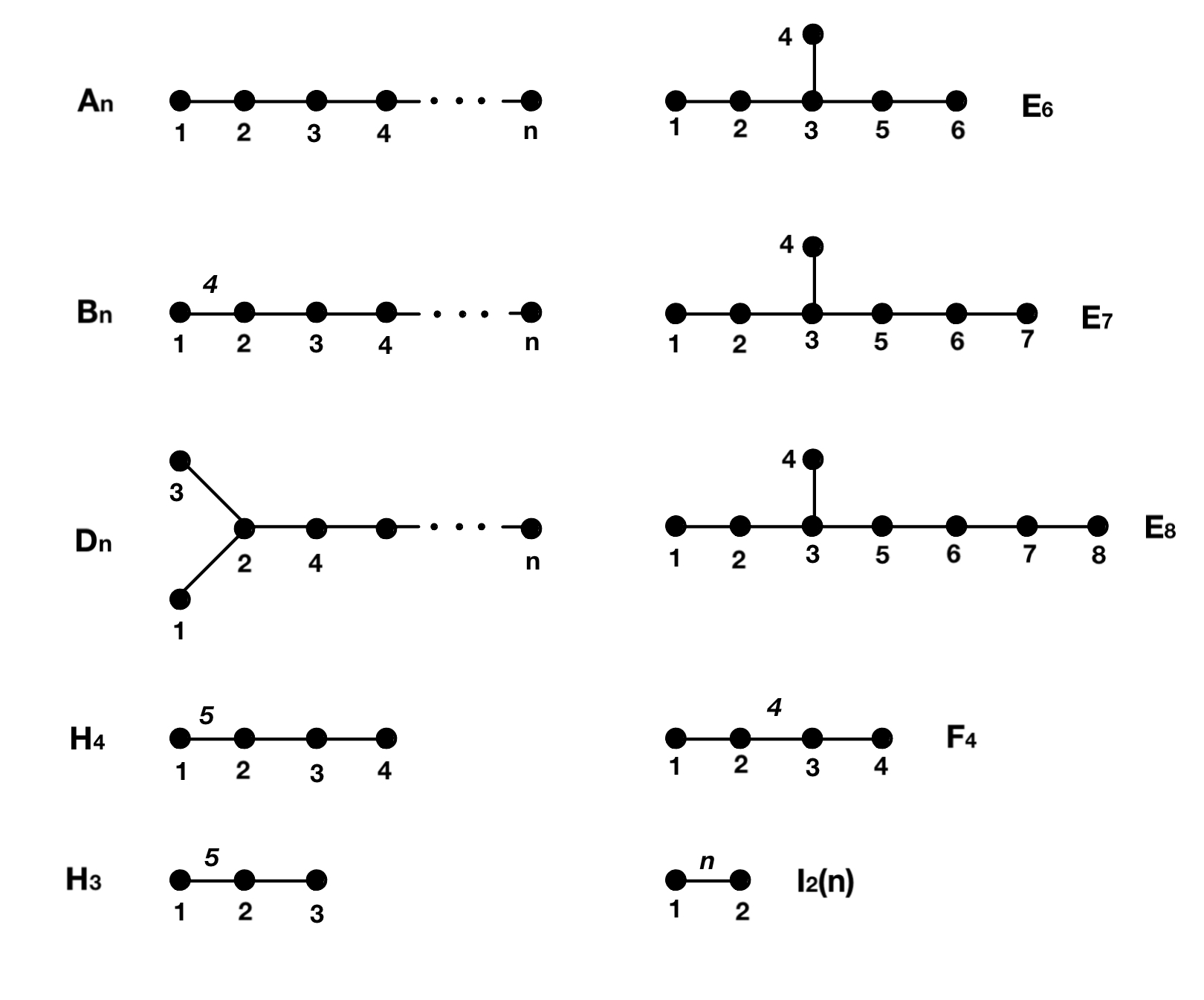}
        \captionsetup{margin=.5cm,justification=centering}
        \caption{Labeled defining graphs $\Gamma$ of finite-type Artin groups $A_{\Gamma}$}
        \label{fig:defgraphs}
    \end{figure}

\subsubsection{Garside elements and parabolic subgroups}
        Finite-type Artin groups were the motivating example for Garside groups, a class of groups which have been studied independently. We will restrict our attention to the finite-type Artin group case, but the Garside structure is nevertheless a useful framework.

        A \textit{Garside monoid} is a pair $(M, \Delta)$ where $M$ is a monoid which is left and right-cancellative, has the property that any two elements of $M$ have both left and right least common multiple and greatest common divisor, and which admits a map $\lambda: M \rightarrow \mathbb{N}$ such that $\lambda(fg) \geq \lambda(f) + \lambda(g)$ and $\lambda(g) \neq 0$ when $g \neq 1$. In addition, $\Delta$ is an element of $M$, called the \textit{Garside element}, such that the left and right divisors of $\Delta$ coincide and generate $M$. For our purposes, the set of divisors of $\Delta$ will be finite.

        A group $G$ is said to be a \textit{Garside group} if there exists a Garside monoid $(M, \Delta)$ such that $G$ is the group of fractions for $M$. The monoid admits two natural partial orderings: the prefix order and the suffix order. In this thesis, we require only the prefix order. We say that $a$ is a prefix of $b$, denoted $a \preccurlyeq b$ if there is some $c \in M$ such that $ac = b$, and the second is the suffix ordering in which $a$ is a suffix of $b$ if there is some $c \in M$ such that $ca = b$. This ordering can be naturally extended to the group, although we will only require the ordering on the monoid. Elements of $G$ which are also in $M$ are called \emph{positive}.

        If $g$ is a positive element of a finite-type Artin group, the same subset of Artin generators appear in any positive word representing $g$. This collection is called the \emph{support} of $g$, denoted $\text{supp}(g)$.

        \begin{example}
            Let $A_{\gamma}$ be the Artin group of type $I_{2}(4)$. The element $s_1 s_2 s_1 s_2^2$ has both $s_1 s_2 s_1$ and $s_2 s_1 s_2$ as prefixes.
            \begin{align*}
                (s_1 s_2 s_1)^{-1}s_1 s_2 s_1 s_2^2 &= s_2^{2}\\
                (s_2 s_1 s_2)^{-1} s_1 s_2 s_1 s_2^2 &= (s_2 s_1 s_2)^{-1} s_2 s_1 s_2 s_1 s_2\\
                &= s_1 s_2\text{.}
            \end{align*}
            The element $s_1 s_2 s_1 s_2^2$ does not have $s_2^6$ as a prefix. One way to see this is that any two positive words which represent the same element of the Artin group have the same word length with respect to the Artin generating set, and $|s_1 s_2 s_1 s_2^2| = 5$ while $|s_2^6| = 6$. Another way to see this is that both $s_1 s_2 s_1 s_2^2$ and $s_2^6$ are positive elements, and $\text{supp}(s_1 s_2 s_1 s_2^2) = \{s_1, s_2\}$ while $\text{supp}(s_2^{6}) = \{s_2\}$.
        \end{example}

        A finite-type Artin group, $A_S$, is a Garside group where $M$ is the Artin monoid $A_S^{+}$ generated by the Artin generating set $S$, and $\Delta$ is the least common multiple of the Artin generators in $S$. 
            
        Irreducible finite-type Artin groups have the special property that the center of the group, $Z(A_S)$, is generated by $\Delta^2$ when the defining graph is of type $A_n$ for $n \geq 2$, $D_n$ for $n \geq 5$ and odd, $E_6$, or $I_2(n)$ for $n \geq 5$ and odd (the cases $n = 3$ are excluded only to avoid overlap in the naming convention). In all other cases, the center of $A_{\Gamma}$ is generated by $\Delta$ itself \cite{Brieskorn-Saito}.

        When $\Delta$ is not central, conjugation by $\Delta$ induces a permutation on the generating set that corresponds to the following label-preserving automorphism of the defining graph. 
        \begin{itemize}
                \item If $A_S$ is of type $A_n$, then $\Delta \sigma_i \Delta^{-1} = \sigma_{n + 1 - i}$, i.e., the first generator is sent to the final one, the second is send to the penultimate, etc.
                \item If $A_S$ is of type $D_n$ for $n \geq 5$ and odd, then conjugation by $\Delta$ permutes the two Artin generators corresponding to vertices on the ends of the prongs in the defining graph and fixes all other generators.
                \item If $A_S$ is of type $E_6$, then conjugation by $\Delta$ fixes the generator corresponding to the prong vertex and induces the same permutation as in the $A_n$ case on the subgraph of type $A_5$.
                \item If $A_S$ is of type $I_2(n)$ for $n$ odd, then conjugation by $\Delta$ permutes the two Artin generators.
        \end{itemize}

        Let $A_S$ be a finite-type Artin group with Artin generating set $S$, and consider $T \subseteq S$. The subgroup of $A_{S}$ generated by $T$ is called a \textit{standard parabolic subgroup} of $A_S$. When $g$ is a non-trivial element of $A_S$, the conjugate $g A_T g^{-1}$ is called a \textit{parabolic subgroup} of $A_S$. Much investigation has been done on the topic of parabolic subgroups, and we recall some key results needed in what follows.

        \begin{theorem}\cite[Theorem~4.13]{van-der-lek}\cite[Theorem~3.1]{Paris-subgroup-conjugacy}\label{parbs are artin groups}
            Let $P = \alpha A_{X} \alpha^{-1}$ be a parabolic subgroup of an Artin group. The group $P$ is an Artin group, and its defining graph is the induced subgraph of $\Gamma$ with vertex set $X$.
        \end{theorem}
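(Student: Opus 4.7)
The statement reduces immediately to showing that the standard parabolic subgroup $A_X = \langle X \rangle \le A_\Gamma$ is canonically isomorphic to the Artin group $A_{\Gamma_X}$ on the induced subgraph $\Gamma_X$ spanned by $X$: conjugation by $\alpha$ is an automorphism of $A_\Gamma$ taking $A_X$ isomorphically onto $P = \alpha A_X \alpha^{-1}$, so it suffices to handle the standard case. My plan is then to construct the obvious surjection $A_{\Gamma_X} \twoheadrightarrow A_X$ and prove that it is injective.

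Constructing the surjection is easy. Define $\varphi \colon A_{\Gamma_X} \to A_\Gamma$ by sending each generator of $A_{\Gamma_X}$ to the corresponding Artin generator of $A_\Gamma$. Every defining braid relation of $A_{\Gamma_X}$ involves only letters in $X$ and inherits the same exponent $m_{ij}$ from $\Gamma$, so $\varphi$ is a well-defined homomorphism whose image is exactly $A_X$.

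The hard part is injectivity of $\varphi$, and here I would lean on the Garside structure reviewed in Subsection \ref{sec:finite type artin groups}. Both $A_{\Gamma_X}$ and $A_\Gamma$ are groups of fractions of their positive Garside monoids $A_{\Gamma_X}^+$ and $A_\Gamma^+$, with Garside elements $\Delta_X$ and $\Delta_\Gamma$ respectively, and both monoids embed into their groups. The map $\varphi$ restricts to a monoid homomorphism $\varphi^+ \colon A_{\Gamma_X}^+ \to A_\Gamma^+$, and it suffices to show $\varphi^+$ is injective, because any element of $A_{\Gamma_X}$ in the kernel of $\varphi$ can be written as a fraction $u^{-1}v$ of positive elements, and $\varphi(u) = \varphi(v)$ then forces $u = v$. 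To see that $\varphi^+$ is injective, I would compare greedy normal forms: every element of $A_{\Gamma_X}^+$ admits a unique normal form as a product of simple elements (divisors of $\Delta_X$), and a standard computation identifies the divisors of $\Delta_X$ with precisely those divisors of $\Delta_\Gamma$ whose support lies in $X$. Thus the normal form computed in $A_{\Gamma_X}^+$ maps to a valid greedy normal form in $A_\Gamma^+$, and by uniqueness of the latter, $\varphi^+$ is injective.

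The main obstacle is really the identification of divisors of $\Delta_X$ with the subset of divisors of $\Delta_\Gamma$ supported on $X$; this is the place where the finite-type hypothesis is used essentially, and I would keep in mind van der Lek's topological proof via the Salvetti complex as a backup, since it identifies the Salvetti complex of $A_{\Gamma_X}$ with a subcomplex of that of $A_\Gamma$ and deduces the embedding on fundamental groups directly. For the exposition I would prefer the Garside route, since the Garside machinery is already set up in Subsection \ref{sec:finite type artin groups} and used throughout the rest of the paper.
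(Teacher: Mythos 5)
The paper does not prove this theorem; it cites van der Lek and Paris, so there is no internal argument to compare against. Evaluating your sketch on its own merits:

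Your Garside route is essentially sound for \emph{finite-type} Artin groups, which is the only case the paper actually uses. But the theorem as stated is for arbitrary Artin groups, and that is the content of van der Lek's result: there is no Garside monoid, no $\Delta$, and in general $A_\Gamma$ is not even the group of fractions of $A_\Gamma^+$, so the step ``any element of $A_{\Gamma_X}$ is a fraction $u^{-1}v$ of positives'' already fails. Your backup plan (the topological / Salvetti-complex or Vinberg-domain argument of van der Lek) is what handles the general case, and it is worth being explicit that the Garside route \emph{only} yields the finite-type special case, not the cited theorem.

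Within the finite-type case, two smaller points. First, the injectivity of $\varphi^+ \colon A_{\Gamma_X}^+ \to A_\Gamma^+$ does not actually require comparing greedy normal forms: since each braid relation involves exactly two generators and is balanced, any chain of relations relating two positive words supported in $X$ passes only through words supported in $X$ and hence uses only relations of $A_{\Gamma_X}^+$. This support argument is more elementary and avoids the divisor-identification lemma you flagged as the main obstacle. Second, if you do insist on the normal-form comparison, you need not just the identification of divisors of $\Delta_X$ with $X$-supported divisors of $\Delta_\Gamma$, but also the closure property that the left-gcd of an $X$-supported positive element with $\Delta_\Gamma$ is itself $X$-supported; this follows from the same support lemma, but it is an extra step and should not be elided. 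With those caveats noted, your outline is a correct (and standard) proof of the finite-type special case.
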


        In the case where $A_{\Gamma}$ is of finite-type, Theorem \ref{parbs are artin groups} additionally implies that $\alpha A_X \alpha^{-1}$ is an Artin group of finite-type because all subgraphs of $\Gamma$ are defining graphs of finite-type Artin groups. Furthermore, each standard parabolic subgroup has its own Garside element. When the subgraph of $\Gamma$ with vertex set $X$ is connected, the parabolic subgroup $A_X$ is an irreducible finite-type Artin group, and $\Delta_X$ is defined precisely as for the finite-type Artin group $A_X$. When the subgraph of $\Gamma$ with vertex set $X$ is disconnected, the parabolic subgroup $A_X$ is a reducible finite-type Artin group, and the Garside element $\Delta_X$ is the product, in any order, of the Garside elements of the irreducible components.

        \begin{theorem}\cite[Theorem~9.5]{C-parab-definition}\label{int of parabolics}
            Let $P$ and $Q$ be two parabolic subgroups of an Artin group of finite type. Then $P \cap Q$ is also a parabolic subgroup of $A_{\Gamma}$.
        \end{theorem}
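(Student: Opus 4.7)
The plan is to reduce the problem to intersections of standard parabolic subgroups, where the result is immediate since $A_X \cap A_Y = A_{X \cap Y}$, via the Garside-theoretic machinery of minimal standardizers. First, by conjugating both $P$ and $Q$ by a common element, I may assume $P = A_X$ is standard and $Q = gA_Y g^{-1}$ for some $g \in A_\Gamma$ and $Y \subseteq V(\Gamma)$. Since replacing $g$ by $gh$ for any $h \in A_Y$ does not change the conjugate, I may take $g = \beta$ to be the unique minimal positive standardizer of $Q$ in the prefix order, as constructed in \cite{Cumplido-minimal-standardizers}: the smallest positive element with $\beta^{-1}Q\beta = A_Y$.

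I would then proceed by induction on the rank $|V(\Gamma)|$. The base cases, rank one (where all subgroups are cyclic and the result is immediate) and rank two (verified by direct analysis in dihedral Artin groups), are straightforward. For the inductive step, note that if $\beta = 1$ then $A_X \cap A_Y = A_{X \cap Y}$ and we are done, so assume $\beta \neq 1$.

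The key step is to argue that $A_X \cap \beta A_Y \beta^{-1}$ is contained in a proper standard parabolic subgroup of $A_\Gamma$. Using the minimality of $\beta$, I would extract a generator $t \in V(\Gamma)$ that cannot appear in the support of any element of the intersection: if some element of the intersection used $t$, one could absorb a prefix of $\beta$ into $A_X$ and produce a shorter standardizer, contradicting minimality. Once the intersection lies in $A_{V(\Gamma)\setminus\{t\}}$, it can be rewritten as an intersection of two parabolic subgroups of the smaller-rank Artin group $A_{V(\Gamma)\setminus\{t\}}$: the first is $A_X \cap A_{V(\Gamma)\setminus\{t\}} = A_{X \setminus \{t\}}$, and the second is $\beta A_Y \beta^{-1} \cap A_{V(\Gamma)\setminus\{t\}}$. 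The inductive hypothesis applied inside $A_{V(\Gamma)\setminus\{t\}}$ then yields that their intersection is parabolic.

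The main obstacle is the absorption argument: showing that minimality of $\beta$ forces the existence of a missing generator $t$. This requires a careful analysis of Garside normal forms together with the theory of ribbons, i.e., minimal conjugators between standard parabolic subgroups. Specifically, one decomposes $\beta$ as a product of ribbons and studies how each ribbon interacts with $A_X$ under conjugation. Combined with properties of the least common multiple in the Garside monoid and the greedy nature of the minimal standardizer algorithm, this decomposition should yield the required generator $t$. Making this absorption precise is the technical heart of the argument and would require the bulk of the work.
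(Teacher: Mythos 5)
This statement is not proved in the paper under review; it is quoted verbatim from \cite{C-parab-definition} (Theorem~9.5), so there is no ``paper's own proof'' to compare against. The genuine proof in \cite{C-parab-definition} is based on a quite different and considerably heavier apparatus than yours: the authors first establish (via summit-set and cyclic-sliding conjugacy machinery) that every element $\alpha$ has a well-defined \emph{parabolic closure}, the unique minimal parabolic subgroup containing $\alpha$, and then deduce the intersection theorem from that. It is not a rank induction via minimal standardizers.

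More importantly, your proposed argument has a circularity at the inductive step. You reduce to the case $P = A_X$ and $Q = \beta A_Y \beta^{-1}$, find $t$ with $P \cap Q \subseteq A_T$ where $T = V(\Gamma)\setminus\{t\}$, and then write $P \cap Q = (A_X \cap A_T) \cap (Q \cap A_T)$ and invoke the inductive hypothesis inside $A_T$. But to apply the inductive hypothesis you must already know that $Q \cap A_T$ is a parabolic subgroup of $A_T$ --- and $Q \cap A_T$ is itself an intersection of two parabolic subgroups of $A_\Gamma$ (one of them standard), i.e.\ precisely an instance of the theorem you are trying to prove in rank $|V(\Gamma)|$. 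Merely knowing $P \cap Q \subseteq A_T$ does not give you that $Q \cap A_T$ is parabolic; you would need the stronger claim $Q \subseteq A_T$ (which would then make $\beta \in A_T$ by the lemma on minimal standardizers inside a parabolic), but your absorption argument is only aimed at showing the \emph{intersection} avoids $t$, not that $Q$ does. Note also that the existence of a missing generator $t$ is already automatic whenever $A_X$ is proper (since $P \cap Q \subseteq A_X$), so the hard absorption step is not really buying you the reduction you want; the true obstacle is the circularity. Without a separate argument that handles intersections with standard parabolics, or a switch to the parabolic-closure viewpoint of \cite{C-parab-definition}, the induction as written does not close.
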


        \begin{theorem}\cite[Theorem~10.3]{C-parab-definition}
            The set of parabolic subgroups of a finite-type Artin group is a lattice with respect to the partial order determined by inclusion.
        \end{theorem}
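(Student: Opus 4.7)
The plan is to verify that any two parabolic subgroups $P, Q$ of $A_{\Gamma}$ admit both a meet and a join in the inclusion poset. The meet is immediate from Theorem \ref{int of parabolics}: the set-theoretic intersection $P \cap Q$ is parabolic, and any parabolic contained in both $P$ and $Q$ must lie in $P \cap Q$, so it is the greatest lower bound.

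The real work lies in the existence of joins. The natural candidate is the intersection of the family $\mathcal{F}$ of all parabolic subgroups containing both $P$ and $Q$; this family is non-empty since $A_{\Gamma} \in \mathcal{F}$. To make this approach precise, I would define the rank of a parabolic $gA_{X}g^{-1}$ to be $|X|$ (well-defined by standard facts about parabolic subgroups) and then establish the crucial monotonicity claim: if $R_1 \subsetneq R_2$ are both parabolic, then $\rank(R_1) < \rank(R_2)$. The natural strategy is to conjugate so that $R_2 = A_Y$ is standard, show that $R_1$ is itself a parabolic subgroup of the Artin group $A_Y$ (via Theorem \ref{parbs are artin groups} together with care about how the parabolic structure restricts), and then observe that a parabolic of $A_Y$ of rank $|Y|$ must equal $A_Y$.

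Granting monotonicity, the ranks of elements of $\mathcal{F}$ form a non-empty subset of $\{0, 1, \ldots, |V(\Gamma)|\}$ and therefore attain a minimum; pick $R_0 \in \mathcal{F}$ realizing this minimum. For any $R' \in \mathcal{F}$, the intersection $R_0 \cap R'$ is again parabolic by Theorem \ref{int of parabolics}, lies in $\mathcal{F}$, and has rank no larger than $R_0$. Minimality forces $R_0 \cap R' = R_0$, hence $R_0 \subseteq R'$, so $R_0$ is the minimum element of $\mathcal{F}$ and serves as the join $P \vee Q$.

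The main obstacle is establishing the rank-strict-monotonicity without circular dependence on the lattice structure we are trying to prove. The cleanest route will likely invoke simultaneous-standardizability results of the type in Proposition \ref{propx:sim-std} to reduce the inclusion $R_1 \subsetneq R_2$ to an assertion about standard parabolics inside a common $A_Y$, where the claim becomes essentially combinatorial in the underlying defining graph.
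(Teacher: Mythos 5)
This statement is a cited theorem (Theorem~10.3 of~\cite{C-parab-definition}); the present paper does not reprove it, so there is no in-paper argument to compare yours against. Judged on its own merits, your proposal is sound. The meet is handled exactly as you say via Theorem~\ref{int of parabolics}, and your strategy for the join—take the family $\mathcal{F}$ of parabolic upper bounds, minimize rank, and use closure under intersection to promote a rank-minimal element to a genuine minimum—is a standard and correct way to extract a join once you have meets in a ranked, bounded-depth poset.

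The one point you flag as the ``main obstacle,'' strict rank monotonicity, is genuinely the crux, but it is available from material already in the paper with no circularity. After conjugating so $R_2 = A_Y$ is standard, Lemma~\ref{minimal standardizer inside parabolic} tells you the minimal positive standardizer $g$ of $R_1$ lies in $A_Y$; writing $R_1 = g A_Z g^{-1}$ with $g \in A_Y$ gives $A_Z \leq A_Y$, hence $Z \subseteq Y$ (this last inclusion is where you must invoke something like van~der~Lek's control of standard parabolics, e.g.\ by passing to the Coxeter quotient, rather than treating it as automatic). If $Z = Y$ then $R_1 = g A_Y g^{-1} = A_Y = R_2$, contradicting properness, so $|Z| < |Y|$. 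Your rank function is well-defined by Theorem~\ref{parab subgroup conjugacy}, as the paper itself remarks. One small caution: Proposition~\ref{propx:sim-std} as stated is about $C_{parab}$-simplices, whose vertices are \emph{irreducible} proper parabolics, so you cannot quote it verbatim for arbitrary $R_1 \subsetneq R_2$; you should instead cite the two-element version (from~\cite{C-parab-definition}, which as the paper notes needs only that $z_{R_1}$ and $z_{R_2}$ commute) or, more directly, Lemma~\ref{minimal standardizer inside parabolic} as above. With that adjustment the argument is complete.
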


        \begin{theorem}\cite[Theorem~4.1]{Paris-subgroup-conjugacy}\label{parab subgroup conjugacy}
            Let $\Gamma$ be the defining graph of a finite-type Artin group, and let $S = V(\Gamma)$. Consider the graph $G$ defined by the following data.\\
            The vertices of $G$ are the subsets $X \subseteq S$.\\
            An edge of $G$ is a triple $(Y, t, t')$ satisfying the following conditions.
                \begin{itemize}
                    \item $Y \subseteq S$.
                    \item There exists a connected component $\Gamma_0$ of $\Gamma_Y$ such that both $t$ and $t'$ are vertices of $\Gamma_0$.
                    \item $\Gamma_0 \in \{A_l : l \geq 2\} \cup \{D_l : l \geq 5\text{ and }l\text{ odd}\} \cup \{E_6\} \cup \{I_2(p) : p \geq 5\text{ and }p\text{ odd}\}$.
                    \item Let $Y_0$ e the set of vertices of $\Gamma_0$, and let $\delta_0 : Y_0 \rightarrow Y_0$ be the permutation such that $\Delta_{Y_0}s\Delta_{Y_0}^{-1} = \delta_0(s)$ for all $s \in Y_0$. Then $t' = \delta_0(t)$, and $t \neq t'$.
                \end{itemize}
            The edge $(Y, t, t')$ joins $X = Y - \{t\}$ with $X' = Y - \{t'\}$. There exists $\beta \in A_{\Gamma}$ such that $\beta A_{X} \beta^{-1} = A_{X'}$ if and only if $X$ and $X'$ are in the same connected component of $G$.
        \end{theorem}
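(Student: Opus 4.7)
The plan is to prove the two directions separately, with the easy direction following from a direct computation on a single edge of $G$ and the harder direction proceeding by induction on the length of a positive conjugator together with a ``short conjugator'' lemma.

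For the sufficient direction, it is enough to show that a single edge $(Y, t, t')$ in $G$ gives a conjugation $A_{Y-\{t\}} \to A_{Y-\{t'\}}$, since then walks in $G$ concatenate to conjugations in $A_\Gamma$. Given such an edge, I would check directly that the element $\Delta_{Y_0}$ does the job. Two ingredients are needed. First, by Theorem \ref{parbs are artin groups}, $A_Y$ embeds in $A_\Gamma$ as the standard parabolic subgroup on $Y$, and inside $A_Y$ the factors corresponding to different connected components of $\Gamma_Y$ commute; hence $\Delta_{Y_0}$ commutes with every generator in $Y \setminus Y_0$. Second, by construction of the edge, conjugation by $\Delta_{Y_0}$ realizes the permutation $\delta_0$ on $Y_0$, and $\delta_0(t) = t'$, so $\delta_0(Y_0 \setminus \{t\}) = Y_0 \setminus \{t'\}$. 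Combining, $\Delta_{Y_0} A_{Y - \{t\}} \Delta_{Y_0}^{-1} = A_{Y - \{t'\}}$.

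For the necessary direction, suppose $\beta A_X \beta^{-1} = A_{X'}$. The first step is to reduce to the case where $\beta$ is a positive element of the Artin monoid. Multiplying $\beta$ by a sufficiently high power of $\Delta_\Gamma$ preserves the conjugation action if $\Delta_\Gamma$ is central, and otherwise contributes only the graph automorphism induced by $\Delta_\Gamma$, which corresponds to a walk in $G$ along edges with $Y = V(\Gamma)$. So we may assume $\beta$ is positive and proceed by induction on its word length. The heart of the argument is the following \emph{short conjugator lemma}: if $\beta \neq 1$ is a positive element of minimal length with $\beta A_X \beta^{-1}$ standard, then $\beta = \Delta_{Y_0}$ where $\Gamma_0 := \Gamma_{Y_0}$ is a connected subgraph of one of the four specific types listed and $Y_0 \cap X = Y_0 \setminus \{t\}$ for some $t \in Y_0$. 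Given this lemma, $\beta$ factors as $\alpha_1 \alpha_2 \cdots \alpha_k$ where each $\alpha_i$ is a minimal positive conjugator between successive standard parabolic subgroups; each $\alpha_i$ corresponds to an edge in $G$, yielding the desired path from $X$ to $X'$.

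The main obstacle is the short conjugator lemma. To prove it, I would work in the Garside monoid and exploit that $\beta$ must conjugate the positive element $\Delta_X$ to the positive element $\Delta_{X'}$. Using the left lcm structure, one analyzes the greatest common prefix of $\beta$ with products involving generators in $X$ and in $X'$; any nontrivial such prefix which is itself a conjugator contradicts minimality unless $\beta$ has the claimed form. The type restriction on $\Gamma_0$ arises because precisely in those types does conjugation by $\Delta_{Y_0}$ act nontrivially on the generators of $Y_0$ — in every other case $\Delta_{Y_0}$ is central in $A_{Y_0}$, so $\Delta_{Y_0}$ does not conjugate $A_{Y_0 \setminus \{t\}}$ to a different standard parabolic subgroup and hence cannot appear as a nontrivial minimal conjugator. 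Once the minimal conjugator is pinned down to this form, the inductive step and the matching with edges of $G$ are mechanical.
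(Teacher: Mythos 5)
This statement is a cited result (Theorem 4.1 of Paris's paper); the present paper does not give a proof, so there is no in-paper argument to compare against. Your easy direction is fine. But your ``short conjugator lemma'' is false as stated, and it is the load-bearing step. The minimal positive element conjugating $A_X$ to a different standard parabolic is \emph{not} a Garside element $\Delta_{Y_0}$; it is an elementary ribbon $d_{X,t} = \Delta_{X(t)}\Delta_{X(t)\setminus\{t\}}^{-1}$, which is a proper prefix of $\Delta_{X(t)}$. Concretely, in the Artin group of type $A_2$ with generators $s_1, s_2$, take $X=\{s_1\}$ and $X'=\{s_2\}$. Then $s_1 s_2$ is positive of length $2$ and $(s_1 s_2) s_1 (s_1 s_2)^{-1} = s_2$, so $s_1 s_2$ conjugates $A_X$ to $A_{X'}$; but $\Delta_{Y_0} = \Delta_{\{s_1,s_2\}} = s_1 s_2 s_1$ has length $3$. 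So no minimal-length conjugator here has the form $\Delta_{Y_0}$, and $\Delta_{Y_0}$ is not even a prefix of the actual minimal conjugator — so an induction that peels off $\Delta_{Y_0}$ from the left of $\beta$ does not get off the ground.

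The deeper issue is that you have deferred the genuine work to the short conjugator lemma and then characterized what remains as ``mechanical.'' The hard content of Paris's theorem is precisely showing that any positive $\beta$ with $\beta A_X\beta^{-1}$ standard has a nontrivial positive elementary ribbon as a left divisor (equivalently, that conjugators decompose as products of elementary ribbons, what Godelle later packaged as $X$-ribbons-$Y$). Your sketch of the proof of the lemma — ``analyze the greatest common prefix of $\beta$ with products involving generators in $X$ and $X'$'' — is gesturing in the right direction (this is indeed done via the Garside/Deligne normal form and a careful lcm analysis), but it needs to land on ribbons, not Garside elements, and the argument that such a ribbon prefix exists and that stripping it off preserves positivity and standardness is nontrivial. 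Once the lemma is corrected, the explanation of why the four listed Coxeter types appear (they are exactly the irreducible types where $\Delta_{Y_0}$ is noncentral, hence where $\delta_0$ is a nontrivial permutation) is correct and matches Paris's bookkeeping of which ribbons give nontrivial edges of $G$.
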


        Notice that any $X$ and $X'$ which are connected by an edge in this graph must have $|X| = |X'|$. This implies that $|X| = |X'|$ for every $X$ in a given connected component of $G$. In particular, if $A_{X}$ is conjugate to $A_{X'}$, then $|X| = |X'|$.

        \begin{example}
            In the Artin group of type $E_8$, the subgroup $\langle s_1, s_2, s_3, s_4 \rangle$ is conjugate to the subgroup $\langle s_5, s_6, s_7, s_8 \rangle$. One possible path is that $\langle s_1, s_2, s_3, s_4 \rangle$ is contained in the unique standard $E_6$ subgroup, and the permutation $\delta_0$ corresponding to this $E_6$ sends $\langle s_1, s_2, s_3, s_4 \rangle$ to $\langle s_3, s_4, s_5, s_6 \rangle$. The subgroup $\langle s_3, s_4, s_5, s_6, s_7, s_8 \rangle$ is of type $A_6$, and the permutation $\delta_0$ of the generators of $A_6$ sends $\langle s_3, s_4, s_5, s_6 \rangle$ to $\langle s_5, s_6, s_7, s_8 \rangle$.

        \begin{figure}[h!]
            \centering
            \includegraphics[width=8cm]{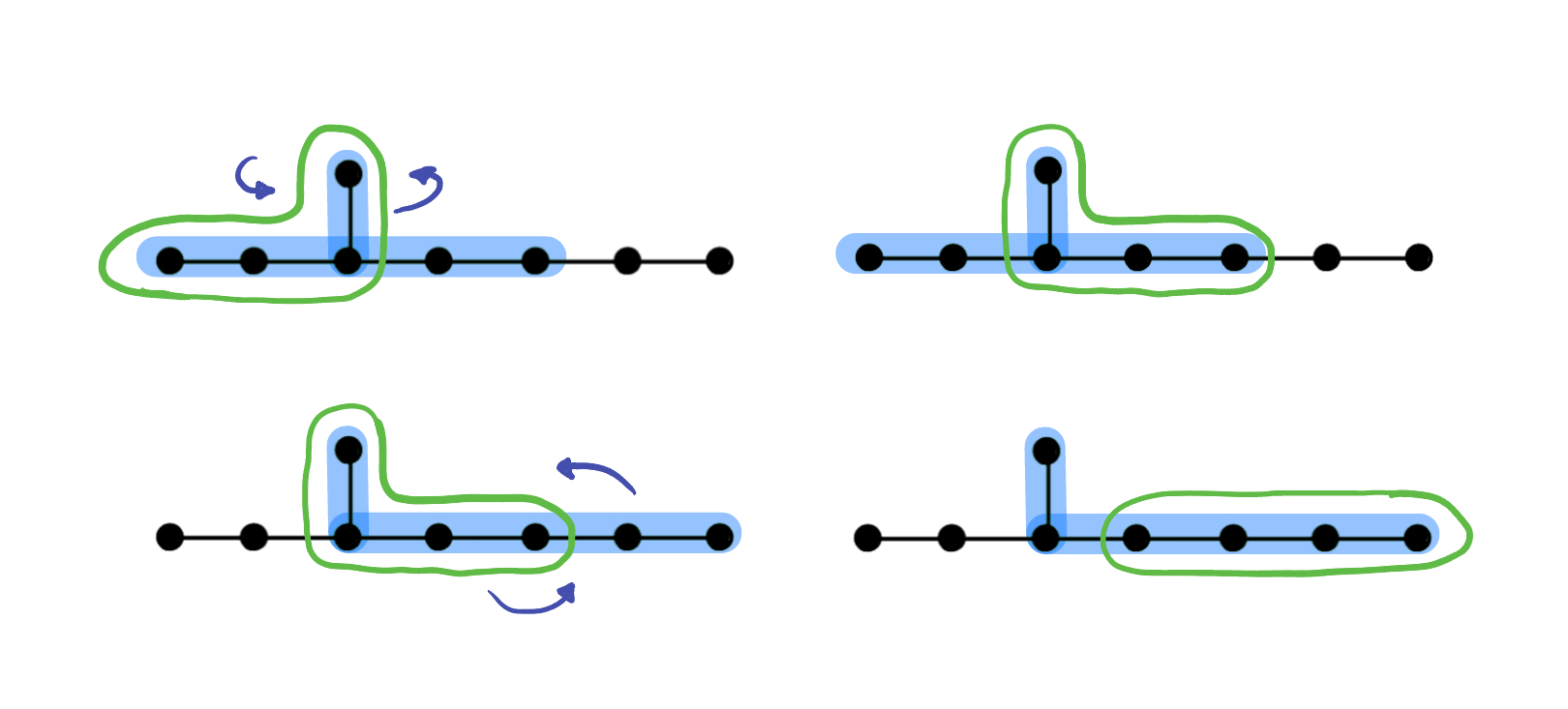}
            \captionsetup{margin=.5cm,justification=centering}
            \caption{One path from $\langle s_1, s_2, s_3, s_4 \rangle$ to $\langle s_5, s_6, s_7, s_8 \rangle$ in $G$.}
        \end{figure}

            This is not the only path: $\langle s_1, s_2, s_3, s_4, s_5 \rangle$ forms a $D_5$ subgroup, and the permutation $\delta_0$ in $D_5$ sends $\langle s_1, s_2, s_3, s_4 \rangle$ to $\langle s_1, s_2, s_3, s_5 \rangle$. The parabolic subgroup $\langle s_1, s_2, s_3, s_5, s_6, s_7, s_8 \rangle$ is of type $A_7$, and $\delta_0$ sends $\langle s_1, s_2, s_3, s_5 \rangle$ to $\langle s_5, s_6, s_7, s_8 \rangle$.

        \begin{figure}[h!]
            \centering
            \includegraphics[width=8cm]{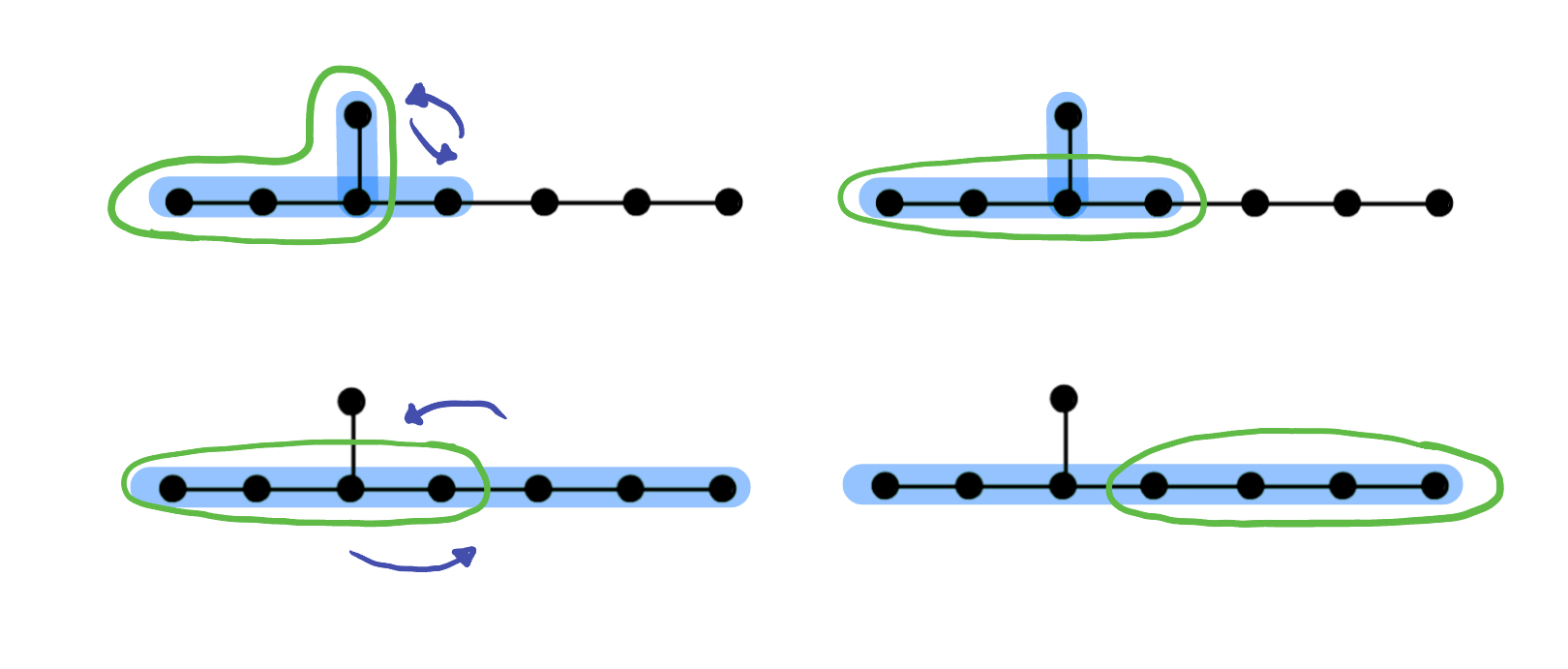}
            \captionsetup{margin=.5cm,justification=centering}
            \caption{Another path from $\langle s_1, s_2, s_3, s_4 \rangle$ to $\langle s_5, s_6, s_7, s_8 \rangle$ in $G$.}
        \end{figure}
            
        \end{example}

        \begin{example}
            In the Artin group of type $A_n$, the irreducible standard parabolic subgroups $A_X$ and $A_Y$ are conjugate if $|X| = |Y|$. Let $X = \{s_i, \cdots, s_{i+m}\}$ and $Y = \{s_j, \cdots, s_{j+m}\}$. If $i = j$, then $X = Y$ and we are done, so suppose without loss of generality that $i < j$.

            Consider the standard parabolic subgroup generated by $\{s_i, \cdots, s_{j+m}\}$. All irreducible standard parabolic subgroups of $A_n$ are of type $A_{n'}$, so this subgroup is of type $A_{|j+m - i|}$. By construction, the permutation $\delta_0$ sends $s_i$ to $s_{j+m}$, $s_{i+1}$ to $s_{j+m-1}$, and so on. The latter $m$ generators in the collection are precisely the generators in $Y$, so in fact $A_X$ and $A_Y$ are adjacent in the graph $G$.
        \end{example}

    While parabolic subgroups are Artin groups themselves, there is not always an obvious unique choice of Garside element such that $g A_X g^{-1} = P$ implies $g \Delta_X g^{-1} = \Delta_P$. Notice that if $X$ contains the generator $s_1$, then $s_1 \Delta_X s_1^{-1}$ does not in general equal $\Delta_X$ when the center of $A_X$ is generated by $\Delta_X^2$. There is, however, a unique way of choosing a central power of a Garside element of $P$.

        \begin{theorem}[\cite{Cumplido-minimal-standardizers} Proposition 35]
            Let $A_S$ be a finite-type Artin group, and let $P = g A_X g^{-1} = h A_{X'} h^{-1}$ be a parabolic subgroup. Let $z_X$ denote $\Delta_X$ or $\Delta_X^2$, whichever is the minimal central power of $\Delta_X$. Then $g z_X g^{-1} = h z_{X'} h^{-1}$, and we call this element $z_P$. If $P$ is irreducible, $z_P$ is the unique element which generates the center of $P$ and is conjugate to a positive element. 
        \end{theorem}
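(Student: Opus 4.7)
The plan is to deploy the length homomorphism $\phi: A_S \to \ZZ$ that sends every Artin generator to $1$. This is well-defined because every defining relation equates two positive words of equal length, and it is a conjugation invariant since $\ZZ$ is abelian. The crucial consequence is that $\phi(w) > 0$ for any nontrivial positive element $w$, so no positive element is conjugate to the inverse of a nontrivial positive element.

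I first treat the case where $P$ is irreducible. Conjugation by $g$ restricts to an isomorphism $c_g: A_X \xrightarrow{\sim} P$ that carries $Z(A_X)$ onto $Z(P)$. By the classification of centers of irreducible finite-type Artin groups recalled in Section~\ref{sec:finite type artin groups}, $Z(A_X) = \langle z_X \rangle$ is infinite cyclic, so $g z_X g^{-1}$ generates $Z(P) \cong \ZZ$; the same applies to $h z_{X'} h^{-1}$. These two generators thus differ at most by inversion. Applying $\phi$, both have strictly positive values $\phi(z_X)$ and $\phi(z_{X'})$, so they cannot be mutual inverses, forcing $g z_X g^{-1} = h z_{X'} h^{-1}$. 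The uniqueness assertion follows immediately: $Z(P) \cong \ZZ$ has only the two generators $z_P^{\pm 1}$, and $\phi(z_P^{-1}) = -\phi(z_P) < 0$, so $z_P^{-1}$ is not conjugate to any positive element, while $z_P$ itself is, being conjugate to $z_X$.

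For reducible $P$, decompose $P = P_1 \times \cdots \times P_r$ and correspondingly $A_X = A_{X_1} \times \cdots \times A_{X_r}$ and $A_{X'} = A_{X'_1} \times \cdots \times A_{X'_r}$ into irreducible factors corresponding to the connected components of the respective defining subgraphs (Theorem~\ref{parbs are artin groups}). The isomorphism $c_g$ must carry the canonical decomposition of $A_X$ into irreducible direct factors to the canonical decomposition of $P$, inducing a permutation $\sigma$ such that $c_g(A_{X_i}) = P_{\sigma(i)}$; likewise for $c_h$ with some permutation $\tau$. Writing $z_X$ as the explicit product $\prod_i z_{X_i}^{\epsilon/\epsilon_{X_i}}$, where $\epsilon_{X_i}\in\{1,2\}$ is the minimal central power for $A_{X_i}$ and $\epsilon = \max_i \epsilon_{X_i}$, and applying the irreducible case factorwise to get $c_g(z_{X_i}) = z_{P_{\sigma(i)}}$, the desired equality $g z_X g^{-1} = h z_{X'} h^{-1}$ reassembles.

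The principal subtlety I anticipate lies in the reducible case, namely checking that the exponents $\epsilon$ and $\epsilon_{X_i}$ are intrinsic to $P$, so that the product expressions for $g z_X g^{-1}$ and $h z_{X'} h^{-1}$ truly agree. This is resolved by noting that $c_g$ can only identify irreducible factors of the same Coxeter type, which necessarily share the same central-power exponent; thus $\epsilon$ equals $\max_j \epsilon_{P_j}$, which is determined purely by $P$, and the matching of factorwise contributions is automatic.
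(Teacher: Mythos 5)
The paper does not prove this statement: it cites it verbatim as Proposition 35 of \cite{Cumplido-minimal-standardizers}, so there is no internal proof to compare against. Your argument must therefore stand on its own.

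Your irreducible case is correct and clean. Using the length homomorphism $\phi$ to rule out inversion between the two candidate generators of $Z(P)$ is a genuinely elementary route; the original reference instead works with Garside normal forms and ribbons. Both the equality $g z_X g^{-1} = h z_{X'} h^{-1}$ and the uniqueness of the positive-conjugate generator are nailed down.

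The reducible case has a real gap. You write that ``$c_g$ can only identify irreducible factors of the same Coxeter type, which necessarily share the same central-power exponent,'' and the entire matching of exponents $\epsilon/\epsilon_{X_i}$ hinges on this. But this claim is doing serious work and is not self-evident: $\Delta_{X_i}$ and hence $\epsilon_{X_i}$ are defined via the Garside structure, not the abstract group, so it is not automatic that an isomorphism between irreducible factors preserves whether the Garside element itself (versus only its square) is central. You also implicitly invoke a Krull--Schmidt-type uniqueness for the decomposition of $P$ into irreducible direct factors, which is likewise not free for this class of groups. Both gaps can be filled from results already in the paper: since $h^{-1}g$ conjugates $A_X$ to $A_{X'}$, Theorem~\ref{parab subgroup conjugacy} (Paris) says $X$ and $X'$ are connected in the graph $G$, whose edges are label-preserving graph isomorphisms induced by conjugation by Garside elements; these visibly preserve the Coxeter type of each connected component, hence each $\epsilon_{X_i}$, and induce a canonical bijection between components that sidesteps any Krull--Schmidt appeal. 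Without that citation (or the analogous rigidity theorem of Paris for finite-type Artin groups), the reducible case is incomplete as written.
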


    There may be many different ways of writing a parabolic subgroup as a conjugate of a standard parabolic subgroup. The following theorem shows that there is a unique way which is in some sense ``simplest''. 
    
        \begin{theorem}[\cite{Cumplido-minimal-standardizers} Theorem 4]
            For any parabolic subgroup $P$ of an Artin group $A_S$, there is a unique choice of $X \subseteq S$ and a unique choice of positive element $g$, the \emph{minimal standardizer of $P$}, such that $P = g A_X g^{-1}$ and for any positive element $h$ and $Y \subseteq S$ with $P = h A_Y h^{-1}$, $g$ is a prefix of $h$.
        \end{theorem}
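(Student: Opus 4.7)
The plan is to work inside the Garside monoid $A_S^+$ and use the existence of prefix greatest common divisors to single out a minimum positive standardizer. Throughout, let $S(P)$ denote the set of positive elements $g \in A_S^+$ such that $g^{-1} P g$ is a standard parabolic subgroup.

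First I would show that $S(P)$ is nonempty. Given any $h \in A_S$ with $P = h A_X h^{-1}$, use the Garside structure to write $h = p \Delta^{-k}$ with $p \in A_S^+$ and $k \geq 0$ (any element of a Garside group has such a representation, since every positive element divides some power of $\Delta$). Since $\Delta$ normalizes the collection of standard parabolic subgroups via the diagram automorphism $\tau$, we have $\Delta^{-k} A_X \Delta^{k} = A_{\tau^{-k}(X)}$, hence $P = p\,A_{\tau^{-k}(X)}\,p^{-1}$, so $p \in S(P)$.

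Next I would prove the key \emph{gcd lemma}: if $g_1, g_2 \in S(P)$, then their prefix gcd $g := \gcd(g_1, g_2)$ lies in $S(P)$. Writing $g_i = g f_i$ with $\gcd(f_1, f_2) = 1$ in $A_S^+$, the claim reduces to showing that the parabolic subgroup $Q := g^{-1} P g = f_1 A_{X_1} f_1^{-1} = f_2 A_{X_2} f_2^{-1}$ is actually standard, which by coprimality should force $f_1 = f_2 = 1$. My approach here would be to combine the parabolic intersection theorem (so that $Q$, being parabolic, corresponds to a \emph{unique} minimal $X$) with the ribbon/elementary-move decomposition supplied by Theorem~4.1 of Paris for the conjugation graph $G$ on subsets of $S$: any positive element that conjugates a standard parabolic to a standard parabolic admits a canonical factorization through ribbons, and the first ribbons of $f_1$ and $f_2$ would have to begin with a common Artin generator, contradicting $\gcd(f_1, f_2) = 1$ unless both factorizations are trivial.

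Given the lemma, the construction of the minimum is routine. Fix any $g_0 \in S(P)$; the set of positive prefixes of $g_0$ in $A_S^+$ is finite (the Garside norm $\lambda$ strictly increases along proper prefixes), so $S(P) \cap \{h : h \preccurlyeq g_0\}$ is a finite gcd-closed subset and has a unique minimum $g^*$. For any $g' \in S(P)$, the lemma gives $\gcd(g^*, g') \in S(P)$, and since $\gcd(g^*, g') \preccurlyeq g_0$, minimality forces $\gcd(g^*, g') = g^*$; hence $g^* \preccurlyeq g'$, so $g^*$ is the global minimum. The subset $X \subseteq S$ is then uniquely determined by the equation $A_X = (g^*)^{-1} P g^*$, using the basic fact that $A_X = A_{X'}$ as subgroups forces $X = X'$.

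The main obstacle, as indicated, is the gcd lemma, specifically the rigidity statement that two coprime positive standardizers of the same parabolic subgroup must both be trivial. This is where the detailed structure of conjugation among standard parabolics (ribbons and the graph $G$) has to be brought in, rather than the general Garside formalism alone; everything else is essentially a bookkeeping exercise on top of it.
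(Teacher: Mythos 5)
The paper under review does not actually prove this theorem; it is cited from Cumplido's paper on minimal standardizers, so there is no ``paper's own proof'' to compare against. Evaluating your proposal on its own merits: the skeleton (establish nonemptiness of the set of positive standardizers $S(P)$ by pushing powers of $\Delta$ to the right, prove a gcd-closedness lemma for $S(P)$, then extract a unique minimum because the prefix order on a Garside monoid is a lattice with finite lower intervals) is the right shape and matches the known structure of Cumplido's argument. The non-emptiness step and the final bookkeeping step are fine.

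The trouble is in the gcd lemma, which as you say is where all the content lives. First, the reduction you state is not correct: from $\gcd(f_1, f_2) = 1$ and $Q = f_1 A_{X_1} f_1^{-1} = f_2 A_{X_2} f_2^{-1}$, the conclusion you need is only that $Q$ \emph{is standard}, not that $f_1 = f_2 = 1$. The latter is genuinely false: take $Q = A_X$ standard, $f_1 = 1$, $f_2 = \Delta_X$; these are coprime positive standardizers of $Q$ and $f_2 \neq 1$. So ``coprimality forces $f_1 = f_2 = 1$'' cannot be the lemma you prove. Second, the mechanism you propose to derive a contradiction --- Paris's ribbon/elementary-move factorization for elements conjugating one standard parabolic to another --- does not directly apply in the situation you are in, because at this point in the argument $Q$ is \emph{not known to be standard}; the elements $f_i$ conjugate the standard $A_{X_i}$ to the unknown $Q$, which is exactly the configuration the ribbon machinery does not cover. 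The substance of Cumplido's proof is a ``first letter'' analysis showing that whenever $P$ is not standard there is a uniquely determined Artin generator (extracted from Garside-theoretic data such as the left normal form of $z_P$ or the positive conjugators in the cyclic-sliding sense) that is a prefix of \emph{every} positive standardizer; that statement, applied to $Q$, yields the contradiction with $\gcd(f_1, f_2) = 1$. Your sketch gestures at this but supplies no actual argument for why the first steps of $f_1$ and $f_2$ must share a generator, and the tool you name is not the one that closes the gap.
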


\subsubsection{Braid groups as Artin groups and mapping class groups}\label{sec:braid groups}

The finite-type Artin group of type $A_n$ is the braid group on $n+1$ strands. The generator $s_i$ is the braid which crosses strand $i$ over strand $i+1$. The Artin relation $s_i s_{i+1} s_i = s_{i+1} s_i s_{i+1}$ can be visualized as the braid relation, depicted in the figure below.

    \begin{figure}[h!]
            \centering
            \includegraphics[width=8cm]{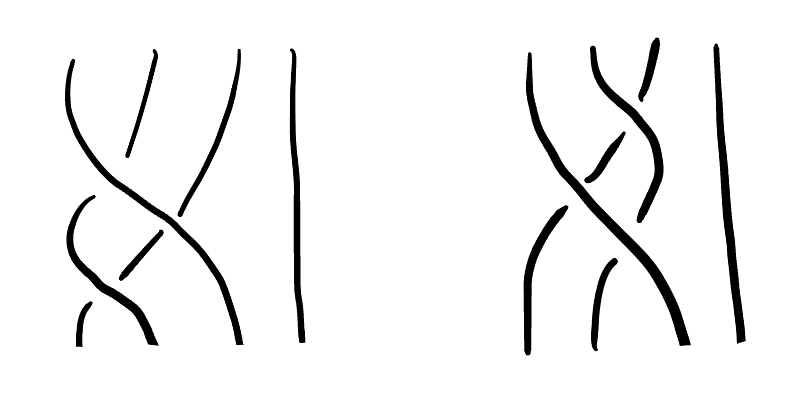}
            \captionsetup{margin=.5cm,justification=centering}
            \caption{The braid relation $s_1 s_2 s_1 = s_2 s_1 s_2$ in $A_3$.}
    \end{figure}

    The braid group on $n$ strands is also the mapping class group of an $n$-punctured disk. The generator $s_i$ is a half-twist about the curve enclosing punctures $i$ and $i+1$, and the Garside element $\Delta$ is a half-twist about the boundary of the disk.

    \begin{figure}[h!]
            \centering
            \includegraphics[width=8cm]{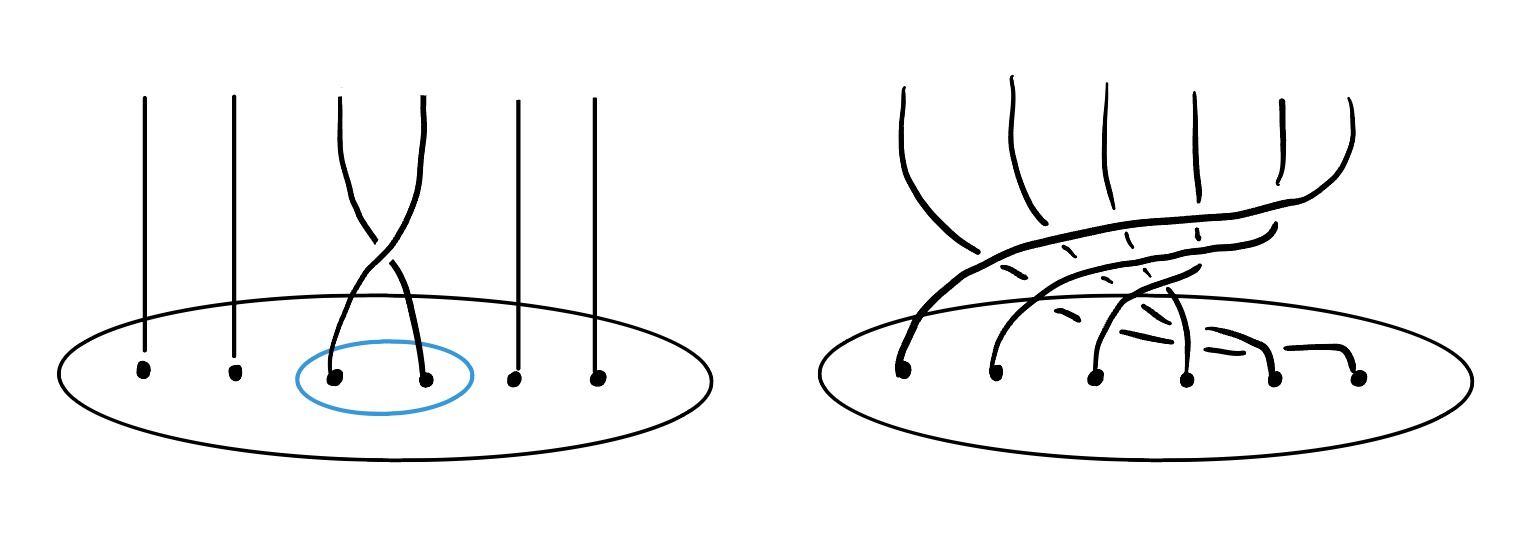}
            \captionsetup{margin=.5cm,justification=centering}
            \caption{The Artin generator $s_3$ and $\Delta$ viewed as elements of $MCG(D_n)$.}
    \end{figure}

    There is a natural correspondence between the standard parabolic subgroups and \emph{round curves}, or circles enclosing adjacent punctures, on the punctured disk: the standard parabolic subgroup $\langle s_i, s_{i+1}, \cdots s_j \rangle$ corresponds to the round curve enclosing punctures $i, i+1 \cdots, j+1$. When viewed as an element of the mapping class group of the $n$-punctured disk, every element of the standard parabolic subgroup is supported inside of the subdisk bounded by this curve. In addition, the Garside element of the standard parabolic subgroup is a half-twist about the curve.

    \begin{figure}[h!]
            \centering
            \includegraphics[width=8cm]{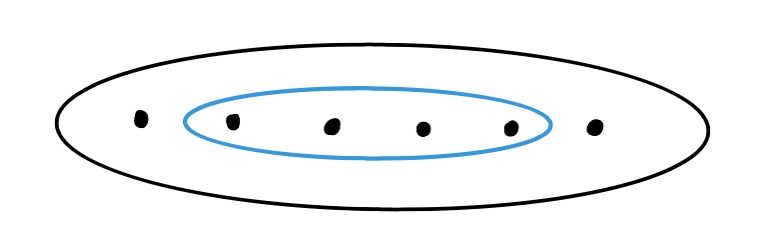}
            \captionsetup{margin=.5cm,justification=centering}
            \caption{The round curve enclosing the support of $\langle s_2, s_3, s_4 \rangle$.}
    \end{figure}

    In fact, this correspondence naturally extends to non-standard parabolic subgroups. Consider any essential simple closed curve $C$ on the $n$-punctured disk. There is some automorphism $\alpha$ of the $n$-punctured disk which sends $c$ to the round curve enclosing the first $m$ punctures for some $m$, so the subset of braids supported on the subsurface enclosed by $C$ is precisely $\alpha A_{X} \alpha^{-1}$ where $X = \{s_1, \cdots, s_{m-1}\}$. To see this, notice that braids supported inside $C$ can be obtained by first sending $C$ to a round curve via $\alpha$, performing any braid supported in the round curve, and then returning to $C$ via $\alpha^{-1}$.

    One can check that every irreducible parabolic subgroup arises in this way. Namely, $\alpha A_{X} \alpha^{-1}$ consists of the braids supported inside of the image under $\alpha^{-1}$ of the round curve which bounds the support of the Artin generators in $X$, since $\alpha$ sends this to a round curve. While the number of punctures in this round curve is unique, the particular choice of round curve is not. There are automorphisms of the disk which send any round curve containing $m$ punctures to any other round curve containing $m$ punctures, so there are multiple automorphisms which send a particular curve $C$ to a round curve.

        \begin{figure}[h!]
            \centering
            \includegraphics[width=8cm]{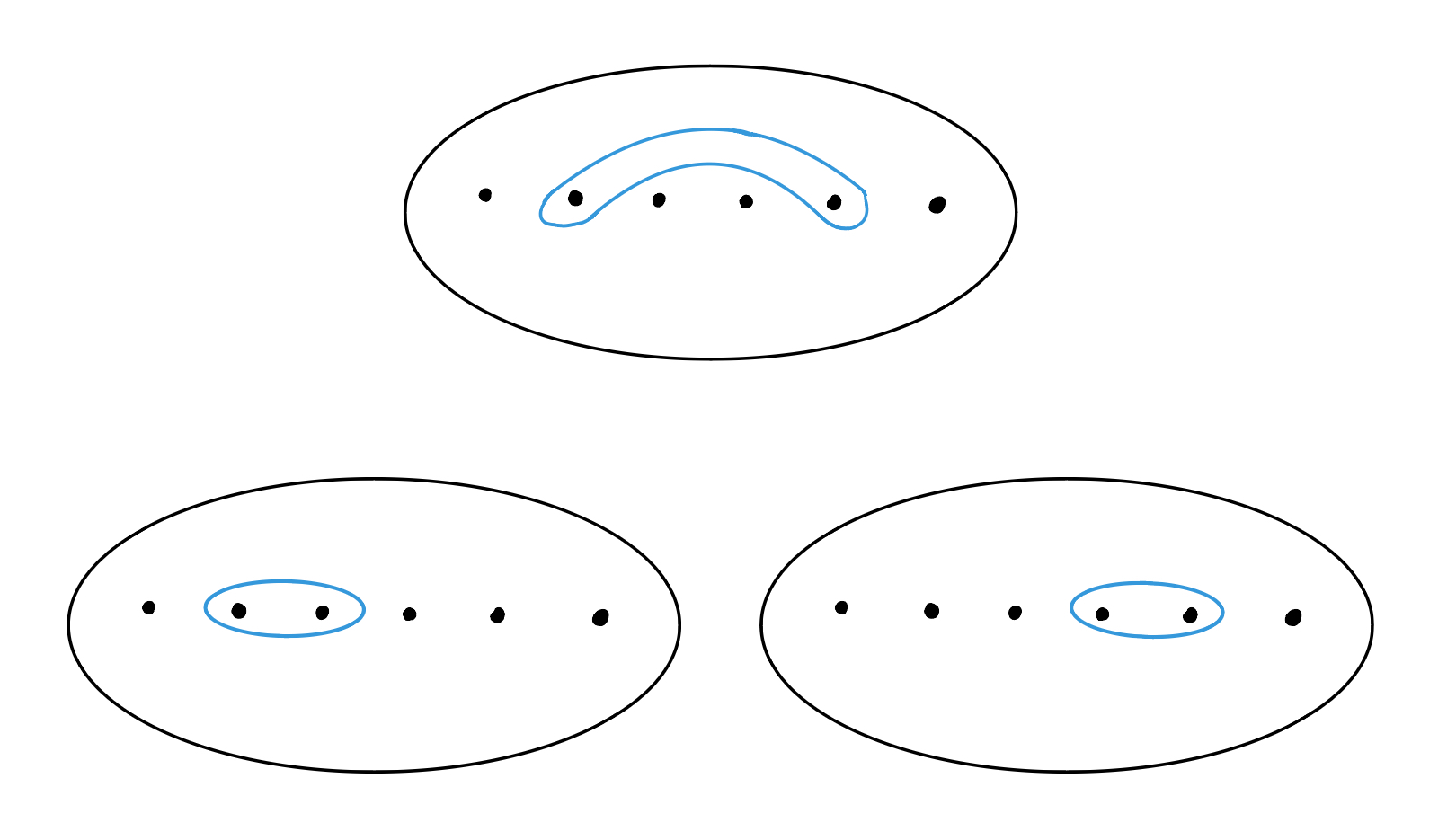}
            \captionsetup{margin=.5cm,justification=centering}
            \caption{The non-round curve can be transformed to the round curve enclosing punctures $2$ and $3$ via $s_4^{-1} s_3^{-1}$ or to the round curve enclosing punctures $4$ and $5$ via $s_2 s_3$.}
        \end{figure}

    In \cite{C-parab-definition}, the authors showed that if $\alpha$ sends $C$ to a round curve enclosing punctures $i \cdots i+m+1$ and $\alpha'$ sends $C$ to a round curve enclosing punctures $j \cdots j +m+1$, then the parabolic subgroups $\alpha A_{s_i, \cdots, s_{i+m}} \alpha^{-1}$ and $\alpha' A_{s_j, \cdots, s_{j+m}} \alpha'^{-1}$ are equal. Thus the correspondence between irreducible parabolic subgroups of the braid group on $n$ strands and essential simple closed curves on the $n$-punctured disk is a bijection.

    The group $A_{\Gamma}$ acts naturally on the collection of irreducible parabolic subgroups via conjugation. In the case of the braid group, this agrees with the action on simple closed curves: if $\beta$ is an element of the braid group and $\alpha A_X \alpha^{-1}$ is a parabolic subgroup, then the curve $\beta \alpha(X)$ clearly contains the support of $\beta \alpha A_{X} \alpha^{-1}\beta^{-1}$.

    Notice that the central element acts trivially on every parabolic subgroup, so if our goal is to construct a complex of parabolic subgroups analogous to the marking graph, we can at best hope to construct a complex which is quasi-isometric to $A_{\Gamma}/Z(A_{\Gamma})$. The analogy with the marking graph is still reasonable since, as we have previously noted, the braid group modulo its center is the mapping class group of an $n$-punctured sphere. Despite the fact that a punctured sphere is more appropriately associated to $A_{\Gamma}$ modulo its center than a punctured disk, we will often provide intuition via figures drawn on the disk rather than the sphere in later sections. We do this for two reasons. Firstly, the curves are more clearly visible in a planar diagram. Second, we will see that subgroups which are properly included in one another map to disjoint curves. It is often useful to keep track of the direction of this subgroup containment, and this is more readily displayed via curves on the disk.
      
\subsubsection{The complex of irreducible parabolic subgroups}\label{sec:Cparab}
        In the previous section, we saw that there is a natural correspondence between irreducible proper parabolic subgroups of the braid group and isotopy classes of essential simple closed curves on the disk. In \cite{C-parab-definition}, the authors provide group theoretic descriptions of irreducible parabolic subgroups which correspond to disjoint curves. Specifically, they show that given two such subgroups $P$ and $Q$, the corresponding curves on the disk are disjoint if and only if one of the following holds:
            \begin{enumerate}
                \item $P \leq Q$
                \item $Q \leq P$
                \item $P \cap Q = \{1\}$ and $P$ and $Q$ commute.
            \end{enumerate}
            
        The authors also showed that one of the above conditions holds if and only if $z_P$ commutes with $z_Q$. They then defined the \emph{complex of irreducible parabolic subgroups}, which generalizes the curve complex for braid groups (and other mapping class groups).

            \begin{definition}\label{def:c-parab-def}
                Let $A_{\Gamma}$ be an irreducible finite-type Artin group. The \emph{complex of irreducible parabolic subgroups}, $C_{parab}(A_{\Gamma})$ is the simplicial complex with vertex set equal to the collection of irreducible, proper parabolic subgroups of $A_{\Gamma}$ and where a collection of $n$ vertices $\{P_i\}$ spans an $(n-1)$-simplex if $z_{P_i}$ commutes with $z_{P_j}$ for all $i, j \in \{1, \cdots, n\}$.
            \end{definition}

        When the choice of $A_{\Gamma}$ is clear, we will often write simply $C_{parab}$. It was shown in \cite{hyp-structures-survey} that this complex is connected when $A_{\Gamma}$ is not of dihedral-type, and it is a consequence of Theorem 2 in \cite{parabs-acting-on-CAL} and Theorem 1.1 in \cite{CAL2} that this complex is infinite diameter when $A_{\Gamma}$ is irreducible. When $A_{\Gamma}$ is reducible, it has diameter 2. Since conjugation preserves commuting relations and subgroup inclusion, the natural action of $A_{\Gamma}$ on the collection of parabolic subgroups extends to an action on $C_{parab}$.
        
\section{Simplices, standardizers, and ribbons}\label{sec:simplices standardizers ribbons}

\subsection{Standardizers and maximal simplices}
As a step in the proof of Theorem 2.2 in \cite{C-parab-definition}, the authors showed that any two adjacent vertices in $C_{parab}$ are \emph{simultaneously standardizable} parabolic subgroups.
            
        \begin{definition}
            Two parabolic subgroups $P$ and $Q$ are \emph{simultaneously standardizable} if $P = g A_X g^{-1}$ and $Q = g A_Y g^{-1}$ for some element $g \in A_S$ and some standard parabolic subgroups $A_{X}, A_Y \leq A_S$.
        \end{definition}

    In fact, their proof does not require irreducibility of the parabolic subgroups, only that $z_P$ and $z_Q$ commute. For the purposes of this paper, we will require the slightly stronger statement that every simplex in $C_{parab}$ is simultaneously standardizable. This is a relatively straightforward generalization of the proof of Theorem 2.2 in \cite{C-parab-definition}, but we prove it here because it is of critical importance in the remainder of the paper. To do this, we require the following lemma.

        \begin{lemma}\label{minimal standardizer inside parabolic}
            If $g$ is the minimal positive standardizer of a parabolic subgroup $g A_Y g^{-1} = P$ such that $P \leq A_{X}$ for some standard parabolic subgroup $A_X$, then $g \in A_X$. 
        \end{lemma}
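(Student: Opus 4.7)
The plan is to exhibit a positive standardizer of $P$ that already lies inside $A_X$, and then use the universal property of the minimal positive standardizer to force $g$ itself into $A_X$ via a support argument.

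First I would observe that $P$, being a parabolic subgroup of $A_S$ contained in the standard parabolic $A_X$, is itself a parabolic subgroup of $A_X$ viewed as a finite-type Artin group in its own right (which it is by Theorem \ref{parbs are artin groups}). Applying the minimal standardizer theorem from \cite{Cumplido-minimal-standardizers} inside $A_X$ then produces a positive element $g' \in A_X^{+}$ and some $Y' \subseteq X$ with $P = g' A_{Y'} g'^{-1}$. Regarding $g'$ as an element of $A_S^{+}$, it is a positive standardizer of $P$ in $A_S$, so the minimality of $g$ forces $g \preccurlyeq g'$; write $g' = g h$ with $h \in A_S^{+}$. The support of a positive element is the set of Artin generators appearing in any positive word representing it, and is therefore additive under positive factorization, so $\text{supp}(g) \subseteq \text{supp}(g') \subseteq X$, giving $g \in A_X^{+} \subseteq A_X$.

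The main obstacle is justifying the first step, namely that a parabolic subgroup of $A_S$ contained in $A_X$ is automatically a parabolic subgroup of $A_X$. I expect this to follow from Theorem \ref{int of parabolics} applied to $P = P \cap A_X$ together with the lattice structure of \cite{C-parab-definition}, which characterizes parabolic subgroups intrinsically in terms of commutation of the central elements $z_P$; if a direct citation for this folklore fact is unavailable, I would prove it by induction on the length of any positive standardizer of $P$, using Theorem \ref{parab subgroup conjugacy} to successively replace generators outside $X$ by generators inside $X$ without changing the conjugate subgroup, until the standardizer is entirely supported in $X$.
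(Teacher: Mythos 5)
Your proposal takes essentially the same route as the paper: realize $P$ as a parabolic subgroup of $A_X$, invoke the existence of a minimal positive standardizer $g'$ inside $A_X$, regard $g'$ as a positive standardizer in $A_S$ so that minimality forces $g \preccurlyeq g'$, and then conclude via additivity of support under positive factorization that $\mathrm{supp}(g) \subseteq \mathrm{supp}(g') \subseteq X$. This is exactly the paper's argument.

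You correctly flag the one nontrivial claim — that a parabolic subgroup of $A_S$ contained in $A_X$ is automatically a parabolic subgroup of $A_X$ — which the paper simply asserts as a known fact without citation. Your first suggestion, however, does not close the gap: applying Theorem \ref{int of parabolics} to $P = P \cap A_X$ only re-establishes that $P$ is parabolic in $A_\Gamma$, not that it is parabolic \emph{in $A_X$} (i.e., conjugate to a standard subgroup of $A_X$ by an element of $A_X$). That stronger statement is genuinely what one needs, and it is a result from \cite{C-parab-definition} — it is part of the machinery behind the intersection theorem rather than a consequence of its statement alone. Your fallback inductive argument via Theorem \ref{parab subgroup conjugacy} is a reasonable sketch, though it would require care; in practice the right move is to cite the relevant lemma in \cite{C-parab-definition} or \cite{Cumplido-minimal-standardizers} directly, as the paper implicitly does.
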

        \begin{proof}
            First, notice that there is some $g' \in A_X$ which standardizes $g A_Y g^{-1}$. This follows from the fact that if a parabolic subgroup $P$ of $A_{\Gamma}$ is contained in $A_{X}$, then it is also a parabolic subgroup of the finite-type Artin group $A_{X}$. One can then apply the existence of minimal standardizers in $A_X$ to obtain a minimal positive $g' \in A_{X}$ which standardizes the subgroup. 

            The minimal positive standardizer is a prefix of any other positive element which standardizes $P = g A_y g^{-1}$; in particular $g \preccurlyeq g'$. Since $g$ and $g'$ are both positive, this implies that $\text{supp}(g) \subseteq \text{supp}(g') \subseteq X$, which implies $g \in A_X$.
        \end{proof}

        We will also need to introduce some terminology. Let $\{P_i\}$ span a simplex $\Sigma$ in $C_{parab}$. A subset of the collection $\{P_i\}$ is called a \emph{nesting chain} if it can be ordered into a chain of proper subgroup inclusions $P_{i_k} < \cdots < P_{i_1}$. A nesting chain is called \emph{complete} if there are no other vertices of $\Sigma$ which can be added. If some $P_i$ is minimal (respectively, maximal) in any nesting chain containing it, we say that $P_i$ is \emph{minimal} (respectively, maximal) in the simplex. 
        
        In general, both minimality and maximality are dependent on the other vertices in the simplex. If $\Sigma$ is a maximal simplex, however, each minimal element will be a conjugate of a cyclic subgroup generated by a single Artin generator. These subgroups are minimal in any simplex containing them. If some larger parabolic subgroup $g A_X g^{-1}$ were minimal, then the conjugate by $g$ of any Artin generator in $X$ would be in the link of $\Sigma$, which contradicts maximality. We will also need to reference the positions of vertices which are neither maximal nor minimal in complete nesting chains.

        \begin{definition}
            Let $\Sigma$ be a maximal $C_{parab}$ simplex with vertex set $\{P_i\}$. A particular vertex $P$ is said to be \emph{in the $k$th level} of $\Sigma$ if there is some complete nesting chain containing $P$ where $k-1$ elements of the chain properly contain $P$. The collection of all such vertices is called the \emph{$k$th level} of $\Sigma$.
        \end{definition}

    As an example, the first level of any simplex is the set of maximal elements. Notice that every vertex of $\Sigma$ is contained in some level. In fact, the levels of $\Sigma$ form a partition of the vertex set.
        
        \begin{lemma}
            Let $P$ be a parabolic subgroup, and let $\Sigma$ be a maximal $C_{parab}$ simplex containing it. Any two complete nesting chains containing $P$ differ only on the parabolic subgroups which are properly contained in $P$.
        \end{lemma}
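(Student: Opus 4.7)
The plan is to show that the set $\mathcal{A}$ of vertices of $\Sigma$ which properly contain $P$ is totally ordered by inclusion; once this is established, every complete nesting chain through $P$ must include all of $\mathcal{A}$, and so any two such chains agree on all subgroups at or above $P$.

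For the total ordering, let $Q_1, Q_2 \in \mathcal{A}$. Since both lie in the simplex $\Sigma$, they are pairwise adjacent in $C_{parab}$, so by the three-alternative characterization of edges recalled in Section \ref{sec:Cparab}, one of the following holds: $Q_1 \subseteq Q_2$, $Q_2 \subseteq Q_1$, or $Q_1 \cap Q_2 = \{1\}$ with $Q_1$ and $Q_2$ commuting. The third alternative is excluded because $P \subseteq Q_1 \cap Q_2$ and $P$ is non-trivial (being itself an Artin group by Theorem \ref{parbs are artin groups}). Hence $Q_1$ and $Q_2$ are comparable, so $\mathcal{A}$ is totally ordered.

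To conclude, suppose $\mathcal{C}$ is a complete nesting chain containing $P$, and suppose toward contradiction that some $Q \in \mathcal{A}$ lies outside $\mathcal{C}$. Then $Q$ is comparable with every element of $\mathcal{C}$: with the elements of $\mathcal{A} \cap \mathcal{C}$ by the total ordering just established, with $P$ by the definition of $\mathcal{A}$, and with each $R \in \mathcal{C}$ satisfying $R \subseteq P$ via $R \subseteq P \subsetneq Q$. Thus $\mathcal{C} \cup \{Q\}$ is still a nesting chain, contradicting the completeness of $\mathcal{C}$. Therefore $\mathcal{A} \cup \{P\}$ is contained in every complete nesting chain through $P$, and two such chains can differ only on the subgroups properly contained in $P$, which is exactly the claim.

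The main obstacle, such as it is, is recognizing that none of the heavier structural results developed earlier --- such as Proposition \ref{propx:sim-std} or Proposition \ref{propx:maxl-characterization} --- is required here; once one notices that a common non-trivial ancestor $P$ rules out the ``trivial intersection'' alternative in the definition of $C_{parab}$-adjacency, the lemma follows from a short combinatorial argument and does not even use maximality of $\Sigma$.
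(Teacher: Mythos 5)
Your proof is correct and uses the paper's key observation: two simplex vertices sharing the non-trivial common subgroup $P$ are forced to be comparable by the three-alternative characterization of $C_{parab}$-adjacency, after which completeness does the rest. The paper packages this as a stepwise comparison of the two chains from $P$ upward (the immediate predecessors of $P$ in each chain contain $P$, hence are comparable, hence by completeness must coincide, and then one iterates); you instead first establish a total ordering of the set $\mathcal{A}$ of all simplex vertices properly containing $P$, then show completeness forces any chain through $P$ to absorb all of $\mathcal{A}$, so that both chains coincide with $\mathcal{A} \cup \{P\}$ at and above $P$ without any pairwise comparison of chains. This is a cleaner organization of essentially the same argument, and your parenthetical remark that maximality of $\Sigma$ is not actually needed is correct and worth noting, since the paper leaves it implicit.
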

        \begin{proof}
            Suppose $P$ is in both the $k$th and $j$th levels of a maximal $C_{parab}$ simplex $\Sigma$, i.e., $P$ appears at the $k$th position in one maximal nesting chain and the $j$th position in another. 
            
            Consider the parabolic subgroups $P_{k-1}$ and $P'_{j-1}$ at the $(k-1)$st and $(j-1)$st positions in the two nesting chains. The intersection of $P_{k-1}$ and $P'_{j-1}$ is nontrivial, since it contains $P$. Since $P_{k-1}$ and $P'_{j-1}$ are adjacent in $C_{parab}$, this implies that either they are equal or one properly contains the other. 
            
            Proper containment would contradict completeness of one of the nesting chains, so they must be equal. Applying the same argument at each step of the two nesting chains shows that in fact, they must be the same nesting chain.            
        \end{proof}
        \begin{corollary}
            The levels of a maximal $C_{parab}$ simplex $\Sigma$ are non-intersecting.
        \end{corollary}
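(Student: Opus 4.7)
The plan is to deduce this corollary directly from the preceding lemma, which is the substantive content; the corollary is essentially a bookkeeping consequence. Non-intersection of the levels means exactly that every vertex $P$ of $\Sigma$ lies in a unique level, so the goal reduces to showing that the integer $k$ such that $P$ lies in the $k$th level is well-defined.

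First I would recall the definition: $P$ lies in the $k$th level precisely when there exists a complete nesting chain in $\Sigma$ containing $P$ in which exactly $k-1$ vertices properly contain $P$. So suppose $P$ lies in both the $k$th level and the $j$th level of $\Sigma$, witnessed by two complete nesting chains $\mathcal{C}$ and $\mathcal{C}'$, and let $N(\mathcal{C},P)$ denote the number of elements of $\mathcal{C}$ strictly containing $P$, so that $N(\mathcal{C},P) = k-1$ and $N(\mathcal{C}',P) = j-1$.

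By the preceding lemma, the chains $\mathcal{C}$ and $\mathcal{C}'$ differ only on parabolic subgroups properly contained in $P$, so they coincide on the portion consisting of $P$ together with the parabolic subgroups strictly containing $P$. In particular $N(\mathcal{C},P) = N(\mathcal{C}',P)$, so $k-1 = j-1$ and hence $k = j$. Thus the level of $P$ is well-defined, and the levels of $\Sigma$ form disjoint subsets of the vertex set.

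There is no real obstacle here once the preceding lemma is in hand; the only thing to be careful about is correctly interpreting "level $k$" as an existential statement over complete nesting chains and observing that the preceding lemma makes it not depend on the choice of chain. Nothing further beyond the lemma is needed.
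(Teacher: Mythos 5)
Your argument is correct and is exactly the intended deduction: the paper states the corollary without a separate proof because it follows immediately from the preceding lemma, which guarantees that any two complete nesting chains through $P$ agree on the portion above $P$, so the count of vertices strictly containing $P$ is chain-independent. Nothing more is needed.
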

        
    \begin{example}
        Let $A_{\Gamma}$ be the Artin group of type $E_6$ with the labelling given in Figure \ref{fig:defgraphs}. The collection $\{ \langle s_1 \rangle, \langle s_1, s_2 \rangle, \langle s_4 \rangle, \langle s_5, s_6 \rangle, \langle s_6 \rangle \}$ spans a simplex $\Pi$ in $C_{parab}$. We will see later in Proposition \ref{maximal simplex characterization} that $\Pi$ is maximal.
        
        The complete nesting chains in $\Pi$ are the pairs $\{ \langle s_1 \rangle, \langle s_1, s_2 \rangle\}$ and $\{ \langle s_6 \rangle, \langle s_5, s_6 \rangle\}$ as well as the singleton $\{\langle s_4 \rangle \}$. The minimal elements are $\langle s_1\rangle$, $\langle s_4 \rangle$, and $\langle s_6 \rangle$. The maximal elements are $\langle s_1, s_2 \rangle$, $\langle s_4 \rangle$, and $\langle s_5, s_6 \rangle$. The partition into levels is as follows.

        Level 1: $\{ \langle s_1, s_2 \rangle, \langle s_4 \rangle, \langle s_5, s_6 \rangle \}$

        Level 2: $\{ \langle s_1\rangle, \langle s_6 \rangle \}$
        
    \end{example}


    \begin{example}
        Let $A_{\Gamma}$ be of type $B_n$ with the labelling in Figure \ref{fig:defgraphs}. The collection $\{ \langle s_1 \rangle, \langle s_1, s_2 \rangle, \cdots \langle s_1, \cdots s_{n-1} \rangle\}$ spans a simplex $\Sigma$ in $C_{parab}$. We will see later in Proposition \ref{maximal simplex characterization} that $\Sigma$ is maximal.

        The simplex $\Sigma$ has only one complete nesting chain: the entire vertex set of $\Sigma$. The minimal element is $\langle s_1 \rangle$, and the maximal element is $\langle s_1, \cdots s_{n-1} \rangle$. The $k$th level is $\langle s_1 , \cdots, s_{n-k}\rangle$.
    \end{example}

    We can now construct a positive standardizing element for a $C_{parab}$-simplex of dimension larger than $1$. The following is Proposition \ref{propx:sim-std} of the introduction.
            
        \begin{proposition}\label{simplex sim standard}
            Let $\{P_i\}$ span a simplex in $C_{parab}$. There is a positive element $g$ such that $g^{-1} P_i g$ is standard for every $P_i$.
        \end{proposition}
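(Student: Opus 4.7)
The plan is to proceed by strong induction on the number of vertices $n = |\{P_i\}|$. The case $n = 1$ is the existence of the minimal positive standardizer, and $n = 2$ is the pair result from \cite{C-parab-definition} cited in the text above, which (as noted) requires only commutativity of the $z$-elements.

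For the inductive step, choose $P_n$ to be a minimal element of the simplex. By induction applied to $\{P_1, \ldots, P_{n-1}\}$, there is a positive $g_0$ with $g_0^{-1} P_i g_0 = A_{X_i}$ standard for every $i < n$. Set $P = g_0^{-1} P_n g_0$. Since $P_n$ is minimal, for each $i < n$ either $P \leq A_{X_i}$ or $P$ commutes trivially with $A_{X_i}$; partition $\{1, \ldots, n-1\} = I \sqcup J$ accordingly. Set $A_X = \bigcap_{i \in I} A_{X_i}$, a standard parabolic by Theorem \ref{int of parabolics} (taken to be $A_{\Gamma}$ if $I = \varnothing$). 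Let $h$ be the minimal positive standardizer of $P$; by Lemma \ref{minimal standardizer inside parabolic}, $h \in A_X$. I claim $g := g_0 h$ works.

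Two of the three verifications are immediate: $g^{-1} P_n g = h^{-1} P h$ is standard by choice of $h$, and for $i \in I$ the containment $h \in A_X \leq A_{X_i}$ means $h$ lies in (and hence normalizes) $A_{X_i}$, so $h^{-1} A_{X_i} h = A_{X_i}$. The delicate case is $j \in J$: one must show $h^{-1} A_{X_j} h$ is standard. For each $i \in I$, $A_{X_i}$ and $A_{X_j}$ are adjacent in $C_{parab}$, so either (i) $A_{X_i} \leq A_{X_j}$, (ii) $A_{X_j} \leq A_{X_i}$, or (iii) they commute trivially. Case (i) is impossible, since it would force $P \leq A_{X_j}$, contradicting $P \cap A_{X_j} = \{1\}$ and $P \neq \{1\}$. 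If (iii) holds for some $i \in I$, then $X_i$ and $X_j$ are completely disjoint in $\Gamma$, so $X := \bigcap_{i' \in I} X_{i'} \subseteq X_i$ is completely disjoint from $X_j$ as well; hence $A_X$ commutes elementwise with $A_{X_j}$ and $h^{-1} A_{X_j} h = A_{X_j}$.

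The main obstacle is the remaining sub-case where (ii) holds for every $i \in I$, forcing $A_{X_j} \leq A_X$; here $P$ and $A_{X_j}$ both lie in the standard parabolic $A_X$ and commute trivially, but $h$ may no longer preserve $A_{X_j}$. My intended resolution is to strengthen the argument to a joint induction on the pair $(|V(\Gamma)|, n)$ in lexicographic order. When $I \neq \varnothing$ the standard parabolic $A_X$ has strictly fewer Artin generators than $A_{\Gamma}$, so the outer inductive hypothesis applied inside $A_X$ to the sub-simplex $\{P\} \cup \{A_{X_j} : j \in J,\ A_{X_j} \leq A_X\}$ produces a positive element of $A_X$ simultaneously standardizing all of them, which replaces $h$. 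The edge case $I = \varnothing$ (where $A_X = A_{\Gamma}$, so the outer induction gives no reduction) is handled separately: $P$ then commutes with every $A_{X_i}$, and I instead take $h$ to be the minimal positive standardizer of the reducible parabolic $R = P \cdot A_{Y_1} \cdots A_{Y_m}$ formed from $P$ and the maximal (hence pairwise commuting) elements $A_{Y_k}$ of $\{A_{X_i}\}$, using that the standardization of a reducible parabolic standardizes each of its irreducible factors at once, and then recursing via the outer induction inside each standardized $A_{Y_k}$ to standardize the non-maximal $A_{X_i}$'s nested inside it.
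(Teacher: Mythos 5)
Your argument is correct, but it follows a genuinely different route than the paper's. The paper constructs $g$ iteratively from the top of the simplex down: at level $1$ it assembles the maximal vertices into a single reducible parabolic $P_{(1,-)}$ and takes its minimal positive standardizer $g_1$; it then observes that the level-$2$ vertices, after conjugation by $g_1$, sit inside the standard parabolic $g_1^{-1}P_{(1,-)}g_1$, so Lemma~\ref{minimal standardizer inside parabolic} localizes the next standardizer there; it then iterates down the levels, terminating because there are finitely many. You instead peel off a single minimal vertex $P_n$, standardize the rest by the inner induction on $n$, and then fix up $P_n$ by dropping into the intersection $A_X = \bigcap_{i\in I}A_{X_i}$; when $I\neq\varnothing$ you recurse into a strictly smaller Artin group (hence the lexicographic upgrade to $(|V(\Gamma)|,n)$), and when $I=\varnothing$ you effectively re-derive one pass of the paper's top-down argument inside $A_\Gamma$. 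Both proofs rest on the same two pillars---Lemma~\ref{minimal standardizer inside parabolic} and the $n=2$ result from \cite{C-parab-definition}---but the paper's single descending iteration is cleaner, whereas yours requires the double induction and the three-way case analysis for $j\in J$. Two details you should fill in: (1) you need $A_X$ to be irreducible to invoke the proposition inside it, and this does hold because the $\{A_{X_i}\}_{i\in I}$ all contain the nontrivial $P$ and so must form a nesting chain, making $A_X$ the smallest (hence irreducible) member of that chain; (2) in the $I=\varnothing$ branch the final standardizer is not $g_0 h$ but $g_0 h h_1\cdots h_m$ where each $h_k$ comes from the recursion inside the $k$-th standardized component, and you should confirm the $h_k$ lie in mutually commuting standard parabolics and commute with $h^{-1}Ph$ so that each conjugation leaves the others' standardizations intact.
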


        \begin{proof}
            To begin, we relabel the parabolic subgroups in the simplex as $P_{(L, i)}$ where $L$ denotes a particular level, and $i$ indexes the parabolic subgroups within that level. We prove the statement by constructing a suitable $g$, as follows.
            \begin{enumerate}
                \item Consider the subgroups $P_{(1,1)}$ and $P_{(1,2)}$. These subgroups are disjoint, commute, and share a positive standardizer, so in fact, $P_{(1,1)}$ and $P_{(1,2)}$ are the irreducible components of a single reducible parabolic subgroup $P_{(1,1-2)}$. If there is no $P_{(1,3)}$, we proceed to the next step in the construction. If a subgroup $P_{(1,3)}$ does exist, is also in the first level, so it cannot contain or be contained in either of $P_{(1,1)}$ or $P_{(1,2)}$. Thus $P_{(1,3)}$ intersects $P_{(1,1-2)}$ trivially and commutes with it, so the two share a positive standardizer, i.e., $P_{(1,1)}$, $P_{(1,2}$, and $P_{(1,3)}$ are the irreducible components of a single reducible parabolic subgroup $P_{(1,1-3)}$. 
                
                We can repeat this process on the remaining elements in level 1 of the simplex to obtain a single reducible parabolic subgroup $P_(1,-)$ whose irreducible components are precisely the $P_{(1,i)}$. Note that this implies there must be finitely many such maximal elements because there are finitely many Artin generators. Every parabolic subgroup has a minimal positive standardizer, so choose $g_1$ to be the minimal positive standardizer of $P_{(1,-)}$. Let $X$ be the subset of the Artin generating set such that $P_{(1,-)} = g_1 A_{X} g_1^{-1}$.
                
                \item Now consider the subset $\{P_{(2,i)}\}$ in the second level. Conjugating by $g_1$ still results in a simplex, so by the same reasoning as above, $\{g_1 ^{-1} P_{(2,i)}g_1$ are the irreducible components of a single reducible parabolic subgroup $g_1^{-1} P_{(2,-)} g_1$. By construction, $g_1^{-1} P_{(2,-)} g_1$ is a parabolic subgroup of $A_{X} = g_1 ^{-1} P_{(1,-)} g_1$. Let $g_2$ denote the minimal positive standardizer for $g_1^{-1} P_{(2,-)}g_1$.
                
                By Lemma \ref{minimal standardizer inside parabolic}, the minimal positive standardizer $g_2$ is contained in the standard reducible parabolic subgroup $A_X$. This implies that the positive element $g_1 g_2$ standardizes the elements of $\{P_i\}$ in both the first and second levels.

                \item Iterate this process to obtain a positive element $g_1 \cdots g_k$ which standardizes the first $k$ levels of the collection. Since there are finitely many standard parabolic subgroups, this process must terminate at some finite $K$. The desired element is thus $g = g_1 \cdots g_K$.
            \end{enumerate} 
        \end{proof}
    Notice that the element $g$ constructed in this way is unique for each simplex $\{P_i\}$. We call it the \emph{canonical positive standardizer} for the simplex $\{P_i\}$, and we often write $\underline{g}$ to distinguish it from other standardizing elements. We refer to the collection $\{X_i\}$ with $P_i = \underline{g} A_{X_i} \underline{g}^{-1}$ as the \emph{canonical positive standardization}. We use the descriptor ``canonical'' rather than ``minimal'' because we will not determine whether this element is a prefix of other positive elements which standardize the simplex. Any canonical way of choosing a standardizing element is sufficient for our purposes.

        \begin{figure}[h]
                \centering
                \includegraphics[width=8cm]{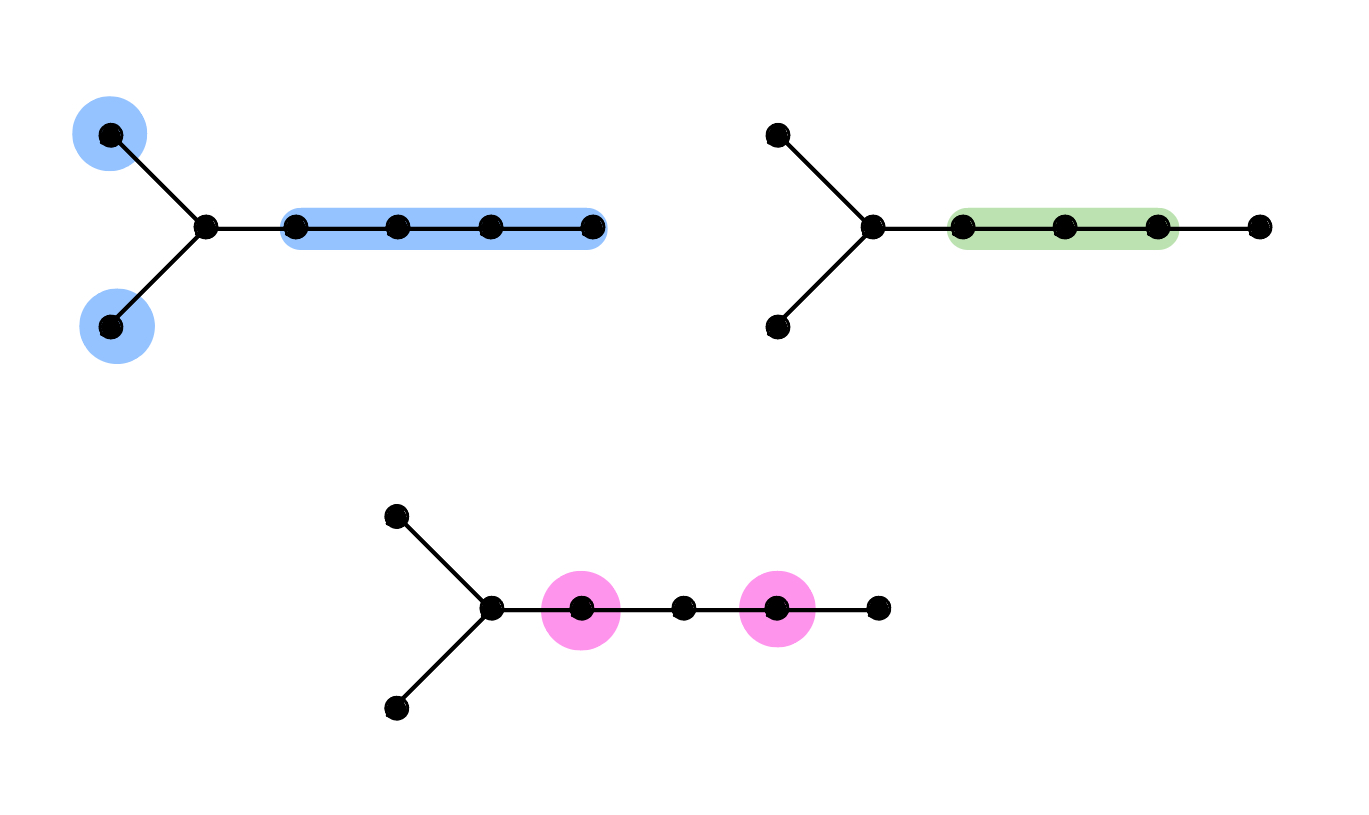}
                \captionsetup{margin=.5cm,justification=centering}
                \caption{Possible maximal subgroups at steps \textcolor{blue}{1}, \textcolor{green}{2}, and \textcolor{pink}{3} of the standardizer construction}
            \end{figure}
        
        \begin{corollary}
            The complex of irreducible parabolic subgroups is finite-dimensional.
        \end{corollary}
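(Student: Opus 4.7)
The plan is to deduce finite-dimensionality directly from the just-established Proposition \ref{simplex sim standard}. Given any simplex $\Sigma$ in $C_{parab}$ with vertex set $\{P_i\}$, the proposition produces a single positive element $g$ and subsets $X_i \subseteq V(\Gamma)$ with $P_i = g A_{X_i} g^{-1}$ for every $i$. Since $g$ is fixed, the map $P_i \mapsto A_{X_i}$ is injective: distinct $P_i$ and $P_j$ force $A_{X_i} = g^{-1} P_i g \neq g^{-1} P_j g = A_{X_j}$, and in particular distinct $X_i$.

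First I would note that the vertices of $C_{parab}$ are irreducible proper parabolic subgroups, so each $A_{X_i}$ must itself be an irreducible proper standard parabolic subgroup; by Theorem \ref{parbs are artin groups} this means each $X_i$ is a proper subset of $V(\Gamma)$ inducing a connected subgraph. Thus the number of vertices of $\Sigma$ is bounded above by the number of such subsets of $V(\Gamma)$, which is finite (at most $2^{|V(\Gamma)|}-1$). Since this bound is independent of $\Sigma$, we conclude that every simplex of $C_{parab}$ has uniformly bounded dimension, so $C_{parab}$ is finite-dimensional.

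There is essentially no obstacle here: the entire content is packaged into Proposition \ref{simplex sim standard}, and the corollary amounts to observing that simultaneous standardizability reduces the question of counting vertices of a simplex to the (visibly finite) question of counting standard irreducible proper parabolic subgroups. The only small care needed is to verify injectivity of the association $P_i \mapsto X_i$, but this is immediate from the fact that conjugation is a bijection of subgroups. I would write the proof in two or three lines, simply citing Proposition \ref{simplex sim standard} and observing the finite upper bound coming from $V(\Gamma)$.
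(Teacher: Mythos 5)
Your proof is correct and matches the argument the paper intends: the corollary is stated without proof immediately after Proposition \ref{simplex sim standard}, and the evident reasoning is exactly yours — conjugation by the simultaneous standardizer is a bijection, so the vertices of any simplex inject into the finite set of proper irreducible standard parabolic subgroups, giving a uniform bound on simplex size. No gaps; the observation about injectivity of $P_i \mapsto A_{X_i}$ is the right (small) thing to check, and the finite bound from subsets of $V(\Gamma)$ is all that is needed.
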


        Notice that the canonical positive standardizer of a simplex is unique only in the sense that it is the unique standardizing element obtained \emph{via this construction}. There are many other standardizers for a simplex. Some of these standardizers may send the simplex to different standardizations. We give one example which arises frequently below.

        \begin{example}
            Let $\{P_i\}$ be a $C_{parab}$ simplex, and let $\{g A_{X_i} g^{-1}\}$ be a choice of standardization. For any $i$ and any power $k$, the element $g \Delta_{X_i}^k$ is also a standardizer for the collection, but we may have $g A_{X_j} g^{-1} = g \Delta_{X_i}^k A_{X'_j} \Delta_{X_i}^{-1} g^{-1}$ for some standard parabolic subgroup $A_{X'_j}$ which was not in the collection $\{A_{X_j}\}$.

            For example, let $A_{\Gamma}$ be of type $B_6$, and let $\{P_i\} = \{\langle s_1, s_2 \rangle , \langle s_4, s_5, s_6 \rangle, \langle s_5, s_6 \rangle\}$. Here the standardizing element $g$ can be taken to be the trivial element. We could also choose $\Delta_{456}^{2}$. Since $\Delta_{456}^{2}$ is central in $\langle s_4, s_5, s_6 \rangle$ and commutes with $\langle s_1, s_2 \rangle$, conjugation by $\Delta_{456}^2$ normalizes each parabolic subgroup in the simplex.

            The element $\Delta_{456}$ does not normalize $\langle s_5, s_6 \rangle$, but it is still a standardizer for the simplex. Because $\langle s_5, s_6 \rangle \leq \langle s_4, s_5, s_6 \rangle$, conjugation by $\Delta_{456}^{-1}$ sends $\langle s_5, s_6 \rangle$ to some standard parabolic subgroup in $\langle s_4, s_5, s_6 \rangle$. In this case, $\Delta_{456}^{-1} \langle s_5, s_6 \rangle \Delta_{456} = \langle s_4, s_5 \rangle$. Since $\Delta_{456}$ normalizes both $\langle s_1, s_2 \rangle$ and $\langle s_4, s_5, s_6 \rangle$, the simplex can also be written in the following form.
            \begin{align*}
                \langle s_1, s_2 \rangle &= \Delta_{456} \langle s_1, s_2 \rangle \Delta_{456}^{-1}\\
                \langle s_4, s_5, s_6 \rangle &= \Delta_{456} \langle s_4, s_5, s_6 \rangle \Delta_{456}^{-1}\\
                \langle s_5, s_6 \rangle &= \Delta_{456} (\Delta_{456}^{-1} \langle s_5, s_6 \rangle \Delta_{456})\Delta_{456}^{-1}\\
                &= \Delta_{456} \langle s_4, s_5 \rangle\Delta_{456}^{-1}
            \end{align*}
            Thus $\Delta_{456}$ is a simultaneous standardizer for the simplex.
        \end{example}

        We now have the tools to describe the maximal simplices of $C_{parab}$ concretely. Since all simplices are simultaneously standardizable, it is no loss of generality to describe the maximality of simplices spanned by standard parabolic subgroups.

        \begin{lemma}\label{3 maxl base components}
            Let $\{A_{X_i}\}$ be a maximal simplex in $C_{parab}$. Any maximal elements of $\{X_i\}$ are contained in disjoint components of $\Gamma - \{v\}$ for some vertex $v \in \Gamma$. In particular, there are at most as many maximal elements as the maximal valence of a vertex in $\Gamma$.
        \end{lemma}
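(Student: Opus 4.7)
The plan is to reduce the statement to a tree argument on $\Gamma$ and exploit maximality to pin down a single separating vertex. First, I would show that any two distinct maximal vertices $A_{X_i}, A_{X_j}$ of the simplex satisfy $X_i \cap X_j = \emptyset$ and have no edges in $\Gamma$ between $X_i$ and $X_j$: by maximality neither is contained in the other, so adjacency in $C_{parab}$ (via the characterization recalled in Subsection \ref{sec:Cparab}) forces them to commute and to intersect trivially, and for standard parabolic subgroups these conditions translate precisely to $X_i \cap X_j = \emptyset$ and to the absence of edges in $\Gamma$ between $X_i$ and $X_j$.

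Next, since every finite-type defining graph in Figure \ref{fig:defgraphs} is a tree, I would let $T \subseteq \Gamma$ be the smallest subtree containing all maximal $X_i$ and set $V' = V(T) \setminus \bigcup_i X_i$, where the union ranges over the maximal $X_i$. The case of a single maximal element is immediate (any $v \notin X_1$ works), so I assume there are $k \geq 2$ maximal elements. Connectedness of $T$ together with the absence of edges in $\Gamma$ between distinct maximal $X_i$ forces $|V'| \geq 1$.

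The key step is to show $|V'| = 1$, by contradiction with maximality. Fix $w \in V'$ and let $N(w)$ be the collection of maximal $X_i$ adjacent to $w$ in $\Gamma$. If $|N(w)| \leq 1$, I enlarge the simplex: adding $\langle w \rangle$ when $N(w) = \emptyset$, or adding $A_{X_i \cup \{w\}}$ when $N(w) = \{X_i\}$. In each case the new vertex is irreducible and proper, is distinct from every existing $A_{X_j}$, commutes with every other maximal $A_{X_j}$ because $w$ has no edge to $X_j$, and is compatible with the non-maximal simplex vertices via either inclusion into their ambient maximal piece or commutation. Both cases contradict maximality, so $|N(w)| \geq 2$ for every $w \in V'$. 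If $|V'| \geq 2$, I pick $w \in V'$ and set $Z = \{w\} \cup \bigcup_{X_i \in N(w)} X_i$; the subgroup $A_Z$ is irreducible (connected through $w$), proper since it omits any $w' \in V' \setminus \{w\}$, strictly contains each $A_{X_i}$ with $X_i \in N(w)$, and commutes with every maximal $A_{X_j}$ with $X_j \notin N(w)$ because such an $X_j$ has no edge to any $X_i \in N(w)$ (by the first paragraph) nor to $w$ (by the definition of $N(w)$); the non-maximal simplex vertices are handled the same way. So $A_Z$ properly extends the simplex, contradicting maximality. I expect this construction, and the bookkeeping of adjacency against every simplex vertex, to be the main obstacle.

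With $V' = \{v\}$, the structure of $T$ forces $v$ to be adjacent in $\Gamma$ to every maximal $X_i$ (else $T$ would split), and in the tree $\Gamma$ the unique path between any two distinct $X_i, X_j$ must pass through $v$. Hence the maximal $X_i$ lie in pairwise distinct components of $\Gamma - \{v\}$, giving the first claim of the lemma. The second claim follows because each maximal $X_i$ contributes a distinct neighbor of $v$ in $\Gamma$, so $k \leq \deg_{\Gamma}(v)$, which is at most the maximal valence of any vertex of $\Gamma$.
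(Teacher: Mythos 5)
Your proof is correct, and it takes a genuinely different route from the paper's. The paper's argument is local: it fixes one maximal element $A_X$, considers $\partial(X)$ and the components of $\Gamma - (X \cup \partial(X))$, uses acyclicity of $\Gamma$ to identify $v$ as the \emph{unique} vertex of $\partial(X)$ adjacent to the component containing a second maximal $Y$, and then handles any third maximal $A_Z$ by showing that placing $Z$ in the same component of $\Gamma - \{v\}$ as $X$ or $Y$ would yield a larger irreducible parabolic subgroup commuting with or containing every simplex vertex, violating maximality. Your argument is global and more symmetric: you form the minimal spanning subtree $T$ of the maximal $X_i$ and show the set $V'$ of bridging vertices has exactly one element, using maximality twice (once to force $|N(w)| \geq 2$ for each $w \in V'$, once to force $|V'| \leq 1$), after which the tree structure immediately places the maximal $X_i$ in distinct components of $\Gamma - \{v\}$. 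Both arguments hinge on $\Gamma$ being a tree and on extending the simplex to derive a contradiction. Yours buys a slightly cleaner picture (the unique separating vertex is exhibited as the sole bridge of $T$, and you incidentally show $v$ is adjacent to every maximal $X_i$), at the cost of two contradiction arguments and the adjacency bookkeeping you flag; the paper's version requires only one appeal to maximality but singles out one maximal element and requires a short chain of observations about $\partial(X)$. The bookkeeping you identify as the main obstacle — verifying that your candidate extension vertex ($\langle w \rangle$, $A_{X_i \cup \{w\}}$, or $A_Z$) is adjacent in $C_{parab}$ to \emph{all} simplex vertices including the non-maximal ones, and is both proper and new — is real and does need to be written out, but the inclusion-or-commutation dichotomy you invoke handles every case, so there is no gap.
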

        \begin{proof}
            If there are fewer than two maximal elements, then we are done. Vertices of the parabolic subgroup graph must be \emph{proper} irreducible parabolic subgroups, so we can take $v$ to be a generator which is not contained in the unique maximal component. Let $A_X$ and $A_Y$ be any two maximal elements in the collection $\{P_i\}$. Since neither is contained in the other, they must be disjoint and commuting. It is clear from the definition of edges in $\Gamma$ that this implies $Y \subseteq V(\Gamma -(X \cup \partial(X)))$, where $\partial(X)$ denotes the collection of vertices of $\Gamma$ which are not contained in $X$ but are adjacent to a vertex in $X$. Since $\Gamma$ is connected, there is some vertex $v$ in $\partial(X)$ which is adjacent to any connected component of $\Gamma - (X \cup \partial(X))$. 
            
            Since $\Gamma$ has no cycles and $X \cup \partial(X)$ is connected, there is exactly one vertex of $\partial(X)$ which is adjacent to a given connected component of $\Gamma - (X \cup \partial(X))$. Let $y$ and $x$ be vertices of $Y$ and $X$ respectively such that $y$ and $x$ are both adjacent to $v$. The vertices $y$ and $x$ cannot be adjacent because $A_X$ commutes with $A_Y$, so the path of length 2 from $y$ to $x$ via $v$ is a geodesic. If $A_Y$ and $A_X$ were in the same connected component of $\Gamma - \{v\}$, then some vertex of $A_Y$ would be connected to some vertex of $A_X$ by a path which did not include $\{v\}$. This is impossible because $\Gamma$ is a tree. Then $A_X$ and $A_Y$ lie in two different connected components of $\Gamma - \{v\}$ for some vertex $v$.

            Suppose there is another maximal element $A_Z$. By the same reasoning as above, $Z$ is contained in $\Gamma - (X \cup \partial X)$. Suppose $Z$ and $Y$ were in the same connected component of $\Gamma - \{v\}$. Since $Z \not\subseteq Y$ and $Y\not\subseteq Z$, the standard parabolic subgroup corresponding to this connected component properly contains both $A_Z$ and $A_Y$ and commutes with $A_X$. This contradicts maximality of the base collection. The same argument shows that $Z$ cannot be in the same connected component of $\Gamma - \{v\}$ as $X$, so it must be in its own connected component. 
        \end{proof}
        Since no finite-type defining graph has a vertex of valence more than 3, we immediately obtain the following corollary.
        \begin{corollary}
            Any maximal simplex in $C_{parab}$ has at most 3 maximal base elements.
        \end{corollary}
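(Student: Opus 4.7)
The plan is to combine Lemma \ref{3 maxl base components} with a direct inspection of the finite-type defining graphs classified in Figure \ref{fig:defgraphs}. First I would use Proposition \ref{simplex sim standard} to reduce to the case of a maximal simplex whose vertex set consists of standard parabolic subgroups $\{A_{X_i}\}$; this is legitimate because conjugation is a simplicial automorphism of $C_{parab}$, so maximality is preserved, and the number of maximal elements in a simplex is clearly a conjugation invariant.

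Next I would invoke Lemma \ref{3 maxl base components} directly: it asserts that the maximal elements $A_{X_{i_1}}, \ldots, A_{X_{i_k}}$ sit in pairwise distinct connected components of $\Gamma - \{v\}$ for some vertex $v$ of $\Gamma$, and therefore $k$ is bounded above by the valence $\deg_{\Gamma}(v)$. Thus it suffices to check that every finite-type defining graph has maximal vertex valence at most $3$.

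The hard part, such as it is, reduces to inspecting the list of defining graphs in Figure \ref{fig:defgraphs}. The graphs of type $A_n$, $B_n$, $F_4$, $H_3$, $H_4$, $I_2(m)$ are paths, so every vertex has valence at most $2$. The graphs of type $D_n$, $E_6$, $E_7$, $E_8$ are trees with a single branch vertex of valence exactly $3$, and all other vertices have valence at most $2$. In every case the maximum valence is at most $3$, giving $k \leq 3$.

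I do not expect any genuine obstacle here; the statement is a combinatorial consequence of the previous lemma together with the elementary classification of finite-type Coxeter graphs. The only subtlety worth flagging in the write-up is the reduction from an arbitrary maximal $C_{parab}$-simplex to one spanned by standard parabolic subgroups, so that Lemma \ref{3 maxl base components} can be applied.
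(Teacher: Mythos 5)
Your proposal is correct and matches the paper's argument: Lemma \ref{3 maxl base components} bounds the number of maximal elements by the maximum vertex valence in $\Gamma$, and every finite-type defining graph has maximum valence at most $3$. The only difference is that you explicitly spell out the reduction to standard parabolic subgroups via Proposition \ref{simplex sim standard}, which the paper leaves implicit since that proposition was just established; this is a reasonable bit of caution but not a substantive divergence.
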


    The following is Proposition \ref{propx:maxl-characterization} of the introduction.

        \begin{proposition}\label{maximal simplex characterization}
            Let $\{A_{X_i}\}$ span a simplex $\Sigma$ in $C_{parab}$. The simplex $\Sigma$ is maximal precisely when the following hold.
            \begin{itemize}
                \item $\bigcup_{i} X_i = V(\Gamma) - \{t\}$ for some Artin generator $t$.
                \item For every $A_{X_i}$ in the simplex, the union of all $X_j \subsetneq X_i$ is equal to $X_i - \{t_i\}$ for some Artin generator $t_i$ contained in $X_i$.
            \end{itemize}
        \end{proposition}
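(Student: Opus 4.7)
My plan is to prove both directions simultaneously by induction on $|V(\Gamma)|$. For the forward direction, suppose $\Sigma = \{A_{X_i}\}$ is maximal. By Lemma \ref{3 maxl base components}, the maximal elements of $\{X_i\}$ lie in disjoint components of $\Gamma - \{t\}$ for some $t \in V(\Gamma)$. I will show (a) each maximal $X_{j_l}$ equals its entire component $C_l$ of $\Gamma - \{t\}$, and (b) every component of $\Gamma - \{t\}$ contains some maximal $X_{j_l}$. Both follow the same pattern: if (a) or (b) fails, then for the relevant component $C$, $A_C$ is an irreducible proper parabolic extending $\Sigma$, since it commutes with everything in other components of $\Gamma - \{t\}$ (the tree-cut-vertex $t$ prevents any connecting edges) and is comparable by inclusion to everything in its own component, contradicting maximality. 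Together (a) and (b) give $\bigcup_i X_i = \bigsqcup_l X_{j_l} = V(\Gamma) - \{t\}$, the first bullet. For the second bullet, I observe that the sub-simplex $\Sigma_{(i)} = \{A_{X_k} : X_k \subsetneq X_i\}$ is maximal in $C_{parab}(A_{X_i})$: any extension $Q$ of $\Sigma_{(i)}$ there would also extend $\Sigma$ (it is adjacent to $A_{X_i}$ by containment, and comparable to or commuting with all other $A_{X_k}$ by the same tree-separation argument). The inductive hypothesis applied to $A_{X_i}$ then yields the first bullet for $\Sigma_{(i)}$, which is precisely the second bullet for $\Sigma$ at $X_i$.

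For the backward direction, assume $\Sigma$ satisfies both bullets and suppose, toward a contradiction, that some irreducible proper parabolic $Q$ extends $\Sigma$. Proposition \ref{simplex sim standard} applied to $\Sigma \cup \{Q\}$ gives a positive $g$ that simultaneously standardizes everything; write $\Sigma' = g^{-1} \Sigma g = \{A_{X'_i}\}$ and $A_Y = g^{-1} Q g$. Conjugation preserves inclusion and commuting relations, and Theorem \ref{parab subgroup conjugacy} gives $|X_i| = |X'_i|$ for the conjugate standard parabolics $A_{X_i}$ and $A_{X'_i}$. Running the forward combinatorial argument on $\Sigma'$, the maximal $X'_{j_l}$'s form a pairwise disjoint collection of connected subgraphs of $\Gamma$ with no edges between them and total size $|V(\Gamma)| - 1$, forcing them to be exactly the components of $\Gamma - \{t'\}$ for some $t'$; iterating on sub-simplices shows $\Sigma'$ also satisfies (i) and (ii). I may therefore replace $\Sigma$ by $\Sigma'$ and assume $Q = A_Y$ is itself standard.

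It remains to rule out each way a standard $A_Y$ could extend $\Sigma$. If $A_{X_{j_{l_0}}} \subsetneq A_Y$ for some $l_0$, then $Y$ is connected and strictly contains the component $C_{l_0}$, so $t \in Y$; the edges from $t$ into each other component $C_l$ prevent $A_Y$ from commuting with $A_{X_{j_l}}$, forcing $X_{j_l} \subsetneq Y$ for every $l$ and hence $Y = V(\Gamma)$, contradicting properness. If $A_Y \subsetneq A_{X_{j_{l_0}}}$ for some $l_0$, then $A_Y$ automatically commutes with each other maximal $A_{X_{j_l}}$ (since $Y \subseteq C_{l_0}$ is separated from the other components by $t$), and $A_Y$ extends the sub-simplex $\Sigma_{(j_{l_0})} = \{A_{X_k} : X_k \subsetneq X_{j_{l_0}}\}$ inside $C_{parab}(A_{X_{j_{l_0}}})$; the inductive hypothesis makes $\Sigma_{(j_{l_0})}$ already maximal, so $A_Y$ must lie in $\Sigma$. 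Finally, if $A_Y$ commutes with every maximal $A_{X_{j_l}}$, then $Y$ is disjoint from each $X_{j_l}$, so $Y \subseteq \{t\}$ gives $Y = \{t\}$; but $t$ is adjacent in $\Gamma$ to a vertex of each $C_l$, so $\langle t \rangle$ does not actually commute with the $A_{X_{j_l}}$. All cases yield contradictions, so $\Sigma$ is maximal. The main technical subtlety is the verification that $\Sigma'$ still satisfies the two bullet conditions after simultaneous standardization, which relies critically on Theorem \ref{parab subgroup conjugacy} and is what allows the backward direction to reduce to a purely combinatorial case analysis fed by the inductive hypothesis.
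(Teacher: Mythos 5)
Your proposal is correct and follows essentially the same approach as the paper: both use Lemma \ref{3 maxl base components} to place maximal base elements in distinct components of $\Gamma - \{t\}$, both argue that a failure of either bullet condition would produce an irreducible proper parabolic extending $\Sigma$, and both use simultaneous standardizability (Proposition \ref{simplex sim standard}) together with the invariance of generator count under conjugacy (Theorem \ref{parab subgroup conjugacy}) to reduce the converse to the standard case. The main difference is presentational: you organize the argument as a clean induction on $|V(\Gamma)|$, pushing the second bullet and the inclusion case of the converse down to sub-simplices inside $C_{parab}(A_{X_i})$, whereas the paper handles these points more tersely (``a similar argument applied to each element'') without the explicit inductive scaffolding; your version is arguably more rigorous but not conceptually distinct.
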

        The generator $t$ in the first condition depends on the parabolic subgroups which make up the first level of the simplex. If $A_{X_i}$ is at level $k$, the generator $t_i$ depends on the parabolic subgroups in the simplex which are at level $k+1$ and are contained in $A_{X_i}$.
        \begin{proof}
            First, notice that the union of the $X_i$ is contained in $\Gamma - \{t\}$ for some $t$ by Lemma \ref{3 maxl base components}. Suppose $\bigcup_i X_i \subseteq V(\Gamma) - \{t_1, t_2\}$ for some distinct $t_1$ and $t_2$. Let $A_{X_1}, A_{X_2}, A_{X_3}$ be maximal. Then $A_{X_1 \cup X_2 \cup X_3 \cup t_1}$ is a proper parabolic subgroup of $A_{\Gamma}$. This parabolic subgroup has some maximal component $M$ which is not equal to any of $A_{X_1}$, $A_{X_2}$, and $A_{X_3}$. 
            
            The subgroups $A_{X_1}, A_{X_2},$ and $A_{X_3}$ are all contained in this reducible parabolic subgroup, so they are either contained in $M$ or commute with $M$. Every other $A_{X_j}$ in $\Sigma$ is contained in one of $A_{X_1}$, $A_{X_2}$, or $A_{X_3}$, so they are also connected to $M$ via an edge in $C_{parab}$. Thus the collection $\{A_{X_i}\}$ cannot have been maximal, since $\{A_{X_i}\} \cup \{M\}$ is a larger simplex.

            A similar argument applied to each element $A_{X_i}$ shows that the second condition is necessary. 

            Now, we check that these conditions are sufficient. First, notice that if both conditions hold, then there is no \emph{standard} parabolic subgroup which can be added to such a collection. We cannot add a new maximal element because it would have to contain the missing Artin generator $t$, and we have already seen that this does not define a simplex. Similar reasoning shows that we cannot add a standard parabolic subgroup contained in any $X_i$. 

            If there were some irreducible parabolic subgroup $P$ in the link of $\Sigma$, then it could be simultaneously standardized. Paris's algorithm for checking conjugacy of standard parabolic subgroups implies that two standard parabolic subgroups can only be conjugate if they contain the same number of generators \cite{Paris-subgroup-conjugacy}. In particular, any other standardization of a collection $\{A_{X_i}\}$ with the properties in the statement will have the same properties. There are no suitable standard parabolic subgroups conjugate to $P$, so there is no such $P$, and $\Sigma$ is maximal.
        \end{proof} 

\subsection{Ribbons}\label{sec:ribbons}
In much of what follows, a key point will be understanding normalizers of certain parabolic subgroups of finite-type Artin groups. Much is known about these normalizers already: Godelle showed in \cite{Godelle-spherical-normalizers} that the normalizer of a standard parabolic subgroup $A_X$ is the semidirect product $H_X \ltimes A_X$, where $H_X$ is a subgroup consisting of special elements called \emph{$X$-ribbons-$X$}; see Definition \ref{ribbon def}. For most choices of $A_X$, there are many such ribbons. When $X$ includes almost all of the Artin generators, however, there are relatively few ribbons. We devote this subsection to classifying the possibilities in that special case, which will prove useful for understanding vertex stabilizers in our marking graph.

    We adopt the notation that $x^{\square}$ can represent $x^k$ for any integer $k$, since the particular exponent will often be unimportant for our purposes. The main goal of this subsection is to prove the following.

    \begin{proposition}\label{product of deltas}
        Let $X \subsetneq S$ be such that $X \cup \{t\} = S$. Every positive $X$-ribbon-$X$ can be written as a product of the form $\Delta_{X_1}^{\square} \Delta_{X_2}^{\square} \Delta_{X_3}^{\square} \Delta_{\Gamma}^{\square}$, where $X_i$ are the indecomposable components of $X$, up to two of which may be empty.
    \end{proposition}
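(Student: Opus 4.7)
The plan is to start with a positive $X$-ribbon-$X$ $g$ and analyse the automorphism of $A_X$ induced by conjugation by $g$: first peel off a factor $\Delta_\Gamma^{\square}$ to kill the induced permutation of the indecomposable components of $X$, then peel off factors $\Delta_{X_i}^{\square}$ to kill the residual action on each component, leaving an element that centralises $A_X$ and is absorbed into the remaining Garside factors. Because $\Gamma$ is a finite-type Coxeter tree of maximum valence $3$, the set $X = V(\Gamma) \setminus \{t\}$ has at most three indecomposable components $X_1, X_2, X_3$, so there are at most three factors to track; this is why exactly three $\Delta_{X_i}^{\square}$ slots suffice, with some possibly trivial.

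For the first reduction, conjugation by $g$ induces an automorphism of $A_X$ that permutes the canonically defined factors $A_{X_1}, A_{X_2}, A_{X_3}$, giving a permutation $\sigma$ of $\{X_1, X_2, X_3\}$ which preserves isomorphism type as Artin groups. A case check against the finite-type defining graphs in Figure \ref{fig:defgraphs} shows that a non-trivial such $\sigma$ only arises when $\delta_0$ is non-trivial and fixes $t$ (types $A_n$ with $n$ odd and $t$ the middle vertex; $D_n$ with $n$ odd and $t$ the branch vertex; $E_6$ with $t$ the branch vertex), and in every such case $\sigma$ is precisely the permutation induced by $\delta_0$. Hence there is an integer $k$ for which $g' = g \Delta_\Gamma^{-k}$ induces the identity permutation on components, so $g'$ lies in $\bigcap_{i} N_{A_\Gamma}(A_{X_i})$ (using $\Delta_\Gamma^{2}$ in the cases where $\Delta_\Gamma$ itself fails to normalise $A_X$).

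Conjugation by $g'$ now restricts to an automorphism of each irreducible $A_{X_i}$, and the only such automorphisms realisable by conjugation come from the local diagram automorphism $\delta_{0, X_i}$, implemented inside $A_{X_i}$ by $\Delta_{X_i}^{\square}$. Stripping off appropriate powers $\Delta_{X_i}^{a_i}$ leaves an element $g''$ that acts as the identity on $A_X$, so $g'' \in Z_{A_\Gamma}(A_X)$. For $X$ of co-size one in $V(\Gamma)$ this centraliser is generated by $Z(A_X) = \langle \Delta_{X_i}^{\square} \rangle_i$ together with the central powers of $\Delta_\Gamma$, so $g''$ is a power of $\Delta_\Gamma^{\square}$ up to the $\Delta_{X_i}^{\square}$ already removed. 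Rearranging using $\Delta_\Gamma \Delta_{X_i} \Delta_\Gamma^{-1} = \Delta_{\delta_0(X_i)}$ puts the factors into the stated order.

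The main obstacle is the permutation step: I must rule out that $\sigma$ is realised by some $\Delta_Y$ with $Y$ lying strictly between an $X_i$ and $V(\Gamma)$, the natural suspect being the $A_3$ subgroup $\langle s_1, s_3, s_2 \rangle$ in $D_n$ when $t = s_3$ and $n$ is even, whose Garside element swaps $s_1$ and $s_2$. The key observation ruling this out is that such a $\Delta_Y$ acts non-trivially on at least one Artin generator of $X \setminus Y$ that is adjacent in $\Gamma$ to a vertex of $Y$, and therefore fails to normalise one of the remaining components of $X$; this contradicts $g \in N(A_X)$ and forces the permutation to factor through $\delta_0$, which is exactly what keeps only $\Delta_\Gamma^{\square}$ in the final product.
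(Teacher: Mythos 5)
Your approach is genuinely different from the paper's. The paper works bottom-up from the definition of ribbons: since $X$ misses only one generator, the only elementary ribbon not of the form $\Delta_{X_i}$ is $\Delta_\Gamma \Delta_X^{-1}$, and the proof tracks what happens to an arbitrary composition of such elementary ribbons, using Lemma~\ref{Delta conjugation} to commute factors into the required order. You instead propose a top-down argument: analyse the automorphism of $A_X$ induced by conjugation by $g$, strip off a power of $\Delta_\Gamma$ to kill the permutation of components, then strip off powers of $\Delta_{X_i}$ to kill the residual action, and finally absorb what remains into the centraliser. The top-down approach would be cleaner if it worked, but as written it has a genuine gap.

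The gap is in the interaction between your step 2 and step 3. You choose $k$ so that $g' = g\Delta_\Gamma^{-k}$ induces the trivial permutation \emph{of the indecomposable components} of $X$. But this criterion is too weak: it does not force $g'$ to act on the generators within each component by a power of $\delta_{0,X_i}$. A concrete failure: take $\Gamma$ of type $D_5$ and let $t$ be the far end of the long arm, so $X = \Gamma \setminus \{t\}$ is a single component of type $D_4$. Then $g = \Delta_\Gamma\Delta_X^{-1}$ is a positive $X$-ribbon-$X$ that swaps the two prong-end generators of the $D_4$. With a single component, the permutation $\sigma$ of components is automatically trivial, so your criterion is satisfied already by $k=0$, giving $g' = g$. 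But conjugation by $g'$ induces the prong-swap on $A_{D_4}$, which is a nontrivial diagram automorphism and \emph{not} a power of $\delta_{0,D_4}$ (which is trivial, as $\Delta_{D_4}$ is central in $A_{D_4}$). So step 3 fails: $g'$ does not lie in the image of $\langle\Delta_{X_1},\dots\rangle$ up to the centraliser. The correct choice is $k=1$, but nothing in your reduction detects this. To fix the argument you would need to choose $k$ to kill the full $\Delta_\Gamma$-action on the generators of $X$ (not just the components), and then the claim that such a $k$ exists is essentially the content of the proposition, so you cannot assume it. You would also need to justify the asserted description of $Z_{A_\Gamma}(A_X)$ for almost-maximal $X$; this requires an argument and is not an immediate consequence of the normalizer description.

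The underlying issue is that the constraints on which diagram automorphisms of $A_{X_i}$ are realisable by conjugation inside $A_\Gamma$ are precisely encoded by the ribbon structure, and your ``main obstacle'' paragraph addresses only the possibility of a single intermediate $\Delta_Y$ rather than an arbitrary composition of elementary ribbons. The paper's proof sidesteps all of this by never separating the $\Delta_\Gamma$-contribution from the $\Delta_{X_i}$-contributions until the very end; each elementary ribbon is written down explicitly, and the commutation relations of Lemma~\ref{Delta conjugation} do the bookkeeping. I would suggest following that route, or, if you want to keep the top-down structure, replacing the component-permutation criterion with a criterion on the full induced permutation of $V(\Gamma)\setminus\{t\}$ and proving directly (from the ribbon structure or from Theorem~\ref{parab subgroup conjugacy}) that this permutation always agrees with some power of $\delta_0$ on all of $X$.
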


    We first recall the precise definition of $X$-ribbons-$X$. The following definition is due to Godelle, but the same collection of elements was defined earlier by Paris in \cite{Paris-subgroup-conjugacy}, where they were called $(X,X)$-conjugators.
    
    \begin{definition}[\cite{Godelle-spherical-normalizers} Definition 0.4]\label{ribbon def}
        Let $A_{\Gamma}$ be an irreducible finite-type Artin group with Artin generating set $S$. Let $X \subseteq S$ and $t \in S$, and let $X(t)$ be the indecomposable component of $A_{X \cup \{t\}}$ containing $t$. If $t \not\in X$, define $d_{X,t} = \Delta_{X(t)} \Delta^{-1}_{X(t) - \{t\}}$. If $t \in X$, define $d_{X,t} = \Delta_{X(t)}$. In either case, there is a unique component $Y$ of $X \cup \{t\}$ such that $Y = d_{X,t} X d_{X,t}^{-1}$, and we say that $d_{X,t}$ is a \emph{positive elementary $X$-ribbon-$Y$}. 

        For $X, Y \subseteq S$, we say that $g \in A_{\Gamma}^{+}$ is a \emph{positive $Y$-ribbon-$X$} if $g = g_n \cdots g_1$ where $g_i$ is a positive elementary $X_i$-ribbon-$X_{i-1}$ with $X_0 = X$ and $X_n = Y$. 
    \end{definition}

    Notice that the property of being an $X$-ribbon-$X$ is stronger than simply normalizing $A_{X}$; $X$-ribbons-$X$ normalize $A_{X}$ \emph{via a permutation of the generating set $X$}.

    \begin{example}
        Let $A_{\Gamma}$ be of type $B_n$. First, let $s_1$ be the Artin generator which is connected to $s_2$ via a 4-labeled edge and which commutes with all other Artin generators as in Figure \ref{fig:defgraphs}. Let $X = \{s_1\}$. The element $d_{X, s_2}$ is $\Delta_{12} s_1^{-1}$. In a dihedral group with even edge label, the Garside element is central. Thus $d_{X,s_2} s_1 d_{X,s_2}^{-1} = s_1$, so $d_{X,s_2}$ is a positive elementary $X$-ribbon-$X$.

        Now let $Y = \{s_2\}$, and let $s_3$ be the Artin generator connected to $s_2$ by a 3-labeled edge. By the same reasoning as above, the element $d_{Y, s_1}$ is an elementary $Y$-ribbon-$Y$. The Garside element of the dihedral braid group $\langle s_2, s_3 \rangle$ is $s_2 s_3 s_2 = s_3 s_2 s_3$, so the ribbon $d_{Y,s_3}$ is $s_2 s_3$, and 
        \begin{align*}
            d_{Y, s_3} s_2 d_{Y, s_3}^{-1} &= s_2 s_3 s_2 s_3^{-1} s_2^{-1}\\
            &= s_3 s_2 s_3 s_3^{-1} s_2^{-1}\\
            &= s_3\text{.}
        \end{align*}
        The element $d_{Y,s_3}$ is a positive elementary $\{s_3\}$-ribbon-$\{s_2\}$. The product $d_{Y, s_3} d_{X, s_1}$ is an $\{s_3\}$-ribbon-$\{s_2\}$ with $X_0 = X_1 = \{s_2\}$ and $X_2 = \{s_3\}$.
        
    \end{example}

    It is shown in \cite{C-parab-definition} that if $g$ conjugates $A_X$ to $A_Y$, then $g z_X g^{-1} = z_Y$. We require a slightly stronger result for certain choices of $g$.

    \begin{lemma}\label{Delta conjugation}
        Let $A_X$ be a standard parabolic subgroup of a finite-type Artin group $A_{\Gamma}$, and let $X \subseteq T$. Suppose that $\Delta_T$ conjugates $A_{X}$ to the standard parabolic subgroup $A_{Y}$. Then $\Delta_T$ conjugates $\Delta_{X}$ to $\Delta_{Y}$ and $\Delta_{Y}$ to $\Delta_{X}$.
    \end{lemma}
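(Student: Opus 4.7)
The plan is to leverage the fact, recalled in Section 2.2, that conjugation by the Garside element $\Delta_T$ restricted to $A_T$ is an automorphism that permutes the Artin generators of $T$, and moreover induces an involution on that generating set (either trivially, when $\Delta_T$ is central in $A_T$, or via one of the four explicit permutations enumerated in the background).

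First I would argue that conjugation by $\Delta_T$ sends $X$ to $Y$ \emph{as subsets of $T$}, not merely at the level of subgroups. Let $\delta_T : T \to T$ denote the permutation of generators induced by conjugation by $\Delta_T$. For each $s \in X$, the element $\Delta_T s \Delta_T^{-1} = \delta_T(s)$ lies in $A_Y$ and is itself a generator in $T$; since a generator $t \in T$ that belongs to a standard parabolic $A_Y$ must lie in $Y$ (this follows from the fact that two distinct standard generators are not conjugate in the parabolic they span), we conclude $\delta_T(X) \subseteq Y$. Symmetrically, using $\Delta_T^{-1}$ and the relation $\Delta_T^{-1} A_Y \Delta_T = A_X$, one obtains $\delta_T^{-1}(Y) \subseteq X$, so $\delta_T(X) = Y$ as sets.

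Next I would invoke the fact that the Garside element $\Delta_X$ is determined purely by the generating set $X$ --- it is the least common multiple of the generators in $X$ in the Artin monoid --- and is therefore preserved by any automorphism of $A_\Gamma$ whose restriction to generators is a bijection $X \to Y$. Applying this to the inner automorphism given by conjugation by $\Delta_T$ yields $\Delta_T \Delta_X \Delta_T^{-1} = \Delta_Y$.

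Finally, for the reverse identity $\Delta_T \Delta_Y \Delta_T^{-1} = \Delta_X$, I would use that $\Delta_T^2$ centralizes $A_T$. When $T$ is irreducible this is standard. When $T$ is reducible with irreducible components $T_1, \dots, T_k$, we have $\Delta_T^2 = \prod_i \Delta_{T_i}^2$, each factor is central in its own component, and the components commute, so $\Delta_T^2$ commutes with every generator in $T$, hence with $\Delta_X \in A_T$. Conjugating both sides of $\Delta_T \Delta_X \Delta_T^{-1} = \Delta_Y$ by $\Delta_T$ gives
\[
\Delta_X \;=\; \Delta_T^{2}\,\Delta_X\,\Delta_T^{-2} \;=\; \Delta_T\,\Delta_Y\,\Delta_T^{-1},
\]
as required.

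The only subtle step is the first: the hypothesis gives only a subgroup-level identity $\Delta_T A_X \Delta_T^{-1} = A_Y$, and one must upgrade this to a set-level identity $\delta_T(X) = Y$ for the generators. This is the main obstacle, and it is exactly the place where the special structure of conjugation by a Garside element (that it permutes generators, rather than acting by arbitrary inner automorphisms) is essential.
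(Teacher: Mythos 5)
Your proof is correct and follows the same three-step skeleton as the paper: promote the subgroup identity $\Delta_T A_X \Delta_T^{-1} = A_Y$ to the set-level identity $\delta_T(X)=Y$ using that $\Delta_T$-conjugation permutes the generators of $T$, deduce $\Delta_T\Delta_X\Delta_T^{-1}=\Delta_Y$, and then use centrality of $\Delta_T^2$ in $A_T$ for the reverse direction. The only difference is cosmetic: where you argue directly that $\Delta_X$ is the lcm of the generators in $X$ and hence respected by the generator bijection, the paper instead cites Theorem 5.1 of Paris and Lemma 2.2 of Godelle for the implication ``conjugates $X$ to $Y$ $\Rightarrow$ conjugates $\Delta_X$ to $\Delta_Y$.''
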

    \begin{proof}
        Conjugation by $\Delta_T$ induces a permutation of the Artin generators in $A_T$, so if $\Delta_T$ conjugates $A_X$ to $A_{Y}$, then $\Delta_T X \Delta_T^{-1} = Y$. Both Theorem 5.1 in \cite{Paris-subgroup-conjugacy} and the proof of Lemma 2.2 in \cite{Godelle-spherical-normalizers} show that if an element of the Artin group conjugates $X$ to $Y$, then it conjugates $\Delta_{X}$ to $\Delta_{Y}$. 

        The square of the Garside element is always central. This implies that $\Delta_T Y \Delta_T^{-1} = X$, and the same argument reveals $\Delta_T \Delta_{Y} \Delta_T^{-1} = \Delta_X$.
    \end{proof}

We can now proceed to the proof of Proposition \ref{product of deltas}.
    \begin{proof}
        Choose some $t' \in S$. We start by classifying the possible elements $d_{X,t'}$. If $t' \in X$, then it is in exactly one $X_i$, and $d_{X,t} = \Delta_{X_i}$. In this case, we have $d_{X,t} A_{X} d_{X,t}^{-1} = A_X$. There is only one choice of $t' \not\in X$, and the indecomposable component of $A_{X \cup \{t'\}}$ containing $t'$ is necessarily all of $A_{\Gamma}$. Thus if $t' \not \in X$, then $d_{X, t'} = \Delta_{\Gamma} \Delta_X^{-1} = \Delta_{\Gamma} \Delta_{X_1}^{-1}\Delta_{X_2}^{-1}\Delta_{X_3}^{-1}$.

        If $t' \not\in X$, then there is a subset $Y$ of $S$ such that $Y = d_{X, t'} X d_{X, t'}^{-1}$. Expanding this, we find that \begin{align*}
            Y =  \Delta_{\Gamma} \Delta_X^{-1} X \Delta_X \Delta_{\Gamma}^{-1} = \Delta_{\Gamma} X \Delta_{\Gamma}^{-1}\text{.} \tag{*}
        \end{align*}

        \noindent\textbf{\underline{Case 1:}} If $Y = X$, then we have determined that the only choice of path $\{X_i\}$ connected via ribbons with $X_0 = X = X_n$ is the path of length 1, so the elementary $X$-ribbons-$X$ are precisely $\Delta_{X_i}$ for each $i$ and $\Delta_{\Gamma} \Delta_X^{-1}$. 
        
        To see that this proves the desired result, note that Lemma \ref{Delta conjugation} applied with $\Gamma$ playing the role of $T$ and $X$ playing the role of both $X$ and $Y$ shows that $\Delta_{\Gamma}$ and $\Delta_{X}^{-1}$ in fact commute, so 
        \begin{align*}
           \Delta_{\Gamma} \Delta_X^{-1} = \Delta_{X}^{-1} \Delta_{\Gamma} = \Delta_{X_1}^{-1}\Delta_{X_2}^{-1}\Delta_{X_3}^{-1} \Delta_{\Gamma}\text{.} 
        \end{align*}

        Furthermore, using the descriptions of possible conjugation actions of $\Delta_{\Gamma}$ in finite-type Artin groups and investigation of the defining graphs, it is straightforward to check that either $\Delta_{\Gamma}$ fixes $X$ pointwise or it permutes two connected components of $X$ and fixes the third (if it is nonempty). 
        
        If $\Delta_{\Gamma}$ permutes $X_1$ and $X_2$ and fixes $X_3$, then $\Delta_{\Gamma} \Delta_{X_1} = \Delta_{X_2} \Delta_{\Gamma}$ and $\Delta_{\Gamma} \Delta_{X_2} = \Delta_{X_1} \Delta_{\Gamma}$ and similarly for their inverses by Lemma \ref{Delta conjugation}, so a product of the form $\Delta_{\Gamma}\Delta_{X}^{-1} \Delta_{X_1}$ can be written in the desired form as follows.
        \begin{align*}
            \Delta_{\Gamma} \Delta_{X}^{-1} \Delta_{X_1} &= \Delta_{X}^{-1} \Delta_{\Gamma} \Delta_{X_1}\\
            &= \Delta_{X_1}^{-1} \Delta_{X_2}^{-1} \Delta_{X_3}^{-1} \Delta_{\Gamma} \Delta_{X_1}\\
            &= \Delta_{X_1}^{-1} \Delta_{X_2}^{-1} \Delta_{X_3}^{-1} \Delta_{X_2}\Delta_{\Gamma}\\
            &= \Delta_{X_1}^{-1}\Delta_{X_3}^{-1} \Delta_{\Gamma}
        \end{align*}


        \noindent\textbf{\underline{Case 2:}} Now, suppose $Y \neq X$. Note that this is only possible if $\Delta_{\Gamma}$ is not central. To build a path $\{X_i\}$ with $X_0 = X_n = X$ and $X_1 = Y$, we have two choices for $X_2$: $Y$ or $X$. To see this, note that any $t' \in Y$ will give a $d_{Y,t'}$ which is contained in $Y$. If $t'$ is the unique generator not contained in $Y$, then 
        \begin{align*}
            d_{Y,t'} Y d_{Y,t'}^{-1} = \Delta_{\Gamma} \Delta_{Y}^{-1} Y \Delta_{Y}\Delta_{\Gamma}^{-1} = \Delta_{\Gamma} Y \Delta_{\Gamma}^{-1} = \Delta_{\Gamma}^{2} X \Delta_{\Gamma}^{-2}= X
        \end{align*}
        since $\Delta_{\Gamma}^2$ is always central. 

        If we choose $X_2 = X$, then our positive ribbon is the product $\Delta_{\Gamma} \Delta_{Y}^{-1} \Delta_{\Gamma} \Delta_{X}^{-1}$. Applying Lemma \ref{Delta conjugation} to (*) yields $\Delta_{\Gamma} \Delta_{X}^{-1} = \Delta_{Y}^{-1} \Delta_{\Gamma}$, so 
        \begin{align*}
            \Delta_{\Gamma} \Delta_{Y}^{-1} \Delta_{\Gamma} \Delta_{X}^{-1} = \Delta_{\Gamma} (\Delta_{\Gamma} \Delta_{X}^{-1}) \Delta_X^{-1} = \Delta_{\Gamma}^2 \Delta_{X}^{-2}\text{.}
        \end{align*}
        
        If we choose $X_2 = Y$ and choose a ribbon $d_{Y,t'} = \Delta_{Y_i}$ for some indecomposable component $Y_i$ of $Y$, then there is some indecomposable component $X_i$ of $X$ such that $\Delta_{\Gamma} \Delta_{X_i} = \Delta_{Y_i} \Delta_{\Gamma}$. Then the first two steps of the ribbon, $\Delta_{Y_i} \Delta_{\Gamma} \Delta_{X}^{-1}$, can be replaced with $\Delta_{\Gamma} \Delta_{X_i} \Delta_{X}^{-1}$. 

        Any subsequent choices of $X_{i+1} = Y$ and $d_{Y, t'} = Y_j$ will yield similar replacements, so the product of the first $i$ factors of our ribbon will be $\Delta_{\Gamma} \Delta^{\square}_{X_i} \Delta_{X_j}^{\square} \Delta_{X_k}^{\square} \Delta_X^{-1}$. Finally, at some point we must choose to return to $X$, so the final factor in the ribbon will be $\Delta_{\Gamma}\Delta_{Y}^{-1}$. Using (*) and the fact that each $\Delta_{X_i}$ commutes with $\Delta_X$, we have
        \begin{align*}
            \Delta_{\Gamma} (\Delta_{Y}^{-1} \Delta_{\Gamma}) \Delta^{\square}_{X_i} \Delta_{X_j}^{\square} \Delta_{X_k}^{\square} \Delta_X^{-1} &= \Delta_{\Gamma} (\Delta_{\Gamma} \Delta_{X}^{-1}) \Delta^{\square}_{X_i} \Delta_{X_j}^{\square} \Delta_{X_k}^{\square} \Delta_X^{-1} \\
            &= \Delta_{\Gamma}^2 \Delta^{\square}_{X_i} \Delta_{X_j}^{\square} \Delta_{X_k}^{\square} \Delta_X^{-2}
        \end{align*}
        Since $\Delta_{\Gamma}^2$ is central and $\Delta_X$ is a product of $\Delta_{X_i}$s, this shows that the only positive $X$-ribbons-$X$ are of the desired form.
    \end{proof}

    \begin{corollary}
        Under the conditions on $X$ in Proposition \ref{product of deltas}, any $X$-ribbon-$X$ is of the form in Proposition \ref{product of deltas}.
    \end{corollary}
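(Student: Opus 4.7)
The plan is to show that every $X$-ribbon-$X$ lies in the subgroup $H$ of $A_{\Gamma}$ generated by $\{\Delta_{X_1}, \Delta_{X_2}, \Delta_{X_3}, \Delta_{\Gamma}\}$, and then to verify that every element of $H$ admits the normal form $\Delta_{X_1}^{\square} \Delta_{X_2}^{\square} \Delta_{X_3}^{\square} \Delta_{\Gamma}^{\square}$.

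By definition, any $X$-ribbon-$X$ is a finite product of positive elementary ribbons and their inverses. The case analysis in the proof of Proposition \ref{product of deltas} already classifies which positive elementary ribbons can occur along a closed path based at $X$: each such ribbon is of the form $\Delta_{X_i}$, $\Delta_{Y_j}$ (where $Y = \Delta_{\Gamma} X \Delta_{\Gamma}^{-1}$), $\Delta_{\Gamma}\Delta_X^{-1}$, or $\Delta_{\Gamma}\Delta_Y^{-1}$. By Lemma \ref{Delta conjugation}, each $\Delta_{Y_j}$ equals $\Delta_{\Gamma}\Delta_{X_{\sigma(j)}}\Delta_{\Gamma}^{-1}$ for an appropriate permutation $\sigma$ of the indecomposable components, and $\Delta_Y = \Delta_{\Gamma}\Delta_X\Delta_{\Gamma}^{-1}$. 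Hence each of these four types of elementary ribbon, together with its inverse, visibly lies in $H$, so every $X$-ribbon-$X$ lies in $H$.

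It remains to check that every element of $H$ can be brought into the claimed normal form. The generators of $H$ satisfy three basic relations inherited from $A_{\Gamma}$: the $\Delta_{X_i}$ pairwise commute, since they have disjoint commuting supports; $\Delta_{\Gamma}^2$ is central; and $\Delta_{\Gamma}\Delta_{X_i}\Delta_{\Gamma}^{-1} = \Delta_{X_{\sigma(i)}}$ by Lemma \ref{Delta conjugation}. Given any word in $\Delta_{X_1}^{\pm 1}, \Delta_{X_2}^{\pm 1}, \Delta_{X_3}^{\pm 1}, \Delta_{\Gamma}^{\pm 1}$, I would repeatedly slide every occurrence of $\Delta_{\Gamma}^{\pm 1}$ past each $\Delta_{X_i}^{\pm 1}$ to the right (relabeling indices according to $\sigma$), then collect powers of each $\Delta_{X_i}$ using pairwise commutativity. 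This produces the form $\Delta_{X_1}^{a_1} \Delta_{X_2}^{a_2} \Delta_{X_3}^{a_3} \Delta_{\Gamma}^{a_4}$ for integers $a_i$, which is exactly the content of the $\square$ notation.

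The main step requiring care is the bookkeeping of the permutation action of $\Delta_{\Gamma}^{\pm 1}$ when it is commuted past a $\Delta_{X_i}^{\pm 1}$ factor, particularly in the case where $\sigma$ is nontrivial. This is, however, the same computation already carried out in Case 1 of the proof of Proposition \ref{product of deltas}, so no additional technical difficulty arises.
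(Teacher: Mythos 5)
Your proposal is correct. You and the paper ultimately do the same elementary computation — sliding $\Delta_{\Gamma}^{\pm 1}$ past the $\Delta_{X_i}^{\pm 1}$ using the permutation relation from Lemma \ref{Delta conjugation} together with the pairwise commutation of the $\Delta_{X_i}$ and centrality of $\Delta_{\Gamma}^2$ — but you organize it differently. The paper takes two positive $X$-ribbons-$X$, plugs in the normal forms of Proposition \ref{product of deltas}, and works out by hand that $r_1 r_2^{-1}$ again has the stated form, then asserts the general product is handled similarly. You instead introduce the auxiliary subgroup $H = \langle \Delta_{X_1}, \Delta_{X_2}, \Delta_{X_3}, \Delta_{\Gamma} \rangle$, observe that every $X$-ribbon-$X$ lies in $H$, and then argue that every element of $H$ can be brought to the claimed form by a rewriting procedure. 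This is a cleaner framing because it handles arbitrary products in one pass rather than pairs at a time, and it isolates the three relations doing all the work. Two small points worth tightening: (i) for step 1 you re-derive the classification of elementary ribbons, but you can shortcut this by noting that Proposition \ref{product of deltas} already places every \emph{positive} $X$-ribbon-$X$ in $H$, and by definition a general $X$-ribbon-$X$ is a product of positive ones and their inverses, so membership in $H$ is immediate; (ii) the sliding computation you invoke appears in Case 2 (not Case 1) of the paper's proof of Proposition \ref{product of deltas}. Neither affects correctness.
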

    \begin{proof}
        A general $X$-ribbon-$X$ is simply the product of positive $X$-ribbons-$X$ and inverses of such elements. Suppose we have two positive $X$-ribbons-$X$, $r_1$ and $r_2$. By Proposition \ref{product of deltas}, there are $\{a,b,c,d\}$ and $\{i,j,k,l\}$ such that $r_1$ = $\Delta_{X_1}^a \Delta_{X_2}^b \Delta_{X_3}^c \Delta_{\Gamma}^d$ and $r_2 = \Delta_{X_1}^i \Delta_{X_2}^j \Delta_{X_3}^k \Delta_{\Gamma}^l$. 
        
        Additionally, conjugating $X$ by $\Delta_{\Gamma}^{d-l}$ either stabilizes each of $X_1$, $X_2$, and $X_3$ or induces some permutation on them. By Lemma \ref{Delta conjugation}, $\Delta_{X_n}\Delta_{\Gamma}^{d-l} = \Delta_{\Gamma}^{d-l} \Delta_{X_m}$ and vice versa for appropriate $n,m \in \{1,2,3\}$. Suppose, for example, that $\Delta_{\Gamma}^{d-l}$ permutes $X_1$ and $X_2$ and stabilizes $X_3$. Then 
        \begin{align*}
            r_1 r^{-2}_2 &= (\Delta_{X_1}^a \Delta_{X_2}^b \Delta_{X_3}^c \Delta_{\Gamma}^d) (\Delta_{\Gamma}^{-l} \Delta_{X_3}^{-k} \Delta_{X_2}^{-j} \Delta_{X_1}^{-i}) \\
            &= \Delta_{X_1}^a \Delta_{X_2}^b \Delta_{X_3}^c \Delta_{\Gamma}^{d-l} \Delta_{X_3}^{-k} \Delta_{X_2}^{-j} \Delta_{X_1}^{-i}\\
            &= \Delta_{X_1}^a \Delta_{X_2}^b \Delta_{X_3}^{c-k} \Delta_{\Gamma}^{d-l} \Delta_{X_2}^{-j} \Delta_{X_1}^{-i}\\
            &= \Delta_{X_1}^{a-j} \Delta_{X_2}^b \Delta_{X_3}^{c-k} \Delta_{\Gamma}^{d-l} \Delta_{X_1}^{-i}\\
            &= \Delta_{X_1}^{a-j} \Delta_{X_2}^{b-i} \Delta_{X_3}^{c-k} \Delta_{\Gamma}^{d-l}\text{.}
        \end{align*}
        A similar argument shows that the product $r^{-1}_1 r^{-1}_2$ is of the correct form, and indeed that the product of finitely elements of this form will be as desired. Every $X$-ribbon-$X$ can be written as such a product.
    \end{proof}

    The above results also allow us to prove the following useful fact about simplices in $\{C_{parab}\}$.

    \begin{lemma}\label{changing standardization}
        Let $\{P_i\}$ be a simplex in $C_{parab}$. Suppose that $g$ and $h$ are two positive standardizers for the simplex, i.e., $g A_{X_i} g^{-1} = P_i = h A_{Y_i} h^{-1}$. There is a positive element $r$ such that $\{rA_{X_i} r^{-1}\}= \{A_{Y_i}\} $ and such that $r$ is a product of the form $\Delta_{Y_k}^{\square} \cdots \Delta_{Y_1}^{\square}\Delta_{\Gamma}^{\square} = \Delta^{\square}_{\Gamma} \Delta^{\square}_{X_k} \cdots \Delta^{\square}_{X_1}$ where $\square$ may denote 0 or 1.
    \end{lemma}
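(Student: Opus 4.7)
Set $q:=h^{-1}g$. By the standing hypothesis $gA_{X_i}g^{-1}=hA_{Y_i}h^{-1}$, one has $qA_{X_i}q^{-1}=A_{Y_i}$ for every $i$, so $q$ already conjugates the standard simplex $\{A_{X_i}\}$ onto $\{A_{Y_i}\}$. The difficulty is that $q$ need not be positive, and even as a group element it need not lie in the restricted ordered product form demanded by the conclusion. I therefore plan to construct a positive element $r$ of the required form from scratch, which will conjugate the collection onto the desired one only as an unordered set.

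My plan is to proceed by induction on the levels of the simplex, in the spirit of the level-by-level construction of the canonical positive standardizer in Proposition \ref{simplex sim standard}. For the base case, I look at level $1$: the maximal components $A_{X_i^{(1)}}$ assemble into a single reducible standard parabolic $A_{X^{(1)}}$ with $X^{(1)}=\bigcup_i X_i^{(1)}$, and similarly $A_{Y^{(1)}}$. The element $q$ conjugates $A_{X^{(1)}}$ to $A_{Y^{(1)}}$, and by Theorem \ref{parab subgroup conjugacy} the induced bijection on components is generated by the elementary ribbon moves $\delta_0$ associated with subgraphs of the special types $A_n$, $D_n$ odd, $E_6$, $I_2(n)$ odd. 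Each such move is induced by conjugation by a Garside element of the relevant subgraph, and the arguments of Proposition \ref{product of deltas} then show that this conjugation can be realised by a product of $\Delta_\Gamma^\square$ with at most one factor $\Delta_{Y_i^{(1)}}$ per swapped component, all with exponents in $\{0,1\}$.

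For the inductive step, assume a positive element $r_L$ of the required shape has been produced so that conjugation by $r_L$ sends the subgroups $A_{X_i}$ at levels $\le L$ onto the subgroups $A_{Y_j}$ at levels $\le L$ (as an unordered set). A subgroup $A_{X_i^{(L+1)}}$ at level $L+1$ is contained in some already-handled $A_{X_j^{(L)}}$, and its image under $r_L$ must lie in the corresponding $A_{Y_{j'}^{(L)}}$. Lemma \ref{minimal standardizer inside parabolic} guarantees that the residual positive conjugation required can be chosen to live inside $A_{Y_{j'}^{(L)}}$, which is itself a finite-type Artin group by Theorem \ref{parbs are artin groups}. Applying the base-case argument inside this smaller group produces a new factor $\Delta_{Y_i^{(L+1)}}^\square$ to append to $r_L$. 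Repeated applications of Lemma \ref{Delta conjugation} then let me move $\Delta_\Gamma$ to the outermost position and rearrange the Garside factors into the prescribed order $\Delta_{Y_k}^\square\cdots\Delta_{Y_1}^\square\Delta_\Gamma^\square=\Delta_\Gamma^\square\Delta_{X_k}^\square\cdots\Delta_{X_1}^\square$.

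The main obstacle is the base case: I must ensure that every permutation of maximal components realisable by conjugation in $A_\Gamma$ is achievable using only exponents in $\{0,1\}$ of $\Delta_\Gamma$ and the $\Delta_{X_i^{(1)}}$, rather than the arbitrary integer powers allowed in Proposition \ref{product of deltas}. This reduction is forced by the fact that the nontrivial graph automorphism induced by $\Delta_\Gamma$ (when it is noncentral) has order $2$, and each $\delta_0$ is an involution; hence at most one copy of each $\Delta_{Y_i^{(1)}}$ and at most one copy of $\Delta_\Gamma$ is ever needed, exactly matching the form in the statement.
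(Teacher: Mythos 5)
Your proof takes essentially the same route as the paper: a level-by-level induction, matching the maximal components first (where only a conjugation by $\Delta_\Gamma^{0 \text{ or } 1}$ is possible because the top level is an almost-maximal reducible standard parabolic), then recursing into each component using the same argument, and finally observing that the exponents can be kept in $\{0,1\}$ because the square of a Garside element is central. The skeleton is correct and matches the paper; a few of your citations are not quite the right tools, though, and deserve adjustment. First, the auxiliary element $q=h^{-1}g$ that you set up in the opening is never actually used; the paper's argument is carried out purely with the two standard collections. Second, at the base case you appeal to Theorem \ref{parab subgroup conjugacy} for the permutation of maximal components, but that theorem only tells you \emph{whether} two standard parabolics are conjugate; the fact you actually need---that two almost-maximal standard parabolics are conjugate only to each other or via conjugation by $\Delta_\Gamma$---is what the paper extracts directly from the analysis in the proof of Proposition \ref{product of deltas}. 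Your route is salvageable (an edge $(Y',t,t')$ in Paris's graph $G$ joining two almost-maximal vertices forces $Y'=V(\Gamma)$, so only the $\Delta_\Gamma$-automorphism appears), but you should say this explicitly rather than gesturing at the theorem. Third, Lemma \ref{minimal standardizer inside parabolic} does not do the work you assign it in the inductive step: it concerns \emph{minimal} positive standardizers, which are not in play here. What you actually need is simply to run the base-case analysis again inside the finite-type Artin group $A_{Y_{j'}^{(L)}}$ (whose internal Garside element plays the role of $\Delta_\Gamma$); no appeal to minimality is required. Finally, the factor you append in the inductive step should be $\Delta_{Y_{j'}^{(L)}}^{\square}$ (the Garside of the \emph{containing} level-$L$ element, playing the role of $\Delta_\Gamma$ one level down), not $\Delta_{Y_i^{(L+1)}}^{\square}$. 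None of these are fatal, but the inductive machinery should be tightened before this reads as a complete argument.
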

    \begin{proof}
        By assumption, the simplices $\{A_{X_i}\}$ and $\{A_{Y_i}\}$ are conjugate. Since subgroup inclusion and commutation are both preserved under conjugation, the two simplices have the same number and length of nesting chains. Consider the maximal elements of the two nesting chains. In both simplices, the maximal elements form conjugate, standard reducible parabolic subgroups $A_Z$ and $A_{Z'}$ which are almost maximal in the sense that there are Artin generators $t$ and $t'$ such that $Z \cup \{t\} = Z' \cup \{t'\} = V(\Gamma)$. It is possible that $A_{Z} = A_{Z'}$ and the two standardizations differ at lower levels. As we saw in the proof of Proposition \ref{product of deltas}, either they are equal or $\Delta_{\Gamma} A_{Z} \Delta_{\Gamma}^{-1} = A_{Z'}$ and $\Delta_{\Gamma} A_{Z'} \Delta_{\Gamma} = A_{Z}$. 

        If there is an ordering such that $\Delta^{e}_{\Gamma} A_{X_i} \Delta_{\Gamma}^{-e} = A_{Y_j}$ for every $i$ and the appropriate $e \in \{0,1\}$, then $r = \Delta^e_{\Gamma}$. Now suppose that there is some level 2 element $A_{X_j} \leq A_{X_1}$ such that $\Delta_{\Gamma}^e A_{X_j} \Delta_{\Gamma}^{-e} \not\in \{A_{Y_i}\}$. 
        
        Without loss of generality, suppose $\Delta_{\Gamma}^e A_{X_1} \Delta_{\Gamma}^{-e} = A_{Y_1}$. Since conjugation preserves subgroup inclusion, $\Delta_{\Gamma}^e A_{X_j} \Delta_{\Gamma}^{-e} \leq A_{Y_1}$. We can apply the same argument inside of $A_{Y_i}$ to see that we must have $\Delta_{Y_1} \Delta_{\Gamma}^{e} A_{X_j} \Delta_{\Gamma}^{-e} \Delta_{Y_1}^{-1} \in \{A_{Y_i}\}$.

        Applying a similar argument inside the irreducible components at each level of the marking shows that there is some element $r = \Delta_{Y_k}^{\square}\cdots \Delta_{Y_1}^{\square}\Delta_{\Gamma}^{\square}$ such that $\{r A_{X_i}r^{-1} \}= \{A_{Y_i}\}$, where each $\square$ may denote 0 or 1.

        Notice that Lemma \ref{Delta conjugation} implies 
        \begin{align*}
            \Delta_{Y_k}^{\square}\cdots\Delta^{\square}_{Y_1}\Delta^{\square}_{\Gamma} = \Delta^{\square}_{\Gamma} \Delta^{\square}_{X_k} \cdots \Delta^{\square}_{X_1}\text{.}
        \end{align*}
    \end{proof}

    At a few points throughout the paper, we will also require the following lemma.
    \begin{lemma}\label{conjugation implies containment}
        Let $A_X$, $A_Y$, and $A_Z$ be irreducible standard parabolic subgroups such that $z_X$ does not commute with $\Delta_Z^i$ for any $i \neq 0$. If $\Delta_{Z}^i A_X \Delta_Z^{-i} = A_Y$ for some integer $i$, then either $i = 0$ and $X = Y$ or $A_X$ and $A_Y$ are subgroups of $A_Z$.
    \end{lemma}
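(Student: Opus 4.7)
The plan is to prove the statement by contradiction, assuming $i \ne 0$ and $\Delta_Z^i A_X \Delta_Z^{-i} = A_Y$ and deriving that $z_X$ must commute with $\Delta_Z^i$, contradicting the hypothesis. This forces $i = 0$, whence $A_X = A_Y$ and so $X = Y$. Note that the ``$A_X, A_Y \le A_Z$'' alternative in the conclusion is itself automatically excluded under the hypothesis: if $A_X \le A_Z$, then $z_X \in A_X \le A_Z$ would commute with the central element $z_Z = \Delta_Z^{k}$ for some $k \in \{1,2\}$.

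First, I would show that $X \cup Z = Y \cup Z$. Since $\Delta_Z^i \in A_Z \le A_{X \cup Z}$, conjugation by $\Delta_Z^i$ is an inner automorphism of $A_{X \cup Z}$, so $A_Y = \Delta_Z^i A_X \Delta_Z^{-i} \le A_{X \cup Z}$; by van der Lek's theorem this gives $Y \cup Z \subseteq X \cup Z$, with the reverse inclusion symmetric. Together with $|X| = |Y|$ from Theorem \ref{parab subgroup conjugacy}, this yields $X \setminus Z = Y \setminus Z =: W$ and $|X \cap Z| = |Y \cap Z|$.

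Next, I would assume $W \ne \emptyset$ and show $\Delta_Z^i$ commutes with $z_X$. The cleanest case is $X \cap Z = \emptyset$: here $|Y \cap Z| = 0$ as well, so $Y = W = X$ and $\Delta_Z^i$ normalizes $A_X$. By Godelle's structure theorem for the normalizer of a standard parabolic, we can write $\Delta_Z^i = r \cdot a$ with $r$ an $X$-ribbon-$X$ and $a \in A_X$. Since $r$ induces a label-preserving graph automorphism of $\Gamma_X$, the reasoning of Lemma \ref{Delta conjugation} (applied with $T = X$) gives $r \Delta_X r^{-1} = \Delta_X$, hence $r z_X r^{-1} = z_X$; combined with $a z_X a^{-1} = z_X$ (as $z_X \in Z(A_X)$), this yields $\Delta_Z^i z_X \Delta_Z^{-i} = z_X$, contradicting the hypothesis.

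The main obstacle is the remaining sub-case $X \cap Z \ne \emptyset$ with $X \not\subseteq Z$. Here, connectedness of $X$ in the tree $\Gamma$ forces some $w \in W$ to be adjacent to some vertex of $X \cap Z \subseteq Z$, so $w$ does not commute with $\Delta_Z$. The idea is to exploit that $\Delta_Z^i A_{X \cap Z} \Delta_Z^{-i} = A_{Y \cap Z}$ (by intersecting with $A_Z$ and using Theorem \ref{int of parabolics}), so within $A_Z$ the conjugation acts by the canonical permutation $\sigma^i$ induced by $\Delta_Z$, while simultaneously $\Delta_Z^i w \Delta_Z^{-i}$ must lie in $A_Y \cap A_{Z \cup \{w\}} = A_{(Y \cap Z) \cup \{w\}}$. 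Using Proposition \ref{product of deltas} applied inside $A_{X \cup Z}$ to control the possible structure of $\Delta_Z^i$ viewed as an element normalizing suitable parabolics, together with Lemma \ref{Delta conjugation} to track the Garside elements, I would reduce this back to a normalizer calculation on $A_X$ and conclude $\Delta_Z^i z_X \Delta_Z^{-i} = z_X$, once again contradicting the hypothesis.
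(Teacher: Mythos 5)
Your proposal takes a genuinely different route from the paper, and it contains a real gap in the main case.

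Your opening observation is correct and, in fact, sharper than the lemma as stated: since $z_X$ is assumed not to commute with any nonzero power of $\Delta_Z$, and $z_Z$ is a central power of $\Delta_Z$, the alternative ``$A_X, A_Y \le A_Z$'' can never occur under the hypothesis, so the lemma really asserts that $i = 0$ and $X = Y$. The set-theoretic bookkeeping ($X \cup Z = Y \cup Z$, hence $X \setminus Z = Y \setminus Z = W$, and $|X \cap Z| = |Y \cap Z|$ from $|X| = |Y|$) is also sound, and the sub-case $X \cap Z = \varnothing$ is handled correctly: there $Y = X$, Godelle's normalizer decomposition applies directly, and an $X$-ribbon-$X$ commutes with $\Delta_X$ by the Paris/Godelle result invoked inside the proof of Lemma \ref{Delta conjugation}.

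The gap is precisely where you flag it: the sub-case $X \cap Z \neq \varnothing$ with $X \not\subseteq Z$ is only sketched, and the sketch does not clearly close. You propose to invoke Proposition \ref{product of deltas} ``inside $A_{X \cup Z}$,'' but that proposition is specialized to the situation where the normalized set is codimension one in the ambient generating set (``$X \cup \{t\} = S$''), which is not the position $Z$ occupies inside $X \cup Z$ unless $|W| = 1$. It is also not clear what parabolic $\Delta_Z^i$ is supposed to normalize here: it normalizes neither $A_X$ nor $A_{X \cap Z}$ in general (it sends $A_{X \cap Z}$ to $A_{Y \cap Z}$, which may be a different standard parabolic), so the Godelle normalizer structure that powered the easy case is not directly available. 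Tracking $\Delta_Z^i w \Delta_Z^{-i}$ into $A_{(Y\cap Z) \cup \{w\}}$ is a reasonable first step, but you would still need an argument that bridges from this containment to the desired commutation $\Delta_Z^i z_X \Delta_Z^{-i} = z_X$, and that bridge is not supplied.

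The paper avoids the case analysis entirely. It first notes, by symmetry of the conjugation of $z_X$ to $z_Y$, that $z_Y$ also fails to commute with $\Delta_Z^i$, so one may take $i > 0$ without loss of generality. It then cites the decomposition from Lemma 21 of \cite{parabs-acting-on-CAL}: any positive element conjugating the positive element $z_X$ to the positive element $z_Y$ factors as a positive product $c_1 \cdots c_r$ in which at least one factor is an elementary ribbon starting at $X$ and at least one is an elementary ribbon ending at $Y$; the support of an elementary ribbon contains the set it starts from or lands on, and supports of positive elements are additive over products, so $X \cup Y \subseteq \operatorname{supp}(\Delta_Z^i) = Z$. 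This is a much shorter argument that sidesteps the combinatorics of $X \cap Z$ entirely. If you want to pursue your route, the missing ingredient is essentially a replacement for that decomposition-into-ribbons lemma; otherwise the hard sub-case remains open in your write-up.
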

    \begin{proof}
        Recall that if $\Delta_Z^i$ conjugates $A_X$ to $A_Y$, then it also conjugates $z_X$ to $z_Y$. 
        
        First, notice that if $i \neq 0$, then $\Delta_{Z}^i$ also does not commute with $z_Y$. If it did, then we would have $z_X = \Delta_{Z}^{-i} z_Y \Delta_{Z}^i = z_Y$; this is a contradiction because $z_X$ was assumed not to commute with $\Delta_{Z}^i$. Thus the roles of $X$ and $Y$ are symmetric, so we can assume without loss of generality that either $i = 0$ and $X = Y$ or the exponent $i$ is positive.

         In the proof of Lemma 21 in \cite{parabs-acting-on-CAL}, it is shown that any positive element which conjugates the positive element $z_X$ to the positive element $z_Y$ can be decomposed as a product of positive elements $c_1 \cdots c_r$ where at least one $c_i$ must be the positive elementary ribbon $r_{X,t}$ for some $t$ and at least one must be $r_{X',t}$ where $r_{X',t}$ is a positive elementary $X'$-ribbon-$Y$ for some $X'$.

        Notice that by the definition of elementary ribbons, $X \subseteq \text{supp}(r_{X,t})$ for any $t$ and $Y \subseteq \text{supp}(r_{X',t})$ if this element conjugates $X'$ to $Y$. The support of a product of positive elements is the union of the supports of the factors, so $X \cup Y \subseteq \text{supp}(\Delta_Z^i) = Z$.
    \end{proof}

\subsection{Simplex Stabilizers}
    The natural action of $A_{\Gamma}$ by conjugation on $C_{parab}$ extends to an action on the set of maximal simplices of $C_{parab}$: an element $g \in A_{\Gamma}$ acts on $\{P_i\}$ by replacing each $P_i$ with its conjugate by $g$. In the mapping class group, stabilizers of pants decompositions were well-understood before the introduction of markings. If maximal $C_{parab}$ simplices are to play the role of pants decompositions in our marking graph, we will first need to understand their stabilizers.

    Recall that any element of the normalizer of a standard parabolic subgroup $X$ is an element of the form $g_x r$ where $g_x \in A_X$ and $r$ is an $X$-ribbon-$X$ \cite{Godelle-spherical-normalizers}.

    \begin{definition}
        Let $\Sigma$ be a $C_{parab}$ simplex with vertex set $\{A_{X_i}\}$. A word $p$ representing a product of powers of $\Delta_{X_i}$ and $\Delta_{\Gamma}$ is called an \emph{ascending product} if it is of the form 
        \begin{align*}
            p = \Delta_{X_{i_1}}^{\square} \cdots \Delta_{X_{i_m}}^{\square} \Delta_{\Gamma}^{\square}
        \end{align*}
        where all factors of $\Delta_{\Gamma}$ appear at the right end of $p$, $X_i \subseteq X_q$ implies the $\Delta_{X_i}$ term is to the left of the $\Delta_{X_q}$ term in the product, and $\square$ may denote any integer. For technical reasons, we assume that there is a term for each element $A_{X_i}$ of $\Sigma$ and allow $\square$ to be 0.
    \end{definition}

    We will need the following lemma.
    
    \begin{lemma}\label{conj preserving gen sets}
        Let $\{A_{X_i}\}$ be a maximal standard simplex in $C_{parab}$, and let an ascending product $g$ stabilize $\{A_{X_i}\}$. Let $g_k$ denote the subword of $g$ of the form $g_k = \Delta_{X_{(1,k-1)}}^i \Delta^j_{X_{(2,k-1)}}\cdots \Delta_{\Gamma}^l$, containing precisely the Garside factors such that $X_{(i,L)}$ is in level $L \leq k-1$ of the simplex and written in ascending order so that $A_{X_{(\square, k-1)}}$ is in the $(k-1)$th level and the final factor before $\Delta_{\Gamma}$ is maximal. Conjugation by $g_k$ stabilizes the $k$th level of the simplex.
    \end{lemma}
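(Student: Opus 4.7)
The plan is to decompose $g$ as a product $g = h \cdot g_k$, where $h$ collects the Garside factors of $g$ corresponding to vertices at levels $\geq k$ and $g_k$ is as in the statement. Because the ascending product places subgroups with smaller support on the left and the $\Delta_{\Gamma}$ factor on the right, $g_k$ is a genuine right-suffix of $g$. The strategy is to show that $h$ already acts trivially by conjugation on each level-$k$ vertex, so that the action of $g_k$ on level-$k$ vertices agrees with that of $g$, which visibly permutes level $k$.

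First I would observe that conjugation by $g$ preserves the level structure of $\Sigma$. The level of a vertex is determined purely by the poset of inclusions in $\Sigma$, and conjugation preserves strict inclusion. Since $g$ stabilizes $\Sigma$ as a set, it induces an automorphism of this finite poset and hence permutes each level individually. In particular, for each $A_{X_i}$ at level $k$ there is a vertex $A_{X_{\sigma(i)}}$ at level $k$ with $g A_{X_i} g^{-1} = A_{X_{\sigma(i)}}$.

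The key step is to verify that $h A_{X_i} h^{-1} = A_{X_i}$ for every $A_{X_i}$ at level $k$. It suffices to check that each factor $\Delta_{X_j}^{\square}$ of $h$ normalizes $A_{X_i}$ trivially, where $A_{X_j}$ lies at some level $L \geq k$. Since $A_{X_j}$ cannot strictly contain $A_{X_i}$ (that would place $A_{X_i}$ at a deeper level than $k$), the definition of an edge in $C_{parab}$ leaves two cases: either $A_{X_j} \subseteq A_{X_i}$, in which case $\Delta_{X_j} \in A_{X_j} \subseteq A_{X_i}$ and conjugation by any power of $\Delta_{X_j}$ preserves $A_{X_i}$ setwise; or $A_{X_j}$ and $A_{X_i}$ are disjoint and commute pointwise, so $\Delta_{X_j}$ fixes $A_{X_i}$ pointwise. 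Iterating over the factors of $h$ yields the claim.

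To conclude, combine the two ingredients: applying the key step to $A_{X_{\sigma(i)}}$, which also lies at level $k$, gives $h^{-1} A_{X_{\sigma(i)}} h = A_{X_{\sigma(i)}}$, so
$$g_k A_{X_i} g_k^{-1} = h^{-1}(g A_{X_i} g^{-1}) h = h^{-1} A_{X_{\sigma(i)}} h = A_{X_{\sigma(i)}},$$
which lies in level $k$. The main obstacle is the case analysis for the action of $h$ on level-$k$ vertices, which ultimately rests on the fact that containment between vertices of $\Sigma$ forces a strict drop in level; a subtler bookkeeping point is verifying that the ascending-order convention makes $g_k$ genuinely a right-suffix of $g$, so that the factorization $g = h g_k$ is well-defined.
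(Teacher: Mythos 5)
Your proof is correct, and it takes a genuinely simpler route than the paper's. The paper proves this lemma by induction on $k$: the base case argues by contradiction (using the characterization of maximal simplices and the fact that conjugate standard parabolics have the same number of generators), and the induction step applies the base case's reasoning inside each level-$k$ parabolic subgroup, treating it as a smaller Artin group. Your argument bypasses the induction entirely. The crux is the observation that the left-hand piece $h = g g_k^{-1}$ (collecting factors at levels $\geq k$) preserves each level-$k$ vertex $A_{X_i}$ setwise: each factor $\Delta_{X_j}^{\square}$ of $h$ either has $A_{X_j} \subseteq A_{X_i}$ (so $\Delta_{X_j} \in A_{X_i}$ and conjugation preserves $A_{X_i}$) or $A_{X_j}$ disjoint-commuting with $A_{X_i}$ (so conjugation is trivial), and the excluded case $A_{X_j} \supsetneq A_{X_i}$ is ruled out because, by the uniqueness of the chain above a vertex, it would force $A_{X_i}$ into level $> k$. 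This gives $g_k A_{X_i} g_k^{-1} = h^{-1}(g A_{X_i} g^{-1})h = h^{-1} A_{X_{\sigma(i)}} h = A_{X_{\sigma(i)}}$ directly, for all $k$ at once. What the paper's more elaborate argument also produces, as a byproduct, is an explicit match between the permutation of level $k+1$ inside each level-$k$ block and the permutation of the blocks themselves, which it uses informally again in the proof of Theorem \ref{almost ascending product}; your version proves the lemma as stated with less machinery but does not surface that extra structure. One small caveat in your write-up: the ascending-product condition by itself only orders comparable factors, so incomparable factors at different levels need not appear in level order a priori. The factorization $g = h g_k$ as a literal right suffix is legitimate because Garside factors of disjoint, commuting parabolics commute, so any ascending product can be re-sorted by level; the lemma statement already builds this ordering in, and it is worth saying explicitly that the re-sorting is always possible rather than attributing it to the definition of ascending product alone.
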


    In the case which is simplest to state, this lemma says that if $g$ is an ascending product which stabilizes a simplex, then the $\Delta_{\Gamma}$-factor of $g$ must stabilize the first level of the simplex. Intuitively, this is because all of the lower level terms are contained in the first level, so if the $\Delta_{\Gamma}$-factor of $g$ did not stabilize the first level, no product of lower level terms would be able to ``undo'' the change.
    
    \begin{proof}
        We proceed by induction on $k$.

        \textbf{Base case:} Let $g$ be an ascending  product as in the statement. When $k = 1$, the $k$th level is the collection $\{X, Y, Z\}$ such that $A_X$, $A_Y$, and $A_Z$ are maximal in the simplex, and $g_k$ is $\Delta_{\Gamma}^l$. Conjugation by $\Delta_{\Gamma}^l$ sends standard parabolic subgroups to standard parabolic subgroups via a permutation of the Artin generators. Let $\Delta_{\Gamma}^l X \Delta_{\Gamma}^{-l} = X'$. Define $Y'$ and $Z'$ analogously. 
            
        The reducible standard parabolic subgroup $A_{X' \cup Y' \cup Z'}$ is conjugate to $A_{X \cup Y \cup Z}$. By Proposition \ref{maximal simplex characterization}, there is some $t$ such that $X \cup Y \cup Z = V(\Gamma) - \{t\}$. Conjugate standard parabolic subgroups must include the same number of Artin generators, so there is some $s$ such that $X' \cup Y' \cup Z' = V(\Gamma) - \{s\}$. 
        
        Suppose $\{X', Y', Z'\} \neq \{X, Y, Z\}$. Since the maximal components are precisely the connected components of $\Gamma - \{s\}$ or $\Gamma - \{t\}$ respectively, this occurs if and only if $s \neq t$. Thus if $\{X', Y', Z'\} \neq \{X, Y, Z\}$, $t$ is contained in at least one of $X'$, $Y'$, or $Z'$.

        Without loss of generality, let $t \in X'$. Let $p$ be the earlier factors of $g$, i.e., $p = \cdots \Delta_{X}^{\square} \Delta_{Y}^{\square} \Delta_Z^{\square}$ with $p \Delta_{\Gamma}^l = g$. Since $g$ stabilizes the simplex, the conjugate $p A_{X'} p^{-1} = A_X$, $A_Y$, or $A_Z$. Conjugating both sides by $p^{-1}$, this is equivalent to $A_{X'} = p^{-1} A_X p$, $p^{-1} A_Y p$, or $p^{-1} A_Z p$. 
        
        By construction, $p$ normalizes each of $A_X$, $A_Y$, and $A_Z$. Thus $ A_{X'} = A_{X}$, $A_Y$, or $A_Z$. This is a contradiction because $X'$ contains $t$ but $X \cup Y \cup Z$ does not. Thus $\{X', Y', Z'\} = \{X, Y, Z\}$, and $\Delta_{\Gamma}^l$ stabilizes $\{A_{X_i}\}$.

        \textbf{Induction step:} Suppose that $g_{j}$ stabilizes the $j$th level of $\{A_{X_i}\}$ for each $j < k+1$. Let $g_{k+1}$ be an element as in the statement which consists of powers of Garside elements of parabolic subgroups at levels $k$ and lower of $A_{X_i}$ and of powers of $\Delta_{\Gamma}$. Let $\{A_{X_{(1,k+1)}}, A_{X_{(2,k+1)}}, \cdots\}$ be the $(k+1)$st level of the simplex. By construction, $g_{k+1} = \Delta_{X_{(1,k)}} \Delta_{X_{(2,k)}} \cdots g_{k-1}$ where $\Delta_{X_{(1,k)}} \Delta_{X_{(2,k)}}\cdots$ denotes a product of the elements at \emph{precisely} level $k$, and $g_{k}$ denotes a product of powers of the elements at levels $k-1$ and lower and $\Delta_{\Gamma}$.
        
        By the induction hypothesis, conjugation by $g_{k}$ permutes the base elements \\$\{X_{(1,k)}, X_{(2,k)}, \cdots\}$. Clearly these elements are preserved under conjugation by $\Delta_{X_{(1,k)}}$, $\Delta_{X_{(2,k)}}$, etc. Thus conjugation by $g_{k+1}$ permutes the elements at the $k$th level of the simplex. We aim to show that it also permutes the elements in the $(k+1)$st level of the simplex.
        
        By Proposition \ref{maximal simplex characterization}, the collection $\{X_{(1,k+1)}, X_{(2,k+1)}, \cdots\}$ consists of parabolic subgroups which are almost maximal inside of $A_{X_{(1,k)}}$, $A_{X_{(2,k)}}, \cdots$ in the sense that there are vertices $v_1 \in X_{(1,k)}$, $v_2 \in X_{(2,k)}$, etc. such that $X_{(1,k+1)} \cup X_{(2,k+1)} \cdots = (X_{(1,k)} - v_1) \cup (X_{(2,k)} - v_2) \cup \cdots$. Conjugation preserves subgroup containment. For example, if $X_{(1,k+1)} \cup X_{(2,k+1)} \cup X_{(3,k+1)} \cup \{v_1\} = X_{(1,k)}$, the image of $A_{X_{(1,k+1)} \cup X_{(2,k+1)} \cup X_{(3,k+1)}}$ under conjugation by $g_k$ must be contained in the image of $A_{X_{(1,k)}}$ under the same conjugation. This image must be some element of the $k$th level of the simplex. Without loss of generality, say this is $A_{X_{(2,k)}}$.

        We can now apply precisely the same reasoning as in the base case, replacing $V(\Gamma)$ with $X_{(2,k)}$, to see that conjugation by $g_{k+1}$ must fix this almost-maximal parabolic subgroup inside of $A_{X_{(2,k)}}$. In particular, conjugation by $g_{k+1}$ sends each maximal component $A_{X_{(\square,k+1)}}$ inside of $A_{X_{(1,k)}}$ to a unique $A_{X_{(\square, k+1)}}$ inside of $A_{X_{(2,k)}}$. Thus conjugation by $g_{k+1}$ permutes the $(k+1)$st level of the simplex.
    \end{proof}

    We can now prove Theorem \ref{thmx:simplex-stab} of the introduction.

    \begin{theorem}\label{almost ascending product}
        If $g$ stabilizes a maximal $C_{parab}$ simplex $\{A_{X_i}\}$, then $g$ can be written as an ascending product of powers of $\Delta_{X_i}$ and $\Delta_{\Gamma}$.
    \end{theorem}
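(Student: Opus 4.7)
The plan is to induct on the number of levels of the maximal simplex $\Sigma = \{A_{X_i}\}$. At each inductive step, let $X = \bigcup X_i$ be the union over the top-level $X_i$, which equals $V(\Gamma)\setminus\{t\}$ by Proposition \ref{maximal simplex characterization}; then $A_X$ is a reducible standard parabolic whose irreducible components are the top-level $A_{X_i}$. Since $g$ permutes these components, it normalizes $A_X$, and I invoke Godelle's decomposition $N(A_X) = H_X \ltimes A_X$ together with the corollary to Proposition \ref{product of deltas} to write $g = rb$ with
\[ r = \Delta_{X_1}^{e_1}\Delta_{X_2}^{e_2}\Delta_{X_3}^{e_3}\Delta_{\Gamma}^{e_0} \]
an $X$-ribbon-$X$ and $b = b_1b_2b_3 \in A_{X_1}\times A_{X_2}\times A_{X_3}$.

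For the base case of one level, Proposition \ref{maximal simplex characterization} forces each $X_i$ to be a single Artin generator, so $A_{X_i} = \langle\Delta_{X_i}\rangle$ and $b = \prod_j \Delta_{X_j}^{c_j}$ is automatically a product of top-level Garside factors. Combining this with $r$, then using Lemma \ref{Delta conjugation} to commute $\Delta_\Gamma^{e_0}$ past each $\Delta_{X_j}^{c_j}$ at the cost of relabeling subscripts by the graph automorphism induced by $\Delta_\Gamma$, rearranges everything into ascending form.

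For the inductive step, the key claim is that each $b_i$ stabilizes (possibly up to a small modification) a maximal simplex of $C_{parab}(A_{X_i})$ of strictly fewer levels, namely the sub-simplex $\Sigma_i := \{A_{X_j}\in\Sigma : A_{X_j}\subseteq A_{X_i}\}$ from Proposition \ref{maximal simplex characterization} applied inside $A_{X_i}$. Granting the claim, the inductive hypothesis expresses each $b_i$ as an ascending product of $\Delta_{X_j}^{?}$'s for $X_j\subseteq X_i$. Since the $A_{X_i}$'s commute pairwise, the $b_i$'s commute and combine into a single ascending product of levels $\geq 2$. Appending the top-level factors of $r$ on the right and pushing $\Delta_\Gamma^{e_0}$ past any leftover $\Delta_{X_j}$-factors via Lemma \ref{Delta conjugation} (relabeling subscripts as before) produces the desired ascending product for $g$.

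The main obstacle is the key claim. A direct calculation shows $b_i\Sigma_i b_i^{-1} = r^{-1}\Sigma_{\pi(i)}r$, where $\pi$ is the top-level permutation induced by $g$; this permutation is realized by $\Delta_\Gamma^{e_0}$ alone, since every element of $A_X$ and each factor $\Delta_{X_j}^{e_j}$ stabilizes every top-level component. When $\pi$ is trivial, the $\Delta_{X_j}^{e_j}$-factors of $r$ with $j\neq i$ commute with $A_{X_i}$ and act trivially on $\Sigma_i$, so one must only control the $\Delta_{X_i}^{e_i}$ and the (now central-inducing) $\Delta_\Gamma^{e_0}$ factors; any residual discrepancy lies in $A_{X_i}$ and can be absorbed into $b_i$ (since $\Delta_{X_i}^{e_i}b_i\in A_{X_i}$ may be taken as the new stabilizer of $\Sigma_i$). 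When $\pi$ is nontrivial, the explicit description of the conjugation action of $\Delta_\Gamma$ on $V(\Gamma)$ recalled in Section \ref{sec:finite type artin groups} identifies the correspondence between $\Sigma_i$ and $\Sigma_{\pi(i)}$ under $\Delta_\Gamma^{-e_0}$-conjugation, again reducing the inductive application to the trivial-$\pi$ case inside each irreducible component.
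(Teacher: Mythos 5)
Your proof follows essentially the same strategy as the paper's: induct on the depth of the simplex, use Godelle's normalizer decomposition to split $g$ into an element of $A_{X_1}\times A_{X_2}\times A_{X_3}$ and an $X$-ribbon-$X$, identify the ribbon via Proposition \ref{product of deltas} as an ascending product of top-level Garside elements and $\Delta_\Gamma$, and then apply the inductive hypothesis to the component factors. (You write the decomposition as $g=rb$ rather than the paper's $g=g'r$, but in a semidirect product these are interchangeable.) The issue is in your handling of the ``key claim.''

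You assert that when the top-level permutation $\pi$ is trivial, $\Delta_\Gamma^{e_0}$ becomes ``central-inducing'' on each $A_{X_i}$ and the residual discrepancy can be absorbed into $b_i\in A_{X_i}$. This is false in general. The automorphism induced by $\Delta_\Gamma$ can fix a top-level component $X_i$ as a set while permuting its generators nontrivially; when that inner permutation is not realized by any element of $A_{X_i}$, the discrepancy does not lie in $A_{X_i}$ and cannot be absorbed. Concretely, take $\Gamma$ of type $D_n$ with $n$ odd and $t$ chosen on the tail so that one connected component of $\Gamma-\{t\}$ is of type $D_m$ with $m$ even. Then $\Delta_\Gamma$ fixes this component setwise (so $\pi$ is trivial there), but it swaps the two prong generators of the $D_m$ subdiagram, and for $m$ even the Garside element $\Delta_{D_m}$ is central, so this swap is not realized by any Garside factor inside $A_{X_i}$. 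Thus $\Delta_{X_i}^{e_i}b_i$ need not stabilize $\Sigma_i$ and the inductive hypothesis does not apply as stated. A similar phenomenon occurs in $E_6$ when the removed vertex is the prong.

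The paper closes this gap through Lemma \ref{conj preserving gen sets}, which tracks, level by level, how the $\Delta_\Gamma$-factor together with the higher-level Garside factors permute the lower levels of a simplex stabilized by an ascending product. That lemma is exactly the tool needed to make rigorous the reduction you are attempting; without it (or an equivalent substitute such as a careful appeal to Lemma \ref{changing standardization} inside $A_{X_i}$, combined with a commutation argument to recombine the resulting products into ascending form), your inductive step does not go through.
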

    \begin{proof}
        We prove this by induction on the length of nesting chains in the marking. 
        
        \textbf{Base case:} Suppose that every nesting chain in the marking has length 1. No marking has more than 3 maximal elements in $\{P_i\}$, so let $A_X$, $A_Y$ and $A_Z$ be the maximal parabolic subgroups which appear in the marking (up to two of $X$, $Y$, and $Z$ may be empty). Note that they must commute with one another, and that by Proposition \ref{maximal simplex characterization}, there is one $t$ such that $X \cup Y \cup Z = V(\Gamma) - \{t\}$.

        If $g$ stabilizes the simplex, then it stabilizes the collection $\{A_X , A_Y, A_Z\}$. In particular, this implies that $g$ normalizes the reducible standard parabolic subgroup $A_{X \cup Y \cup Z}$. Thus $g$ is of the form $g'r$ where $g' \in A_{X \cup Y \cup Z}$ and $r$ is a $(X \cup Y \cup Z)$-ribbon $(X\cup Y \cup Z)$. 
        
        By Proposition \ref{product of deltas}, the element $r$ is a product of the form $\Delta^{\square}_X \Delta^{\square}_Y \Delta^{\square}_Z \Delta^{\square}_{\Gamma}$. Since $A_X$, $A_Y$, and $A_Z$ are also minimal elements of a maximal simplex, they must consist of a single generator, which is also the Garside element. Thus any element $g' \in A_{X \cup Y \cup Z}$ is a product of the form $\Delta^{\square}_X \Delta^{\square}_Y \Delta^{\square}_Z$. The elements $\Delta_{X}$, $\Delta_Y$, and $\Delta_Z$ commute by construction, so $g'r$ is also of the desired form.

        \textbf{Induction step:} Suppose the statement is true for all markings where nesting chains have length at most $n-1$, and consider a marking with nesting chains of length $n$. Let $A_X$, $A_Y$ and $A_Z$ be the maximal components, up to two of which may be empty. As before, $A_X$, $A_Y$ and $A_Z$ all commute with each other, and there is a single $t \in S$ such that $t \not\in X \cup Y \cup Z$.

        As in the base case, since $g$ stabilizes the simplex, the element $g$ is of the form $g'r$ where $g' \in A_{X \cup Y \cup Z}$ and $r$ is a ribbon of the form $\Delta^{\square}_X \Delta^{\square}_Y \Delta^{\square}_Z \Delta^{\square}_{\Gamma}$.

        By construction, $A_{X \cup Y \cup Z} = A_X \times A_Y \times A_Z$, so there are unique $g_x \in A_X$, $g_y \in A_Y$, and $g_z \in A_Z$ such that $g = g_x g_y g_z$.
        
        The non-maximal elements $\{A_{X_i}\} - \{A_{X}, A_Y , A_Z\}$ can be subdivided into maximal standard simplices in the parabolic subgroup complexes associated to $A_X$, $A_Y$, and $A_Z$. By Lemma \ref{conj preserving gen sets}, conjugation by $r$ permutes the maximal elements of each of these submarkings such that $\{r A_{X_i} r^{-1}\} - \{A_{X}, A_Y , A_Z\}$ can also be subdivided into maximal simplices in the parabolic subgroup complexes associated to each of $A_{X}$, $A_Y$, and $A_Z$. 

        The elements $g_y$ and $g_z$ commute with every element of $A_{X}$. Thus since $g = g_x g_y g_z r$ stabilizes the simplex $\{A_{X_i}\}$, $g_x$ must stabilize the subset of  $\{r A_{X_i} r^{-1}\} - \{A_{X}, A_Y , A_Z\}$ which forms a maximal simplex in $C_{parab}(A_X)$. This simplex has strictly shorter nesting chains than the starting simplex, so by the induction hypothesis, $g_x$ can be decomposed as an ascending product of the Garside elements of parabolic subgroups contained in $A_X$. 

        Analogous arguments for $g_y$ and $g_z$ reveal that they can also be written as appropriate ascending products. No parabolic subgroup contained in $A_{X}$ can be contained in any parabolic subgroup of $A_Y$ since $A_X$ and $A_Y$ are disjoint and commute, nor can the analogous situation occur in $A_X$ and $A_Z$ or $A_Y$ and $A_Z$, so $g_x g_y g_z$ is an ascending product. 

        Since every element of $A_{X}$ commutes with every element of $A_Y$ and $A_Z$, the complete expression $g = g' r = g_x g_y g_z \Delta_{X}^i \Delta_{Y}^j \Delta_Z^k \Delta_{\Gamma}^l$ can be rearranged to an ascending product expression by moving all $\Delta_{Y}$ factors of $g_{y}$ to the right of the non-$\Delta_{Z}$ factors of $g_z$ and moving all $\Delta_{X}$ factors of $g_x$ to the right of the non-$\Delta_{Y}$ and non-$\Delta_{Z}$ factors of $g_yg_z$.
        \end{proof}

\section{Markings}\label{sec:markings}
\subsection{Definition}
Recall that a clean marking on a genus 0 surface is a collection of ordered pairs of essential simple closed curves $\{(\alpha_i, t_i)\}$ such that the collection $\{\alpha_i\}$ forms a maximal simplex in the curve complex, $t_i$ intersects $\alpha_j$ if and only if $i = j$, and for each $i$, a neighborhood of $\alpha_i \cup \beta_i$ is homeomorphic to a 4-punctured sphere. In this section, we describe a natural extension of this idea to the parabolic subgroup complex for a finite-type Artin group. Since we will only work with analogues of complete clean markings, we drop the terms ``complete'' and ``clean''.

    \begin{definition}\label{def:marking}
        A \emph{marking} $M$ on $A_{\Gamma}$ is a collection $\{(P_i , Q_i)\}$ of ordered pairs of irreducible parabolic subgroups such that \begin{enumerate}
            \item $\{P_i\}$ spans a maximal simplex in $C_{parab}$,
            \item $z_{Q_i}$ commutes with $z_{P_j}$ if and only if $i \neq j$, and
            \item For each $j$, the collection $\{P_i\} \cup Q_j$ is simultaneously standardizable.
        \end{enumerate}
    \end{definition}

Following the naming convention for mapping class groups, we refer to $\{P_i\}$ as the \emph{base elements} of the marking and to $\{Q_i\}$ as the \emph{transverse elements}. If all of the base elements are standard parabolic subgroups, we say that the marking is \emph{standard}. We say that a collection $\{P_i\}$ is a \emph{nesting chain} in the marking if it is a nesting chain in the $C_{parab}$-simplex spanned by the base elements. Similarly, $P_i$ is \emph{minimal} (resp. \emph{maximal}) if it is a minimal (resp. maximal) element of the simplex spanned by the base elements. The transverse elements generally do not span a simplex in $C_{parab}$.

    The first two conditions in our definition of a marking are obtained by applying the idea that two parabolic subgroups correspond to disjoint curves if their centers commute, as in \cite{C-parab-definition}. The third condition is a natural analogue of the requirement that a base-transverse pair must fill a 4-punctured sphere.

    To see this, consider a pants decomposition $\{\alpha_i\}$ on the punctured disk, and let $t_j$ be a choice of transverse element for $\alpha_j$. Curves on the disk correspond to simultaneously standardizable parabolic subgroups exactly when there is an element $\varphi$ of the mapping class group which sends $\{\{\alpha_i\}, t_j\}$ to round curves. The fact that such a $\varphi$ exists for the pair $(\alpha_j, t_j)$ is precisely the condition that they fill a 4-punctured sphere. All other $\alpha_i$ are contained entirely in $D_n - S_{0,4}$, and they can therefore be sent to round curves by appropriate elements of the mapping class groups of the connected components of $D_n - S_{0,4}$. These elements fix $\alpha_j$ and $t_j$ by construction, so the desired element is simply the composition of $\varphi$ with these elements.

      \begin{figure}[h]
        \centering
        \includegraphics[width=12cm]{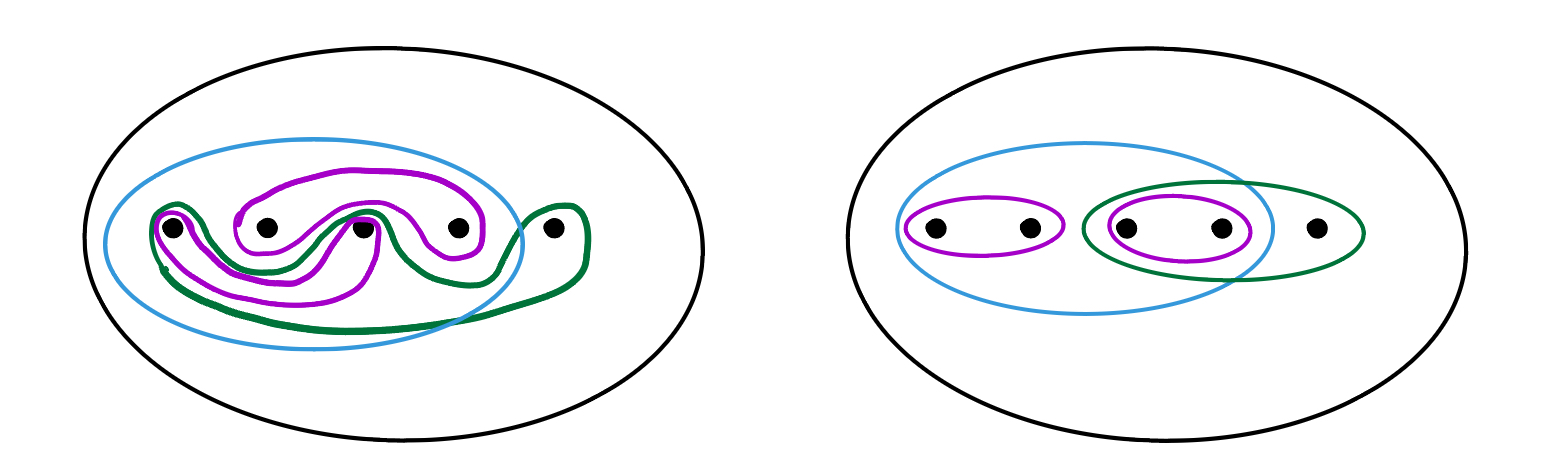}
        \captionsetup{margin=.5cm,justification=centering}
        \caption{A maximal simplex in the curve complex of the punctured disk (purple and green) and a transversal curve (blue) for the green base curve, pre and post-standardization}
        \label{fig:standardizingsimplices}
    \end{figure}

    Since maximality in $C_{parab}$, commutativity of subgroups, subgroup inclusion, and simultaneous standardizability are all preserved under conjugation, we obtain the following.
    
    \noindent\textbf{Key observation:} The property of being a marking is preserved under conjugation.

    The following facts about markings will frequently be useful.
    \begin{lemma}\label{maximal inclusion}
        Let $M = \{(A_{X_i}, Q_i)\}$ be a standard marking. Let the maximal base elements be $A_{X_1}$, $A_{X_2}$, and $A_{X_3}$. If $Q_1 = A_{Y_1}$, then $A_{X_2 \cup X_3} \leq A_{Y_1}$.
    \end{lemma}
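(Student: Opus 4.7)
The plan is to leverage the trichotomy from \cite{C-parab-definition}: for irreducible parabolic subgroups $P$ and $Q$, the centers $z_P$ and $z_Q$ commute if and only if one of $P \leq Q$, $Q \leq P$, or $P \cap Q = \{1\}$ with $P$ and $Q$ commuting element-wise holds. By condition (2) of Definition \ref{def:marking}, $z_{Y_1}$ commutes with $z_{X_j}$ for $j = 2, 3$ but not with $z_{X_1}$; and by Definition \ref{def:marking}, $A_{Y_1}$ is irreducible. I will show that the only configuration consistent with these constraints is $A_{X_2}, A_{X_3} \leq A_{Y_1}$, from which the claim follows since $X_2$ and $X_3$ commute element-wise.

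First, Proposition \ref{maximal simplex characterization} gives a generator $t$ with $X_1 \cup X_2 \cup X_3 = V(\Gamma) - \{t\}$, and Lemma \ref{3 maxl base components} places the three maximal $X_i$ in three distinct components of $\Gamma - \{t\}$. Since their union exhausts $V(\Gamma) - \{t\}$, each $X_i$ must be a full component of $\Gamma - \{t\}$, so $\partial(X_i) = \{t\}$ and no vertex of $X_i$ is $\Gamma$-adjacent to a vertex of $X_j$ for $i \neq j$. Applying the trichotomy to $(A_{Y_1}, A_{X_2})$, the case $A_{Y_1} \leq A_{X_2}$ is excluded: $Y_1 \subseteq X_2$ would give $Y_1 \cap X_1 = \emptyset$ with no connecting $\Gamma$-edges, forcing $z_{Y_1}$ to commute with $z_{X_1}$. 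The same argument rules out $A_{Y_1} \leq A_{X_3}$. Thus for each $j \in \{2,3\}$ we are left with case (B) $A_{X_j} \leq A_{Y_1}$ or case (C) $A_{Y_1} \cap A_{X_j} = \{1\}$ with the two commuting.

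Suppose (C) holds for both $j=2$ and $j=3$. Then $Y_1 \cap (X_2 \cup X_3) = \emptyset$ with no $\Gamma$-edges to $X_2 \cup X_3$, so $Y_1 \subseteq V(\Gamma) - X_2 - X_3 - \{t\} = X_1$. Then $A_{Y_1} \leq A_{X_1}$, forcing $z_{Y_1}$ to commute with $z_{X_1}$, a contradiction. For the mixed case, say (B) for $X_2$ and (C) for $X_3$: $X_2 \subseteq Y_1 \subseteq X_1 \cup X_2$. Irreducibility of $A_{Y_1}$ requires the subgraph induced on $Y_1$ to be connected. If $Y_1 \cap X_1 = \emptyset$ then $Y_1 = X_2$, but then $A_{Y_1} = A_{X_2}$ commutes with $A_{X_1}$, contradiction. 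Otherwise $Y_1$ has vertices in two distinct components of $\Gamma - \{t\}$ while $t \notin Y_1$, so the induced subgraph is disconnected, contradicting irreducibility. The symmetric mixed case is identical.

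Hence case (B) holds for both $j = 2, 3$: $X_2, X_3 \subseteq Y_1$. Since $X_2$ and $X_3$ commute element-wise, $A_{X_2 \cup X_3} = A_{X_2} \cdot A_{X_3} \leq A_{Y_1}$, as desired. The main obstacle is organizational rather than technical: one must work through the trichotomy for $X_2$ and $X_3$ in parallel and invoke irreducibility of $A_{Y_1}$ precisely in the mixed (B)/(C) case, where it is essential to preclude $Y_1$ from being a disconnected union bridging two components of $\Gamma - \{t\}$.
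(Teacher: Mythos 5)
Your proof is correct and uses the same key ingredients as the paper's (the trichotomy from \cite{C-parab-definition}, the structure of the maximal base components from Lemma \ref{3 maxl base components}, and irreducibility of $A_{Y_1}$). The only difference is organizational: the paper handles $X_2$ and $X_3$ independently, showing for each $j$ that the disjoint-and-commuting case alone forces $Y_1 \subseteq X_1 \cup X_k$ (for the other index $k$), whereupon irreducibility places $Y_1$ in a single component and yields adjacency to $A_{X_1}$ immediately, so the mixed cases in your joint analysis never need to be separated out.
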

    \begin{proof}
        The parabolic subgroups $A_{X_2}$, $A_{X_3}$, and $A_{Y_1}$ are all adjacent in $C_{parab}$, so either they are disjoint and commute or there is inclusion in one direction. The subgroup $A_{Y_1}$ cannot be included in either of $A_{X_2}$ or $A_{X_3}$ because these subgroups commute with $A_{X_1}$, and $A_{Y_1}$ does not.
        
        Suppose that $A_{Y_1}$ is disjoint from $A_{X_2}$ and the two subgroups commute. In particular, this implies that $Y_1 \subseteq \Gamma - (X_2 \cup \partial(X_2))$. In Lemma \ref{3 maxl base components}, we saw that when $X_2$ is maximal, $\partial(X_2) = \{v\}$ for a single vertex $v$ and $\Gamma - (X_2 \cup \{v\}) = X_1 \cup X_3$. Any irreducible parabolic subgroup which is contained in $A_{X_1 \cup X_3}$ is connected to $A_{X_1}$ via an edge in $C_{parab}$. This contradicts the requirement that $z_{Y_1}$ does not commute with $z_{X_1}$. Thus $A_{X_2} \leq A_{Y_1}$. The same argument can be applied to $A_{X_3}$ to obtain that $A_{Y_1}$ must contain both of $A_{X_2}$ and $A_{X_3}$. 
    \end{proof}
    \begin{corollary}\label{maximal transversal inclusion}
        Let $M = \{(P_i,Q_i)\}$ be a marking. If the base elements $P_1$, $P_2$, and $P_3$ are maximal and non-empty, then $P_2, P_3 \leq Q_1$.
    \end{corollary}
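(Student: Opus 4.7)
The plan is to reduce to the standard case already handled by Lemma \ref{maximal inclusion} via simultaneous standardizability, then transport the conclusion back by conjugation. Since condition (3) in Definition \ref{def:marking} guarantees that $\{P_i\} \cup \{Q_1\}$ is simultaneously standardizable, the collection $\{P_i\}$ alone admits a common standardizer as part of this larger collection.

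Concretely, I would first choose an element $g \in A_{\Gamma}$ such that $g^{-1} P_i g = A_{X_i}$ is standard for every $i$ and such that $g^{-1} Q_1 g = A_{Y_1}$ is also standard. Next, I would verify that the conjugated collection $M' = \{(A_{X_i}, g^{-1} Q_i g)\}$ is again a marking: condition (1) holds because maximality in $C_{parab}$ is conjugation-invariant, condition (2) holds because commutation of the elements $z_{P_i}$, $z_{Q_i}$ is conjugation-invariant, and condition (3) holds because simultaneous standardizability is a property of the subgroups that is preserved under conjugation (one simply composes standardizing elements with $g^{-1}$). Thus $M'$ is a marking whose base elements are standard, i.e., it is a \emph{standard} marking in the sense of Lemma \ref{maximal inclusion}, and its first transversal $A_{Y_1}$ is standard as well.

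Since $P_1, P_2, P_3$ are maximal base elements of $M$, their conjugates $A_{X_1}, A_{X_2}, A_{X_3}$ are the maximal base elements of $M'$. Applying Lemma \ref{maximal inclusion} to $M'$ with $A_{Y_1} = g^{-1} Q_1 g$ gives $A_{X_2 \cup X_3} \leq A_{Y_1}$. Conjugating back by $g$, we obtain
\[
P_2 \cup P_3 \;=\; g A_{X_2 \cup X_3} g^{-1} \;\leq\; g A_{Y_1} g^{-1} \;=\; Q_1,
\]
which yields $P_2, P_3 \leq Q_1$ as desired.

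There is essentially no obstacle here; the only point requiring care is the verification that conjugation preserves the property of being a marking, so that Lemma \ref{maximal inclusion} genuinely applies to $M'$. This is the ``key observation'' stated just before Lemma \ref{maximal inclusion}, so the argument is routine.
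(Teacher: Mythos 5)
Your proof is correct and follows the same route as the paper's one-line proof: use condition (3) of Definition \ref{def:marking} to conjugate $M$ to a marking with standard base and standard first transversal, apply Lemma \ref{maximal inclusion}, and conjugate back, since subgroup inclusion is conjugation-invariant. The only minor notational slip is writing $P_2 \cup P_3$ for the subgroup $g A_{X_2 \cup X_3} g^{-1}$, but the intended conclusion $P_2, P_3 \leq Q_1$ follows immediately regardless.
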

    \begin{proof}
        By the third condition of a marking, $M$ is conjugate to a marking $M'$ which satisfies the conditions of Lemma \ref{maximal inclusion}, and subgroup inclusion is preserved under conjugation.
    \end{proof}

    \begin{lemma}\label{lower level transversal inclusion}
        Let $M = \{(P_i , Q_i)\}$ be a marking. If $P_j \leq P_k$ and $j \neq k$, then $Q_j \leq P_k$ and either $P_j \leq Q_k$ or $P_j$ commutes with $Q_k$.
    \end{lemma}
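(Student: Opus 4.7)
The plan is to exploit the trichotomy established in Section~\ref{sec:Cparab}: two irreducible parabolic subgroups $P$ and $Q$ satisfy $[z_P, z_Q] = 1$ if and only if (i)~$P \leq Q$, (ii)~$Q \leq P$, or (iii)~$P \cap Q = \{1\}$ with $[P, Q] = 1$. Crucially, all three cases \emph{force} $z_P$ and $z_Q$ to commute, so whenever we know that two centers do \emph{not} commute, all three cases are ruled out. The strategy for each of the two assertions is to locate the relevant pair whose centers commute via condition~(2) of Definition~\ref{def:marking}, apply the trichotomy, and then eliminate the configurations that would contradict the complementary requirement $[z_{P_i}, z_{Q_i}] \neq 1$.

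For the first assertion, note that $j \neq k$ gives $[z_{Q_j}, z_{P_k}] = 1$, so one of (i)--(iii) holds for the pair $(Q_j, P_k)$. If $P_k \leq Q_j$, then the hypothesis $P_j \leq P_k$ yields $P_j \leq Q_j$; if instead $P_k$ and $Q_j$ are disjoint and commute elementwise, then the same hypothesis gives $P_j \cap Q_j = \{1\}$ and $[P_j, Q_j] = 1$, since both properties are inherited by subgroups. In either case $z_{P_j}$ commutes with $z_{Q_j}$, which violates the defining property of the pair $(P_j, Q_j)$ in the marking. The only surviving option is $Q_j \leq P_k$.

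For the second assertion, the pair with commuting centers is $(P_j, Q_k)$, again because $j \neq k$. The only configuration I need to exclude from the trichotomy is $Q_k \leq P_j$: this would give $Q_k \leq P_j \leq P_k$ and therefore $[z_{Q_k}, z_{P_k}] = 1$, contradicting the requirement that $z_{Q_k}$ does not commute with $z_{P_k}$. The remaining possibilities are exactly $P_j \leq Q_k$ and the disjoint-commuting configuration, which is the stated conclusion. I do not anticipate a genuine obstacle: once the three-case characterization of $C_{parab}$-adjacency is in hand, together with the trivial observation that both ``containment'' and ``disjoint and commuting'' are preserved under passing to subgroups, the argument reduces to a short case-elimination driven entirely by the defining properties of a marking.
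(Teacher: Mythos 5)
Your proof is correct and takes essentially the same approach as the paper: both apply the three-way characterization of $C_{parab}$-adjacency to the pairs $(Q_j, P_k)$ and $(P_j, Q_k)$ and then rule out the offending cases by contradiction with the transversality conditions $[z_{P_j}, z_{Q_j}] \neq 1$ and $[z_{P_k}, z_{Q_k}] \neq 1$. The paper compresses your two-case elimination for the first assertion into the single observation that either remaining case would make $Q_j$ adjacent to every subgroup of $P_k$, but the underlying argument is identical.
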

    \begin{proof}
        Let $P_j \leq P_k$. The subgroup $Q_j$ must be adjacent to $P_k$ in $C_{parab}$. This implies that either $Q_j \leq P_k$, or $Q_j \cap P_k = \{1\}$ and $Q_j$ and $P_k$ commute, or $P_k \leq Q_j$. Either of the latter two cases would imply that $Q_j$ is also adjacent to every subgroup of $P_k$ in $C_{parab}$. This is a contradiction because $Q_j$ is a transverse element for $P_j$. Thus $Q_j \leq P_k$.

        Similarly, $P_j$ must be adjacent to $Q_k$ in $C_{parab}$, so either $P_j \leq Q_k$, $P_j \cap Q_k = \{1\}$ and $P_j$ commutes with $Q_k$, or $Q_k \leq P_j$. If $Q_k \leq P_j$, then $Q_k \leq P_k$. This is a contradiction because $Q_k$ is a transverse element for $P_k$.
    \end{proof}

    \begin{lemma}\label{distinct twists}
        Let $\{A_{X_i}\}$ be the base of a marking, and let $Q$ be the transversal for a particular $A_{X_j}$. The subgroup $\Delta_{X_j}^k Q \Delta_{X_j}^{-k}$ is equal to $Q$ only when $k = 0$.
    \end{lemma}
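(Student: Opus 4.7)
The plan is to derive a contradiction with the transversality condition (condition~(2) of Definition~\ref{def:marking}) in the case $k \neq 0$. Since the third marking condition gives that $\{A_{X_i}\} \cup \{Q\}$ is simultaneously standardizable, first choose $g$ so that $A_Y := g^{-1} Q g$ and $A_{Z_i} := g^{-1} A_{X_i} g$ are all standard. Conjugating the hypothesis $\Delta_{X_j}^k Q \Delta_{X_j}^{-k} = Q$ by $g$ gives that $g^{-1} \Delta_{X_j}^{2k} g$ normalizes $A_Y$. The key observation is that $\Delta_{X_j}^{2k}$ is always a power of $z_{X_j}$ (regardless of whether $z_{X_j} = \Delta_{X_j}$ or $\Delta_{X_j}^2$), and since $g^{-1} z_{X_j} g = z_{Z_j}$, together with the fact that conjugate standard parabolic subgroups have the same central power, the conjugation yields $g^{-1} \Delta_{X_j}^{2k} g = \Delta_{Z_j}^{2k}$. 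Thus $\Delta_{Z_j}^{2k}$ normalizes $A_Y$.

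Now suppose for contradiction that $k \neq 0$; after replacing $k$ by $-k$ if needed, assume $2k > 0$, so $\Delta_{Z_j}^{2k}$ is a positive element. Since $z_Y$ is the unique positive-conjugate central element of $A_Y$, normalizing $A_Y$ forces $\Delta_{Z_j}^{2k} z_Y \Delta_{Z_j}^{-2k} = z_Y$. I would then invoke the ribbon decomposition of positive conjugators used in the proof of Lemma~\ref{conjugation implies containment}: any positive element conjugating $z_Y$ to $z_Y$ decomposes as a product of positive factors $c_1 \cdots c_r$, at least one of which is a positive elementary $Y$-ribbon-$Y'$ whose support contains $Y$. Since $\mathrm{supp}(\Delta_{Z_j}^{2k}) = Z_j$, we obtain $Y \subseteq Z_j$, i.e., $A_Y \leq A_{Z_j}$.

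Subgroup inclusion $A_Y \leq A_{Z_j}$ implies that $z_Y$ commutes with $z_{Z_j}$; conjugating back by $g$, we obtain that $z_Q$ commutes with $z_{X_j}$, contradicting condition~(2) of Definition~\ref{def:marking} applied to $P_j = A_{X_j}$ and $Q_j = Q$. Hence $k = 0$. The main technical obstacle is that the standardizing element $g$ need not satisfy $g^{-1} \Delta_{X_j} g = \Delta_{Z_j}$: only the central power $z_{X_j}$ is preserved under general conjugation, so directly tracking $\Delta_{X_j}^k$ is delicate. Passing to the even power $2k$ circumvents this, since $\Delta^{2k}$ is always a power of the central element $z$ and therefore behaves predictably under the standardization $g$.
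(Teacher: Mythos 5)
Your proof has a genuine gap at the ribbon-decomposition step. You claim that a positive element conjugating $z_Y$ to $z_Y$ decomposes as a product with ``at least one factor a positive elementary $Y$-ribbon-$Y'$ whose support contains $Y$.'' This support claim is false for arbitrary elementary ribbons: if $s \notin Y$ commutes with every generator in $Y$ (so $Y \cup \{s\}$ is disconnected), then $d_{Y,s} = \Delta_{\{s\}}\Delta_{\emptyset}^{-1} = s$ is a positive elementary $Y$-ribbon-$Y$ with $\mathrm{supp}(d_{Y,s}) = \{s\} \not\supseteq Y$. Your setting is precisely the degenerate one where this matters: having assumed $k \neq 0$, you have shown $\Delta_{Z_j}^{2k}$ \emph{commutes} with $z_Y$, so the conjugation from $z_Y$ to $z_Y$ is trivial and the decomposition may consist entirely of such commuting ribbons. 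Note also that you cannot instead invoke Lemma \ref{conjugation implies containment} itself, because its hypothesis is that $z_X$ does \emph{not} commute with $\Delta_Z^i$ for any $i \neq 0$ --- the opposite of what you have just established; that hypothesis is exactly what guarantees the decomposition involves ribbons that genuinely move $Y$ and therefore carry $Y$ in their support.

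The paper avoids this entirely and is considerably shorter: from $\Delta_{X_j}^k Q \Delta_{X_j}^{-k} = Q$ and Proposition 35 of \cite{Cumplido-minimal-standardizers}, it gets $\Delta_{X_j}^k z_Q = z_Q \Delta_{X_j}^k$, squares to $\Delta_{X_j}^{2k} z_Q^2 = z_Q^2 \Delta_{X_j}^{2k}$, and then cites Lemma 4.6 of \cite{C-parab-definition} to conclude that either $k=0$ or $z_{X_j}$ commutes with $z_Q$; the latter contradicts transversality. That cited lemma is the crucial tool --- it upgrades commutation of nonzero powers of the central elements to commutation of the central elements themselves --- and it is what your argument should invoke in place of the ribbon decomposition. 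A secondary observation: your detour through the standardizing element $g$ is unnecessary. Your identity $g^{-1}\Delta_{X_j}^{2k}g = \Delta_{Z_j}^{2k}$ is correct (and a nice observation in its own right), but the paper shows one can work directly with $\Delta_{X_j}^k$ and $z_Q$, so the standardization buys nothing here.
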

    \begin{proof}
        Suppose there is some $k$ with $\Delta_{X_j}^k Q \Delta_{X_j}^{-k} = Q$. By Proposition 35 of \cite{Cumplido-minimal-standardizers}, this implies the following equalities. 
    \begin{align*}
        \Delta_{X_j}^k z_Q \Delta_{X_j}^{-k} &= z_Q\\
        \Delta_{X_j}^k z_Q &= z_Q \Delta_{X_j}^k \\
        \Delta_{X_j}^{2k} z^2_Q &= z^2_Q \Delta_{X_j}^{2k}
    \end{align*}
    By Lemma 4.6 in \cite{C-parab-definition}, this implies that either $k = 0$ or $z_{X_j}$ and $z_Q$ commute. The latter contradicts transversality of $Q$, so $k=0$.
    \end{proof}
    
\subsection{Existence}
    While the existence of suitable base collections is clear from the fact that $C_{parab}$ is finite-dimensional and non-empty, it is not obvious from our definition that a suitable choice of transverse elements exists for any choice of base elements. In this subsection, we show that this is in fact the case. Moreover, we show that there is always a choice of transverse elements that are simultaneously standardizable with both the base \emph{and each other}. These ``uniformly standardizable'' collections will be useful in later sections.

        \begin{proposition}\label{marking existence}
            Any maximal simplex $\{P_i\}$ in $C_{parab}$ is the collection of base elements for some marking. Moreover, the transversals can be chosen such that the entire collection $\{P_i\} \cup \{Q_i\}$ is simultaneously standardizable.
        \end{proposition}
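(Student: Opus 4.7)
The plan is to first reduce to the case of a standard simplex via Proposition \ref{propx:sim-std}: choose a positive standardizer $g$ so that $g^{-1}P_i g = A_{X_i}$ for each $i$, construct transversals $A_{Y_i}$ for the standard simplex, and set $Q_i = g A_{Y_i} g^{-1}$. All marking conditions are preserved by conjugation, and if the $A_{Y_i}$ are standard then the element $g$ simultaneously standardizes the entire collection $\{P_i\} \cup \{Q_i\}$, which handles the moreover claim for free.

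For the top-level transversals, I will use the structure from Proposition \ref{propx:maxl-characterization}. Write $V(\Gamma) = \{t\} \sqcup \bigsqcup_i X_i$, where the union is over the maximal $X_i$ (by the arguments of Lemma \ref{3 maxl base components} these partition $V(\Gamma) \setminus \{t\}$ into connected components of $\Gamma - \{t\}$), and let $t_i \in X_i$ denote the missing generator inside $X_i$. Since $\Gamma$ is a tree, there is a unique neighbor $t_i^*$ of $t$ lying in $X_i$. For each maximal $A_{X_i}$, I take $Y_i$ to be the connected component of $V(\Gamma) \setminus \{t_i\}$ containing $t$: this equals $V(\Gamma) \setminus X_i$ when $t_i = t_i^*$, and equals $V(\Gamma) \setminus X_i$ augmented by the unique level-$2$ child of $X_i$ containing $t_i^*$ when $t_i \ne t_i^*$. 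The resulting $A_{Y_i}$ is irreducible and a proper parabolic subgroup, it contains every top-level $A_{X_j}$ with $j \ne i$ as a subgroup, and it overlaps $X_i$ either trivially or only inside the chosen child, producing the required non-adjacency with $A_{X_i}$ while preserving $C_{parab}$-adjacency with the other $A_{X_j}$'s.

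For the lower-level base elements $A_{X_j} \subsetneq A_{X_i}$ with $A_{X_i}$ maximal, I argue by induction on $|V(\Gamma)|$. The sub-collection $\{A_{X_k} : X_k \subseteq X_i\}$ is a maximal simplex in $C_{parab}(A_{X_i})$, which by Theorem \ref{parbs are artin groups} is a smaller irreducible finite-type Artin group. The inductive hypothesis provides standard transversals $Q_k$ inside $A_{X_i}$, and these remain valid transversals in $A_{\Gamma}$: every out-of-branch base element $A_{X_l}$ (with $X_l \not\subseteq X_i$) commutes with $A_{X_i}$ by the simplex structure, hence with any subgroup of $A_{X_i}$, so adjacency with these is automatic. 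The base case is the dihedral Artin group $I_2(p)$, where a maximal $C_{parab}$-simplex is a single standard generator and the other standard generator supplies a transversal.

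The main obstacle is verifying $C_{parab}$-adjacency of the top-level transversal $A_{Y_i}$ with each lower-level base element $A_{X_k} \subsetneq A_{X_i}$ that lies outside the chosen child. This reduces to showing that the disjoint supports $X_k$ and $Y_i$ span commuting subgroups, i.e., have no edges between them in $\Gamma$; this is exactly where the tree structure of $\Gamma$ matters, since the only possible bridge from $X_i$ to $V(\Gamma) \setminus X_i$ is the single edge $(t_i^*, t)$, and by construction $t_i^*$ either coincides with $t_i$ (so lies outside every $X_k \subsetneq X_i$ by Proposition \ref{propx:maxl-characterization}) or lies in the child of $X_i$ that we absorbed into $Y_i$.
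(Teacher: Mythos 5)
Your proposal is correct and follows essentially the same approach as the paper's proof: reduce to a standard simplex via simultaneous standardizability, construct the top-level transversal for each maximal $A_{X_i}$ as the complement $\Gamma - X_i$ augmented (when necessary) by the one second-level child adjacent to the missing vertex $t$, then recurse into each $A_{X_i}$ using that $\{A_{X_k} : X_k \subsetneq X_i\}$ is a maximal simplex in $C_{parab}(A_{X_i})$. Your description of the top-level transversal as ``the connected component of $V(\Gamma)\setminus\{t_i\}$ containing $t$'' is a clean unified phrasing of the paper's two cases, and your explicit framing as induction on $|V(\Gamma)|$ is equivalent to the paper's iterative descent through levels.
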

        \begin{proof}
            It is no loss of generality to assume that $\{P_i\}$ is standard. To reflect this, we rename the collection $\{A_{X_i}\}$.

            Let $A_X$, $A_Y$, and $A_Z$ denote the maximal elements of $\{A_{X_i}\}$. First, suppose at least two are non-empty. By Proposition \ref{maximal simplex characterization}, each is contained in a different connected component of $\Gamma$ minus a single vertex $v$. In particular, $\Gamma - X$ is a connected subgraph which contains any other maximal components. The corresponding subgroup $A_{\Gamma - X}$ is disjoint from $X$ but does not commute with it because $v \in \Gamma - X$ is adjacent to $X$.
                
            If $v$ is not connected by an edge to any vertex of $X_i$ for all $X_i \leq X$, then we can choose $\Gamma - X$ to be the transversal for $X$, since the vertex set of $\Gamma - v$ is precisely $v \cup Y \cup Z$ and $X_i$ also is not connected by an edge to any vertex of $Y$ or $Z$.
            
            Consider the maximal components $X'_1$, $X'_2$, and $X'_3$ \emph{inside} of $X$. If $v$ is connected by an edge to any $X_i \in X$, then it will necessarily be connected by an edge to one of these. Furthermore, the components $X'_1$, $X'_2$, and $X'_3$ are connected to each other via edges inside of $X$. Since $\Gamma$ has no cycles, this implies $v$ is connected by an edge to at most one of the $X_i$. If $v$ is connected to $X'_1$, then we instead choose the transversal to be $(\Gamma - X) \cup X'_1$. By the same reasoning as before, this element has the desired commuting properties, and its intersection with $X$ is precisely $X'_1$.

            \begin{figure}[h]
                \centering
                \includegraphics[width=8cm]{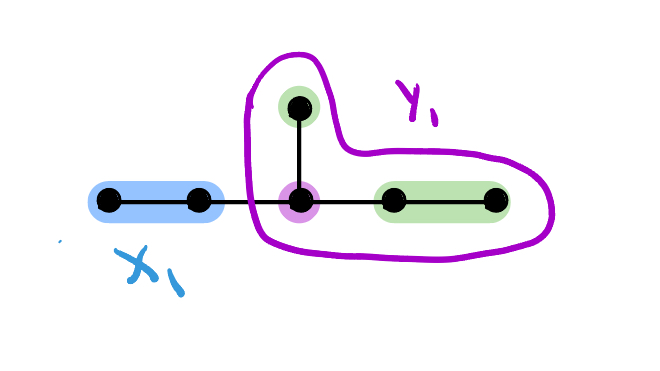}
                \captionsetup{margin=.5cm,justification=centering}
                \caption{Some maximal base elements (blue and green), the missing vertex $v$ (purple highlight) and a transversal for the blue one (purple line)}
            \end{figure}
            Repeating this process for $Y$ and $Z$, if they are nonempty, yields transversals for each maximal element.
                
            Now proceed to the elements which are at level 2 in the marking and contained in $X$: $X'_1$, $X'_2$, and $X'_3$. The set of $\{X_i\}$ which are contained in $X$ clearly spans a maximal simplex in $C_{parab}(X)$. Thus we can build transversals for $X'_1$, $X'_2$, and $X'_3$ \emph{inside} of $X$ in precisely the same way as we did for the maximal components. Note that the result will be contained entirely in $A_X$ and will therefore commute with both $A_Y$ and $A_Z$ (and consequently any subgroups they contain). We then repeat this process inside of $Y$ and $Z$, and then iterate for subsequent steps down all nesting chains in the collection. This process terminates because the size of the collection is finite.       
        \end{proof}  
  
\section{Transversal projections}\label{sec:transversals}

Our ultimate goal in this section is to define a projection of a transversal $Q$ onto its corresponding base element $P$. This will function in some sense like the intersection number between two curves in the mapping class group, though the definition of our projections will be very different.

Throughout this section, we will make use of the following observation to compare different standardizing elements of a maximal $C_{parab}$-simplex. It is a straightforward consequence of several earlier results, but we state it explicitly for clarity.

\begin{lemma}\label{p comparison}
    Let $\{P_i\}$ be a maximal $C_{parab}$-simplex, and let $g$ and $h$ be two standardizing elements so that $P_i = g A_{X_i} g^{-1} = h A_{Y_i} h^{-1}$ for each $i$ and some $\{A_{X_i}\}$ and $\{A_{Y_i}\}$. Let $r'$ be the element of Lemma \ref{changing standardization} which conjugates $\{A_{X_i}\}$ to $\{A_{Y_i}\}$, and let $r$ be the element of Lemma \ref{changing standardization} which conjugates $\{A_{Y_i}\}$ to $\{A_{X_i}\}$. 
    
    There is an ascending product $p_1$ of $\Delta_{Y_i}$ and $\Delta_{\Gamma}$ and an ascending product $p'_1$ of $\Delta_{X_i}$ and $\Delta_{\Gamma}$ such that $h = grp_1 = g p'_1 r$. There is also an ascending product $p_2$ of $\Delta_{X_i}$ and an ascending product $p'_2$ of $\Delta_{Y_i}$ such that $gr = h p'_2$ and $gp_2 = h r'$.
\end{lemma}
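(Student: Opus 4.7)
The plan is to define each of the four elements explicitly in terms of $g, h, r, r'$, verify by direct computation that each one stabilizes the appropriate standard simplex, and then invoke Theorem \ref{almost ascending product} (which characterises stabilizers of maximal $C_{parab}$-simplices as ascending products of Garside elements) to obtain the required ascending-product form.

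Concretely, I set
\[
p_1 := r^{-1} g^{-1} h, \qquad p'_1 := g^{-1} h r^{-1}, \qquad p_2 := g^{-1} h r', \qquad p'_2 := h^{-1} g r.
\]
The four required identities $h = g r p_1$, $h = g p'_1 r$, $g p_2 = h r'$, and $g r = h p'_2$ are then immediate from the definitions, so the entire content of the lemma is in establishing the structural form of these four elements.

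The key computation is that each one stabilises the appropriate standardization. Using $h A_{Y_i} h^{-1} = P_i = g A_{X_i} g^{-1}$ together with the defining property $\{r A_{Y_j} r^{-1}\} = \{A_{X_j}\}$ of $r$ from Lemma \ref{changing standardization}, one finds
\[
p_1 A_{Y_i} p_1^{-1} \;=\; r^{-1} g^{-1} (h A_{Y_i} h^{-1}) g r \;=\; r^{-1} A_{X_i} r \;\in\; \{A_{Y_j}\},
\]
so $p_1$ stabilises the simplex $\{A_{Y_i}\}$, and Theorem \ref{almost ascending product} then expresses $p_1$ as an ascending product of powers of the $\Delta_{Y_i}$ and $\Delta_\Gamma$. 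Entirely parallel computations (swapping the roles of $g \leftrightarrow h$, $r \leftrightarrow r'$, and $\{A_{X_i}\} \leftrightarrow \{A_{Y_j}\}$) show that $p'_2$ stabilises $\{A_{Y_i}\}$ while $p'_1$ and $p_2$ each stabilise $\{A_{X_i}\}$, yielding the corresponding ascending-product expressions.

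The subtle point I anticipate is the asymmetry in the statement: for $p_2$ and $p'_2$ the factor $\Delta_\Gamma$ is not listed among the allowed generators. I expect to handle this by inspecting the explicit forms of $r$ and $r'$ produced by Lemma \ref{changing standardization} --- the rightmost $\Delta_\Gamma^{\square}$ factor in $r'$ (respectively $r$) carries precisely the permutation-of-maximal-components data that would otherwise force a $\Delta_\Gamma$ factor in $p_2$ (respectively $p'_2$). Combining this with the commutation relations of Lemma \ref{Delta conjugation}, the would-be $\Delta_\Gamma$ contribution in $p_2$ gets absorbed into the $\Delta_{X_i}$ factors (and symmetrically for $p'_2$ into the $\Delta_{Y_i}$ factors). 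This bookkeeping --- matching up the $\Delta_\Gamma$-exponents on both sides of $g p_2 = h r'$ and $g r = h p'_2$ --- is the main obstacle, and it parallels the cancellation argument already carried out in the proof of Proposition \ref{product of deltas}.
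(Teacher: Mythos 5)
Your proof is correct and follows essentially the same route as the paper's: write each of the four candidate elements explicitly in terms of $g, h, r, r'$, verify by a one-line conjugation computation that each stabilizes the appropriate standard simplex $\{A_{X_i}\}$ or $\{A_{Y_i}\}$, and invoke Theorem~\ref{almost ascending product}. The only procedural difference is that the paper produces $p'_1$ indirectly, from the relation $r p_1 = p'_1 r$ together with Lemma~\ref{Delta conjugation}, rather than by directly verifying that $p'_1 = g^{-1}hr^{-1}$ stabilizes $\{A_{X_i}\}$ as you do; both are fine.

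One correction, though: the ``subtle point'' you anticipate about the absent $\Delta_\Gamma$ in the statement's phrasing for $p_2$ and $p'_2$ is not a real issue, and the fix you sketch would not work if it were. First, by Definition of ascending product the $\Delta_\Gamma^{\square}$-factor (with $\square$ possibly $0$) is always present at the right end, so the lemma's wording ``ascending product of $\Delta_{X_i}$'' for $p_2$ is just shorthand, and the paper's own proof does obtain $p_2$ and $p'_2$ as ascending products of $\Delta_{X_i}$ (resp.\ $\Delta_{Y_i}$) \emph{and} $\Delta_\Gamma$. Second, a nontrivial power of $\Delta_\Gamma$ cannot in general be ``absorbed into the $\Delta_{X_i}$ factors'': the support of $\Delta_\Gamma$ is all of $V(\Gamma)$, while the $\Delta_{X_i}$ generate a proper standard parabolic subgroup missing the vertex $t$ of Proposition~\ref{maximal simplex characterization}, so no such rewriting exists. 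The conjugation-matching argument in Lemma~\ref{Delta conjugation} lets you move $\Delta_\Gamma$ past $\Delta_{X_i}$-factors at the cost of a permutation, but it never eliminates the $\Delta_\Gamma$-power. Since Theorem~\ref{almost ascending product} already delivers exactly the form required, this extra bookkeeping is both unnecessary and unavailable; drop it and the rest of your argument stands as is.
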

\begin{proof}
    Notice that the elements $g^{-1}hr'$ and $h^{-1}gr$ stabilize the maximal simplices $\{A_{X_i}\}$ and $\{A_{Y_i}\}$ respectively. Specifically, if $\sigma$ is the permutation such that $r' A_{X_i} r'^{-1} = A_{Y_{\sigma(i)}}$, then
    \begin{align*}
        g^{-1} h r' A_{X_i} r'^{-1} h^{-1} g &= g^{-1} (h A_{Y_{\sigma(i)}} h^{-1}) g\\
        &= g^{-1} (g A_{X_{\sigma(i)}} g^{-1}) g\\
        &= A_{X_{\sigma(i)}}
    \end{align*}
    
    Theorem \ref{almost ascending product} implies that there is an ascending product $p_2$ of the $\Delta_{X_i}$ and $\Delta_{\Gamma}$ with $h^{-1} g r = p_2$ and an ascending product $p'_2$ of the $\Delta_{Y_i}$ and $\Delta_{\Gamma}$ with $g^{-1} h r' = p'_2$. 
    
    Similarly, the element $r^{-1}g^{-1}g$ stabilizes the simplex $\{A_{Y_i}\}$. If $\tau$ is the permutation with $r A_{Y_i} r^{-1} = A_{X_{\tau(i)}}$, then
    \begin{align*}
        r^{-1}g^{-1}(h A_{Y_i} h^{-1}) g r &= r^{-1}g^{-1}(g A_{X_i} g^{-1}) g r\\
        &= r^{-1} A_{X_i} r\\
        &= r^{-1} (r A_{Y_{\tau^{-1}(i)}} r^{-1}) r\\
        &= A_{Y_{\tau^{-1}(i)}}\text{.}
    \end{align*}
    Theorem \ref{almost ascending product} then implies that there is an ascending product $p_1$ of the $\Delta_{Y_i}$ and $\Delta_{\Gamma}$ with the following property.
    \begin{align*}
        r^{-1} g^{-1} h &= p_1\\
        h &= grp_1
    \end{align*}
    Since $r$ is a product of Garside elements of standard parabolic subgroups containing each $A_{Y_i}$ and which conjugate each $A_{Y_i}$ to some $A_{X_j}$, repeated applications of Lemma \ref{Delta conjugation} imply that there is a suitable $p'_1$ with $rp_1 = p'_1 r$.
\end{proof}

Notice that in each of these cases, it is clear from the position of $p$ or $p'$ whether the factors in the ascending product are $\Delta_{X_i}$ and $\Delta_{\Gamma}$ or $\Delta_{Y_i}$ and $\Delta_{\Gamma}$.

The following fact will play a key technical role in the next section.
\begin{proposition}\label{differ by p}
    Let $\{P_i\}$ be a maximal $C_{parab}$ simplex, and let $g$, $h$, $\{A_{X_i}\}$, and $\{A_{Y_i}\}$ be such that $g A_{X_i} g^{-1} = P_i = h A_{Y_i} h^{-1}$ for each $i$. There is an ascending product $p$ of $\Delta_{X_i}$ and $\Delta_{\Gamma}$ such that $gp = h$.
\end{proposition}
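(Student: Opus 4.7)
The plan is to express $p := g^{-1}h$ as an ascending product in $\Delta_{X_i}$ and $\Delta_{\Gamma}$. Lemma \ref{p comparison} provides the factorization $h = g\,p'_1 r$, hence $p = p'_1 r$, where $p'_1$ is already an ascending product in $\Delta_{X_i}$ and $\Delta_{\Gamma}$ that stabilizes $\{A_{X_i}\}$, and $r$ is the bridging element from Lemma \ref{changing standardization} that conjugates $\{A_{Y_i}\}$ onto $\{A_{X_i}\}$. The key observation is that when Lemma \ref{changing standardization} is applied with the roles of $X$ and $Y$ swapped, the resulting $r$ is itself of the form $\Delta_{X_k}^{\square} \cdots \Delta_{X_1}^{\square}\Delta_{\Gamma}^{\square}$, i.e., an ascending product in $\Delta_{X_i}$ and $\Delta_{\Gamma}$. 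So $p = p'_1 r$ is a product of two ascending products in $X$, and the task reduces to collapsing them into a single ascending product.

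I would execute this collapse by repeatedly commuting the interior $\Delta_{\Gamma}^{\square}$ factor of $p'_1$ to the right past the $\Delta_{X_j}$'s appearing in $r$ using Lemma \ref{Delta conjugation} (which replaces each $\Delta_{X_j}$ by $\Delta_{\tau(X_j)}$, where $\tau$ denotes conjugation by $\Delta_{\Gamma}$), and then using that $\Delta_{X_i}$ and $\Delta_{X_j}$ commute whenever $X_i$ and $X_j$ are incomparable under inclusion, so as to reorder and consolidate the exponents until precisely one $\Delta_{X_i}$-factor appears per vertex of the simplex, arranged by nesting depth with $\Delta_{\Gamma}^{\square}$ on the far right.

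The main obstacle is ensuring that each intermediate subset $\tau^{n}(X_j)$ produced along the way is still an element of $\{X_i\}$, so that the collapsed word is genuinely an ascending product in our simplex. For this I would invoke Lemma \ref{conj preserving gen sets}, which constrains the $\Delta_{\Gamma}$-exponent of the stabilizer $p'_1$ so that the corresponding power of $\tau$ is compatible with the simplex level by level, together with the fact that the $\Delta_{\Gamma}$-exponent of $r$ is the one distinguished by Lemma \ref{changing standardization} for moving $X_j$'s to $Y_j$'s; the two exponents interlock to keep every intermediate subset inside the simplex. If the direct rewriting proves unwieldy, I would instead imitate the inductive proof of Theorem \ref{almost ascending product} on the length of the longest nesting chain, handling the base case via Godelle's normalizer decomposition and a routine generalization of Proposition \ref{product of deltas} to $\overline{Y}$-ribbons-$\overline{X}$ (noting that $\overline{X}$ and $\overline{Y}$ each omit a single generator, so they differ by at most a single $\Delta_{\Gamma}^{\square}$), and recursing into each maximal component.
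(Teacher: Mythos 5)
Your setup is the same as the paper's: both start from the factorization $h = g\,p'_1\,r$ supplied by Lemma \ref{p comparison}, where $r$ is the ascending product from Lemma \ref{changing standardization} conjugating $\{A_{Y_i}\}$ onto $\{A_{X_i}\}$ and $p'_1$ is an ascending product stabilizing $\{A_{X_i}\}$. The divergence — and the gap — is in the collapse step. You propose to push only the $\Delta_\Gamma^{\square}$ factor of $p'_1$ rightward across the $\Delta_{X_j}$-factors of $r$, and then use commutativity of $\Delta$-factors at incomparable vertices to reorder. But Lemma \ref{conj preserving gen sets} does \emph{not} say that $\Delta_\Gamma^a$ alone preserves every level of the simplex; it says the terminal subword $g_k$, which contains the $\Delta_{X_i}$-factors of $p'_1$ at levels $\leq k-1$ \emph{together with} $\Delta_\Gamma^a$, stabilizes level $k$. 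The $\Delta_\Gamma$-factor by itself only controls level $1$, so the intermediate $\Delta_\Gamma^a X_j \Delta_\Gamma^{-a}$ can fall outside the simplex, and then the commutativity step cannot rescue you because the stray subset sits inside a larger $X_i$ and hence does not commute past $\Delta_{X_i}$.

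A concrete instance where this fails: take $A_\Gamma$ of type $E_6$, label the $A_5$-path $s_1,\dots,s_5$ with prong $s_6$, and consider the chain simplex $X_1 = \{s_1,\dots,s_5\}\supsetneq X_2 = \{s_1,\dots,s_4\}\supsetneq X_3=\{s_1,s_2,s_3\}\supsetneq X_4=\{s_1,s_2\}\supsetneq X_5=\{s_1\}$. Pair it with $Y_1 = X_1$, $Y_2 = X_2$, $Y_3 = \{s_2,s_3,s_4\}$, $Y_4 = \{s_2,s_3\}$, $Y_5 = \{s_2\}$. Then one checks $r = \Delta_{X_3}\Delta_{X_2}$ (with zero $\Delta_\Gamma$-factor), and $p'_1 = \Delta_{X_1}\Delta_\Gamma$ is a legitimate stabilizer of $\{A_{X_i}\}$ despite having an \emph{odd} $\Delta_\Gamma$-exponent. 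Pushing $\Delta_\Gamma$ past $\Delta_{X_3}$ produces $\Delta_{\{s_3,s_4,s_5\}}$, and $\{s_3,s_4,s_5\}$ is not a vertex of the simplex; since $\{s_3,s_4,s_5\}\subset X_1$, the new factor does not commute with the remaining $\Delta_{X_1}$, so your reordering step has nothing to work with. The paper's proof avoids this by moving each $\Delta_{X_j}$-factor of $r$ \emph{leftward} through the whole terminal subword $p_f$ of $p'_1$ at levels shallower than $L_j$ — the $\Delta_{X_i}$-factors included, not just $\Delta_\Gamma$ — and Lemma \ref{conj preserving gen sets} applied to that full subword guarantees $p_f\Delta_{X_j}p_f^{-1}=\Delta_{X_{\tau(j)}}$ with $X_{\tau(j)}$ still a level-$L_j$ vertex of $\{X_i\}$. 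Your fallback (re-running the induction of Theorem \ref{almost ascending product} via Godelle's normalizer decomposition and a ribbon computation) is plausible but not carried out, and it would need the same level-by-level control; as written your primary argument has a genuine gap at the exact obstacle you flagged.
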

\begin{proof}
    Let $r$ be the ascending product of $\Delta_{X_i}$ and $\Delta_{\Gamma}$ provided by Lemma \ref{changing standardization} so that $\{r A_{Y_i} r^{-1}\} = \{A_{X_i}\}$. Let $\sigma$ be the permutation so that $r A_{Y_i} r^{-1} = A_{X_{\sigma(i)}}$.
    
    Applying Lemma \ref{p comparison} shows that there is some ascending product $p'_1$ of the $\Delta_{X_i}$ and $\Delta_{\Gamma}$ such that $gp'_1r = h$ and $p'_1$ stabilizes the collection $\{A_{X_i}\}$. For simplicity of notation, we will rename $p'_1 = p$. It suffices to check that any product $pr$ where $p$ stabilizes $\{A_{X_i}\}$ can be written as an ascending product $p'$ of the desired form.

    Consider the leftmost factor of $r$, say $\Delta_{X_j}$, and suppose $A_{X_j}$ is at level $L_j$ in the marking. In Lemma \ref{conj preserving gen sets}, we saw that there is a permutation $\tau$ of the $L_j$th level of the simplex such that, if $p_f$ is the terminal subword of $p$ containing all terms at higher levels of the marking than $A_{X_j}$, then $p_f \Delta_{X_j} = \Delta_{X_{\tau(j)}} p_f$. All terms of $p$ within a level commute, so we can remove the $\Delta_{X_j}$ term from $r$ and instead include it in the $\Delta_{X_{\tau(j)}}$ term of $p$.

    The element $r$ has finitely many terms, so repeated applications of this process result in an $r$ which consists only of a power of $\Delta_{\Gamma}$. The right-multiple of an ascending product $p$ by any power of $\Delta_{\Gamma}$ is still an ascending product, which concludes the proof.
\end{proof}

\subsection{Simplex projections of standardizers}

To begin, we define the projection to each vertex of a standardizer $g$ for some $C_{parab}$-simplex. We will later use the simultaneous standardizability of transversals and base simplices to extend this to projections of transversal elements. One might hope to simply examine the $\Delta_{X_i}$ suffixes of $g$ for each $i$, but in fact, this is not sufficiently well-behaved with respect to negative powers. Instead, we define our projections as follows.

    \begin{lemma}\label{unique p}
        Let $\{A_{X_i}\}$ be a maximal $C_{parab}$ simplex, and let $p$ be an ascending product of $\Delta_{X_i}$ and $\Delta_{\Gamma}$. The tuple of exponents of $\Delta_{X_i}$-factors in $p$ is unique up to reordering entries corresponding to commuting Garside elements that are in the same level of the marking.
    \end{lemma}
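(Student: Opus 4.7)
The plan is induction on the number of levels $L$ of the maximal simplex $\{A_{X_i}\}$. Suppose $p$ admits two ascending-product expressions, denoted $p_a$ and $p_b$. The commutativity of Garside elements of pairwise-disjoint, same-level parabolic subgroups lets us reorder both expressions into a common order of indices; the task reduces to showing that the exponent tuples $(a_1,\ldots,a_m,a_0)$ and $(b_1,\ldots,b_m,b_0)$ coincide.

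For the inductive step, split each expression as $p_a = p_a^{\mathrm{low}}\, p_a^{\mathrm{top}}\,\Delta_\Gamma^{a_0}$ (and similarly for $p_b$), where $p_a^{\mathrm{top}}$ collects the Garside elements at the first (maximal) level and $p_a^{\mathrm{low}}$ contains the Garside factors at strictly deeper levels. Let $A_{X_1}, A_{X_2}, A_{X_3}$ denote the (at most three) maximal base elements. Every deeper-level $X_j$ satisfies $X_j \subsetneq X_i$ for some maximal $X_i$, so $p_a^{\mathrm{low}}, p_b^{\mathrm{low}} \in A_{X_1 \cup X_2 \cup X_3} = A_{X_1}\times A_{X_2}\times A_{X_3}$. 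Equating $p_a = p_b$ and solving yields
\[
(p_b^{\mathrm{low}})^{-1}\,p_a^{\mathrm{low}} \;=\; p_b^{\mathrm{top}}\,\Delta_\Gamma^{b_0 - a_0}\,(p_a^{\mathrm{top}})^{-1}.
\]
Since $\Delta_\Gamma$ normalizes $A_{X_1\cup X_2\cup X_3}$ (it permutes the maximal components), the right-hand side rearranges into the form $q\cdot\Delta_\Gamma^{b_0-a_0}$ with $q \in A_{X_1\cup X_2\cup X_3}$, forcing $\Delta_\Gamma^{b_0-a_0} \in A_{X_1\cup X_2\cup X_3}$. By Proposition~\ref{maximal simplex characterization}, $X_1\cup X_2\cup X_3 = V(\Gamma)\setminus\{t\}$ is proper; since any nontrivial positive power of $\Delta_\Gamma$ has support equal to $V(\Gamma)$, and the positive elements of a standard parabolic $A_Y \leq A_\Gamma$ are exactly those with support in $Y$ (van~der~Lek), this containment forces $a_0 = b_0$.

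With $a_0 = b_0$ established, the identity reduces to $p_a^{\mathrm{top}} = p_b^{\mathrm{top}}$ inside $A_{X_1}\times A_{X_2}\times A_{X_3}$; projecting to each direct factor gives $\Delta_{X_i}^{a_i} = \Delta_{X_i}^{b_i}$, whence $a_i = b_i$ by infinite order of $\Delta_{X_i}$. Cancelling these yields $p_a^{\mathrm{low}} = p_b^{\mathrm{low}}$, an equality between ascending products in each $A_{X_i}$ for the $C_{parab}(A_{X_i})$-simplex obtained by removing the first level. These simplices have strictly fewer levels, so the inductive hypothesis applies in each factor. The base case $L=1$ (empty $p^{\mathrm{low}}$) is subsumed into the first two steps.

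The main obstacle is isolating the $\Delta_\Gamma$-exponent $a_0$, since $\Delta_\Gamma$ need not commute with deeper-level Garside factors and can act nontrivially by permutation on maximal $A_{X_i}$'s, so naive projection onto a single component is unavailable. The resolution is the support argument above: a nontrivial power of $\Delta_\Gamma$ cannot be realized as a group-theoretic element of a proper standard parabolic subgroup, because its support is all of $V(\Gamma)$ while positive elements of $A_Y$ are supported within $Y$. Once $a_0$ is pinned down, the remaining step is the straightforward projection onto the direct factors $A_{X_i}$ combined with induction.
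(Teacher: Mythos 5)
Your overall strategy --- pin down the $\Delta_\Gamma$-exponent via a support argument, then descend level by level through the direct-product structure $A_{X_1}\times A_{X_2}\times A_{X_3}$ --- is the same as the paper's, which peels off the $\Delta_\Gamma$-factor and then repeats the support argument for each maximal component and subsequent level. Your write-up is more explicit about the direct product decomposition, which is a nice improvement, but two of your intermediate justifications are off.

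First, the claim that ``$\Delta_\Gamma$ normalizes $A_{X_1\cup X_2\cup X_3}$'' is false in general. For example, in the type $A_3$ group take the maximal simplex $\{\langle s_1\rangle, \langle s_1,s_2\rangle\}$, so $X_1\cup X_2\cup X_3 = \{s_1,s_2\}$; conjugation by $\Delta_\Gamma$ sends $\langle s_1,s_2\rangle$ to $\langle s_2,s_3\rangle$. The permutation $\delta$ induced by $\Delta_\Gamma$ need not fix the omitted generator $t$, so it need not stabilize $V(\Gamma)\setminus\{t\}$. Luckily you do not actually need this: since $p_b^{\mathrm{top}}$, $p_a^{\mathrm{top}}$, and $(p_b^{\mathrm{low}})^{-1}p_a^{\mathrm{low}}$ all lie in $A_{X_1\cup X_2\cup X_3}$, your displayed identity already gives $\Delta_\Gamma^{b_0-a_0} = (p_b^{\mathrm{top}})^{-1}(p_b^{\mathrm{low}})^{-1}p_a^{\mathrm{low}}\,p_a^{\mathrm{top}} \in A_{X_1\cup X_2\cup X_3}$ by simple rearrangement, and then the support/van~der~Lek argument (applied to $\Delta_\Gamma^{|b_0-a_0|}$ to keep it positive) forces $a_0=b_0$.

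Second, the assertion that, once $a_0=b_0$, ``the identity reduces to $p_a^{\mathrm{top}}=p_b^{\mathrm{top}}$'' and projection gives $\Delta_{X_i}^{a_i}=\Delta_{X_i}^{b_i}$ is not justified as written: you only have $p_a^{\mathrm{low}}p_a^{\mathrm{top}}=p_b^{\mathrm{low}}p_b^{\mathrm{top}}$, and the projection of each side onto the $A_{X_i}$ factor is a full ascending product (deeper-level factors times $\Delta_{X_i}^{a_i}$ or $\Delta_{X_i}^{b_i}$), not the single factor $\Delta_{X_i}^{a_i}$. The right move is exactly the one your induction is designed for: the projection onto $A_{X_i}$ is an ascending product of the $\Delta_{X_j}$ with $X_j\subsetneq X_i$ together with $\Delta_{X_i}$ (playing the role of $\Delta_\Gamma$ for the maximal $C_{parab}(A_{X_i})$-simplex $\{A_{X_j}: X_j\subsetneq X_i\}$), so the inductive hypothesis applied inside $A_{X_i}$ gives all the exponents $a_j=b_j$ for $X_j\subseteq X_i$ at once. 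With these two corrections the argument goes through and matches the paper's in spirit.
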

    \begin{proof}
        Let $p_1$ and $p_2$ be two such ascending product expressions, and suppose $p_1 = p_2$ as group elements. Suppose that the $\Delta_{\Gamma}$-factor of $p_1$ is $\Delta_{\Gamma}^k$, and consider the elements $p_1 \Delta_{\Gamma}^{-k}$ and $p_2 \Delta_{\Gamma}^{-k}$. By construction, the element $p_1 \Delta_{\Gamma}^{-k}$ has support contained in $\bigcup_{i} X_i$, which is $V(\Gamma) - \{t\}$ for some $t$. 
        
        If the $\Delta_{\Gamma}$ factor of $p_2$ is not $\Delta_{\Gamma}^k$, then the support of $p_2 \Delta_{\Gamma}^{-k}$ is necessarily all of $V(\Gamma)$. The support of an element is well-defined, so this implies that $p_1 \Delta_{\Gamma}^{-k} \neq p_2 \Delta_{\Gamma}^{-k}$ and consequently that $p_1 \neq p_2$.

        Thus the $\Delta_{\Gamma}$ terms must be the same. We can repeat the same process for $p_1 \Delta_{\Gamma}^{-k}$ and $p_2 \Delta_{\Gamma}^{-k}$ for the factor of $p_1$ corresponding to each maximal component of $\{A_{X_i}\}$, if their exponents are non-zero in at least one expression, to see that these terms must agree in $p_1$ and $p_2$ as well. Repeating this process at each subsequent level of the marking in descending order gives the desired result.
    \end{proof}
    
    \begin{corollary}\label{unique p difference}
        Let $\{P_i\}_{i=1}^{N}$ span a maximal simplex in $C_{parab}$. If $g$ and $h$ are two elements of $A_{\Gamma}$ with $g A_{X_i} g^{-1} = P_i = h A_{Y_i} h^{-1}$ for every $i$, then there is a tuple $I = (n_{i_1}, \cdots n_{i_{N+1}})$ that is unique up to the ordering of $X_i$ within the same level such that $I$ represents the exponents of $\Delta_{X_i}$ and $\Delta_{\Gamma}$ in the ascending product $p$ with $gp = h$.
    \end{corollary}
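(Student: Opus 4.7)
The plan is to combine the two immediately preceding results: Proposition \ref{differ by p} for existence of an ascending product $p$ with $gp=h$, and Lemma \ref{unique p} for uniqueness of the exponent tuple of such an ascending product.

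First I would invoke Proposition \ref{differ by p} applied to $g$, $h$, $\{A_{X_i}\}$, $\{A_{Y_i}\}$ to obtain an ascending product $p$ of the $\Delta_{X_i}$ and $\Delta_{\Gamma}$ with $gp = h$. Writing $p$ out in its ascending form,
\[
p = \Delta_{X_{i_1}}^{n_{i_1}} \cdots \Delta_{X_{i_N}}^{n_{i_N}} \Delta_{\Gamma}^{n_{i_{N+1}}},
\]
with the ordering conventions from Definition \ref{almost ascending product}, gives a candidate tuple $I = (n_{i_1},\dots,n_{i_{N+1}})$.

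For uniqueness, suppose $p$ and $p'$ are two ascending products of the $\Delta_{X_i}$ and $\Delta_{\Gamma}$ with $gp = h = gp'$. Left-cancelling $g$ in the group $A_{\Gamma}$ yields $p = p'$ as group elements. Then Lemma \ref{unique p}, applied to the maximal simplex $\{A_{X_i}\}$, asserts that any two ascending product expressions for the same group element must have the same tuple of exponents up to reordering factors that lie in the same level of the simplex and whose Garside elements commute. This is precisely the stated uniqueness, so the tuple $I$ is well-defined up to that same reordering.

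There is no serious obstacle here: the content has already been established in Proposition \ref{differ by p} and Lemma \ref{unique p}, and the corollary is simply their combination, observing that the existence step produces an ascending product and the uniqueness step applies to any such product. The only point requiring (minimal) care is noting that left-cancellation is valid in the group $A_{\Gamma}$, which is immediate since $A_{\Gamma}$ is a group (indeed a Garside group, so both left and right cancellation hold).
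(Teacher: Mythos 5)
Your proposal is correct and matches the paper's intent: the corollary is stated without an explicit proof, but the structure of Section 5 makes clear that it follows precisely by combining Proposition \ref{differ by p} (existence of an ascending product $p$ with $gp=h$) with Lemma \ref{unique p} (uniqueness of the exponent tuple of an ascending product), exactly as you argue.
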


    The element $p$ and its associated ascending product expression are unique for a \emph{pair} of elements $g$ and $h$. We would like to associate a unique $p$ to each individual standardizer. One way to do this is to find a unique element $g$ to which we can compare all $h$, so that an expression $p$ and a collection $\{P_i\}$ uniquely determine $h$. There is a natural choice of comparison element: the canonical positive standardizer.

    \begin{definition}
        Let $\{P_i\}$ be a maximal $C_{parab}$ simplex with canonical positive standardization $\underline{g} A_{X_i} \underline{g}^{-1} = P_i$, and let $h$ be an element such that $P_i = h A_{Y_i} h^{-1}$. By Lemma \ref{unique p difference}, there is a unique ascending product expression $p$ of $\Delta_{X_i}$ and $\Delta_{\Gamma}$ such that $\underline{g}p = h$. We define the power of $\Delta_{X_i}$ in $p$ to be the \emph{projection of $h$ to $P_i$}, and we write $\pi_{P_i}(h)$. 
    \end{definition} 

The following lemma says that these projections behave predictably with respect to right-multiplication by powers of suitable Garside elements.

\begin{lemma}\label{well behaved projections}
    Let $P_i = h A_{Y_i} h^{-1}$ be a maximal $C_{parab}$-simplex, and fix an index $j$. For any $k$, we have
    \begin{align*}
        \pi_{P_\sigma(i)}(h \Delta_{Y_j}^k) = \begin{cases}
                                        \pi_{P_i}(h) & \text{ if } i \neq j  \\
                     \pi_{P_i}(h)+k & \text{ if } i=j
                                    \end{cases}
    \end{align*}
for some permutation $\sigma$ which depends only on the power $k$ and collection $\{A_{X_i}\}$ and where $\sigma(i) = i$ whenever $P_i$ is not a proper subgroup of $P_j$. In particular, $\pi_{P_j} (h \Delta_{Y_j}^k) = \pi_{P_j}(h) + k$.
\end{lemma}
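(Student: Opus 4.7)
The plan is to use Proposition~\ref{differ by p} to write $h = \underline{g}\,p$, where $\underline{g}$ is the canonical positive standardizer and $p$ is the unique ascending product in the Garside elements $\Delta_{X_i}$ and $\Delta_{\Gamma}$ whose exponent of $\Delta_{X_i}$ is $\pi_{P_i}(h)$. First I would verify that $h' := h\,\Delta_{Y_j}^k$ is again a simultaneous standardizer of the simplex $\{P_i\}$, so that Proposition~\ref{differ by p} applies to produce a unique ascending product $p'$ with $\underline{g}\,p' = h'$ and hence $p' = p\,\Delta_{Y_j}^k$ as group elements. The standardizer check is routine: for each $i$, the conjugate $\Delta_{Y_j}^{-k}A_{Y_i}\Delta_{Y_j}^k$ is a standard parabolic subgroup, equal to $A_{Y_i}$ when $i=j$, when $A_{Y_i}$ commutes with $A_{Y_j}$, or when $A_{Y_j}\subseteq A_{Y_i}$ (in each case by inner conjugation), and equal to another standard parabolic subgroup of $A_{Y_j}$ when $A_{Y_i}\subsetneq A_{Y_j}$ by Lemma~\ref{Delta conjugation}.

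The main step is to exhibit $p\,\Delta_{Y_j}^k$ as an ascending product differing from $p$ only in the $j$th exponent. I would decompose $p = p_b\cdot\Delta_{X_j}^{a_j}\cdot p_f$, where $a_j = \pi_{P_j}(h)$, the left block $p_b$ gathers the factors $\Delta_{X_i}^{a_i}$ with $X_i\subsetneq X_j$ (so $p_b\in A_{X_j}$), and the right block $p_f$ gathers all the remaining factors, namely those $\Delta_{X_i}^{a_i}$ with $X_j\subsetneq X_i$ or with $X_i$ incomparable to $X_j$, together with $\Delta_{\Gamma}^{a_\Gamma}$; incomparable factors commute with $\Delta_{X_j}$ and with the factors in $p_b$, so such a decomposition is compatible with the ascending order. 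The key identity is
\[
    p_f\,\Delta_{Y_j}^k \;=\; \Delta_{X_j}^k\,p_f,
\]
which I would prove by noting that since $p_b$ and $\Delta_{X_j}^{a_j}$ lie in $A_{X_j}$, the relation $A_{Y_j} = p^{-1}A_{X_j}p$ simplifies to $A_{Y_j} = p_f^{-1}A_{X_j}p_f$. Each factor of $p_f$ either commutes with $\Delta_{X_j}$ (the incomparable case) or is a Garside element of a standard parabolic subgroup containing $A_{X_j}$, or equals $\Delta_{\Gamma}^{a_\Gamma}$; in the latter two situations Lemma~\ref{Delta conjugation} applies. Iteratively, this shows $p_f^{-1}X_jp_f = Y_j$ as sets of Artin generators (using uniqueness of generating sets for standard parabolics) and $p_f^{-1}\Delta_{X_j}p_f = \Delta_{Y_j}$; raising to the $k$th power gives the displayed identity.

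Combining the pieces yields
\[
    p\,\Delta_{Y_j}^k \;=\; p_b\,\Delta_{X_j}^{a_j}\,p_f\,\Delta_{Y_j}^k \;=\; p_b\,\Delta_{X_j}^{a_j+k}\,p_f,
\]
which is again in ascending form, with the same exponents as $p$ except that the $j$th has increased by $k$. Reading off exponents gives $\pi_{P_j}(h') = \pi_{P_j}(h) + k$ and $\pi_{P_i}(h') = \pi_{P_i}(h)$ for every $i\neq j$, so the lemma holds with $\sigma$ equal to the identity, which satisfies $\sigma(i)=i$ for every index, in particular when $P_i$ is not a proper subgroup of $P_j$. The main obstacle is the uniform applicability of Lemma~\ref{Delta conjugation} through the entire right block $p_f$: this is ultimately a consequence of the ascending-product form of $p$, which guarantees every nontrivial Garside factor in $p_f$ comes from a standard parabolic subgroup containing $X_j$ (or from $\Delta_{\Gamma}$), so the conjugation action on $X_j$ is always through a label-preserving graph automorphism.
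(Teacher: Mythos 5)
Your proof is correct, but it takes a genuinely different and, in my view, more careful route than the paper. The paper writes $\underline{g}p\Delta_{Y_j}^k = \underline{g}\Delta_{X_j}^k p$, asserting that ``$p$ conjugates $\Delta_{Y_j}$ to $\Delta_{X_j}$'' by repeated applications of Lemma~\ref{Delta conjugation}, and then pushes the stray $\Delta_{X_j}^k$ rightward past the low-level block $p_s$ of $p$, producing the permutation $\sigma$. But Lemma~\ref{Delta conjugation} requires the subgroup being moved to be contained in (or commute with) the parabolic whose Garside element is doing the conjugating, and this fails for factors $\Delta_{X_i}$ with $X_i\subsetneq X_j$ --- for instance, in type $A_3$ with simplex $\{\langle s_1\rangle, \langle s_1,s_2\rangle\}$, $j=2$, and $p=s_1$, one has $p\Delta_{Y_j}p^{-1}=s_1^2s_2\neq s_1s_2s_1=\Delta_{X_j}$, so the paper's displayed identity $p\Delta_{Y_j}^k=\Delta_{X_j}^k p$ is simply false, and the induced ``$\sigma$'' is ill-defined as a permutation of the simplex. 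Your decomposition $p=p_b\,\Delta_{X_j}^{a_j}\,p_f$ sidesteps this precisely: the inner block $p_b\,\Delta_{X_j}^{a_j}$ lies in $A_{X_j}$ and therefore drops out of the conjugacy $p^{-1}A_{X_j}p = A_{Y_j}$, leaving the reduced relation $p_f^{-1}A_{X_j}p_f = A_{Y_j}$ on which Lemma~\ref{Delta conjugation} does apply factor by factor (every factor of $p_f$ is $\Delta_T$ with $X_j\subseteq T$ or with $T$ disjoint from and commuting with $X_j$, plus $\Delta_{\Gamma}$, and this persists inductively through the block). The payoff is twofold: the argument actually closes, and you obtain the stronger conclusion that $\sigma$ is the identity, which by Lemma~\ref{unique p}/Corollary~\ref{unique p difference} is forced anyway. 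The one detail you could make slightly more explicit is the inductive bookkeeping of which parabolic contains the conjugate of $X_j$ at each step through $p_f$ (so the hypotheses of Lemma~\ref{Delta conjugation} keep holding), but the claim that every Garside factor of $p_f$ acts through a label-preserving automorphism of a graph containing the current conjugate of $X_j$ is exactly the right observation and does propagate.
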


\begin{proof}
    Let $\underline{g} A_{X_i} \underline{g}^{-1} = P_i$ be the canonical positive standardization of $\{P_i\}$. By Corollary \ref{unique p difference}, there is a unique ascending product $p$ of the $\Delta_{X_j}$ and $\Delta_{\Gamma}$ such that $\underline{g}p = h$. 

    Right-multiplying both sides of the equality $\underline{g}p = h$ by $\Delta_{Y_j}^k$, we obtain \begin{align*}
        \underline{g}p \Delta_{Y_j}^k = h \Delta_{Y_j}^k
    \end{align*}

    Notice that by the construction of $p$ and the fact that we chose to index the $A_{Y_i}$ in agreement with the $A_{X_i}$, we have $p A_{Y_j} p^{-1} = A_{X_{j}}$. Since $p$ is constructed as an ascending product of Garside elements and $p$ conjugates $A_{Y_j}$ to $A_{X_j}$, repeated applications of Lemma \ref{Delta conjugation} show that $p$ conjugates $\Delta_{Y_j}$ to $\Delta_{X_j}$.

    Thus $\underline{g} p \Delta_{Y_j}^k = \underline{g} \Delta_{X_j}^k p$. Let $p_s$ denote the initial subword of $p$ consisting of the terms which correspond to lower levels of the simplex than $A_{X_j}$. Each such $A_{X_i}$ is either contained in $A_{X_j}$ or commutes with it. By Lemma \ref{Delta conjugation}, $\Delta_{X_j}^k$ commutes with $p_s$ up to an induced permutation $\sigma$ on the indices $i$ with $A_{X_i} \leq A_{X_j}$, i.e., $\Delta_{X_j}^k p_s = p'_s \Delta_{X_j}^k$ where if $p_s = \Delta_{X_1}^{n_1} \cdots \Delta_{X_i}^{n_i}$, then $p'_s = \Delta_{X_{\sigma(1)}}^{n_1} \cdots \Delta_{X_{\sigma(i)}}^{n_i}$. We then include the additional $k$ factors of $\Delta_{X_j}^k$ in the existing $\Delta_{X_j}$ term of $p$.

    The resulting element is an ascending product $p'$ with $\underline{g}p' = h\Delta_{Y_j}^k$, so the projection of $h\Delta_{X_j}^k$ to each $P_i$ is given by examining the $\Delta_{X_i}$ term in $P_i$. Comparing the expressions $p$ and $p'$ shows the result.
\end{proof}

\subsection{Classifying and projecting transversals}

In the definition of a marking, we required that each transverse element $Q$ be simultaneously standardizable with the base simplex. This does not necessarily imply that every simultaneous standardizer for the base also standardizes $Q$. However, we will see that there is a unique way to obtain a simultaneous standardizer for both $Q$ and the base simplex from any $g$ which standardizes the base.

We will require the following lemma and its corollary.
\begin{lemma}\label{cancel transversal terms}
    Let $\{A_{X_i}\}$ be a maximal $C_{parab}$-simplex. Let $p$ be an ascending product of $\Delta_{X_i}$ and $\Delta_{\Gamma}$, so $p = \Delta_{X_1}^{i_1} \cdots \Delta_{X_n}^{i_n} \Delta_{\Gamma}^{i_{n+1}}$. Suppose $Q=p A_Y p^{-1}$ is a transverse element for a particular $A_{X_j}$. Then $Q = \Delta_{X_j}^{i_j} A_{Y'} \Delta_{X_j}^{-i_j}$ for some $Y' \subseteq V(\Gamma)$. 
\end{lemma}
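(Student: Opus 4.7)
My plan is to proceed by induction on the number of Garside factors in $p$ other than $\Delta_{X_j}^{i_j}$. In the base case, $p = \Delta_{X_j}^{i_j}$, and the claim holds immediately with $Y' = Y$.

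For the inductive step, I first extract structural information from the transversality hypothesis. Since $z_Q$ commutes with $z_{X_k}$ for every $k \neq j$, the subgroup $Q$ is $C_{parab}$-adjacent to each such $A_{X_k}$, falling into one of cases (a) $Q \leq A_{X_k}$, (b) $A_{X_k} \leq Q$, or (c) $Q$ and $A_{X_k}$ are disjoint and commute. Combined with the simplex relation between $A_{X_k}$ and $A_{X_j}$, case (a) forces $A_{X_j} \subsetneq A_{X_k}$, since both alternatives ($A_{X_k} \subseteq A_{X_j}$ or $A_{X_k}$ commuting with $A_{X_j}$) would make $Q$ commute with $A_{X_j}$, contradicting transversality. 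Hence in case (a), the factor $\Delta_{X_k}^{a_k}$ lies strictly to the right of $\Delta_{X_j}^{i_j}$ in the ascending product.

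I then peel factors from $p$ in an outside-in order: first the $\Delta_{\Gamma}^{i_{n+1}}$ factor, which sends $A_Y$ to another standard parabolic via the Coxeter permutation of $V(\Gamma)$; then factors $\Delta_{X_k}^{a_k}$ in decreasing level of $A_{X_k}$ in the simplex. In cases (b) and (c), $\Delta_{X_k}^{a_k}$ normalizes $Q$ trivially (being in $Q$ or commuting with it element-wise), so after rearranging $p$ via the commutation of non-comparable factors to bring it to an extremal position, I remove it and shorten $p$. In case (a), the key intermediate observation is that after peeling all factors for subgroups strictly containing $A_{X_k}$ together with $\Delta_{\Gamma}$, the remaining factors of $p^{(\ell)}$ correspond to simplex elements either contained in $A_{X_k}$ (so their Garside elements lie in $A_{X_k}$) or non-comparable to $A_{X_k}$ (so their Garside elements commute with $A_{X_k}$); in both situations, these factors normalize $A_{X_k}$. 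Hence the intermediate standard parabolic $A_{Y^{(\ell)}} = (p^{(\ell)})^{-1} Q p^{(\ell)}$ satisfies $A_{Y^{(\ell)}} \leq (p^{(\ell)})^{-1} A_{X_k} p^{(\ell)} = A_{X_k}$, and Lemma \ref{Delta conjugation} applied within $A_{X_k}$ lets me absorb $\Delta_{X_k}^{a_k}$ by replacing $A_{Y^{(\ell)}}$ with $\Delta_{X_k}^{a_k} A_{Y^{(\ell)}} \Delta_{X_k}^{-a_k}$, again a standard parabolic inside $A_{X_k}$.

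Iterating through all non-$\Delta_{X_j}$ factors reduces $p$ to $\Delta_{X_j}^{i_j}$ and yields $Q = \Delta_{X_j}^{i_j} A_{Y'} \Delta_{X_j}^{-i_j}$ for some $Y' \subseteq V(\Gamma)$. The main obstacle is maintaining the containment of the intermediate standard parabolic inside the appropriate $A_{X_k}$ as the peeling proceeds; the outside-in order handles this precisely because, after peeling larger simplex elements, the remaining factors of $p$ normalize each $A_{X_k}$ corresponding to a still-unpeeled case (a) factor. The commutation of non-comparable Garside elements in the ascending product makes the required rearrangements available at every step.
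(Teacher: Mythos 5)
Your approach runs parallel to the paper's: strip off the $\Delta_{\Gamma}$ factor, then peel the remaining $\Delta_{X_k}^{a_k}$ factors one at a time, using the trichotomy (a) $Q \leq A_{X_k}$, (b) $A_{X_k} \leq Q$, (c) disjoint-commuting. Your observation that (a) forces $A_{X_j} \subsetneq A_{X_k}$ is correct and is the content of Lemma \ref{lower level transversal inclusion}. Your case (a) mechanism --- after peeling everything strictly larger, the remaining product $p^{(\ell)}$ normalizes $A_{X_k}$, so $A_{Y^{(\ell)}} = (p^{(\ell)})^{-1} Q p^{(\ell)} \leq A_{X_k}$ and conjugation by $\Delta_{X_k}^{a_k}$ produces another standard parabolic inside $A_{X_k}$ --- is exactly the final stage of the paper's argument.

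The gap is in the handling of cases (b) and (c). You claim that since $\Delta_{X_k}^{a_k}$ normalizes $Q$, you can ``bring it to an extremal position'' by commuting it past non-comparable factors, then remove it. But commutation is not available past \emph{nested} factors: if some $\Delta_{X_{k'}}$ with $X_{k'} \subsetneq X_k$ sits to the left of $\Delta_{X_k}^{a_k}$, those two Garside elements generally do not commute, so $\Delta_{X_k}^{a_k}$ cannot reach the leftmost position. The normalization-of-$Q$ argument requires leftmost position. Symmetrically, a factor $\Delta_{X_k}^{a_k}$ with $X_k \subsetneq X_j$ (which is automatically case (b) or (c)) cannot be moved \emph{rightward} past $\Delta_{X_j}^{i_j}$, nor can it be absorbed into $A_{Y^{(\ell)}}$ from the right, because the remaining $p^{(\ell)}$ then contains $\Delta_{X_j}^{i_j}$, which need not normalize $A_{X_k}$, so the ``$p^{(\ell)}$ normalizes $A_{X_k}$'' observation fails for these factors. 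Your stated ``outside-in / decreasing level'' order is ambiguous and does not by itself resolve which side each factor should be peeled from. The paper's ordering removes the obstruction cleanly: it peels the \emph{smallest} remaining factor first, from the left, where the factor is automatically extremal (its left-neighbors are all disjoint-commuting), handles every factor at level $\geq L_j$ (except $j$ itself) this way using normalization of $Q$, and only then absorbs the surviving factors containing $A_{X_j}$ into $A_Y$ from the right, which is where your case (a) argument lives. To repair your version, you would either adopt that two-sided order (small-to-large from the left for levels $\geq L_j$, and right-to-left for levels $< L_j$), or else extend your case (a) observation to (b) and (c) for the right-to-left sweep by noting that $p^{(\ell)}$ normalizing $A_{X_k}$ transfers the commuting or containment relation from $Q$ to $A_{Y^{(\ell)}}$, so that $\Delta_{X_k}^{a_k}$ normalizes $A_{Y^{(\ell)}}$ directly --- and then separately handle the factors with $X_k \subsetneq X_j$ from the left.
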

\begin{proof}
    Recall that $\Delta_{\Gamma}$ conjugates standard parabolic subgroups to standard parabolic subgroups. Thus up to replacing $Y$ with a different subset $Y'$ of the Artin generators, we can assume the $\Delta_{\Gamma}$-factor in $p$ is 0.

    Our goal is to show that conjugating $Q$ by an appropriate power of $\Delta_{X_i}$ with $i \neq j$ results in $Q = \Delta_{X_i} Q \Delta_{X_i}^{-1} = p' A_{Y'} p'^{-1}$ where $p'$ has trivial $\Delta_{X_i}$-factor. Repeating for all $i \neq j$ will prove the result.

    Since $p A_Y p^{-1}$ is a transverse element, every $A_{X_i}$ either contains $p A_Y p^{-1}$, commutes with $p A_Y p^{-1}$, or is contained in $p A_Y p^{-1}$. Notice that if $A_{X_i}$ is at a lower level of the marking than $A_{X_j}$, then $A_{X_i}$ cannot contain $p A_{Y}p^{-1}$ by Lemma \ref{lower level transversal inclusion}.

    First, each minimal $A_{X_i}$ is either contained in $p A_Y p^{-1}$ or commutes with it. In either case, $p A_Y p^{-1}$ is preserved under conjugation by any power of $\Delta_{X_i}$. Conjugating by the inverse of the power of $\Delta_{X_i}$ which appears in $p$ thus yields a new ascending product $p'$ with $p' A_Y p'^{-1} = p A_Y p^{-1}$ and such that $p'$ is precisely $p$ without its $\Delta_{X_i}$-factors.

    Repeating this process at each subsequent level yields an element $p'$ with $p' A_Y p'^{-1} = p A_Y p^{-1}$ and such that $p'$ is precisely $p$ with all factors at levels lower than $A_{X_j}$ removed. By same process, we can remove any non-$\Delta_{X_j}$ terms at the same level as $A_{X_j}$.

    Conjugating by any power of $\Delta_{X_j}$ itself results in a simplex whose base elements are the same as the original simplex at all levels appearing in $p'$. This implies that if $p''$ is $p'$ without the $\Delta_{X_j}$-factor, then $p'' A_Y p''^{-1}$ is a transverse element for $A_{X_j}$ in a marking where the top levels agree with those of $\{A_{X_i}\}$. If we can show that any such $p'' A_{Y} p''^{-1}$ is equal to $A_{Y'}$ for some $Y'$, then we are done.
    
    We begin with the rightmost term of $p''$. Notice that conjugating by $(p'')^{-1}$ stabilizes the maximal elements of the base collection, since $p''$ has no $\Delta_{\Gamma}$-factors. In particular, since each each of these maximal elements $A_{X_i}$ either contains or commutes with $p'' A_Y p''^{-1}$ by Lemma \ref{lower level transversal inclusion}, each maximal element also either contains or commutes with $p''^{-1} (p'' A_Y p''^{-1}) p'' = A_Y$. 
    
    In the commuting case, any power of the Garside element $\Delta_{X_i}$ normalizes $A_Y$. If $A_Y \leq A_{X_i}$, then $\Delta_{X_i}$ conjugates $A_Y$ to some different standard parabolic subgroup $A_{Y'}$. Thus we can remove all factors of $p''$ corresponding to maximal components by possibly replacing $A_Y$ with $A_{Y'}$. We apply the same argument on each subsequent level and remove all terms of $p''$ to eventually obtain $p'' A_Y p''^{-1} = A_{Y'}$.
\end{proof}

We obtain the following corollary.

\begin{corollary}\label{transversal k existence}
    Let $M$ be a marking with base elements $\{P_i\}$, and let $Q$ be the transverse element for a particular $P_j$. Let $g$ be a simultaneous standardizer for $\{P_i\}$ with $g A_{X_i} g^{-1} = P_i$ for each $i$. Then there is a unique integer $k$ and a unique standard parabolic subgroup $A_Y$ such that $Q = g\Delta_{X_j}^k A_{Y} \Delta_{X_j}^{-k}g^{-1}$.
\end{corollary}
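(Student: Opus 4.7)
The plan is to treat existence and uniqueness separately, pulling existence out of Proposition~\ref{differ by p} and Lemma~\ref{cancel transversal terms}, and uniqueness out of Lemma~\ref{conjugation implies containment} plus the transversality of $Q$.

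For existence, the third defining condition of a marking provides some simultaneous standardizer $h$ for $\{P_i\} \cup \{Q\}$, with $hA_{Y_i}h^{-1} = P_i$ and $hA_Yh^{-1} = Q$ for standard parabolics $A_{Y_i}$ and $A_Y$. Since $g$ and $h$ both standardize $\{P_i\}$, Proposition~\ref{differ by p} furnishes an ascending product $p$ of the $\Delta_{X_i}$ and $\Delta_{\Gamma}$ such that $gp = h$. Hence $Q = gp A_Y p^{-1} g^{-1}$, and applying Lemma~\ref{cancel transversal terms} to the transverse element $pA_Yp^{-1}$ of the base standardization $\{A_{X_i}\}$ reduces $pA_Yp^{-1}$ to $\Delta_{X_j}^k A_{Y'} \Delta_{X_j}^{-k}$ for some integer $k$ (equal to the exponent of $\Delta_{X_j}$ in $p$) and some standard parabolic $A_{Y'}$, which produces the desired description of $Q$.

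For uniqueness, suppose $g\Delta_{X_j}^{k_1}A_{Y_1}\Delta_{X_j}^{-k_1}g^{-1} = g\Delta_{X_j}^{k_2}A_{Y_2}\Delta_{X_j}^{-k_2}g^{-1}$. Cancelling $g$ and rearranging gives $\Delta_{X_j}^{k_1-k_2}A_{Y_1}\Delta_{X_j}^{-(k_1-k_2)} = A_{Y_2}$. The case $k_1 = k_2$ immediately forces $A_{Y_1} = A_{Y_2}$, so I assume $k_1 \neq k_2$ and aim for a contradiction via Lemma~\ref{conjugation implies containment} with $Z = X_j$. The hypothesis to verify is that $z_{Y_1}$ commutes with no nonzero power of $\Delta_{X_j}$. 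First, $z_{Y_1}$ does not commute with $z_{X_j}$: the element $g^{-1}Qg$ is transverse to $A_{X_j}$, so its center $\Delta_{X_j}^{k_1}z_{Y_1}\Delta_{X_j}^{-k_1}$ does not commute with $z_{X_j}$, and since $z_{X_j}$ is itself a power of $\Delta_{X_j}$ it commutes with $\Delta_{X_j}^{k_1}$, so the same failure must hold for $z_{Y_1}$. Second, squaring and invoking Lemma~4.6 of~\cite{C-parab-definition} exactly as in the proof of Lemma~\ref{distinct twists} upgrades this to the full hypothesis: any $\Delta_{X_j}^i$ commuting with $z_{Y_1}$ would force $\Delta_{X_j}^{2i}$ to commute with $z_{Y_1}^2$ and hence force either $i = 0$ or $z_{X_j}$ to commute with $z_{Y_1}$, a contradiction.

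With the hypothesis verified, Lemma~\ref{conjugation implies containment} yields $A_{Y_1}, A_{Y_2} \leq A_{X_j}$, so $Q \leq gA_{X_j}g^{-1} = P_j$ and $z_Q$ commutes with $z_{P_j}$, contradicting transversality of $Q$. Hence $k_1 = k_2$ and $A_{Y_1} = A_{Y_2}$. The main obstacle is precisely the upgrade from non-commutation with $z_{X_j}$ to non-commutation with every nonzero power of $\Delta_{X_j}$, which is where Lemma~4.6 of~\cite{C-parab-definition} does its work; everything else is a direct chain of results already in Sections~\ref{sec:simplices standardizers ribbons} and~\ref{sec:markings}.
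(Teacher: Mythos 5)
Your proof is correct and follows essentially the same approach as the paper: existence comes from the third marking axiom plus the ``differ by an ascending product'' result plus Lemma~\ref{cancel transversal terms}, and uniqueness comes from Lemma~\ref{conjugation implies containment} together with transversality. The one place you are more careful than the published proof is that you explicitly verify the hypothesis of Lemma~\ref{conjugation implies containment} (that $z_{Y_1}$ commutes with no nonzero power of $\Delta_{X_j}$) by transporting the transversality of $Q$ through $g^{-1}$ and then upgrading via the Lemma~\ref{distinct twists}/Lemma~4.6 argument, whereas the paper invokes Lemma~\ref{conjugation implies containment} without spelling out that check; your added step is the right one and genuinely tightens the argument, and your citation of Proposition~\ref{differ by p} in place of Lemma~\ref{p comparison} is the more precise reference for the existence step.
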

    \begin{proof}
        By the third property of a marking, we can choose a simultaneous standardizing element $h$ for the collection $\{\{P_i\}, Q\}$ with $P_i = h A_{Z_i} h^{-1}$ and $Q = h A_Y h^{-1}$.  By Lemma \ref{p comparison}, there is an ascending product $p$ such that $h = gp$. By Lemma \ref{cancel transversal terms}, there are some $k$ and $Y'$ such that $Q = g\Delta_{X_j}^{k} A_{Y'} \Delta_{X_j}^{-k}g^{-1}$.

        To see that such an expression is unique, suppose that there are two such expressions
    \begin{align*}
        g \Delta_{X_j}^k A_Y \Delta_{X_j}^{-k} g^{-1} = Q = g \Delta_{X_j}^{l} A_{Y'} \Delta_{X_j}^{-l} g^{-1}\text{.}
    \end{align*}
    Conjugating both sides by $\Delta_{X_j}^{-l}g^{-1}$ gives the equality 
    \begin{align*}
        \Delta_{X_j}^{k-l} A_Y \Delta_{X_j}^{l-k} = A_{Y'}\text{.} 
    \end{align*}
    By Lemma \ref{conjugation implies containment}, this implies either $k-l = 0$ and $Y = Y'$ or $Y$ and $Y'$ are both contained in $X_j$. The latter possibility would contradict that $Q$ is transverse to $A_{X_j}$, so we must have that $k = l$ and $Y = Y'$.
    \end{proof}

We can now prove the following.

\begin{proposition}
    Let $\{P_i\}$ be the base of a marking $M$, and let $Q$ be the transverse element for some $P_j$. Let $g$ be a choice of standardizer for the base, and let $k_g$ and $Y$ be the unique integer and standard parabolic subgroup provided by Corollary \ref{transversal k existence} such that $Q = g \Delta_{X_j}^{k_g} A_Y \Delta_{X_j}^{-k_g} g^{-1}$. The value \begin{align*}\tag{$\star\star$}
        l = \pi_{P_j}(g \Delta_{X_j}^{k_g}) 
    \end{align*}
    is independent of the choice of standardizing element $g$.
\end{proposition}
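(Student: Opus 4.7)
The plan is to compare the two simultaneous standardizers $a_1 = g_1\Delta_{X_{j,1}}^{k_{g_1}}$ and $a_2 = g_2\Delta_{X_{j,2}}^{k_{g_2}}$ (where $X_{j,i}$ denotes $g_i$'s image of $P_j$) and show that they differ by an ascending product whose contribution to the $\Delta_{\tilde{X}_j}$-slot of the canonical decomposition vanishes. First I would observe that each $a_i$ is a genuine simultaneous standardizer of the enlarged collection $\{P_i\}\cup\{Q\}$: by construction $a_i A_Y a_i^{-1} = Q$ for the standard $A_Y$ produced by Corollary \ref{transversal k existence}, and multiplication by $\Delta_{X_{j,i}}^{k_{g_i}}$ on the right preserves standardizer status for the base simplex because $\Delta_{X_{j,i}}$ either normalizes each $A_{X_{i}}$ or permutes it to another standard parabolic. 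In particular both $a_1$ and $a_2$ standardize the simplex $\{P_i\}$.

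Next I would apply Proposition \ref{differ by p} to the two standardizers $a_1, a_2$ of $\{P_i\}$: this produces a unique ascending product $p$ in the Garside elements $\Delta_{X_{i,1}^*}$ (the $a_1$-standardization) and $\Delta_{\Gamma}$ with $a_1 p = a_2$. The key constraint on $p$ comes from the simultaneous standardizability of $Q$: writing $a_k A_{Y_k} a_k^{-1} = Q$ for $k=1,2$ (both $A_{Y_k}$ standard and transverse to the respective $a_k$-images of $P_j$), we obtain $A_{Y_1} = p A_{Y_2} p^{-1}$. Invoking Lemma \ref{cancel transversal terms} for the transverse element $A_{Y_1} = p A_{Y_2} p^{-1}$ with respect to $A_{X_{j,1}^*}$, we can write $A_{Y_1} = \Delta_{X_{j,1}^*}^{m_j} A_{Y''} \Delta_{X_{j,1}^*}^{-m_j}$, where $m_j$ is the $\Delta_{X_{j,1}^*}$-exponent of $p$. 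Because $A_{Y_1}$ is itself standard and transverse to $A_{X_{j,1}^*}$, Lemma \ref{conjugation implies containment} forces $m_j = 0$.

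With $m_j = 0$ established, I would compute $\pi_{P_j}(a_2) = \pi_{P_j}(a_1 p)$ by iterating Lemma \ref{well behaved projections} over the factors of $p$. Factors $\Delta_{X_{i,1}^*}^{m_i}$ with $A_{X_{i,1}^*}$ at levels not containing $\tilde{X}_j$ (i.e., $P_j$ not a proper subgroup of $P_i$) fix the index $j$ under the permutation $\sigma$ supplied by that lemma, so they leave $\pi_{P_j}$ unchanged; the $\Delta_{X_{j,1}^*}^{0}$ factor contributes nothing; and a direct check shows the terminal $\Delta_{\Gamma}^{\square}$ factor only shifts the $\Delta_{\Gamma}$ exponent in the ascending form. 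The delicate case is a factor $\Delta_{X_{i,1}^*}^{m_i}$ with $P_j \subsetneq P_i$, where $\sigma_i$ could in principle swap $j$ with a sibling index. Here I would use the constraint from Step~2 once more: the requirement that $p A_{Y_2} p^{-1}$ be equal to the \emph{particular} standard parabolic $A_{Y_1}$ transverse to $A_{X_{j,1}^*}$ (rather than to a sibling) forces the combined contribution of the shallower factors to preserve the $j$-slot, appealing to Lemma \ref{Delta conjugation} to track how these factors conjugate both $\Delta_{X_{j,1}^*}$ and the transverse parabolic $A_{Y_2}$ in a compatible way.

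The main obstacle is precisely this last bookkeeping step — controlling how permutations induced by shallower Garside factors of $p$ interact with the already-fixed standardization of $Q$. I expect the cleanest way to handle it is to rewrite $p$ one factor at a time using Lemma \ref{Delta conjugation}, absorbing each shallow factor's swap into a compensating adjustment of the same-level sibling exponents (analogous to the ascending-product manipulations in the proof of Proposition \ref{product of deltas}), so that at the end the $\Delta_{\tilde{X}_j}$-exponent of the canonical ascending form of $a_1 p$ coincides with that of $a_1$, giving $\pi_{P_j}(a_2) = \pi_{P_j}(a_1)$ as required.
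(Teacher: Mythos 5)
Your proposal is correct in substance and is essentially a repackaging of the paper's argument rather than a genuinely different route, though the repackaging is clean enough to be worth noting. The paper compares the base standardizers $g$ and $h$ directly: from $gp=h$ (Proposition \ref{differ by p}) it reads off $\pi_{P_j}(h)=\pi_{P_j}(g)+p_j$ via Lemma \ref{well behaved projections}, and separately derives $k_g=k_h+p_j$ from Lemma \ref{cancel transversal terms} and the uniqueness clause of Corollary \ref{transversal k existence}; the two identities combine arithmetically to give the claim. You instead compare the enhanced standardizers $a_i = g_i\Delta_{X_{j,i}}^{k_{g_i}}$, which simultaneously standardize the whole collection $\{P_i\}\cup\{Q\}$, and show that the $\Delta_{X_j}$-slot of the comparison ascending product $p$ (with $a_1p=a_2$) vanishes, via $A_{Y_1}=pA_{Y_2}p^{-1}$, Lemma \ref{cancel transversal terms}, and Lemma \ref{conjugation implies containment}. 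Unwinding $a_1^{-1}a_2 = \Delta_{X_j}^{k_h-k_g}\cdot(g^{-1}h)$ shows your $m_j=0$ is precisely the paper's relation $k_g - k_h = p_j$, and the lemmas invoked are the same (the paper's uniqueness step in Corollary \ref{transversal k existence} is itself proved via Lemma \ref{conjugation implies containment}).

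Two small remarks. First, the permutation bookkeeping you flag in the last step --- that shallower factors of $p$ at levels strictly above $A_{X_{j,1}^*}$ could in principle move the $j$-slot via the $\sigma$ in Lemma \ref{well behaved projections} --- is a legitimate concern, but it is equally latent in the paper's one-line assertion $\pi_{P_j}(h)=\pi_{P_j}(g)+p_j$; the paper implicitly relies on the fact that such permutations act within a level and can be absorbed without changing the exponent attached to $\tilde X_j$, and your argument can lean on exactly the same implicit reasoning. Second, when you apply Lemma \ref{conjugation implies containment}, the hypothesis is phrased for $z_{Y''}$, not $z_{Y_1}$; the first paragraph of that lemma's proof shows the roles of source and target are symmetric, so it suffices to know that $z_{Y_1}$ fails to commute with nonzero powers of $\Delta_{X_{j,1}^*}$, which follows from transversality by the computation in Lemma \ref{distinct twists}. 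With those two caveats acknowledged, the proposal is sound.
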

\begin{proof}
    Let $\{g A_{X_i} g^{-1} = h A_{Y_i} h^{-1} \}$ be the base of a marking $M$, and let $Q$ be the transverse element for $g A_{X_j} g^{-1} = h A_{Y_j} h^{-1}$. By Proposition \ref{differ by p}, there is an ascending product $p$ of $\Delta_{X_i}$ and $\Delta_{\Gamma}$ such that $gp = h$. Notice that if $p_j$ is the exponent of the $\Delta_{X_j}$-factor of $p$, then by Lemma \ref{well behaved projections}, $\pi_{P_j}(h) = \pi_{P_j}(g) + p_j$.

    By Corollary \ref{transversal k existence}, there are some integers $k_g$ and $k_h$ such that
    \begin{align*}
        Q &= g\Delta_{X_j}^{k_g} A_Y \Delta_{X_j}^{-k_g} g^{-1}\\
        &= h \Delta_{Y_j}^{k_h} A_Y \Delta_{Y_j}^{-k_h} h^{-1}\\
        &= gp \Delta_{Y_j}^{k_h} A_Y \Delta_{Y_j}^{-k_h} p^{-1} g^{-1}\text{.}
    \end{align*} 
    By the construction of $p$, $p \Delta^{k_h}_{Y_j} = \Delta_{X_j}^{k_h} p$, so Lemma \ref{cancel transversal terms} implies the following.
    \begin{align*}
        Q &= g \Delta_{X_j}^{p_j + k_h} A_{Y'} \Delta_{X_j}^{-p_j - k_h} g^{-1}\text{.}
    \end{align*}
    The uniqueness part of Corollary \ref{transversal k existence} implies that $Y' = Y$ and $p_j + k_h = k_g$. Thus 
    \begin{align*}
        \pi_{P_j}(g) + k_g &= \pi_{P_j}(g) + p_j + k_h\\
        &= \pi_{P_j}(h) - p_j +p_j +k_h\\
        &= \pi_{P_j}(h) +k_h\text{.}
    \end{align*}
\end{proof}

Notice that if the base collection $\{P_i\}$ is standard, the trivial element is an acceptable choice of $g$. If $Q$ is also standard, then $0= k_e$, so $\pi_{P_j}(Q) = 0$.

We can now give the following definition. 
\begin{definition}\label{def:projections}
    Let $\{P_i\}$ be the base of a marking $M$, and let $Q$ be the transverse element for $P_j$. The \emph{projection of $Q$ to $P_j$}, denoted $\pi_{P_j}(Q)$, is the integer $l$ in ($\star\star$).
\end{definition}

    \begin{figure}[h]
        \centering
        \includegraphics[width=8cm]{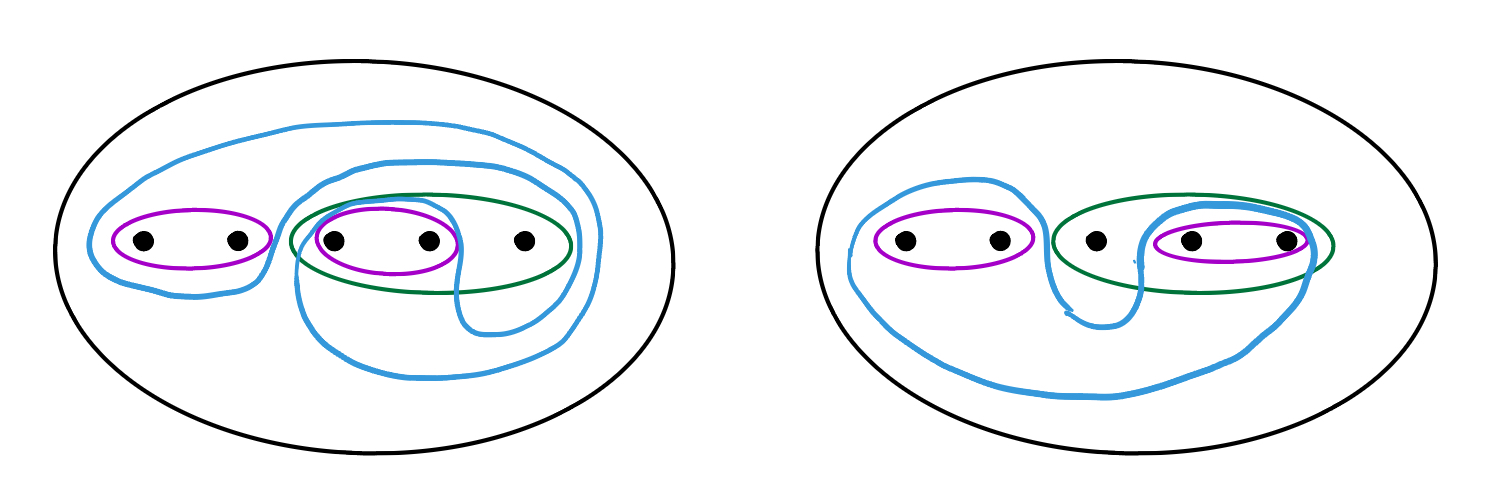}
        \captionsetup{margin=.5cm,justification=centering}
        \caption{A maximal $C(D_n)$ simplex (purple and green) with two transversals $Q_1$ and $Q_2$ (blue) for the green element $P$ with $l = 2$ and $l = -1$}
    \end{figure}

\section{Marking stabilizers}\label{sec:marking stabilizers}

Our classification of transverse elements in fact yields the following.

\begin{proposition}\label{markings conj to standard}
    Every marking is conjugate to one in which all base and transverse elements are standard.
\end{proposition}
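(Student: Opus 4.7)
The plan is to reduce to the case of a standard base and then exhibit a single ascending product of Garside elements that simultaneously standardizes all the transversals. By the conjugation-invariance of being a marking, we may replace $M$ with any conjugate. Applying Proposition~\ref{simplex sim standard}, there is a positive element $g$ with $gA_{X_i}g^{-1}=P_i$; after replacing $M$ by $g^{-1}Mg$ we may and will assume $P_i=A_{X_i}$ from the outset. Corollary~\ref{transversal k existence}, applied with the trivial element as the base-standardizer, then produces for each $j$ a unique integer $k_j$ and standard parabolic $A_{Y_j}$ with $Q_j = \Delta_{X_j}^{k_j}A_{Y_j}\Delta_{X_j}^{-k_j}$.

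Next, set $u := \prod_i \Delta_{X_i}^{k_i}$, arranged as an ascending product; this is well-defined because factors $\Delta_{X_i}^{k_i}$, $\Delta_{X_q}^{k_q}$ with $X_i,X_q$ unrelated by nesting lie in commuting parabolic subgroups. I claim $u^{-1}Mu$ is a fully standard marking. For the base, $u$ is an ascending product of the Garside elements considered in Theorem~\ref{almost ascending product}, so by the argument of Lemma~\ref{conj preserving gen sets} it permutes the simplex $\{A_{X_i}\}$, making each $u^{-1}A_{X_i}u$ standard. For a transversal, fix $j$, split $u = L_j\,\Delta_{X_j}^{k_j}\,R_j$ according to the chosen ascending order, and use Lemma~\ref{Delta conjugation} to commute every factor of $L_j$ and $R_j$ past $\Delta_{X_j}^{\pm k_j}$; the central copies of $\Delta_{X_j}^{\pm k_j}$ cancel, leaving an expression of the form $(L_j^{\mathrm{conj}}R_j)^{-1}A_{Y_j}(L_j^{\mathrm{conj}}R_j)$, where $L_j^{\mathrm{conj}}$ differs from $L_j$ only in that each factor $\Delta_{X_i}^{k_i}$ with $X_i\subsetneq X_j$ has been replaced by the Garside of a permuted standard subgroup of $A_{X_j}$.

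It then suffices to verify that each factor appearing in this product either normalizes $A_{Y_j}$ or conjugates a running standard parabolic to another standard parabolic, and this case analysis is the main obstacle. For a factor with $X_i \supsetneq X_j$, Lemma~\ref{lower level transversal inclusion} forces $Q_j \leq A_{X_i}$ and hence $A_{Y_j}\leq A_{X_i}$, so $\Delta_{X_i}^{k_i}$ conjugates $A_{Y_j}$ to another standard subgroup of $A_{X_i}$. For any other factor, the marking condition says $A_{X_i}$ and $Q_j$ are adjacent in $C_{parab}$; Lemma~\ref{lower level transversal inclusion} together with $Q_j\not\leq A_{X_j}$ rules out $Q_j\leq A_{X_i}$, leaving $A_{X_i}\leq Q_j$ or $A_{X_i}$ commuting with $Q_j$. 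Either of these conclusions is preserved (up to permutation of the subset $X_i$) after conjugating by $\Delta_{X_j}^{\pm k_j}$, and in both cases the corresponding Garside lies in $A_{Y_j}$ or commutes with it, hence normalizes $A_{Y_j}$ as a set. Iterating these observations along the chain of ancestors of $X_j$ in $R_j$ yields a standard parabolic subgroup, and thus $u^{-1}Q_j u$ is standard, completing the proof.
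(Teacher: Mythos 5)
Your proof is correct and uses essentially the same conjugating element as the paper: the composite of the level-by-level conjugations by $\Delta_{X_j}^{-k_j}$ that the paper performs, done bottom-up from the minimal elements, is exactly $u^{-1}=\Delta_{X_{j_N}}^{-k_{j_N}}\cdots\Delta_{X_{j_1}}^{-k_{j_1}}$, so the two arguments agree at the level of the group element. The paper's iterative version is bookkeeping-friendlier because at each step only the already-standardized deeper levels get permuted; your one-shot version requires the commuting-past-$\Delta_{X_j}^{\pm k_j}$ analysis and the iteration over $L_j^{\mathrm{conj}}R_j$, which is fine but somewhat compressed at the end. One small inaccuracy: $u$ does not in general \emph{permute} the base simplex $\{A_{X_i}\}$, and neither Theorem~\ref{almost ascending product} nor Lemma~\ref{conj preserving gen sets} (both of which concern ascending products that \emph{are} assumed to stabilize the simplex) says it does. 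What is true, and all that you need, is that each $u^{-1}A_{X_i}u$ is again a \emph{standard} parabolic subgroup, possibly different from any of the original $A_{X_q}$; this follows because each factor of $u$ either normalizes $A_{X_i}$ or conjugates it to a standard subgroup of a larger $A_{X_q}$, exactly as in your transversal case analysis.
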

\begin{proof}
    Let $M = \{(P_i , Q_i)\}$ be a marking. The base is simultaneously standardizable by definition, so $M$ is conjugate to a marking $\{(A_{X_i} , Q'_i)\}$. We refer to this new marking as $M_1$. By Lemma \ref{transversal k existence}, there are some integers $k_i$ and standard parabolic subgroups $A_{Y_i}$ such that $Q'_i = \Delta_{X_i}^{k_i} A_{Y_i} \Delta_{X_i}^{k_i}$ for every $i$.

    Consider an $A_{X_j}$ at the the bottom level of the base simplex. Since it is minimal, $A_{X_j}$ is either contained in $A_{X_i}$ or commutes with $A_{X_i}$ for every $i \neq j$. In particular, conjugation by any power of $\Delta_{X_j}$ normalizes $A_{X_i}$ for every $A_{X_i}$ in the base simplex. 

    Similarly, by Lemma \ref{lower level transversal inclusion}, $A_{X_j}$ is either contained in $Q'_i$ or commutes with $Q'_i$ for every $i \neq j$. This again implies that conjugation by any power of $\Delta_{X_j}$ normalizes $Q_{i}'$ for $i \neq j$. In particular, conjugating $M_1$ by $\Delta_{X_j}^{-k_j}$ for each minimal $A_{X_j}$ results in a marking $M_2$ which is exactly the same as $M_1$ except for the transverse elements associated to the minimal elements, which in $M_2$ have been replaced with the standard parabolic subgroups $A_{Y_j}$.

    Now consider a different $A_{X_j}$ in the next level of the base simplex. The same reasoning as above shows that conjugation by any power of $\Delta_{X_j}$ normalizes $A_{X_i}$ and $Q_i'$ whenever $X_i$ is either in the same level as $A_{X_j}$ or in a higher level than $A_{X_j}$. 

    If $A_{X_i}$ is contained in $A_{X_j}$, then Lemma \ref{lower level transversal inclusion} shows that $A_{Y_i}$ is also contained in $A_{X_j}$. Then conjugation by any power of $\Delta_{X_j}$ sends both $A_{X_i}$ and $A_{Y_i}$ to some other standard parabolic subgroups of $A_{X_j}$. 
    
    If $A_{X_i}$ is at a lower level of the base simplex than $A_{X_j}$ but is not contained in $A_{X_j}$, then the same reasoning shows that both $A_{X_i}$ and $A_{Y_i}$ are contained in some other base element at the same level of the simplex as $A_{X_j}$. In particular, both commute with $A_{X_j}$. Then conjugating $M_2$ by $\Delta_{X_j}^{-k_j}$ for each $X_j$ in this level of the marking replaces $M_2$ with a marking $M_3$ which has all standard base elements and standard transverse elements for the first two levels.

    Repeating this process for each level of the marking proves the desired result.
\end{proof}

We can now classify stabilizers of markings.

\begin{theorem}\label{vx stabilizer}
        Let $M$ be a marking on a finite-type Artin group $A_{\Gamma}$. The stabilizer of $M$ is contained in a conjugate of $\langle \Delta_{\Gamma} \rangle$.
\end{theorem}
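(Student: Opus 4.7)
The plan is to first reduce to the case where $M = \{(A_{X_i}, A_{Y_i})\}$ is a standard marking, using Proposition \ref{markings conj to standard}; since stabilizers of conjugate markings are themselves conjugate, this reduction suffices. If $g \in A_\Gamma$ stabilizes $M$, then $g$ in particular preserves the maximal simplex $\{A_{X_i}\}$ setwise, so Theorem \ref{almost ascending product} allows me to write
\[
    g = \Delta_{X_1}^{a_1} \cdots \Delta_{X_n}^{a_n} \Delta_\Gamma^{a_{n+1}}
\]
as an ascending product. The goal reduces to showing that each $a_i$ vanishes for $i \le n$.

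The central tool is the transversal projection $\pi_{P_j}$ from Definition \ref{def:projections}. For a standard base the canonical positive standardizer $\underline{g}$ is trivial, so Lemma \ref{unique p} identifies $\pi_{P_i}(g)$ with the exponent $a_i$ in the ascending product expression above. Likewise, since each transverse $Q_i = A_{Y_i}$ is already standard, evaluating the projection via the trivial standardizer gives $\pi_{P_i}(Q_i) = 0$.

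I would then compute $\pi_{P_{\sigma(j)}}(Q_{\sigma(j)})$ in two ways, where $\sigma$ is the permutation of base indices induced by the stabilizer $g$. Using the trivial standardizer yields $0$ immediately. Alternatively, using $g$ itself as a standardizer of $\{P_i\}$, one has $P_{\sigma(j)} = g A_{X_j} g^{-1}$ and $Q_{\sigma(j)} = g A_{Y_j} g^{-1}$, so the uniqueness portion of Corollary \ref{transversal k existence} forces the associated integer to be $0$ and the associated standard parabolic to be $A_{Y_j}$. Definition \ref{def:projections} then gives
\[
    \pi_{P_{\sigma(j)}}(Q_{\sigma(j)}) \;=\; \pi_{P_{\sigma(j)}}\bigl(g\,\Delta_{X_j}^{0}\bigr) \;=\; \pi_{P_{\sigma(j)}}(g) \;=\; a_{\sigma(j)}.
\]
Equating the two computations yields $a_{\sigma(j)} = 0$ for every $j$, and since $\sigma$ is a bijection, all the $a_i$ vanish. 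Hence $g = \Delta_\Gamma^{a_{n+1}} \in \langle \Delta_\Gamma \rangle$.

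The principal subtlety I expect to navigate is the bookkeeping between two distinct indexings of the base: the standard one used to define $\pi_{P_i}$ (for which the canonical standardizer is $e$), and the one induced by using the stabilizer $g$ itself as a standardizer, which shifts indices by $\sigma^{-1}$. Handling this carefully is the only nontrivial step; once it is in hand, the general (non-standard) case follows at once by conjugation, giving the stabilizer as a subset of $\alpha^{-1}\langle \Delta_\Gamma \rangle \alpha$ for an appropriate conjugator $\alpha$.
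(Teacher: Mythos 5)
Your proof is correct. It differs from the paper's in formulation rather than substance: the paper applies Lemma \ref{cancel transversal terms} directly to $g A_{Y_i} g^{-1}$ to rewrite it as $\Delta_{X_{\sigma(i)}}^{n_{\sigma(i)}} A_{Y'} \Delta_{X_{\sigma(i)}}^{-n_{\sigma(i)}}$, then invokes Lemma \ref{conjugation implies containment} against the equality $g A_{Y_i} g^{-1} = A_{Y_{\sigma(i)}}$ to force $n_{\sigma(i)} = 0$; whereas you route the same information through the transversal projection $\pi_{P_j}$ of Definition \ref{def:projections}, exploiting the proposition preceding it (well-definedness of $\pi_{P_j}(Q)$ independent of the choice of standardizer) with the two standardizers $e$ and $g$. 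Since the well-definedness proposition already packages Corollary \ref{transversal k existence} and, through it, Lemma \ref{conjugation implies containment}, your argument is a reorganization of the same core mechanism. What your version buys is a cleaner "compute one quantity two ways" structure and avoidance of re-running the ribbon/support argument explicitly; what the paper's version buys is that it never needs the projection machinery (which the paper develops later for elementary moves, not for this theorem). One small bookkeeping point you flag but should keep firmly in view: when $g$ stabilizes the marking, the equality $g Q_j g^{-1} = Q_{\sigma(j)}$ is forced because each base element pairs with a unique transverse element, which is exactly what makes the two evaluations of $\pi_{P_{\sigma(j)}}(Q_{\sigma(j)})$ comparable.
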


    \begin{proof}
        By Proposition \ref{markings conj to standard}, it suffices to show that the stabilizer of any marking of the form $\{(A_{X_i} , A_{Y_i})\}$ is contained in $\langle \Delta_{\Gamma} \rangle$. Suppose without loss of generality that the base elements are ordered such that if $j > i$, either $P_i \leq P_j$ or $P_i \cap P_j = \varnothing$ and the elements $z_{P_i}$ and $z_{P_j}$ commute. 

        Since $g$ stabilizes the base simplex, Theorem \ref{almost ascending product} shows that $g$ is an ascending product of the Garside elements of the $\{A_{X_i}\}$, i.e., there are some integers $n_i$ such that $g = \Delta_{X_1}^{n_1} \cdots \Delta_{X_N}^{n_N}\Delta_{\Gamma}^{n_{N+1}}$. 
        
        Let $\sigma$ be the permutation of $\{1, \cdots, N\}$ such that $g A_{X_i} g^{-1} = A_{X_{\sigma(i)}}$. Since $g$ also preserves the collection of transverse elements, it is clear from the definition of transversality that $g A_{Y_i} g^{-1} = A_{Y_{\sigma(i)}}$. The element $g$ is an ascending product, so Lemma \ref{cancel transversal terms} implies that for any $i$, $g A_{Y_i} g^{-1} = \Delta_{X_{\sigma(i)}}^{n_{\sigma(i)}} A_{Y'} \Delta_{X_{\sigma(i)}}^{-n_{\sigma(i)}}$ for some $Y' \subseteq V(\Gamma)$. 

        By assumption, however, $g A_{Y_i} g^{-1} = A_{Y_{\sigma(i)}}$. By Lemma \ref{conjugation implies containment}, $\Delta_{\sigma(X_i)}^{\sigma(n_i)} A_{Y'} \Delta_{\sigma(X_i)}^{-\sigma(n_i)} = A_{Y_{\sigma(i)}}$ implies either $n_i = 0$ or both $Y'$ and $Y_{\sigma(i)}$ are contained in $\sigma(X_i)$. The latter contradicts transversality of $A_{Y_{\sigma(i)}}$, so we must have $n_i = 0$.

        Applying this argument to each non-$\Delta_{\Gamma}$-factor of $g$ shows that $n_i = 0$ for any $i \in \{1, \cdots, N\}$. Thus $g \in \langle \Delta_{\Gamma} \rangle$.
    \end{proof}

\section{Elementary moves}\label{sec:elementary moves}
In the mapping class group, edges in the marking graph are defined via two types of elementary moves on markings: twist moves and flip moves. In this section, we define twist and flip moves between markings on finite-type Artin groups which are inspired by the surface analogues. 

Traditionally, two markings on a surface are connected via a twist move if one can be obtained from the other by Dehn twisting a transverse curve by a base curve. This move has a straightforward analogue in our setting.
    \begin{definition}\label{def:twist}
        Two markings $M_1$ and $M_2$ on $A_{\Gamma}$ are connected via a \emph{twist} move if one can be obtained from the other by replacing some transverse element $Q_i$ with $z_{P_i} Q_i z_{P_i}^{-1}$ where $P_i$ is the associated base element.
    \end{definition}

It is straightforward to check that $M_2$ is indeed a marking with the same base collection as $M_1$. We could have equivalently defined a twist move to replace \emph{every} parabolic subgroup in the marking with its conjugate by $z_{P_i}$, since $z_{P_i}$ normalizes every element of the marking except $Q_i$.

The second kind of move is somewhat more complex. In the mapping class group, a flip move would interchange a $(P_i , Q_i )$ pair and then replace $Q_j$ where $j \neq i$ with some suitable choices which ensure that we still have a marking and where the new choices of $Q_j$ have appropriate intersections with the other curves in the marking. In this setting, we replace intersection number with the projections $\pi_{P_i}$ from Definition \ref{def:projections} to obtain the following analogue. 

    \begin{definition}\label{def:flip}
        Let $M_1 = \{(P_i , Q_i)\}$ be a marking on $A_{\Gamma}$, and let $n = |\{P_i\}|$. We say that $M_1$ is connected by a \emph{flip} move to $M_2$ if
        \begin{align*}
            M_2 = \{(P_1, Q'_1)\cdots, (P_{i-1}, Q'_{i-1}), (Q_i, P_i),  (P_{i+1}, Q'_{i+1}, \cdots, (P_n, Q'_n)\}
        \end{align*} 
        with $|\pi_{P_j}(Q_j) - \pi_{P_j}(Q'_j)| \leq 1$ for all $j \neq i$. 
    \end{definition}

It follows from the finite dimensionality of $C_{parab}$ and uniqueness of the element $z_{P_i}$ for a given $P_i$ that there are finitely many markings $M'$ which can be obtained from a given marking $M$ via twist moves. Specifically, there are at most $2D$ where $D$ is the dimension of $C_{parab}$. In fact, there are also finitely many markings $M'$ which can be obtained from a given marking $M$ via flip moves; see Proposition \ref{finite flips}. We begin with two preliminary lemmas.

\begin{lemma}\label{unique transversal with a proj}
    Let $\{P_i\}$ be a maximal simplex. For each integer $n$, there are at most $N$ parabolic subgroups $Q$ such that $Q$ is a transversal for a particular $P_j$ with $\pi_{P_j}(Q) = n$, where $N$ is the number of irreducible standard parabolic subgroups in $A_{\Gamma}$.
\end{lemma}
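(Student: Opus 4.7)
The plan is to fix the base simplex $\{P_i\}$ and use the canonical positive standardizer $\underline{g}$ (with $\underline{g} A_{X_i} \underline{g}^{-1} = P_i$) as the reference, and then read off from Corollary \ref{transversal k existence} and Definition \ref{def:projections} that the projection value determines the conjugating power of $\Delta_{X_j}$, leaving only the standard parabolic subgroup $A_Y$ free to vary.

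More precisely, the first step is to apply Corollary \ref{transversal k existence} with $g = \underline{g}$: any transversal $Q$ for $P_j$ can be written uniquely as
\begin{equation*}
    Q \;=\; \underline{g}\,\Delta_{X_j}^{k}\, A_Y\, \Delta_{X_j}^{-k}\,\underline{g}^{-1}
\end{equation*}
for some integer $k$ and some standard parabolic subgroup $A_Y$. The second step is to compute $\pi_{P_j}(Q)$ directly from Definition \ref{def:projections}. By that definition, $\pi_{P_j}(Q) = \pi_{P_j}(\underline{g}\,\Delta_{X_j}^{k})$, which by definition is the $\Delta_{X_j}$-exponent of the unique ascending product $p$ with $\underline{g}\,p = \underline{g}\,\Delta_{X_j}^{k}$. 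Evidently $p = \Delta_{X_j}^{k}$, which is itself a valid ascending product (all other exponents being $0$), so $\pi_{P_j}(Q) = k$.

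Combining these two observations, fixing $\pi_{P_j}(Q) = n$ forces $k = n$, and then $Q$ is completely determined by the choice of $A_Y$. Since every vertex of $C_{parab}$ is an irreducible parabolic subgroup and conjugation preserves irreducibility, $A_Y$ must be one of the irreducible standard parabolic subgroups of $A_{\Gamma}$. There are at most $N$ such subgroups by definition of $N$, giving the desired bound.

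There is essentially no main obstacle here beyond correctly invoking the two previously established results; the only subtlety is the bookkeeping in the second step, namely verifying that $\Delta_{X_j}^{k}$ really is the ascending product used to define $\pi_{P_j}(\underline{g}\,\Delta_{X_j}^{k})$. This is immediate from Corollary \ref{unique p difference} (uniqueness of the ascending product up to commuting factors), since $\Delta_{X_j}^{k}$ itself witnesses the existence of such a product with the given $\Delta_{X_j}$-exponent.
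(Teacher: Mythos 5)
Your proof is correct and follows essentially the same route as the paper's: apply Corollary \ref{transversal k existence} with $g = \underline{g}$ to get a unique $(k, A_Y)$ for each transversal $Q$, observe that $\pi_{P_j}(Q) = k$ so the projection value pins down $k$, and conclude that only the choice of $A_Y$ remains. The only difference is that you spell out the computation $\pi_{P_j}(\underline{g}\Delta_{X_j}^k) = k$ explicitly from Definition \ref{def:projections}, which the paper leaves implicit with the phrase ``by the definition of the projection.''
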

\begin{proof}
    Let $\{\underline{g} A_{X_i} \underline{g}^{-1}\}$ be the canonical positive standardization of $\{P_i\}$. Consider any subgroup $Q$ as in the statement. By Lemma \ref{transversal k existence}, there is a unique $k$ and a unique $A_Y$ such that $Q = \underline{g} \Delta_{X_j}^k A_Y \Delta_{X_j}^{-k} \underline{g}^{-1}$. 

    Suppose there is some other $Q'$ with $\pi_{P_j}(Q) = \pi_{P_j}(Q')$. The subgroup $Q'$ admits an expression of the same form, and by the definition of the projection, this expression is $Q' = \underline{g} \Delta_{X_j}^k A_{Y'} \Delta_{X_j}^{-k}\underline{g}^{-1}$ for the same $k$. Thus the number of possible choices for $Q$ is bounded above by $N$.
\end{proof}

Note that $N$ is not sharp. Since no base element can appear as a transverse element, $N$ could at least be replaced with $N - |\{P_i\}|$. In fact, we suspect that there is a unique choice of $Q$ with a given projection, but we do not prove this here because any finite bound is suitable for our purposes.

\begin{lemma}\label{exists a transversal with a proj}
    Let $\{P_i\}$ be a maximal simplex. For each integer $n$, there is at least one parabolic subgroup $Q$ such that $Q$ is an allowable transverse element for a particular $P_j$ and either $\pi_{P_j}(Q) = n$ or $\pi_{P_j}(Q) = n-1$.
\end{lemma}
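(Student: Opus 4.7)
The plan is to start from a known transverse element and generate many others by iterated twist moves, tracking how the projection changes. By Proposition \ref{marking existence} there is at least one allowable transverse element $Q_0$ for $P_j$, and I would set $Q_k := z_{P_j}^{k} Q_0 z_{P_j}^{-k}$ for each $k \in \ZZ$.

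To verify that each $Q_k$ is an allowable transverse element, I would first observe that $z_{P_j}$ normalizes every $P_i$: by the trichotomy characterizing $C_{parab}$-adjacency, either $P_i \leq P_j$ (so $z_{P_j}$ is central in $P_j$ and hence commutes with $P_i$), or $P_j \leq P_i$ (so $z_{P_j}$ lies in $P_i$), or $P_i$ and $P_j$ commute element-wise. In each case $z_{P_j}^{k} P_i z_{P_j}^{-k} = P_i$, so conjugation by $z_{P_j}^{k}$ preserves the commutation relations of $z_{Q_0}$ with each $z_{P_i}$ as well as the non-commutation with $z_{P_j}$. For simultaneous standardizability of $\{P_i\} \cup Q_k$, if $h$ simultaneously standardizes $\{P_i\} \cup Q_0$ with $h A_{Y_i} h^{-1} = P_i$ and $h A_Y h^{-1} = Q_0$, then $z_{P_j}^{k} h$ standardizes the new collection: it sends each $A_{Y_i}$ to $z_{P_j}^{k} P_i z_{P_j}^{-k} = P_i$ by normalization, and sends $A_Y$ to $Q_k$.

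For the projection computation, let $\underline{g}$ be the canonical positive standardizer with $\underline{g} A_{X_i} \underline{g}^{-1} = P_i$, and write $Q_0 = \underline{g}\, \Delta_{X_j}^{m}\, A_Y\, \Delta_{X_j}^{-m}\, \underline{g}^{-1}$ with $m = \pi_{P_j}(Q_0)$ via Corollary \ref{transversal k existence}. Since $z_{P_j} = \underline{g} z_{X_j} \underline{g}^{-1}$ and $z_{X_j} = \Delta_{X_j}^{e}$ for some $e \in \{1,2\}$, and since powers of $\Delta_{X_j}$ commute with each other, I obtain $Q_k = \underline{g}\, \Delta_{X_j}^{m+ek}\, A_Y\, \Delta_{X_j}^{-m-ek}\, \underline{g}^{-1}$, and the uniqueness in Corollary \ref{transversal k existence} yields $\pi_{P_j}(Q_k) = m + ek$. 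When $e=1$, the set $\{m+k : k \in \ZZ\}$ equals all of $\ZZ$, so any $n$ is achieved exactly. When $e=2$, these projections hit precisely the integers of the same parity as $m$; for any $n$, at least one of $n$ and $n-1$ matches this parity, so some $Q_k$ satisfies $\pi_{P_j}(Q_k) \in \{n, n-1\}$. The only real subtlety is applying the uniqueness clause of Corollary \ref{transversal k existence} to extract $\pi_{P_j}(Q_k)$ directly from the computed expression; everything else follows from $z_{P_j}$ acting by normalization on the base simplex.
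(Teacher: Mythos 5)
Your proof is correct and follows essentially the same strategy as the paper: start from a transversal (obtained from Proposition \ref{marking existence}) and conjugate it by central powers of the Garside element of $P_j$, using the fact that such conjugation normalizes every base element, to shift the projection by multiples of $e \in \{1,2\}$. The paper phrases this by conjugating the standard transversal $A_{Y_j}$ (which has projection $0$) directly by $\underline{g}\Delta_{X_j}^{n}$ or $\underline{g}\Delta_{X_j}^{n-1}$, while you conjugate a general $Q_0$ by $z_{P_j}^k$ and then solve for $k$; these are the same move dressed slightly differently, and your more explicit verification of the normalization and standardizability claims is a fine supplement to the paper's terser wording.
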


\begin{proof}
    Let $\underline{g} A_{X_i} \underline{g}^{-1}$ be the canonical standardization of $\{P_i\}$. By Proposition \ref{marking existence}, there is some standard transversal $A_Y = A_{Y_j}$ for $A_{X_j}$ with respect to the collection $\{A_{X_i}\}$. If $\Delta_{X_j}^n$ is a central power of $\Delta_{X_j}$ in $A_{X_j}$, then let $Q$ be the conjugate of $A_{Y_j}$ by $\underline{g}\Delta_{X_j}^n$. Conjugation by $\Delta_{X_j}^n$ normalizes each element in $\{A_{X_i}\}$, so $Q$ is a transversal for $P_j$ in $\{P_i\}$ with $\pi_{P_j}(Q) = n$.
    
    If $\Delta_{X_j}^n$ is not a central power, then $\Delta_{X_j}^{n-1}$ must be. Then instead conjugate $A_{Y_j}$ by $\underline{g}\Delta_{X_j}^{n-1}$, and $\pi_{P_j}(Q) =n -1$.
\end{proof}

\begin{proposition}\label{finite flips}
    Given a marking $M$, there is at least one and at most $N^{D(D-1)}$ possible markings $M'$ which are obtained from $M$ via a flip move, where $D$ is the dimension of $C_{parab}$ and $N$ is the number of standard irreducible parabolic subgroups in $A_{\Gamma}$.
\end{proposition}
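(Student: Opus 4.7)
The plan is to handle the upper bound and existence separately. The upper bound is a combinatorial count using Lemma \ref{unique transversal with a proj}, while existence requires exhibiting at least one valid flip from an arbitrary marking.

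For the upper bound, a flip move is determined by a flip index $i$ together with transverse elements $Q'_j$ for each $j \neq i$ in the base. Since the base spans a simplex in $C_{parab}$ of dimension at most $D$, it has at most $D+1$ vertices, giving at most $D+1$ choices of $i$. For each $j \neq i$, the condition $|\pi_{P_j}(Q_j) - \pi_{P_j}(Q'_j)| \leq 1$ forces $\pi_{P_j}(Q'_j)$ to lie in one of three integer values, and Lemma \ref{unique transversal with a proj} bounds the number of transverse elements achieving any fixed projection by $N$. This gives at most $3N$ choices for each $Q'_j$ and at most $(D+1)(3N)^D$ flip moves in total, which is absorbed by the generous bound $N^{D(D-1)}$ in the stated range of parameters.

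For existence, I fix any index $i$ (such $i$ exists since the base is non-empty) and exhibit a valid completion. Condition (3) of Definition \ref{def:marking} says $\{P_j\}_{j \neq i} \cup \{Q_i\}$ is simultaneously standardizable, so after conjugating I may assume the collection is standard. I then verify that this standard collection spans a maximal $C_{parab}$-simplex by checking the two conditions of Proposition \ref{maximal simplex characterization}: Corollary \ref{maximal transversal inclusion} and Lemma \ref{lower level transversal inclusion} pin down exactly how $Q_i$ sits with respect to the other base elements, letting me show that $Q_i$ takes over the structural role vacated by $P_i$ so that the union of supports still omits precisely one Artin generator and each element is almost maximal inside its parent. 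Given the new maximal base, Lemma \ref{exists a transversal with a proj} produces, for each $j \neq i$, a transverse element $Q'_j$ for $P_j$ with $|\pi_{P_j}(Q_j) - \pi_{P_j}(Q'_j)| \leq 1$; together these satisfy all three conditions of Definition \ref{def:marking} and thus form a marking.

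The main obstacle is the maximality verification for the new base $\{P_j\}_{j \neq i} \cup \{Q_i\}$. The two bullets of Proposition \ref{maximal simplex characterization} require a case analysis based on the level of $P_i$ in the original simplex; if $P_i$ is maximal, $Q_i$ must contain the other maximal base elements by Corollary \ref{maximal transversal inclusion}, and if $P_i$ is at a lower level, Lemma \ref{lower level transversal inclusion} controls how $Q_i$ is nested with the base elements that contain $P_i$. Once this structural replacement is verified, the existence part follows immediately from Lemma \ref{exists a transversal with a proj}, and the count from Lemma \ref{unique transversal with a proj} together with the distance-$1$ constraint yields the stated upper bound.
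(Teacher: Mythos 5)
Your upper-bound approach matches the paper's (counting with Lemma \ref{unique transversal with a proj}), and you are in fact more careful on one point: the flip condition allows $\pi_{P_j}(Q'_j)$ to take any of three values, giving $3N$ choices per index rather than the $N$ the paper uses. But the claim that $(D+1)(3N)^D$ is ``absorbed by the generous bound $N^{D(D-1)}$'' is unverified and is actually false for small parameters. In the four-strand braid group $A_3$, a maximal $C_{parab}$-simplex has two vertices and $N=5$ irreducible standard parabolics; reading $D$ as the simplicial dimension $1$ as you do, your count is $2\cdot(15)^1 = 30$ while the claimed bound $N^{D(D-1)} = 5^0 = 1$; reading $D$ as the vertex count $2$ (which is evidently what the paper intends, given it writes ``$D-1$ indices''), your count is $3\cdot(15)^2 = 675$ while $N^{D(D-1)} = 25$. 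Either way your bound does not fit below $N^{D(D-1)}$, so you have not established the stated inequality. (The paper's derivation itself jumps from $D\cdot N^{D-1}$ to $N^{D(D-1)}$ without comment, so the stated constant may simply be incorrect as printed, but your factor of $3$ makes the mismatch worse, and it needs to be addressed rather than waved away.)

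On existence, your approach is more careful than the paper's: you explicitly verify that the flipped base $\{P_j\}_{j\neq i}\cup\{Q_i\}$ spans a maximal $C_{parab}$-simplex before invoking Lemma \ref{exists a transversal with a proj}, whereas the paper leaves this implicit. There is a shortcut worth noting: Proposition \ref{maximal simplex characterization} forces every maximal simplex to have exactly $|V(\Gamma)|-1$ vertices (each vertex contributes exactly one new Artin generator to the union of supports, summed over levels). The flipped collection has the same cardinality as the original base, consists of distinct irreducible proper parabolics, and spans a simplex because the required commuting conditions on the elements $z_{Q_i}$, $z_{P_j}$ are precisely the marking axioms; it is therefore automatically maximal. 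The case analysis through Corollary \ref{maximal transversal inclusion} and Lemma \ref{lower level transversal inclusion} that you sketch would also succeed but is heavier than needed.
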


\begin{proof}
    Let $M = \{(P_i , Q_i)\}$ be a marking, and let $M' = \{(P'_i , Q'_i)\}$ be the result of a flip move across index $j$. 

    Lemmas \ref{unique transversal with a proj} and \ref{exists a transversal with a proj} imply that for each index $i \neq j$, there are at least one and at most $N$ suitable choices of $Q'_i$. There are $D-1$ indices $i$ for which we must choose a $Q'_i$, since we do not need to make a choice at index $j$. Each possible $M'$ is characterized entirely by the collection $\{Q'_i\}$, so an upper bound for the number of possible markings obtained via a flip move across index $j$ is $N^{D-1}$.

    There are $D$ choices of index $j$ we could flip across, so the total number of $M'$ is $N^{D(D-1)}$
\end{proof}

We are now ready to define the marking graph of a finite-type Artin group.
    \begin{definition}\label{def:marking graph}
        The marking graph $\mathcal{W}$ for $A_{\Gamma}$ is the graph with vertex set equal to the set of markings on $A_{\Gamma}$ and an edge between vertices $M_1$ and $M_2$ if the corresponding markings are connected by either a twist move or a flip move.
    \end{definition}
Recall that there are finitely many possible results of a twist move performed on a given marking. We then obtain the following as a corollary of Proposition \ref{finite flips}.
\begin{corollary}
    The marking graph of a finite-type Artin group is locally finite.
\end{corollary}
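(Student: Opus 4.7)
The plan is to bound the valence of an arbitrary fixed vertex $M$ of $\mathcal{W}$. Since edges come in exactly two flavors (twist moves and flip moves), it suffices to bound the number of each type of neighbor of $M$ separately and then add.

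The flip side is immediate: Proposition \ref{finite flips} provides the explicit upper bound $N^{D(D-1)}$, where $D$ is the dimension of $C_{parab}$ and $N$ is the number of standard irreducible parabolic subgroups in $A_{\Gamma}$. Both quantities are finite: $N$ is bounded by the number of subsets of the Artin generating set, and $D$ is finite by the corollary to Proposition \ref{simplex sim standard}. For twist neighbors, I would argue directly from Definition \ref{def:twist}. A twist neighbor $M'$ of $M$ is determined by a choice of index $i$ (specifying which transverse element is being modified) together with a direction, since $M'$ can be obtained from $M$ either by replacing $Q_i$ with $z_{P_i} Q_i z_{P_i}^{-1}$ or, equivalently, by applying the inverse of a twist move to pick up $z_{P_i}^{-1} Q_i z_{P_i}$. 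There are at most $D$ base elements in $M$ and two possible directions, giving at most $2D$ twist neighbors.

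Adding the two bounds, the valence of $M$ is at most $N^{D(D-1)} + 2D$, which is finite and independent of $M$. Hence every vertex of $\mathcal{W}$ has finite degree and $\mathcal{W}$ is locally finite. There is no real obstacle to this argument: it is essentially bookkeeping on top of Proposition \ref{finite flips}, with the only subtlety being the observation that the twist-move relation in Definition \ref{def:twist} is generated by conjugation by $z_{P_i}^{\pm 1}$ (and not by arbitrary powers), so the count of $2D$ is genuinely a finite number rather than merely countable.
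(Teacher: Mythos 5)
Your argument is essentially the same as the paper's: the paper also obtains the bound of at most $2D$ twist neighbors (from the $D$ choices of index and two twist directions) and combines it with the $N^{D(D-1)}$ bound on flip neighbors from Proposition \ref{finite flips} to conclude local finiteness. The observation you flag — that Definition \ref{def:twist} prescribes a single conjugation by $z_{P_i}$ rather than an arbitrary power — is exactly the point the paper implicitly relies on, so your proof matches both in approach and in the constants obtained.
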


\section{Action by isometries}\label{sec:by isometries}

It follows from Theorem \ref{vx stabilizer} that $A_{\Gamma}/ Z(A_{\Gamma})$ acts properly on the marking graph. We will now show that this action is by isometries, i.e., that if $M_1$ and $M_2$ are connected by an edge, so are $x \cdot M_1$ and $x \cdot M_2$ for any $x \in A_{\Gamma}/Z(A_{\Gamma})$. In the twist move case, this is straightforward.

     \begin{lemma}\label{lem:twist-isom}
        Let $x \in A_{\Gamma} / Z(A_{\Gamma})$, and let $M_1$ and $M_2$ be two markings which are connected via a twist move. Then $x \cdot M_1$ and $x \cdot M_2$ are also connected via a twist move.
    \end{lemma}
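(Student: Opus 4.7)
The plan is to lift $x$ to an element $\tilde{x} \in A_{\Gamma}$ and show that the twist move commutes with conjugation by $\tilde{x}$. Since every element of the center acts trivially on every parabolic subgroup (central elements normalize everything and induce the trivial outer action on each parabolic subgroup), the action of $A_{\Gamma}/Z(A_{\Gamma})$ on markings by conjugation is well-defined, so the choice of lift does not matter.

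Write $M_1 = \{(P_i, Q_i)\}$ and suppose $M_2$ differs from $M_1$ only in that $Q_k$ has been replaced by $z_{P_k} Q_k z_{P_k}^{-1}$. Then $\tilde{x} \cdot M_1 = \{(\tilde{x} P_i \tilde{x}^{-1}, \tilde{x} Q_i \tilde{x}^{-1})\}$ is a marking, since markings are preserved under conjugation (the key observation noted after Definition \ref{def:marking}). Likewise $\tilde{x} \cdot M_2$ is a marking, and it differs from $\tilde{x} \cdot M_1$ only in the $k$th transverse element, which is $\tilde{x} z_{P_k} Q_k z_{P_k}^{-1} \tilde{x}^{-1}$.

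The crux of the argument is that for each irreducible parabolic subgroup $P$, the element $z_P$ is canonically characterized (by the theorem quoted from \cite{Cumplido-minimal-standardizers}) as the unique generator of $Z(P)$ that is conjugate to a positive element. This characterization is manifestly preserved under conjugation by any group element, so
\[
\tilde{x} z_{P_k} \tilde{x}^{-1} = z_{\tilde{x} P_k \tilde{x}^{-1}}.
\]
Substituting this identity into the expression for the $k$th transverse element of $\tilde{x} \cdot M_2$ yields
\[
\tilde{x} z_{P_k} Q_k z_{P_k}^{-1} \tilde{x}^{-1} = z_{\tilde{x} P_k \tilde{x}^{-1}} \, (\tilde{x} Q_k \tilde{x}^{-1}) \, z_{\tilde{x} P_k \tilde{x}^{-1}}^{-1},
\]
which is exactly the result of applying a twist move to $\tilde{x} \cdot M_1$ at the $k$th pair. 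Hence $\tilde{x} \cdot M_1$ and $\tilde{x} \cdot M_2$ are connected by a twist move in $\mathcal{W}$.

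I do not anticipate a real obstacle here; the only subtlety worth checking is the canonical nature of $z_P$, which is precisely the point of citing the canonical characterization above. Without that canonical choice, $z_P$ would only be defined up to a choice that might not transform equivariantly under conjugation, and the identity $\tilde{x} z_{P_k} \tilde{x}^{-1} = z_{\tilde{x} P_k \tilde{x}^{-1}}$ could fail.
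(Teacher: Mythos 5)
Your proof is correct and follows essentially the same route as the paper: both reduce to checking the single affected transverse element and invoke the equivariance identity $x z_{P} x^{-1} = z_{x P x^{-1}}$ from \cite{Cumplido-minimal-standardizers}. Your extra care about lifting $x$ to $\tilde{x} \in A_\Gamma$ and about the canonical characterization of $z_P$ makes the argument a bit more explicit, but the substance is identical.
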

    \begin{proof}
        Let $M_1 = \{(P_i , Q_i)\}$ and $M_2 = \{(P_1 , Q_1), \cdots, (P_{j-1}, Q_{j-1}), (P_j , z_{P_j} Q_{j} z_{P_j}^{-1}), (P_{j+1}, Q_{j+1}), \cdots\}$. To see that $x \cdot M_1$ and $g \cdot M_2$ are connected by a twist edge, it suffices to check that $x z_{P_j} Q_{j} z_{P_j}^{-1} x^{-1}$ is equal to $z_{x P_j x^{-1}} x Q_{j}x^{-1} z_{x P_j x^{-1}}^{-1}$. 
        
        It was shown in \cite{Cumplido-minimal-standardizers} that $z_{x P_i x^{-1}} = x z_{P_i} x^{-1}$. Then 
            \begin{align*}
                z_{x P_j x^{-1}} x Q_{j}x^{-1} z_{x P_j x^{-1}}^{-1} &= x z_{P_j} x^{-1} x Q_{j}x^{-1} x z_{P_j}^{-1} x^{-1}\\
                &= x z_{P_j} Q_{j} z_{P_j}^{-1} x^{-1}
            \end{align*}
        as desired.
    \end{proof}

We next consider edges corresponding to flip moves.
    
    \begin{proposition}\label{prop:flip-isom}
        Let $x \in A_{\Gamma} / Z(A_{\Gamma})$, and let $M_1$ and $M_2$ be two markings which are connected via a flip move. Then $x \cdot M_1$ and $x \cdot M_2$ are also connected via a flip move.
    \end{proposition}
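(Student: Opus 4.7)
The plan is to show that conjugation preserves the integer differences of transversal projections required by a flip move. Lift $x$ to an element of $A_\Gamma$, which is permissible because central elements act trivially on every parabolic subgroup and hence on markings. Since markings are closed under conjugation (the key observation in Section~\ref{sec:markings}), both $x\cdot M_1$ and $x\cdot M_2$ are markings, and by inspection the base of $x\cdot M_2$ differs from that of $x\cdot M_1$ exactly by swapping $xP_ix^{-1}$ with $xQ_ix^{-1}$ in the $i$th pair. This is precisely the shape required by Definition~\ref{def:flip}, so the proof reduces to checking the projection inequality $|\pi_{xP_jx^{-1}}(xQ_jx^{-1}) - \pi_{xP_jx^{-1}}(xQ'_jx^{-1})|\le 1$ for $j\ne i$.

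The heart of the argument is to choose any standardizer $g$ for $\{P_i\}$ with $P_i = gA_{X_i}g^{-1}$, and to observe that $xg$ is then a standardizer for $\{xP_ix^{-1}\}$ using the same standardization $\{A_{X_i}\}$. Applying Corollary~\ref{transversal k existence} to $Q_j$ and $Q'_j$ with standardizer $g$ yields unique integers $k_j,k'_j$ and standard parabolic subgroups $A_{Y_j},A_{Y'_j}$ such that
\begin{align*}
Q_j = g\Delta_{X_j}^{k_j}A_{Y_j}\Delta_{X_j}^{-k_j}g^{-1}, \qquad Q'_j = g\Delta_{X_j}^{k'_j}A_{Y'_j}\Delta_{X_j}^{-k'_j}g^{-1}.
\end{align*}
Conjugating each identity by $x$ gives $xQ_jx^{-1} = (xg)\Delta_{X_j}^{k_j}A_{Y_j}\Delta_{X_j}^{-k_j}(xg)^{-1}$ and analogously for $xQ'_jx^{-1}$. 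By the uniqueness clause of Corollary~\ref{transversal k existence}, applied to the base $\{xP_ix^{-1}\}$ with standardizer $xg$, these are the unique integers associated to $xQ_jx^{-1}$ and $xQ'_jx^{-1}$, so the integers $k_j$ and $k'_j$ are the same before and after conjugation.

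Finally, two applications of Lemma~\ref{well behaved projections} --- one with standardizer $g$, one with $xg$ --- yield
\begin{align*}
\pi_{P_j}(Q_j) - \pi_{P_j}(Q'_j) &= k_j - k'_j,\\
\pi_{xP_jx^{-1}}(xQ_jx^{-1}) - \pi_{xP_jx^{-1}}(xQ'_jx^{-1}) &= k_j - k'_j,
\end{align*}
because the baseline projections $\pi_{P_j}(g)$ and $\pi_{xP_jx^{-1}}(xg)$ cancel in each subtraction. Hence the two differences agree, the flip inequality $\le 1$ transfers from $M_1,M_2$ to $x\cdot M_1,x\cdot M_2$, and the proof is complete. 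The only mildly technical point is the uniqueness invocation in the previous paragraph, but this is immediate from Corollary~\ref{transversal k existence} once both expressions are written in parallel with standardizers $g$ and $xg$.
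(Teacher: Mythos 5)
The proposal follows the same high-level strategy as the paper — write each transversal in the canonical form from Corollary~\ref{transversal k existence}, observe that conjugation by $x$ preserves the resulting integers, and then translate this back into a statement about the projections $\pi_{P_j}$. However, you have missed a crucial step that the paper treats carefully: you cannot apply Corollary~\ref{transversal k existence} to $Q'_j$ with the standardizer $g$.

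The issue is that $g$ is, by your choice, a simultaneous standardizer for the base of $M_1$, namely $\{P_1,\dots,P_n\}$. But $Q'_j$ is the transverse element of $P_j$ in $M_2$, and the base of $M_2$ is $\{P_1,\dots,P_{i-1},Q_i,P_{i+1},\dots,P_n\}$, where $i$ is the flip index. Corollary~\ref{transversal k existence} is stated — and its proof genuinely uses — the hypothesis that $g$ simultaneously standardizes the base \emph{of the marking containing $Q$}. Your $g$ does not necessarily standardize $Q_i$: by the very same corollary, $Q_i = g\Delta_{X_i}^{m}A_{Y_i}\Delta_{X_i}^{-m}g^{-1}$ for some integer $m$ that need not vanish, so $g^{-1}Q_i g$ is generally not standard. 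The expression $Q'_j = g\Delta_{X_j}^{k'_j}A_{Y'_j}\Delta_{X_j}^{-k'_j}g^{-1}$ is therefore not justified, and the same problem recurs when you try to apply Lemma~\ref{well behaved projections} to compute $\pi_{P_j}(Q'_j) = \pi_{P_j}(g) + k'_j$: that lemma, and the ``baseline cancellation'' $\pi_{P_j}(g) - \pi_{P_j}(g) = 0$, both require a single standardizer valid for both bases, and moreover the two baseline projections are a priori taken relative to two different maximal simplices (those of $M_1$ and of $M_2$), so they do not cancel for free.

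The paper resolves exactly this difficulty by not using $\underline{g}$ directly, but $\underline{g}\Delta_{X_i}^{k_i}$ (in your indexing), where $k_i = \pi_{P_i}(Q_i)$ is the projection of the \emph{flipped} transversal. The paper then verifies explicitly that this element simultaneously standardizes the bases of both $M_1$ and $M_2$: it standardizes $Q_i$ by construction, it fixes each $A_{X_l}$ that contains or commutes with $A_{X_i}$, and it moves each $A_{X_l}\le A_{X_i}$ to another standard parabolic subgroup of $A_{X_i}$. Only after establishing this common standardizer can Corollary~\ref{transversal k existence} and the projection bookkeeping be applied to both $Q_j$ and $Q'_j$ on equal footing, and only then does the conjugation-by-$x$ argument go through as you describe. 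Your argument can be repaired by importing that step, but as written it does not establish the inequality.
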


    \begin{proof}
        Let $M_1 = \{(P_i , Q_i)\}$. Suppose the flip move is across index $j$, so 
        \begin{align*}
            M_2 = \{(P_1, Q'_1), \cdots, (P_{j-1}, Q'_{j-1}), (Q_j, P_j), (P_{j+1}, Q'_{j+1}) \cdots\}\text{.}
        \end{align*}
        Consider $x \cdot M_1$ and $x \cdot M_2$. It is clear that the base collections and the index $j$ pairs still have the appropriate form, so it suffices to check that $|\pi_{x P_i x^{-1}} (x Q_i x^{-1}) - \pi_{x P_i x^{-1}} (x Q'_i x^{-1})| = |\pi_{P_i}(Q_i) - \pi_{P_i}(Q'_i)|$ for each $i \neq j$.

        Let $\{\underline{g} A_{X_i} \underline{g}^{-1}\}$ be the canonical positive standardizer for the original base collection $\{P_i\}$, and let $k_i = \pi_{P_i}(Q_i)$ and $k'_i = \pi_{P_i}(Q'_i)$ for each $i$. 
        
        Lemma \ref{unique transversal with a proj} implies that there is some $Y_j$ such that
        \begin{align*}
            Q_j = \underline{g} \Delta_{X_j}^{k_j} A_{Y_j} \Delta_{X_j}^{-k_j}\underline{g}^{-1} \\
        \end{align*}

        As in the proof of Proposition \ref{markings conj to standard}, the element $\underline{g} \Delta_{X_j}^{k_j}$ is a standardizer for the bases of both $M_1$ and $M_2$. The element $\underline{g} \Delta_{X_j}^{k_j}$ standardizes both $Q_j$ and $P_j$ by construction. To see that it standardizes any $P_i$, notice that \begin{align*}
            \Delta_{X_j}^{-k_j} \underline{g}^{-1} P_i \underline{g} \Delta_{X_j}^{k_j} &= \Delta_{X_j}^{-k_j} A_{X_i} \Delta_{X_j}^{k_j}\text{.}
        \end{align*}
        
        If $A_{X_i}$ contains or commutes with $A_{X_j}$, then $\Delta_{X_j}^{-k_j} A_{X_i} \Delta_{X_j}^{k_j}$ is precisely $A_{X_i}$. If $A_{X_i}$ is contained in $A_{X_j}$, then $\Delta_{X_j}^{-k_j} A_{X_i} \Delta_{X_j}^{k_j}$ is some other standard parabolic subgroup $A_{X'_i}$ which is also contained in $A_{X_j}$. Either way, there is some collection $\{A_{X'_i}\}$ of standard irreducible parabolic subgroups where $\underline{g} \Delta_{X_j}^{k_j} A_{X'_i} \Delta_{X_j}^{-k_j} \underline{g}^{-1} = P_i$ for each $i$.

        By Corollary \ref{transversal k existence}, there are some $k_i$ and $Y_i$ such that for any $i \neq j$, 
        \begin{align*}
            Q_i = \underline{g} \Delta_{X_j}^{k_j} \Delta_{X'_i}^{k_i} A_{Y_i} \Delta_{X'_i}^{-k_i}\Delta_{X_j}^{-k_j}\underline{g}^{-1}\text{.} \tag{$\star\star\star$}
        \end{align*}

        Similarly, applying Corollary \ref{transversal k existence} to the collection $Q'_i$ yields $l_i$ and $Y'_i$ for each $i \neq j$ such that
        \begin{align*}
            Q'_i = \underline{g}\Delta_{X_j}^{k_j}\Delta_{X'_i}^{l_i} A_{Y'_i} \Delta_{X'_i}^{-l_i} \Delta_{X_j}^{-k_j} \underline{g}^{-1}\text{.} \tag{$\star\star\star\star$}
        \end{align*}

        Since $\pi_{P_i}(Q_i) = \pi_{P_i}(Q'_i)$ for all $i \neq j$ and $\pi_{P_i}(\underline{g}) = 0$, the forms for $Q_i$ and $Q'_i$ that we found in equations ($\star\star\star$) and ($\star\star\star\star$) imply that \begin{align*}
            1 &\geq |\pi_{P_i}(Q_i) - \pi_{P_i}(Q'_i)|\\
            &= |\pi_{P_i}(\underline{g}\Delta_{X_j}^{k_j}) + k_i - (\pi_{P_i}(\underline{g}\Delta_{X_j}^{k_j}) + l_i)|\\
            &= |k_i - l_i|\text{.}
        \end{align*}

        Consider the markings $x \cdot M_1$ and $x \cdot M_2$. The base collection of $x \cdot M_1$ is $\{x P_i x^{-1} \} = \{(x\underline{g}\Delta_{X_j}^{k_j}) A_{X'_i} (\Delta_{X_j}^{-k_j}\underline{g}^{-1}x^{-1})\}$, and the base collection of $M_2$ is the same for all $i \neq j$.

        For each $i$, the transverse elements of $M_1$ and $M_2$ are \begin{align*}
            x Q_i x^{-1} &= (x \underline{g}\Delta_{X_j}^{k_j}) \Delta_{X_i}^{k_i} A_{Y_i} \Delta_{X_i}^{-k_i}(\Delta_{X_j}^{-k_j}\underline{g}^{-1} x^{-1}) \text{ and}\\
            x Q'_i x^{-1} &= (x \underline{g} \Delta_{X_j}^{k_j}) \Delta_{X_i}^{l_i} A_{Y'_i} \Delta_{X_i}^{-l_i} (\Delta_{X_j}^{-k_j}\underline{g}^{-1} x^{-1})\text{.}
        \end{align*}

        The element $x \underline{g}\Delta_{X_j}^{k_j}$ is a standardizer for the bases of both markings, so by the definition of the projections $\pi$, \begin{align*}
            |\pi_{x P_i x^{-1}}(x Q_i x^{-1}) - \pi_{x P_i x^{-1}}(x Q'_i x^{-1})| &= |\pi_{x P_i x^{-1}}(x\underline{g}\Delta_{X_j}^{k_j}) + k_i - (\pi_{x P_i x^{-1}}(x\underline{g}\Delta_{X_j}^{k_j}) + l_i)|\\
            &= |k_i - l_i|\\
            &\leq 1\text{.}
        \end{align*}
    \end{proof}

\section{A geometric action}\label{sec:geometric action}
We have now seen that an irreducible finite-type Artin group $A_{\Gamma}$ modulo its center acts properly on the locally finite graph $\mathcal{W}$. 

    To complete the proof of Theorem \ref{thmx:main}, it remains to verify that the action is cocompact, i.e., that there is a compact set $K$ such that the image of $K$ under the group action covers the marking graph. Since the marking graph is locally finite, we can choose this compact set to be a finite-diameter region around a given marking. 
    
    The set $K$ will need to contain more than one vertex, since markings with different standard base curves are not in general conjugate to one another. Recall that Theorem 4 in \cite{Paris-subgroup-conjugacy} implies that two standard parabolic subgroups containing different numbers of generators are never conjugate, and there are many standard markings whose maximal base elements contain different numbers of generators.

        \begin{figure}[h!]
            \centering
            \includegraphics[width=8cm]{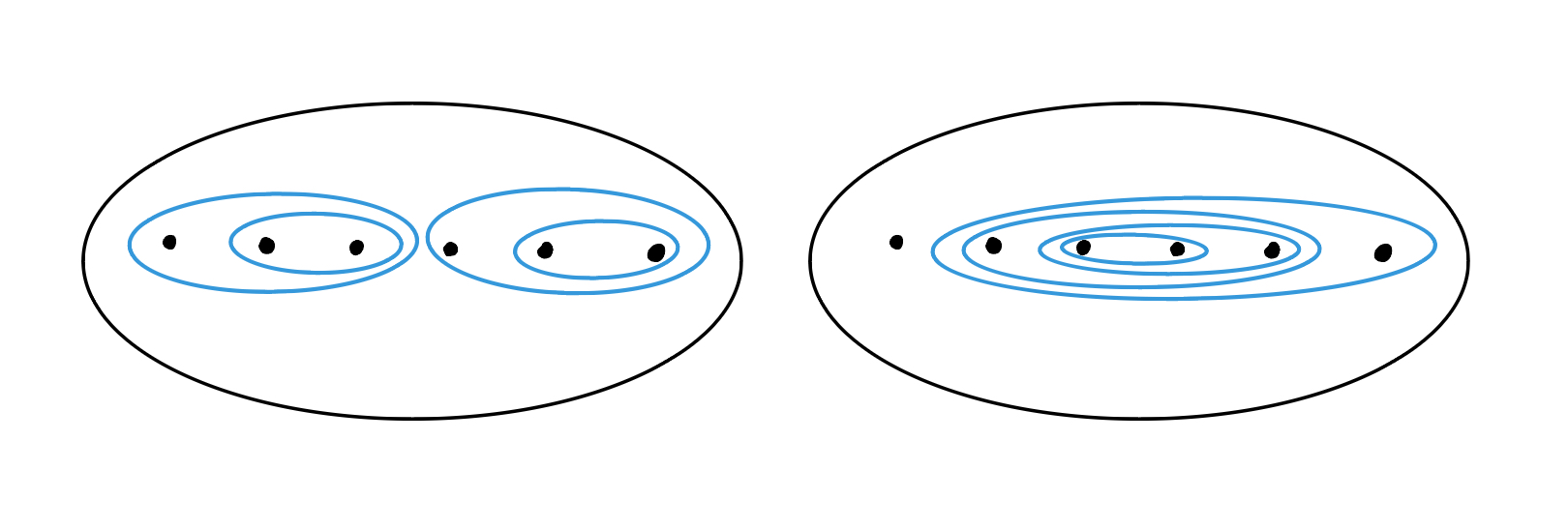}
            \captionsetup{margin=.5cm,justification=centering}
            \caption{Two collections of curves whose associated $C_{parab}$ simplices are maximal and non-conjugate.}
        \end{figure}
    
    We will show, however, that any choices of marking in which the base and transverse elements are all standard are connected to one another via paths of length at most $k$ for some $k$ which depends on $|V(\Gamma)|$. We showed in Proposition \ref{markings conj to standard} that every marking is conjugate to one of these, so we can choose a region $K$ of diameter $k$ around any all-standard marking whose orbits cover the marking graph.

    We begin by showing that any two markings which differ only by choosing different $Q_i$ and $Q'_i$ with $|\pi_{P_i}(Q_i) - \pi_{P_i} (Q'_i)| \leq 1$ are bounded distance apart in the marking graph. In particular, this implies that any marking $M$ with standard base and transverse elements is bounded distance in the marking graph from the marking with the same base whose transverse elements are precisely as constructed in the proof of Proposition \ref{marking existence}.
    
    \begin{lemma}\label{changing transversals via flips}
        Let $M_1 = \{(P_i , Q_i)\}$ and $M_2 = \{(P_i , R_i)\}$ be two markings such that $|\pi_{P_i}(Q_i) - \pi_{P_i}(R_i)| \leq 1$ for all $i$. Then $M_1$ and $M_2$ are distance at most 4 in the marking graph.
    \end{lemma}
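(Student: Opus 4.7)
The plan is to construct an explicit path of length at most $4$ from $M_1$ to $M_2$ in $\mathcal{W}$ consisting of four flip moves, organized as two consecutive flip-pairs across two distinct base indices. Since $M_1$ and $M_2$ share the base simplex $\{P_i\}$ and a flip executed twice across the same index restores the base, such a scheme lets us alter transverses without net change to the base. Fix two distinct indices $j$ and $k$ in $\{1, \ldots, n\}$; such a choice is possible whenever $\dim C_{parab} \geq 1$, which covers the cases of interest.

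The path will be $M_1 \to N_1 \to N_2 \to N_3 \to M_2$, where $M_1 \to N_1$ and $N_1 \to N_2$ are flips across index $j$ and $N_2 \to N_3$ and $N_3 \to M_2$ are flips across index $k$. The transverses in $N_1$ are chosen so that the flip-back in Step~2 produces a marking $N_2$ whose base is again $\{P_i\}$, with transverse $R_i$ at every index $i \neq j$ and the original $Q_j$ retained at index $j$. The hypothesis $|\pi_{P_i}(Q_i) - \pi_{P_i}(R_i)| \leq 1$ for $i \neq j$ is exactly the projection slack needed to make this change within two flips across $j$. The second flip-pair then changes the transverse at index $j$ from $Q_j$ to $R_j$ while holding the remaining transverses at $R_i$, applying the hypothesis now at index $j$ to certify the projection bound of the flip across $k$.

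The main technical obstacle is that the projection $\pi_{P_i}(\cdot)$ depends on the canonical positive standardizer of the entire base simplex, which changes under a flip, since the base of $N_1$ differs from that of $M_1$ at index $j$. Consequently, chaining the per-flip projection bounds across two consecutive moves requires comparing $\pi_{P_i}$ with respect to two different base simplices, and similarly across Steps~3 and~4. A secondary technical point is verifying that the chosen intermediate transverses give valid markings, which reduces to checking that the $z$-elements of the chosen transverses commute with the swapped base element (namely $Q_j$ in $N_1$, and the new $k$th element in $N_3$) and that the whole collection remains simultaneously standardizable. Both points can be handled using the canonical form for transverses from Lemma~\ref{cancel transversal terms}, the change-of-standardizer machinery of Proposition~\ref{differ by p}, and the classification of admissible transverses by projection in Lemmas~\ref{unique transversal with a proj} and~\ref{exists a transversal with a proj}.
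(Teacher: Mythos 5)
Your proposal takes essentially the same approach as the paper's proof: a four-flip path organized as two flip-pairs, the first across index $j$ to move the transverses at indices $i \neq j$ from $Q_i$ to $R_i$, and the second across a distinct index $k$ to move the transverse at index $j$ from $Q_j$ to $R_j$, with Lemma~\ref{exists a transversal with a proj} (and Proposition~\ref{marking existence}) used to realize the intermediate transverses with projections compatible with both $Q_i$ and $R_i$. The paper's version handles the base-change issue you flag by invoking the same cluster of lemmas you cite; the two arguments are structurally identical.
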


    \begin{proof}
        Throughout the proof, $M'$, $M''$, and $M'''$ will denote some ``stand-in'' markings in which many of the precise choices of parabolic subgroups are irrelevant. Recall that in Lemma \ref{exists a transversal with a proj}, we saw that for any $n$ and any choice of base collection $\{P_i\}$, either there is a transverse element $Q_i$ for each $P_i$ with $\pi_{P_i}(Q_i) = n$ or $\pi_{P_i}(Q_i) = n-1$, depending on whether $n$ is a central power of the Garside element in a standardization of $P_i$. 
        
        The same proof shows that there is a transversal element $Q_i$ for each $P_i$ with $\pi_{P_i}(Q_i) = n$ or $\pi_{P_i}(Q_i) = n+1$. In particular, if we perform a flip move across some index of $M_1$, then we can always choose transversal elements $Q'_i$ with both $|\pi_{P_i}(Q_i) - \pi_{P_i}(Q'_i)| \leq 1$ \emph{and} $|\pi_{P_i}(R_i) - \pi_{P_i}(Q'_i)| \leq 1$.
    
        Choose any index $j$. By Proposition \ref{marking existence}, there is some marking $M'$ whose collection of base elements is $\{P_1, \cdots, P_{j-1}, Q_j, P_{j+1}, \cdots\}$ and whose transverse elements are $P_j$ at index $j$ and any appropriate choices $Q'_i$ for $i \neq j$ which agree with both $Q_i$ and $R_i$. By the definition of a flip edge, any such $M'$ is connected to $M_1$ via a flip move.

        It is clear that $M'' = \{(P_1, Q'_1), \cdots (P_{j-1}, Q'_{j-1}), (P_j, Q_j), (P_{j+1}, Q_{j+1}), \cdots\}$ is a marking, since $M_2$ was a marking and there are no requirements on how transverse elements relate to one another. It is also clear from the assumptions on $M_1$ and $M_2$ that $M''$ is connected to $M'$ via a flip move.

        Choose any $k \neq j$. As before, we know that there is a marking $M'''$ with base elements $\{P_1, \cdots, P_{k-1}, Q'_k, P_{k+1}, \cdots\}$ and any appropriate choices of transversal with $\pi_{P_i}(Q'_i) = 0$ for $k \neq 1$, and this $M'''$ is connected to both $M''$ and $M_2$ via flip moves, completing the proof. 
    \end{proof}


    Finally, we show that all markings involving only standard elements are bounded distance from each other in the marking graph.
    
    \begin{proposition}\label{flip base paths}
        There exists $k$ depending on $|V(\Gamma)|$ such that if $\{P_i\}$ and $\{P'_i\}$ are two collections of standard base elements, then there are markings $M_1$ and $M_2$ such that the following hold. \begin{enumerate}
            \item $\{P_i\}$ forms the base of $M_1$ and $\{P'_i\}$ forms the base of $M_2$.
            \item All transverse elements of $M_1$ and $M_2$ are standard.
            \item $M_1$ and $M_2$ are distance at most $2k$ from each other in the generalized marking graph $\mathcal{W}$.
        \end{enumerate}
    \end{proposition}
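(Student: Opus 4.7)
The plan is to exploit the fact that the subgraph $\mathcal{W}_{\mathrm{std}}\subseteq \mathcal{W}$ of all-standard markings is finite with cardinality bounded in terms of $|V(\Gamma)|$, and then argue it has bounded diameter in $\mathcal{W}$. For the cardinality bound, Proposition \ref{maximal simplex characterization} shows that a maximal standard simplex is specified by a recursive choice of ``missing generator'' at each level of its nesting structure, so the number of such simplices is bounded by a function of $|V(\Gamma)|$. For each such base, Lemma \ref{unique transversal with a proj} applied with $n=0$ gives at most $N$ standard choices of transversal per index, where $N$ is the total number of standard irreducible parabolic subgroups in $A_\Gamma$. Hence $|\mathcal{W}_{\mathrm{std}}|$ is bounded in terms of $|V(\Gamma)|$, and it will suffice to take $k$ equal to this bound.

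For connectivity, I would show that any two all-standard markings $M_1,M_2$ are joined in $\mathcal{W}$ by a path built from ``single swaps''. A swap takes an all-standard marking $M$ with base $\{P_i\}$ and transverse elements $\{Q_i\}$, an index $j$, and a standard irreducible parabolic subgroup $R$ that is a legal transversal for $P_j$ relative to $\{P_i\}$ and whose substitution yields a maximal $C_{\mathrm{parab}}$-simplex, and produces an all-standard marking $M'$ with base $(\{P_i\}\setminus\{P_j\})\cup\{R\}$. Since both $R$ and $Q_j$ are standard, $\pi_{P_j}(R)=0=\pi_{P_j}(Q_j)$, so Lemma \ref{changing transversals via flips} moves $M$ in at most four steps to an all-standard marking in which the transversal of $P_j$ is $R$; a flip across index $j$ then exchanges $P_j$ with $R$; and a final application of Lemma \ref{changing transversals via flips} restores all-standard transversals after the flip. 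Each swap therefore costs a bounded constant number of edges in $\mathcal{W}$.

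The main obstacle is showing the ``swap graph'' on maximal standard simplices is connected, with diameter bounded in terms of $|V(\Gamma)|$. I would fix a reference simplex $\Sigma_0$ and show every maximal standard simplex $\{P_i\}$ is connected to $\Sigma_0$ via a swap-sequence of bounded length, proceeding level by level: first swap maximal components of $\{P_i\}$ into place, using legal transversals supplied by Proposition \ref{marking existence} as intermediaries; then recurse inside each maximal component, where the relative maximal simplex structure is again described by Proposition \ref{maximal simplex characterization} applied inside a smaller defining graph. The delicate point is ensuring that each step really is an admissible swap --- in particular, that the replacement subgroup $R$ fails to commute with the one being removed (so it is genuinely transverse and not merely a $C_{\mathrm{parab}}$-neighbor), while still commuting appropriately with all other $P_i$ so that the resulting collection is again a maximal simplex in the sense of Proposition \ref{maximal simplex characterization}. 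Once this is secured, any two all-standard markings are within bounded distance $2k=k(|V(\Gamma)|)$ of one another, completing the proof.
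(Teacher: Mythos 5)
Your high-level plan matches the paper's: reduce to all-standard markings, then move between them using flips interleaved with Lemma \ref{changing transversals via flips}, proceeding level by level, with an induction on the size of the defining graph hiding in the recursion inside maximal components. The cost of each ``swap'' (at most four edges to set the transversal, one flip, at most four more to restore standard transversals) is correct, and your observation that the two standard transversals both project to $0$ under $\pi_{P_j}$ --- so Lemma \ref{changing transversals via flips} applies --- is the right justification.

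The finiteness observation is a legitimate alternative framing: once connectivity of the swap graph is known, finiteness gives the diameter bound for free, whereas the paper's induction produces an explicit bound directly. That is a mild simplification in bookkeeping, but it buys you nothing for the hard part.

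The genuine gap is exactly the one you flag and then leave unresolved: proving that the swap graph is connected, i.e., that from any maximal standard simplex you can reach a fixed reference simplex through admissible single-vertex replacements, where at each step the replacement $R$ is transverse to the removed base element and the new collection is again a maximal simplex. That is not a routine verification; it is the content of the paper's induction step. The paper handles it by exploiting the very specific combinatorics of finite-type defining graphs (every vertex has valence at most $3$, there is at most one branching vertex, and the explicit labelings fix a ``preferred'' ordering of the generators), which lets it argue that one can always flip away maximal components until a single maximal component of the form $\langle s_1,\dots,s_{N-1}\rangle$ remains and then recurse inside it; it also needs to treat separately the case where the missing generator $v$ is the end-vertex $s_1$. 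Without carrying out that case analysis --- and in particular without checking that Proposition \ref{marking existence}'s transversals really do produce maximal simplices after each flip --- the proof does not close. You have correctly reduced the problem to this point, but the reduction is not the proof.
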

    \begin{proof}
        We will show that every marking in which $\{P_i\}$ and $\{Q_i\}$ are all standard is distance at most $k$ from a specific one, $\overline{M}$. Namely, $\overline{M}$ is the marking where $P_i$ is generated by $\{\sigma_1, \cdots, \sigma_i\}$ for the labelings given in Figure \ref{fig:defgraphs}, which we have reprinted below for the reader's convenience, and where all transverse elements are the standard transverse elements constructed in Proposition \ref{marking existence}. Specifically, the transverse element associated to $P_i$ is $Q_i = s_{i+1}$ unless vertex $s_{i-1}$ has valence 3 and does not commute with $s_{i+1}$, in which case $Q_{i}$ is the standard parabolic subgroup generated by $\{\sigma_1, \cdots \sigma_{i-1}, \sigma_{i+1}\}$. Vertex labels are below or to the left of the corresponding vertex, and edge labels are italicized.

        \begin{figure}[h]
            \centering
            \includegraphics[width=10cm]{Figures/defining_graph_labeling.jpeg}
        \end{figure}
        
        The proof is by induction on the number of vertices in the defining graph $A_{\Gamma}$.

        \textbf{Base case:} Suppose $\Gamma$ has two vertices.  This occurs only if $\Gamma$ is of type $I_2(n)$. There are only two proper standard parabolic subgroups: the two generators. Thus there are only two markings where all elements are standard: one where $\sigma_1$ is the base and $\sigma_2$ is the transversal, and the reverse. These are clearly connected via a single flip move. 

        \begin{figure}[h!]
            \centering
            \includegraphics[width=8cm]{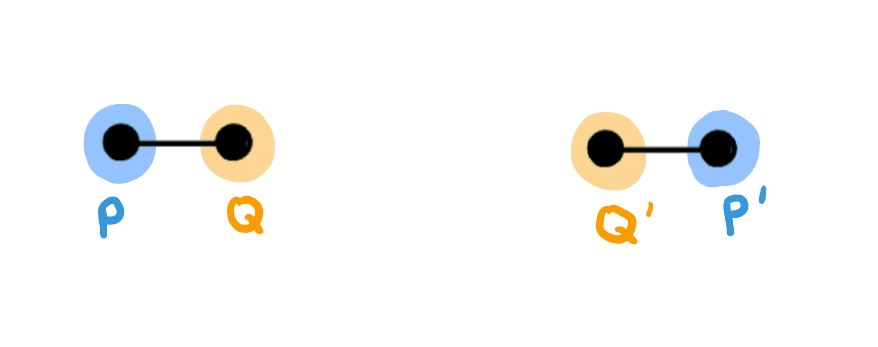}
            \captionsetup{margin=.5cm,justification=centering}
            \caption{The two choices of marking with standard base and transverse elements.}
        \end{figure}

        \textbf{Induction step:} Suppose that all markings on $A_{\Lambda}$ consisting only of standard parabolic subgroups are $k$ flip moves away from $\overline{M}$ when $A_{\Lambda}$ is an irreducible finite-type Artin group and $\Lambda$ has at most $N-1$ vertices. 
        
        Let $\Gamma$ be a graph with $N$ vertices, and let $M$ be a marking on $A_{\Gamma}$ whose base and transverse elements are all standard. By Lemma \ref{changing transversals via flips}, $M$ is distance at most 4 from the marking $M' = (\{A_{X_i}\}, \{A_{Y_i}\})$ where $\{A_{X_i}\}$ is the base collection of $M$ and where $\{A_{Y_i}\}$ is the collection of standard transverse elements constructed for the base simplex $\{A_{X_i}\}$ in Proposition \ref{marking existence}. 
        
        Consider the maximal components $A_{X_1}$, $A_{X_2}$, and $A_{X_3}$, up to two of which may be empty. We recall three key facts. \begin{itemize}
            \item There is a unique vertex $v$ of $\Gamma$ such that $X_1 \cup X_2 \cup X_3 \cup \{v\} = V(\Gamma)$ by Lemma \ref{3 maxl base components}.
            \item If $X_j \subset X_i$ for any $i$ and $j$, then $Y_j \subset X_i$ by Lemma \ref{lower level transversal inclusion}.
            \item The transverse elements $A_{Y_1}$, $A_{Y_2}$, and $A_{Y_3}$ for each of the maximal elements $A_{X_1}$, $A_{X_2}$, and $A_{X_3}$ are defined as follows. If $A_{X_2 \cup \{v\} \cup X_3}$ commutes with all $A_{X_j} \leq A_{X_1}$, then $Y_1 = X_2 \cup \{v\} \cup X_3$. If it does not, then there is a unique element $A_{X_j}$ of the base such that $A_{X_j} \leq A_{X_1}$ and some vertex of $A_{X_j}$ is connected by an edge to $v$. In this case, we defined $Y_1 = X_2 \cup \{v\} \cup X_3 \cup X_j$.

        \begin{figure}[h!]
            \centering
            \includegraphics[width=8cm]{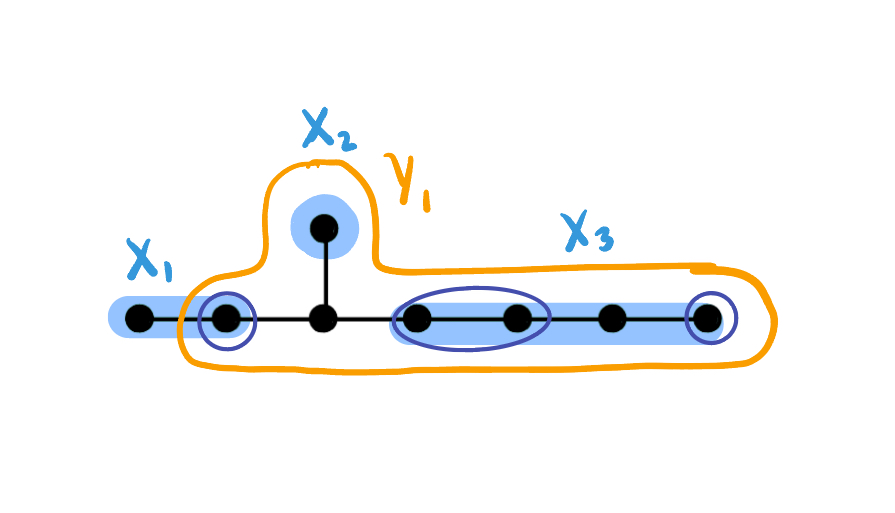}
            \captionsetup{margin=.5cm,justification=centering}
            \caption{An example of the construction of $Y_1$. Maximal base components are shaded in blue, and level 2 base components are circled in dark blue.}
        \end{figure}
        \end{itemize}   

        First, suppose that $X_1$ contains the vertex $s_1$. Let the missing vertex $v$ be $s_j$ for some $j >1$. If all three maximal components are nonempty, one of them must be a single vertex. Label the $A_{X_i}$ so that $X_2$ is the single vertex labeled $3$ or $4$, depending on whether $\Gamma$ is of type $D_n$ vs $E_6$, $E_7$, or $E_8$. We perform a flip move across the pair $(X_3, Y_3)$, using the unique choices of standard transverse curves for the image (if $X_3$ is empty, use $X_2$ instead). The resulting marking has at most two maximal components, one which contains at least $\{s_1, \cdots, s_j\}$ for $j > 1$ and one which was previously a subset of $X_3$. Repeating this process at most $N-2$ additional times yields a marking $(\{A_{X'_i}\}, \{A_{Y_i}\})$ consisting of all standard parabolic subgroups whose base has exactly one maximal component: the standard parabolic subgroup $P_M = \langle s_1, \cdots s_{N-1} \rangle$.

         \begin{figure}[h]
            \centering
            \includegraphics[width=8cm]{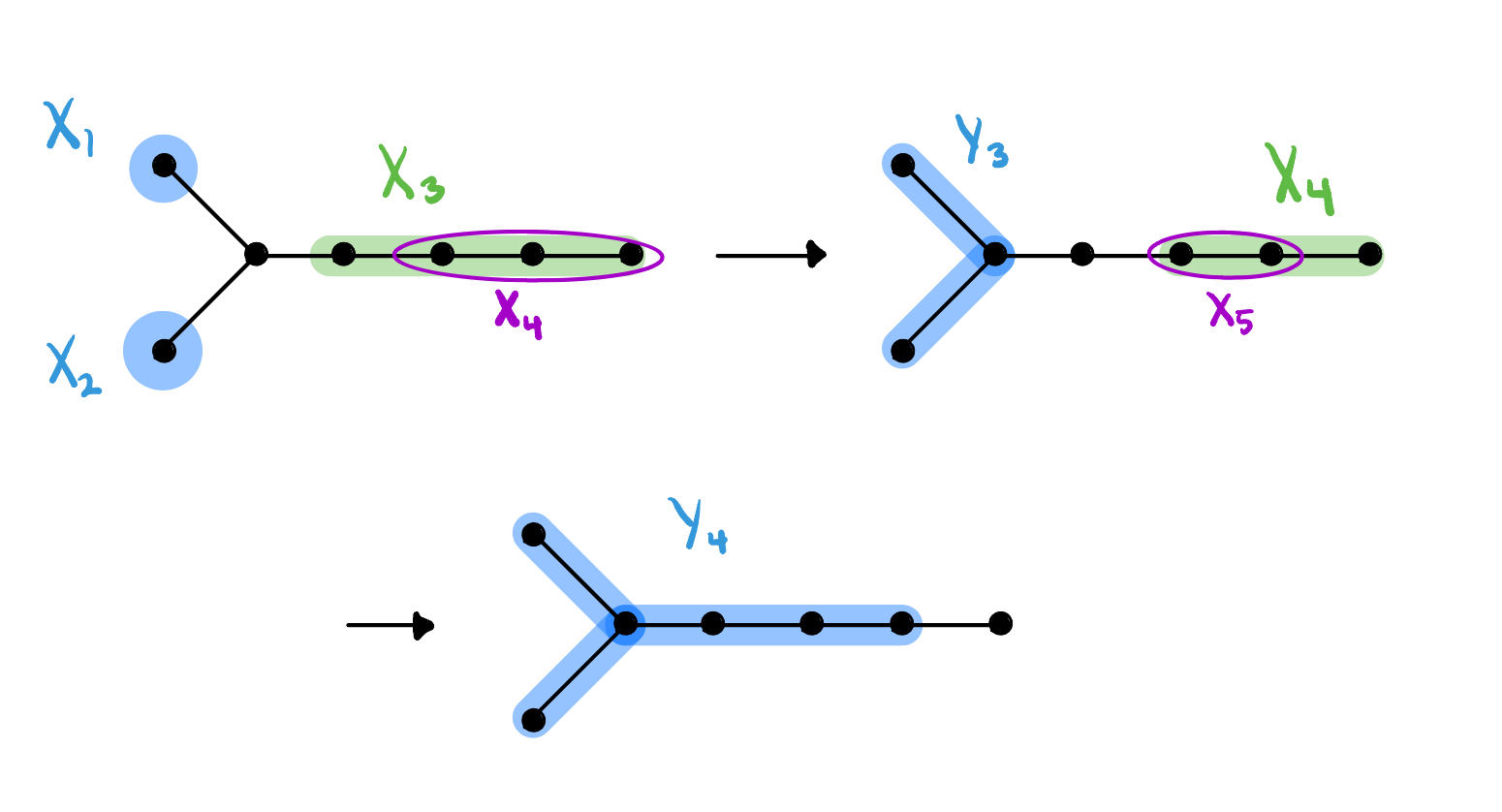}
            \captionsetup{margin=.5cm,justification=centering}
            \caption{An example which requires two flips}
        \end{figure}

        The pairs $(A_{X'_i} , A_{Y'_i})$ which are not maximal form a submarking on $P_M$ (viewed as its own Artin group). It has one fewer generator than $\Gamma$. Notice that performing flip moves inside of this parabolic subgroup will not affect the position of the exterior maximal base element (though it may change the associated transversal). By the induction hypothesis, performing at most $k$ additional flip moves in this smaller maximal subgroup and choosing standard transversals each time  results in a marking with the correct set of base curves and all standard transversals. By Lemma \ref{changing transversals via flips}, this marking is connected to $\overline{M}$ by at most 4 additional flip moves.

        Now suppose that $s_1$ is the missing generator $v$ such that $X_1 \cup \{s_1\} = \Gamma$. By Lemma \ref{3 maxl base components}, the maximal components are the connected components of $\Gamma - \{s_1\}$, so we may drop the second and third maximal components because in our labeling, $s_1$ always has valence $1$. Notice that there is always some choice of standard base collection $\Sigma$ inside $X_1$ which contains the standard parabolic subgroup generated by $\{s_2, \cdots, s_{N-1}\}$. Since $X_1$ has one fewer vertex than $\Gamma$, we can apply the induction hypothesis to see that our original marking is at most $k$ flip moves (inside of $X_1$) from one with $\Sigma$ as its base collection and all standard transverse elements. By Lemma \ref{changing transversals via flips}, this marking is at most 4 flip moves away from one where the transversal for $A_{X_1}$ is $A_{Y_1} = \langle s_1, s_2, \cdots, s_{N-1}\rangle$.

        We can now perform a flip move across $A_{X_1}$ and $A_{Y_1}$, again choosing standard transverse elements throughout, to obtain an all standard marking whose maximal component does contain $s_1$. By the same reasoning as in the prior case, this marking is at most $k+4$ additional flip moves away from the desired one.

        Thus the desired marking can always be obtained after $\text{max}\{2k+13, N + k +7\}$ flip moves.
    \end{proof}

    The $k$ constructed above is likely not optimal, but any finite $k$ suffices for our purposes. If $k$ is the constant from Proposition \ref{flip base paths} and $K$ is the closed ball of radius $k$ around the preferred marking $\overline{M}$, then the image of $K$ under the group action covers all of the marking graph.

\bibliographystyle{acm}
\bibliography{refs.bib} 

\end{document}